\newtheorem{theo}{{Th\'eor\`eme}}[section]
\newtheorem{coro}[theo]{{Corollaire}}
\newtheorem{lemma}[theo]{{Lemme}}
\newtheorem{prop}[theo]{Proposition}
\theoremstyle{remark}
\newtheorem{remark}[theo]{\textbf{Remarque}}
\theoremstyle{definition}
\newtheorem{defn}[theo]{D\'efinition}
\newtheorem{example}[theo]{Exemple}
\newtheorem{recette}[theo]{\textbf{Recette}}
\newcommand{\ra}{\rightarrow}
\newcommand{\ol}{\overline}
\newcommand{\immouv}[1][r]
   {\ar@{}[#1] |*[o][F]{\hbox{%
         \vrule width 1.5mm height 0pt depth 0pt%
         \vrule width 0pt height .75mm depth .75mm%
         }}
         \ar@{^{(}->}[#1]}
\newcommand{\cA}{\mathcal{A}}
\newcommand{\cB}{\mathcal{B}}
\newcommand{\cC}{\mathcal{C}}
\newcommand{\cE}{\mathcal{E}}
\newcommand{\cF}{\mathcal{F}}
\newcommand{\cG}{\mathcal{G}}
\newcommand{\cH}{\mathcal{H}}
\newcommand{\cK}{\mathcal{K}}
\newcommand{\cM}{\mathcal{M}}
\newcommand{\cO}{\mathcal{O}}
\newcommand{\cP}{\mathcal{P}}
\newcommand{\cR}{\mathcal{R}}
\newcommand{\cS}{\mathcal{S}}
\newcommand{\cU}{\mathcal{U}}
\newcommand{\A}{\mathbb A}
\newcommand{\C}{\mathbb C}
\newcommand{\B}{\mathbb B}
\newcommand{\K}{\mathbb K}
\newcommand{\N}{\mathbb N}
\newcommand{\bP}{\mathbb{P}}
\newcommand{\Q}{\mathbb Q}
\newcommand{\R}{\mathbb R}
\newcommand{\Z}{\mathbb Z}
\newcommand{\bM}{\mathbf M}
\newcommand{\bR}{\mathbf R}
\newcommand{\fD}{\mathfrak D}
\newcommand{\fF}{\mathfrak F}
\newcommand{\fK}{\mathfrak K}
\newcommand{\rA}{\mathrm A}
\newcommand{\rB}{\mathrm B}
\newcommand{\rC}{\mathrm C}
\newcommand{\rD}{\mathrm D}
\newcommand{\rE}{\mathrm E}
\newcommand{\rH}{\mathrm H}
\newcommand{\rP}{\mathrm P}
\newcommand{\rT}{\mathrm T}
\renewcommand{\ker}{\mathrm {Ker} }
\renewcommand{\Re}{\mathrm Re}
\newcommand{\Dir}{\mathbf{Dir}}
\DeclareMathOperator{\Res}{\mathrm Res}
\DeclareMathOperator{\GL}{\mathrm GL}
\DeclareMathOperator{\SL}{\mathrm SL}
\DeclareMathOperator{\LC}{\mathrm LC}
\DeclareMathOperator{\Fil}{\mathrm{Fil}}
\DeclareMathOperator{\Frac}{\mathrm Frac}
\DeclareMathOperator{\Gal}{\mathrm Gal}
\DeclareMathOperator{\Sym}{\mathrm{Sym}}
\DeclareMathOperator{\tr}{\mathrm Tr}
\DeclareMathOperator{\plim}{\varprojlim}
\newcommand{\alg}{\mathrm{alg}}
\newcommand{\an}{\mathrm{an}}
\newcommand{\con}{\mathrm{cong}}
\newcommand{\cycl}{\mathrm{cycl}}
\newcommand{\dR}{\mathrm{dR}}
\newcommand{\spe}{\mathrm{sp}}
\newcommand{\Inf}{\mathrm{inf}}
\newcommand{\res}{\mathrm{res}}
\newcommand{\Iw}{\mathrm{Iw}}
\newcommand{\Exp}{\mathrm{Exp}}
\newcommand{\z}{\zeta}
\newcommand{\eps}{\epsilon}
\begin{document}

\title{Le syst\`eme d'Euler de Kato }
\author{Shanwen \textsc{WANG} }
\date{}
\maketitle

\begin{abstract}
Ce texte est consacr\'e au syst\`eme d'Euler de Kato,
construit \`a partir des unit\'es modulaires, et \`a son
image par l'application exponentielle duale (loi de r\'eciprocit\'e
explicite de Kato).  La pr\'esentation que nous en donnons
est sensiblement diff\'erente de la pr\'esentation originelle
de Kato.
\end{abstract}
\def\abstractname{Abstract}
\begin{abstract}
This article is devoted to Kato's Euler system, which is constructed from modular unites, and to its image by the dual exponential map (so called Kato's reciprocity law). The presentation in this article is different from Kato's oringinal one, and the dual exponential map in this article is a modification of Colmez's construction in his Bourbaki talk. 
\end{abstract}
\tableofcontents

\section{ Notations et Introduction }

 \subsection{Notations}
On note $\overline\Q$ la cl\^oture alg\'ebrique de $\Q$ dans $\C$,
et on fixe, pour tout nombre premier~$p$, une cl\^oture
alg\'ebrique $\overline\Q_p$ de $\Q_p$, ainsi qu'un plongement de
$\overline\Q$ dans $\overline\Q_p$.

Si $N\in\N$, on note $\zeta_N$ la racine $N$-i\`eme
$e^{2i\pi/N}\in\overline\Q$ de l'unit\'e, et on note
$\Q^{\rm cycl}$ l'extension cyclotomique de $\Q$,
r\'eunion des $\Q(\zeta_N)$, pour $N\geq 1$, ainsi que $\Q^{\rm cycl}_p$ l'extension cyclotomique de $\Q_p$, r\'eunion de $\Q_p(\z_N)$, pour $N\geq 1$.
 
 \subsubsection{Objets ad\'eliques}
 Soient $\cP$ l'ensemble des premiers de $\Z$ et $\hat{\Z}$ le compl\'et\'e profini de $\Z$, alors
$\hat{\Z}=\prod_{p\in\cP}\Z_p$. Soit $\Q\otimes\hat{\Z}$ l'anneau
des ad\`eles de $\Q$ ( le produit restreint des $\Q_p$ par rapport
aux sous-anneaux $\Z_p$ de $\Q_p$ ). Quel que soit
$x\in\Q\otimes\hat{\Z}$, on note $x_p$ ( resp. $x^{]p[}$ ) la
composante de $x$ en $p$ ( resp. en dehors de $p$ ). Notons
$\hat{\Z}^{]p[}=\prod_{l\neq p}\Z_l$. On a donc
$\hat{\Z}=\Z_p\times\hat{\Z}^{]p[}$. Cela induit les d\'ecompositions
suivantes: pour tout  $d\geq 1$,
\[
\bM_d(\Q\otimes\hat{\Z})=\bM_d(\Q_p)\times\bM_d(\Q\otimes\hat{\Z}^{]p[})
\text{ et }
\GL_d(\Q\otimes\hat{\Z})=\GL_d(\Q_p)\times\GL_d(\Q\otimes\hat{\Z}^{]p[}).\]
On d\'efinit les sous-ensembles suivants de $\Q\otimes\hat{\Z}$ et
$\bM_2(\Q\otimes\hat{\Z})$:

\begin{align*}
\hat{\Z}^{(p)}=\Z_p^{*}\times\hat{\Z}^{]p[} &\text{ et
}
\bM_{2}(\hat{\Z})^{(p)}=\GL_2(\Z_p)\times\bM_2(\hat{\Z}^{]p[}), \\
(\Q\otimes\hat{\Z})^{(p)}=\Z_p^{*}\times(\Q\otimes\hat{\Z}^{]p[})
&\text{ et }
\bM_{2}(\Q\otimes\hat{\Z})^{(p)}=\GL_2(\Z_p)\times\bM_2(\Q\otimes\hat{\Z}^{]p[}).
\end{align*}

\subsubsection{Actions de groupes}
Soient $X$ un espace topologique localement profini, $V$ un
$\Z$-module. On note $\LC_c(X,V)$ le module des fonctions localement
constantes sur  $X$ \`a valeurs dans $V$ dont le support est compact
dans $X$. On note $\fD_{\alg}(X,V)$ l'ensemble des distributions alg\'ebriques sur
$X$ \`a valeurs dans $V$, c'est \`a dire des applications
$\Z$-lin\'eaires de $\LC_c(X,\Z)$ \`a valeurs dans $V$. On note $\int_X\phi\mu$ la valeur de $\mu$
sur $\phi$, o\`u $\mu\in\fD_{\alg}(X,V)$ et $\phi\in \LC_c(X,\Z)$.

Soit $G$ un groupe localement profini, agissant contin\^ument \`a
droite sur $X$ et $V$ (c.-\`a-d. quels que soient $g_1,g_2\in G, x\in
X,$ on a $(x*g_1)*g_2=x*(g_1g_2)$). On munit $\LC_c(X,\Z)$ et
$\fD_{\alg}(X,V)$ d'actions de $G$ \`a droite comme suit:

si $g\in G, x\in X,\phi\in\LC_c(X,\Z), \mu\in\fD_{\alg}(X,V),$ alors
\begin{equation}\label{actiondis} (\phi*g)(x)=\phi(x*g^{-1}) \text{ et } \int_{X}\phi(\mu*g)=\bigl(\int_{X}(\phi*g^{-1})\mu\bigr)*g.
\end{equation}

\subsubsection{Formes modulaires}
Soient $A$ un sous-anneau de $\C$ et $\Gamma$ un sous-groupe
d'indice fini de $\SL_2(\Z)$. On note $\cM_k(\Gamma,\C)$ le
$\C$-espace vectoriel des formes modulaires de poids $k$ pour
$\Gamma$. On note aussi $\cM_{k}(\Gamma,A)$ le sous $A$-module de
$\cM_k(\Gamma,\C)$ des formes modulaires dont le $q$-d\'eveloppement
est \`a coefficients dans $A$. On pose
$\cM(\Gamma,A)=\oplus_{k=0}^{+\infty}\cM_k(\Gamma,A)$. Et on note
$\cM_k(A)$ ( resp. $\cM(A)$ ) la r\'eunion des $\cM_k(\Gamma,A)$
(resp. $\cM(\Gamma,A)$), o\`u $\Gamma$ d\'ecrit tous les
sous-groupes d'indice fini de $\SL_2(\Z)$. On peut munir l'alg\`ebre
$\cM(\C)$ d'une action de $\GL_2(\Q)_{+}=\{\gamma\in\GL_2(\Q)|\det\gamma>0\}$ de la fa\c{c}on suivante:
\begin{equation}\label{etoi}
f*\gamma=(\det\gamma)^{1-k}f_{|_k}\gamma, \text{ pour }
f\in\cM_k(\C) \text{ et } \gamma\in\GL_2(\Q)_{+},
\end{equation}
o\`u $f_{|_k}\gamma$ est l'action modulaire usuelle de $\GL_2(\R)_{+}$ (voir section $\S\ref{formes}$ la formule (\ref{slash})).


\begin{defn}Soient $N\geq 1$ et $\Gamma_N=\{\bigl(\begin{smallmatrix}a & b\\ c & d\end{smallmatrix}\bigr)\in\SL_2(\Z), \bigl(\begin{smallmatrix}a & b\\ c & d\end{smallmatrix}\bigr)\equiv \bigl(\begin{smallmatrix}1 & 0\\ 0 & 1\end{smallmatrix}\bigr)[N]\}$. Le groupe $\Gamma_N$ est un sous-groupe de $\SL_2(\Z)$ d'indice fini.
On dit qu'un sous-groupe $\Gamma$ de $\SL_2(\Z)$ est un sous-groupe
de congruence s'il contient $\Gamma_N$ pour un certain $N\geq 1$.
\end{defn}
\begin{example}
Les sous-groupes
$\Gamma_0(N)=\{\gamma\in\SL_2(\Z)|\gamma\equiv(\begin{smallmatrix}*&*\\0&*\end{smallmatrix})\mod
N\}$ et
$\Gamma_1(N)=\{\gamma\in\SL_2(\Z)|\gamma\equiv(\begin{smallmatrix}1&*\\0&1\end{smallmatrix})\mod
N\}$ sont des sous-groupes de congruences.
\end{example}

On d\'efinit de m\^eme:
\[\cM^{\con}_k(A)=\bigcup\limits_{\substack{\Gamma \text { sous-groupe de congruence } }}\cM_k(\Gamma,A)\text{ et }
\cM^{\con}(A)=\oplus_{k=0}^{+\infty}\cM_k^{\con}(A).\]

Soit $K$ un sous-corps de $\C$ et soit $\ol{K}$ la cl\^oture alg\'ebrique de $K$. On note $\Pi_K$ le groupe des
automorphismes de $\cM(\ol{K})$ sur $\cM(\SL_2(\Z),K)$; c'est un
groupe profini.
On note
$\Pi_{\Q}^{'}$ le groupe des automorphismes
de $\cM(\overline\Q)$ engendr\'e par $\Pi_{\Q}$ et
$\GL_2(\Q)_{+}$.
Plus g\'en\'eralement, si $S\subset\cP$ est fini, on note $\Pi_{\Q}^{(S)}$ le sous-groupe de $\Pi_{\Q}^{'}$ engendr\'e par $\Pi_{\Q}$ et $\GL_2(\Z^{(S)})_{+}$, o\`u $\Z^{(S)}$ est le sous-anneau de $\Q$ obtenu en inversant tous les nombres premiers qui n'appartiennent pas \`a $S$.

\subsubsection{Objets $p$-adiques}
Soit $q$ une variable.
On note $\fK^+=\Q_p\{\frac{q}{p}\}$ l'anneau des fonctions analytiques
sur le disque ferm\'e $\{q\in\C_p:v_p(q)\geq 1\}$,
que l'on munit de la valuation spectrale $v_p$
(i.e. $v_p(f)=\inf_{v_p(q)\geq 1}v_p(f(q))$ ).
On note $\fK$ le compl\'et\'e du corps des fractions de $\fK^+$.
On fixe une cl\^oture alg\'ebrique $\ol{\fK}$ de $\fK$ et
on note $\ol{\fK}^+$ la cl\^oture int\'egrale de $\fK^+$ dans $\ol{\fK}$.
On note $\cG_\fK$ le groupe galois de $\ol{\fK}$ sur $\fK$.

On choisit un syst\`eme compatible $(q_M)_{M\geq 1}$
de racines $M$-i\`emes de $q$ dans $\ol{\fK}^+$
(i.e. $q_{NM}^N=q_M$, pour tous
$N,M\geq 1$.)
On note $F_M=\Q_p(\z_{M})$, $F_{Mp^{\infty}}=\cup_{n\geq 1}F_{Mp^{n}}$ et $F_{\infty}=\cup_{M\geq 1}F_M$.
Soit $\fK_M=\fK[q_{M},\z_M]$ ; c'est une extension galoisienne de $\fK$.
On note $\fK_{\infty}$ la r\'eunion des $\fK_M$ pour tous $M\geq 1$,
 $\fK_{\infty}^+$ la cl\^oture int\'egrale de $\fK^+$ dans $\fK_{\infty}$, $P_{\Q_p}$ le groupe galois de $\ol{\Q}_p\fK_{\infty}$ sur $\fK$, ainsi que $P^{\cycl}_{\Q_p}$ le groupe galois de $\fK_{\infty}$ sur $\fK$.

L'application qui \`a une forme modulaire associe son $q$-d\'eveloppement, nous fournit une inclusion de $\cM(\ol{\Q})$ dans $\ol{\Q}_p\fK_{\infty}^+$ et un morphisme $P_{\Q_p}\ra\Pi_{\Q}$ car $P_{\Q_p}$
pr\'eserve l'espace $\cM(\ol{\Q})$. Ceci induit un morphisme $\cG_\fK\ra \Pi_{\Q}$ et un morphisme "de localisation" $\rH^i(\Pi_{\Q}, W)\ra \rH^i(\cG_\fK,W)$ pour tout $\Pi_{\Q}$-module $W$ et tout $i\in\N$ (c.f. $\S\ref{corps}$).

Enfin, on note $\cK^+=\Q_p[[q]]$ le compl\'et\'e $q$-adique de $\fK^+$,  
$\cK_M^+$ le compl\'et\'e $q$-adique
de $\fK_M^+=\fK^+[\z_M,q_M]$ pour $M\geq 1$ un entier, $\cK^+_{\infty}$ la r\'eunion des $\cK^+_M$ pour tout $M\geq 1$, ainsi que  $\cK_{Mp^{\infty}}^+$ la r\'eunion des $\cK^+_{Mp^n}$ pour tout $n\geq 1$.
\subsection{Introduction}
\subsubsection{Fonctions $L$ $p$-adiques de formes modulaires}
Soit $N\geq 1$ et soit $\eps$ un caract\`ere de Dirichlet modulo $N$. 
Soit $f(\tau)=\sum_{n=1}^{+\infty}a_nq^{n}\in S_k(\Gamma_0(N),\eps)$ une forme primitive de poids $k\geq 2$ avec $q=e^{2i\pi \tau}$.
 Soient $\alpha,\beta$ les racines du polyn\^ome $X^2-a_pX+\eps(p)p^{k-1}$. Soit $v_p(\alpha)<k-1$, on pose $f_{\alpha}(\tau)=f(\tau)-\beta f(p\tau)$. C'est une forme de niveau $Np$, propre pour tous les $T_l$, normalis\'ee, et avec la valeur propre $\alpha$ pour $U_p$. Soit $\sum_{n=1}^{+\infty}b_nq^{n/N}$ le $q$-d\'eveloppement de $f_{\alpha}$.  Comme $v_p(\alpha)<k-1$ ( en particulier $\alpha\neq 0$ ), on peut prolonger $n\mapsto b_n$ en une fonction sur $\Z[\frac{1}{p}]$ en for\c{c}ant l'\'equation fonctionelle $b_{np}=\alpha b_n$.

Soit $\phi\in\LC_c(\Q_p, \ol{\Q})$ une fonction localement constante \`a support compact dans $\Q_p$,
\`a valeurs dans $\ol{\Q}$. On d\'efinit la fonction $L$
complexe de la forme modulaire $f$ associ\'ee \`a $\phi$ et $\alpha$,
par la formule
\[L(f_{\alpha},\phi,s)=\sum_{n\in\Z[\frac{1}{p}]}\phi(n)b_nn^{-s}.\]
La s\'erie converge pour $\Re(s)>\frac{k+1}{2}$ et la fonction
$L(f_{\alpha},\phi,s)$ admet un prolongement analytique
\`a tout le plan complexe.
De plus, il existe des nombres complexes non nuls $\Omega_f^{+}, \Omega_f^{-}$
permettant de rendre alg\'ebriques les valeurs sp\'eciales $L(f_{\alpha},\phi,j)$ de $L(f_{\alpha},\phi,s)$,
 pour $1\leq j\leq k-1$. Plus pr\'ecis\'ement,
si $\phi\in \LC_c(\Q_p,\ol{\Q})$, alors
\[\frac{\Gamma(j)}{(-2i\pi)^j}L(f,\phi,j)\in \begin{cases}\Q(f_\alpha, \z_N)\cdot\Omega_f^{+}, &\text{ si } 1\leq j\leq k-1, \phi(-x)=(-1)^j\phi(x) \\
\displaystyle \Q(f_{\alpha},\z_N)\cdot\Omega_f^{-}, &\text{ si } 1\leq j\leq
k-1, \phi(-x)=(-1)^{j+1}\phi(x).
\end{cases} \]
On pose \[\tilde{L}(f_{\alpha},\phi,j)=\frac{1}{2}\frac{\Gamma(j)}{(-2i\pi)^j}
(\frac{L(f_{\alpha},\phi+(-1)^j\phi\circ(-1),j)}{\Omega_f^{+}}
+\frac{L(f_{\alpha},\phi-(-1)^j\phi\circ(-1),j)}{\Omega_f^{-}}),\]
o\`u $\phi\circ(-1)(x)=\phi(-x)$, ce qui est \`a valeurs dans $\ol{\Q}$
 et permet de le consid\'erer comme un nombre $p$-adique.

On d\'efinit une transform\'ee de Fourier de $ \LC_c(\Q_p,\ol{\Q}_p)$ dans $\LC_c(\Q_p,\ol{\Q}_p)$ par la formule
\[\hat{\phi}(x)=\int_{\Q_p}\phi(y)e^{-2i\pi xy}\,dy=
p^{-m}\sum_{y\mod p^m}\phi(y)e^{-2i\pi xy},\]
o\`u $m\in\N$ assez grand.

Rappelons que la transform\'ee d'Amice nous donne un isomorphisme entre l'alg\`ebre des distributions sur $\Z_p$ \`a valeurs dans une extension finie $L$ de $\Q_p$ et 
l'anneau $\cR^+_L$ des fonctions localement analytiques sur la boule ouverte $v_p(T)>0$ \`a coefficients dans $L$:
\[\cA: \fD(\Z_p, L)\cong \cR^+_L; \mu\mapsto \cA_\mu(T)=\int_{\Z_p}(1+T)^x\mu.\]
Une distribution $\mu$ sur $\Z_p$ est d'ordre $k$ si sa transform\'ee d'Amice $\cA_\mu(T)=\sum_{n=0}^{+\infty} a_nT^n$ est d'ordre $k$ (i.e. 
la suite de terme g\'en\'erale $\{ v_p(a_n)+k\log_pn\}$ est minor\'ee.)
    
\begin{theo}\label{distri}Si $v_p(\alpha)<k-1$, il existe une unique distribution $\mu_{f,\alpha}$ d'ordre $v_p(\alpha)$ sur $\Z_p$, telle que l'on ait
\[\int_{\Z_p}\phi(x)x^{j-1}\mu_{f,\alpha}=\tilde{L}(f_\alpha, \hat{\phi}, j),\]
quels que soient $\phi\in\LC(\Z_p,\ol{\Q}_p)$, et $1\leq j\leq k-1$. De plus, quel que soit $\phi$ une fonction localement analytique sur $\Z_p$ \`a valeurs dans $\ol{\Q}_p$, on a $\int_{p\Z_p}\phi(p^{-1}x)\mu_{f,\alpha}=\alpha^{-1}\int_{\Z_p}\phi(x)\mu_{f,\alpha}$.
\end{theo}
Ce th\'eor\`eme signifie l'existence de l'interpolation $p$-adique des valeurs sp\'eciales de la fonction $L$ complexe de $f_\alpha$. En particulier, on peut construire 
la fonction $L$ $p$-adique de $f$ associ\'ee \`a $\alpha$ et \`a un caract\`ere localement analytique $\phi:\Z_p^*\ra \ol{\Q}_p$ ,
\[L_{p,\alpha}(f,\phi, s)=\int_{\Z_p^*}\phi(x)\langle x\rangle^{s-1}\mu_{f,\alpha}, \text{ o\`u } \langle x\rangle^{s-1}=\exp((s-1)\log \langle x\rangle) \text{ et } s\in \Z_p .\]

La d\'emonstration du th\'eor\`eme $\ref{distri}$ a \'et\'e donn\'ee par plusieurs personnes
via des m\'ethodes tr\`es diff\'erentes.
Celle de Kato $\cite{KK}$ repose sur la construction d'un
syst\`eme d'Euler (via la $K$-th\'eorie) et sur une loi de
r\'eciprocit\'e explicite r\'esultant d'un calcul d\'elicat
dans les anneaux de Fontaine,
qui permet de montrer que la machine \`a fonctions L $p$-adiques
de Perrin-Riou~\cite{Per} fournit naturellement la distribution $\mu_{f,\alpha}$
quand on l'applique au syst\`eme d'Euler de Kato.  Colmez a esquiss\'e dans~$\cite{PC1}$ une variante de la m\'ethode de Kato,
et ce texte est consacr\'e \`a v\'erifier que cette esquisse,
convenablement modifi\'ee, conduit bien au r\'esultat de Kato.

\subsubsection{Le syst\`eme d'Euler de Kato}
En bref, un syst\`eme d'Euler est une collection de classes de cohomologie v\'erifiant une relation de distribution. On construit le syst\`eme d'Euler de Kato
comme suit:

\`A partir des unit\'es de Siegel, on construit une distribution
alg\'ebrique $z_{\mathbf{Siegel}}$ sur $(\Q\otimes\hat{\Z})^2-(0,0)$ \`a
valeurs dans $\Q\otimes(\cM(\bar{\Q})[\frac{1}{\Delta}])^{*}$, o\`u $\Delta=q\prod_{n\geq 1}(1-q^n)^{24}$ est la forme modulaire de poids $12$. Cette
distribution $z_{\mathbf{Siegel}}$ est invariante sous l'action du groupe
$\Pi_{\Q}^{'}$.
La th\'eorie de Kummer $p$-adique nous fournit un \'el\'ement
\[z_{\mathbf{Siegel}}^{(p)}\in \rH^1(\Pi^{'}_{\Q},\fD_{\alg}((\Q\otimes\hat{\Z})^{2}-(0,0),\Q_p(1))).\]
Par cup-produit et
restriction \`a $\Pi_{\Q}^{(p)}\subset\Pi'_\Q$
et $\bM_2(\Q\otimes\hat{\Z})^{(p)}\subset((\Q\otimes\hat{\Z})^{2}-(0,0))^2$,
 on obtient une distribution alg\'ebrique:
\[z_{\mathbf{Kato}}\in\rH^2(\Pi^{(p)}_{\Q},\fD_{\alg}(\bM_2(\Q\otimes\hat{\Z})^{(p)},\Q_p(2))).\]
En modifiant $z_{\mathbf{Kato}}$ par un op\'erateur $(c^2-\langle c,1\rangle)(d^2-\langle1,d\rangle)$
(c.f. $\S \ref{gcd}$ ) qui fait dispara\^\i tre les d\'enominateurs,
 on obtient une distribution alg\'ebrique \`a valeurs dans $\Z_p(2)$
(que l'on peut donc voir comme une mesure), et une torsion \`a
la Soul\'e nous fournit enfin un \'el\'ement
\[z_{\mathbf{Kato},c,d}(k,j)\in\rH^2(\Pi^{(p)}_{\Q},\fD_{\alg}(\bM_2(\Q\otimes\hat{\Z})^{(p)},V_{k,j})),\]
o\`u $V_{k,j}=\Sym^{k-2}V_p\otimes\Q_p(2-j)$, o\`u $V_p$ est la repr\'esentation standard de dimension
$2$ de $\GL_2(\Z_p)$.

\subsubsection{La loi de r\'eciprocit\'e explicite de Kato}
La loi de r\'eciprocit\'e explicite de Kato consiste \`a
relier l'\'el\'ement $z_{\mathbf{Kato},c,d}(k,j)$, qui vit dans la cohomologie
du groupe $\Pi^{(p)}_{\Q}$, \`a une distribution
construite \`a partir de produits de deux s\'eries d'Eisenstein
(le produit scalaire de Petersson d'une forme primitive avec
un tel produit fait appara\^itre les valeurs sp\'eciales de la fonction $L$ de $f$,
et c'est cela qui permettrait de construire la fonction $L$ $p$-adique).
Ceci se fait en plusieurs \'etapes:

$\bullet$ On commence par ``localiser'' notre classe de cohomologie
\`a $\cG_{\fK}$ et \`a
\'etendre les coefficients de
$V_{k,j}$ \`a $\B_{\dR}^+(\bar\fK^+)\otimes V_{k,j}$, o\`u
$\B_{\dR}^+(\bar\fK^+)$ est un \'enorme anneau de Fontaine.

$\bullet$ On constate que l'image de $z_{\mathbf{Kato},c,d}(k,j)$ sous l'application "de localisation" \[\rH^2(\Pi^{(p)}_{\Q},\fD_{\alg}(\bM_2(\Q\otimes\hat{\Z})^{(p)},V_{k,j}))
\ra\rH^2(\cG_{\fK},\fD_{\alg}(\bM_2(\Q\otimes\hat{\Z})^{(p)},\B_{\dR}^+(\ol{\fK}^+)\otimes V_{k,j}))\]
est l'inflation d'un $2$-cocycle sur $P^{\cycl}_{\Q_p}$
\`a valeurs dans $\fD_{\alg}(\bM_2(\Q\otimes\hat{\Z})^{(p)},
\B_{\dR}^+(\fK_{\infty}^+)\otimes V_{k,j})$.
Les m\'ethodes de descente presque \'etale de Tate~\cite{Ta} et
Sen~\cite{Sen}, revisit\'ees par Faltings~\cite{Fa}
(c.f.~aussi Andreatta-Iovita~\cite{AI}) permettraient de montrer
que c'est toujours le cas, mais nous donnons une preuve directe
pour l'\'el\'ement de Kato
(c.f.~la construction dans $\S \ref{constrcocycle}$).

$\bullet$ On construit
une application exponentielle duale (c.f.~$\S~\ref{constructiondeloi}$):
\[\exp^{*}_{\mathbf{Kato}}:\rH^2(P_{\Q_p},\fD_{\alg}(\bM_2(\Q\otimes\hat{\Z})^{(p)},\B_{\dR}^+(\fK_{\infty}^+)\otimes V_{k,j}))\ra\rH^0(P_{\Q_p},\fD_{\alg}(\bM_2(\Q\otimes\hat{\Z})^{(p)},\cK_{\infty}^+));\]
et on calcule l'image de $z_{kato,c,d}(k,j)$.
On obtient finalement le r\'esultat fondamental suivant:
\begin{theo}\label{theo}Si $k\geq 2$, $1\leq j\leq k-1$, et $c,d\in\Z_p^{*}$,
on a:
\[\exp^{*}_{\mathbf{Kato}}(z_{\mathbf{Kato},c,d}(k,j))=z_{\mathbf{Eis},c,d}^{(p)}(k,j),\]
o\`u $z_{\mathbf{Eis},c,d}^{(p)}(k,j)\in
\rH^0(\cG_{\fK},\fD_{\alg}(\bM_2(\Q\otimes\hat{\Z})^{(p)},\fK_{\infty}^+))$
est la localis\'ee d'une distribution
$z_{\mathbf{Eis},c,d}(k,j)$ sur $\bM_2(\Q\otimes\hat{\Z})$, \`a valeurs dans
$\cM_k^{\con}(\Q_p^{\cycl})\subset \fK_\infty^+$, fix\'ee
sous l'action de $\Pi_{\Q}$.
\end{theo}

\subsubsection{La cohomologie de $\rP_m$}
Soit $M\geq 1$ tel que $v_p(M)=m\geq v_p(2p)$.
On note $\fK_{Mp^{\infty}}=\cup_{n\geq 1}\fK_{Mp^n}$.
La d\'efinition de l'application
$\exp^*_{\mathbf{Kato}}$ et le calcul de l'image de $z_{\mathbf{Kato},c,d}(k,j)$ reposent
sur une description explicite de la cohomologie
du groupe de Galois $P_{\fK_M}$ de l'extension
$\fK_{Mp^{\infty}}/\fK_M$; c'est un groupe analytique $p$-adique de rang $2$,
isomorphe \`a
\[\rP_m=\{(\begin{smallmatrix}a& b\\ c& d\end{smallmatrix})\in\GL_2(\Z_p): a=1, c=0, b\in p^m\Z_p, d\in 1+p^m\Z_p \}.\]
Soit $V$ une repr\'esentation analytique de $\rP_m$.
On d\'emontre le tr\`es utile r\'esultat suivant, o\`u l'on note
$(u,v)$ l'\'el\'ement $(\begin{smallmatrix}1& u\\ 0& e^v\end{smallmatrix})$
de $\rP_m$:
\begin{theo}[Prop.$\ref{analytic}$]
Soit $V$ une repr\'esentation analytique de $\rP_m$. Alors
\begin{itemize}
\item[$(i)$]Tout \'el\'ement de $ \rH^2(\rP_m,V) $ est repr\'esentable par un  $2$-cocycle analytique;
\item[$(ii)$]On a $ \rH^2(\rP_m,V)\cong V/(\partial_1,\partial_2-1)$,
et l'image d'un $2$-cocycle analytique
\[((u,v),(x,y))\ra c_{(u,v),(x,y)}=\sum_{i+j+k+l\geq 2}c_{i,j,k,l}u^iv^jx^ky^l,\]
par cet isomorphisme, est celle de
$\delta^{(2)}(c_{(u,v),(x,y)})=c_{1,0,0,1}-c_{0,1,1,0}$.
\end{itemize}
\end{theo}
\subsubsection{Survol de la m\'ethode de Kato et sa variante de Colmez}
Nous donnerons un survol vague de la d\'emonstration  du th\'eor\`eme $\ref{distri}$ pour expliquer le r\^ole du th\'eor\`eme $\ref{theo}$ dans la m\'ethode de Kato. 
Pour plus de d\'etails, on renvoie le lecteur  \`a $\cite{KK}$ et  $\cite{PC1}$.

$\bullet$ Si $k\geq 2 $ et $j\in\Z$, Kato $\cite{KK}$ a construit
un syst\`eme coh\'erent $c_M(k,j)\in \rH^2_{\mathbf{et}}(Y(M), W_{k,j})$, o\`u $W_{k,j}$ est le syst\`eme 
local sur la courbe modulaire $Y(M)$ de niveau $M$ dans la cohomologie du lequel on d\'ecoupe 
les repr\'esentations $p$-adiques associ\'ees aux formes modulaires de poids $k$ et de niveau $M$ d'apr\`es Deligne $\cite{De}$. 
Ensuite, il dispose d'une application exponentielle duale:
\begin{equation}
\exp^{*}_{\mathbf{Kato}}: \rH^2_{\mathbf{et}}(Y(M), W_{k,j})\ra \rH^2_{\mathbf{et}}(Y(M), \B_{\dR}^{+}(\ol{\K})\otimes W_{k,j}) \ra \K_M,
\end{equation}
avec  $\K_M=\K[\z_M,q_M]$ et $\ol{\K}$ la cl\^oture alg\'ebrique de $\K$, o\`u  $ \K$ est le compl\'et\'e $p$-adique du corps des fractions de $\Z_p[[q]][q^{-1}]$. 
Dans son expos\'e de Bourbaki $\cite{PC1}$,  Colmez a reformul\'e cette application par la m\'ethode de Tate-Sen-Colmez pour le corps $\ol{\K}$ et il a \'esquiss\'e une preuve 
du th\'eor\`eme $\ref{theo}$. Sa m\'ethode est tr\`es belle; en revanche, le $2$-cocycle qu'il a explicit\'e (c.f.~$\S~\ref{constrcocycle}$) n'appartient pas \`a son groupe de cohomologie. Cela nous oblige \`a \'elarger le corps $\K$ et \`a reconstruire l'application $\exp_{\mathbf{Kato}}^{*}$ pour l'anneau $\fK^{+}$ suivant l'id\'ee de Colmez. 

D'autre part, si $V$ est une repr\'esentation de de Rham de $\cG_{\Q_p[\z_M]}$, on a l'application exponentielle duale de Bloch-Kato:
\[\exp^{*}_{\mathbf{BK}}: \rH^1(\cG_{\Q_p[\z_M]}, \B_{\dR}^{+}(\ol{\Q_p})\otimes V   )\cong \rH^0(\cG_{\Q_p[\z_M]},  \B_{\dR}^{+}(\ol{\Q_p})\otimes V  )=: D_{\dR}^{+}(V),\]
dont l'inverse est donn\'e par le cup-produit avec $\log \chi_{cycl}\in \rH^1(\cG_{\Q_p[\z_M]}, \Q_p)$.
En plus, Kato $\cite{KK}$ a v\'erifi\'e que le diagramme suivant est commutatif:
\[\xymatrix{
&\rH^2_{\mathbf{et}}(Y(M), W_{k,j})\ar[d] &\\
&\rH^1(\cG_{\Q_p[\z_M]}, \B_{\dR}^{+}(\ol{\Q}_p)\otimes \rH^1_{\mathbf{et}}(Y(M)_{\ol{\Q}}  ,W_{k,j}  ) )\ar[dr]^{\exp^{*}_{\mathbf{Kato}}}\ar[dl]^{\exp^{*}_{\mathbf{BK}}}&\\
D_{\dR}^+( \rH^1_{\mathbf{et}}( Y(M)_{\ol{\Q}}  ,W_{k,j}  ))\ar[r]& \cM_k(\Gamma_M, \ol{\Q})\otimes\Q_p\ar[r] & \K_M 
}\]
Ceci lui permet d'identifier les deux applications exponentielles. En revanche, je ne sais pas comment comparer $\exp^{*}_{\mathbf{BK}}$ et la nouvelle $\exp^{*}_{\mathbf{Kato}}$.

On identifie $\exp^{*}_{\mathbf{BK}}$ et $\exp^{*}_{\mathbf{Kato}}$ et on projette le syst\`eme d'Euler de Kato sur la composante \`a la forme primitive $f$.  Ceci demande de calculer le produit scalaire de Pertersson de $f$ avec un produit de deux s\'eries d'Eisenstein, ce qui se fait au moyen de la m\'ethode de Rankin et le r\'esultat fait intervenir les valeurs sp\'eciales des fonction L attach\'ees \`a $f$ et ses tordues des caract\`eres de Dirichlet.

$\bullet$Soit $L$ une extension finie de $\Q_p$. On note $\cR_L$ (resp. $\cE_L^{\dag}$) l'anneau des $f(T)=\sum\limits_{n\in \Z}a_nT^n$ des fonctions analytiques (resp. analytiques born\'ees pour la valuation $v_p(f)=\inf v_p(a_n)$)  sur une cournonne du type $0<v_p(T)\leq r$, o\`u $r$ d\'epend du $f$ et $a_n$ sont des \'el\'ements de $L$. 
On les munit d'un frobenius $\varphi$ et d'une action de $\Gamma=\Gal(\Q_p(\z_{p^{\infty}})/\Q_p)$ via les formules:
\[ \varphi(\sum_{k\in \Z}a_kT^k)=\sum_{k\in \Z} a_k((1+T)^p-1)^k \text{ et } \gamma(\sum_{k\in \Z}a_kT^k)=\sum_{k\in \Z} a_k((1+T)^{\chi_{\cycl}(\gamma)}-1)^k. \]
Ces actions commutent entre elles.

Un $(\varphi,\Gamma)$-module \'etale sur $\cE_L^{\dag}$ est un $\cE_L^{\dag}$-espace vectoriel $D$ de dimension finie $d$ muni d'actions semi-lin\'eaires de $\varphi$ et $\Gamma$ commutant entre elles, verifiant que la matrice de $\varphi$ sous  une certaine base de $D$ appartient \`a $\GL_d(\cO_{\cE^{\dag}_L})$ avec $\cO_{\cE^{\dag}_L}=\{f\in\cE_L^{\dag} |  v_p(f)=\inf v_p(a_k)\geq 0\}$. Sur un $(\varphi,\Gamma)$-module \'etale sur $\cE_L^{\dag}$, il existe un op\'erateur $\psi$ communtant avec l'action de $\Gamma$, qui est l'inverse \`a gauche de $\varphi$. 

La th\'eorie d'Iwasawa et la th\'eorie de $(\varphi,\Gamma)$-modules (c.f. $\cite{CC3}$) nous fournissent un isomorphisme:
\[\Exp^{*}: \rH^1_{\Iw}(\Q_p, V)=\rH^1(\cG_{\Q_p}, \fD_0(\Z_p^{*},V))\cong (D^\dag(V))^{\psi=1},\] 
 avec $V$ une $L$-repr\'esentation de $\cG_{\Q_p}$ et $\fD_0(\Z_p^{*},V)$ l'alg\`ebre des mesures sur $\Z_p^{*}$ \`a valeurs dans $V$.
 
 Soit $V$ une $L$-repr\'esentation cristalline de $\cG_{\Q_p}$. Une analyse d\'elicate ( c.f. section $4.4$ dans $\cite{PC1}$ ) de la structure de $ (D^{\dag}(V))^{\psi=1}$ fournit une candidate pour la distribution voulu dans le th\'eor\`eme $\ref{distri}$.

$\bullet $ Il reste \`a v\'erifier la propri\'et\'e d'interpolation, mais c'est une cons\'equence non-triviale  du th\'eor\`eme suivant (c.f. $\cite{CC3}$):
\begin{theo}Si $V$ est une $L$-repr\'esentation de de Rham de $\cG_{\Q_p}$, si $n\in \N$ assez grand, et si $\mu \in \rH^1_{\Iw}(\Q_p, V)$,  alors $\varphi^{-n}(\Exp^{*}(\mu))$ converge dans $\B_{\dR}^{+}(\ol{\Q}_p)\otimes V$, et on a 
\[ p^{-n}\varphi^{-n}(\Exp^{*}(\mu))=\sum\limits_{n\in\Z}\exp^{*}_{\mathbf{BK}}(\int_{1+p^n\Z_p}x^{-k}\mu) \in L(\z_{p^n}) [[t]]\otimes D_{\dR}(V).\]
\end{theo} 
\subsection{Remerciements}

Ce travail est ma th\`ese de doctorat, sous la direction de Pierre Colmez.  Il est \'evident au lecteur combien cet artcile doit \`a Colmez comme cet article vit sur son travail manifique $\cite{PC1}$. Je voudrais exprimer toute ma gratitude \`a lui. Je remercie vivement Denis Benois pour ses remarques utiles,  Ga\"etan Chenevier et Gabriel Dospinescu pour des discussions et ses encourages.  

\section{Syst\`eme d'Euler de Kato}\label{section0}
\subsection{S\'eries d'Eisenstein-Kronecker  et la distribution
$z_{\mathbf{Eis}}$}\label{section1}
\subsubsection{Formes modulaires}\label{formes}

Soient $\cH=\{x+iy,y>0\}$ le demi-plan de Poincar\'e, $A\subset \R$
un sous anneau. On note
$\GL_{2}(A)_{+}=\{\gamma\in\GL_2(A)|\det\gamma>0\}$, et on d\'efinit
une action \`a droite de $\GL_2(\R)_{+}$ sur l'ensemble des
fonctions de classe $\cC^{\infty}$ de $\cH$ dans $\C$ par la formule:
\begin{equation}\label{slash}
(f_{|_k}\gamma)(\tau)=\frac{(\det\gamma)^{k-1}}{(c\tau+d)^k}f(\frac{a\tau+b}{c\tau+d}) \text{ si }\gamma=\bigl(\begin{smallmatrix}a & b\\ c & d\end{smallmatrix}\bigr).
\end{equation}

Soit $f$ une fonction de classe $\cC^{\infty}$ de $\cH$ dans $\C$ fix\'ee par un sous groupe $\Gamma$ de $\SL_2(\Z)$ d'indice fini. Alors cette fonction $f$ est une fonction
p\'eriodique de p\'eriode $N$ pour un certain entier $N\geq 1$. Le
$q$-d\'eveloppement de $f$ s'\'ecrit sous la forme ci-dessous:
\[f(\tau)=\sum_{n\in \Z[\frac{1}{N}]}a_{n}e^{2i\pi n }=\sum_{n\in \Z[\frac{1}{N}]}a_{n}q^{n}, \text{ o\`u } q=e^{2i\pi\tau }.\]
\begin{defn}Soit $\Gamma$ un sous-groupe de $\SL_2(\Z)$ d'indice fini. Une forme modulaire de poids $k$ pour $\Gamma$ est une fonction holomorphe $f:\cH\ra\C$ verifiant les propri\'et\'es suivantes:
\begin{itemize}
\item[(1)]$f_{|_k}\gamma=f$ pour $\gamma\in\Gamma$;
\item[(2)]$f$ est holomorphe en $i\infty$ (i.e. quel que soit $\gamma\in\Gamma\setminus\SL_2(\Z)$, le $q$-d\'eveloppement de $f_{|_k}\gamma(\tau)$ est de la forme $\sum\limits_{n\in\Q_{+}}a_nq^n$, o\`u $q=e^{2i\pi \tau}$: il n'y a pas de termes n\'egatifs).
    \end{itemize}
\end{defn}

Si $K$ est un corps alg\'ebriquement clos et si $\Gamma$ est un sous-groupe distingu\'e d'indice fini de $\SL_2(\Z)$, alors le groupe des automorphismes de $\cM(\Gamma,K)$ sur $\cM(\SL_2(\Z),K)$ est $\SL_2(\Z)/\Gamma$. Ceci implique $\Pi_K=\widehat{\SL_2(\Z)}$, o\`u $\widehat{\SL_2(\Z)}$ est le compl\'et\'e profini de $\SL_2(\Z)$. Dans le cas g\'en\'eral, on dispose d'une suite exacte:
\[1\ra\Pi_{\ol{K}}\ra\Pi_K\ra\cG_K\ra1,\]
qui admet une section $\cG_K\ra\Pi_K$ naturelle,  en faisant $\cG_K$ agir sur les
coefficients du $q$-d\'eveloppement des formes modulaires.

Soient $f\in\cM_k(\Gamma,\ol{K})$ et $\alpha\in\GL_2(\Q)_{+}$.
On peut verifier que $(f*\alpha)_{|_k}(\alpha^{-1}\gamma\alpha)=f*\alpha$ pour tout $\gamma\in\Gamma$. Donc
$f*\alpha$ est invariante pour le groupe
$\alpha^{-1}\Gamma\alpha\bigcap\SL_2(\Z)$. Comme $\alpha$ peut
s'\'ecrire sous la forme $\gamma\bigl(\begin{smallmatrix}a & b\\ 0 &
d\end{smallmatrix}\bigr)$ avec  $\gamma\in\SL_2(\Z)$, on d\'eduit
\[(f*\alpha)(\tau)=d^{-k}(f*\gamma)(\frac{a\tau+b}{d})=d^{-k}(f_{|_k}\gamma)(\frac{a\tau+b}{d}),\]
ce qui montre que $f*\alpha$ est holomorphe en $i\infty$. Donc
$f*\alpha$ est une forme modulaire et $\cM(\bar{K})$ est stable sous
l'action de $\GL_2(\Q)_{+}$. On d\'efinit $\Pi_{K}^{'}$ comme le
groupe des automorphismes de $\cM(\ol{K})$ engendr\'e par $\Pi_K$ et $\GL_2(\Q)_{+}$. Plus g\'en\'eralement, si $S\subset\mathscr{P}$ est fini, on note $\Pi_K^{(S)}$ le sous-groupe de $\Pi_K^{'}$ engendr\'e par $\Pi_K$ et $\GL_2(\Z^{(S)})$, o\`u $\Z^{(S)}$ est le sous-anneau de $\Q$ obtenu en inversant tous les nombres premiers qui n'appartiennent pas \`a $S$.

Le groupe des automorphimes de $\cM^{\con}(\Q^{\cycl})$ sur $\cM(\SL_2(\Z),\Q^{\cycl})$ est le groupe profini $\SL_2(\hat{\Z})$, le compl\'et\'e profini de $\SL_2(\Z)$ par rapport aux sous-groupes de congruence. D'autre part, quel que soit $f\in\cM^{\con}(\Q^{\cycl})$, le groupe $\cG_{\Q}$ agit sur les coefficients du $q$-d\'eveloppement de $f$ \`a travers son quotient $\Gal(\Q^{\cycl}/\Q)$ qui est isomorphe \`a $\hat{\Z}^{*}$ par le caract\`ere cyclotomique. On note $H$ le groupe des automorphismes de $\cM^{\con}(\Q^{\cycl})$ sur $\cM(\SL_2(\Z),\Q)$.  La sous-alg\`ebre $\cM^{\con}(\Q^{\cycl})$ est stable par $\Pi_{\Q}$ qui agit \`a travers $H$.

\begin{theo}\label{diagram} On a un diagramme commutatif de groupes:
\[\xymatrix{
1\ar[r]&\Pi_{\bar{\Q}}\ar[r]\ar[d]&\Pi_{\Q}\ar[r]\ar[d]&\cG_{\Q}\ar[r]\ar[d]\ar@{.>}@/^/[l]&1\\
1\ar[r]&\SL_{2}(\hat{\Z})\ar[r]\ar[d]&H\ar[r]\ar@{.>}[d]^{\alpha}&\Gal(\Q^{\cycl}/\Q)\ar[r]\ar[d]^{\chi_{\cycl}}\ar@{.>}@/^/[l]^{\iota_0}&1                     \\
1\ar[r]&\SL_{2}(\hat{\Z})\ar[r]&\GL_2(\hat{\Z})\ar[r]^{\det}&\hat{\Z}^{*}\ar[r]\ar@{.>}@/^/[l]^{\iota}&1                     },\]
o\`u  $\alpha: H\ra \GL_2(\hat{\Z})$ est un isomorphisme, ce qui permet d'identifier $H$ et $\GL_2(\hat{\Z})$. Dans cette identification la section de $\cG_{\Q}$ dans $\Pi_{\Q}$ d\'ecrite plus haut envoie $u\in \hat{\Z}^{*}$ sur la matrice $(\begin{smallmatrix}1&0\\0&u\end{smallmatrix})\in \GL_2(\hat{\Z})$.
\end{theo}
La d\'emonstration repose sur l'\'etude des s\'eries d'Eisenstein qui se trouve plus loin, et donc on donne l'id\'ee ici.

$\bullet$Construire une bijection $\alpha:H\ra \GL_2(\hat{\Z})$:\\
Soit $u\in\hat{\Z}^{*}$ et soit $\sigma_u\in\Gal(\Q^{\cycl}/\Q)$ l'image inverse de $u$ dans $\Gal(\Q^{\cycl}/\Q)$ via l'isomorphisme $\chi_{\cycl}:\Gal(\Q^{\cycl}/\Q)\cong\hat{\Z}^{*}$. On peut d\'ecomposer $H$ (resp. $\GL_2(\hat{\Z})$) en l'ensemble $\SL_2(\hat{\Z})\sigma_u$ (resp. $\SL_2(\hat{\Z})(\begin{smallmatrix}1&0\\0&u\end{smallmatrix})$) par la classe \`a droite suivant $\SL_2(\hat{\Z})$. Alors, on d\'efinit une bijection de $H$ sur $\GL_2(\hat{\Z})$ en envoyant la classe \`a droite $\gamma\sigma_u$ sur $\gamma(\begin{smallmatrix}1&0\\0&u\end{smallmatrix})$, si $\gamma\in\SL_2(\hat{\Z})$.
 
$\bullet$L'\'etude des s\'eries d'Eisenstein  (c.f. Prop. $\ref{eis}$) montre que la bijection $\alpha:H\ra \GL_2(\hat{\Z})$ construite dans l'\'etape pr\'ec\'edente est un morphisme de groupes:\\
En utilisant la bijection $\alpha$, on d\'efinit une action de $\GL_2(\hat{\Z})$ sur $\cM^{\con}(\Q^{\cycl})$ induit par celle de $H$:
\begin{defn} Si $\gamma=\gamma_0(\begin{smallmatrix}1& 0\\0&u \end{smallmatrix})\in \GL_2(\hat{\Z})$ avec $\gamma_0\in\SL_2(\hat{\Z})$ et $u\in\hat{\Z}^*$, et si $f\in \cM^{\con}(\Q^{\cycl})$, on d\'efinit
\[f*\gamma:=f*(\alpha^{-1}(\gamma))=f*(\gamma_0\sigma_u). \]
\end{defn}
En utilisant le fait que l'alg\`ebre $\cM(\SL_2(\Z),\Q)$ est engendr\'ee par les s\'eries d'Eisenstein, on conclut que le corps des fractions de $\cM^{\con}(\Q^{\cycl})$ est engendr\'e par les s\'eries d'Eisenstein par la th\'eorie de Galois pour l'extension $\Frac(\cM^{\con}(\Q^{\cycl}))$ sur $\Frac(\cM(\SL_2(\Z),\Q) )$.
On d\'efinit une autre action du groupe $\GL_2(\hat{\Z})$ sur les s\'eries d'Eisenstein ( c.f. d\'ef. \ref{act} ), dont on peut le prolonger en $\cM^{\con}(\Q^{\cycl})$,
 et on montre dans la proposition \ref{eis} que ces deux actions co\"incident.  Donc $\alpha$ est un morphisme de groupes et cela nous permet d'identifier le groupe $H$ au groupe $\GL_2(\hat{\Z})$ naturellement.

\subsubsection{S\'eries d'Eisenstein-Kronecker }\label{EK}
Les r\'esultats dans ce paragraphe peuvent se trouver dans le livre de Weil $\cite{Weil}$.

\begin{defn}Si $(\tau,z)\in\cH\times\C$, on pose $q=e^{2i\pi\tau}$ et
$q_z=e^{2i\pi z}$. On introduit l'op\'erateur
$\partial_{z}:=\frac{1}{2i\pi}\frac{\partial}{\partial
z}=q_z\frac{\partial}{\partial q_z}$. On pose aussi  $e(a)=e^{2i\pi
a}$. Si $k\in\N$, $\tau\in\cH$, et $z,u\in\C$, la s\'erie d'Eisenstein-Kronecker est
\[\rH_k(s,\tau,z,u)=\frac{\Gamma(s)}{(-2i\pi)^k}(\frac{\tau-\bar{\tau}}{2i\pi})^{s-k}\sideset{}{'}\sum_{\omega\in\Z+\Z\tau}\frac{\overline{\omega+z}^k}{|\omega+z|^{2s}}e(\frac{\omega\bar{u}-u\bar{\omega}}{\tau-\bar{\tau}}),\]
qui converge pour $\Re(s)>1+\frac{k}{2}$, et poss\`ede un
prolongement
 m\'eromorphe \`a tout le plan complexe avec des p\^oles simples
en $s=1$ (si $k=0$ et $u\in\Z+\Z\tau$) et $s=0$ (si $k=0$
et $z\in\Z+\Z\tau$). Dans la formule ci-dessus $\sideset{}{'}\sum$
signifie (si $z\in\Z+\Z\tau$) que l'on supprime le terme
correspondant \`a $\omega=-z$. De plus, elle v\'erifie l'\'equation fonctionnelle:
\[\rH_k(s,\tau,z,u)=e(\frac{z\bar{u}-u\bar{z}}{\tau-\bar{\tau}})\rH_k(k+1-s,\tau,u,z).\]
\end{defn}
Si $k\geq 1$, on d\'efinit les fonctions suivantes:
\begin{align*}
E_k(\tau,z)&=\rH_k(k,\tau,z,0)(=\frac{\Gamma(k)}{(-2i\pi)^k}\sideset{}{'}\sum_{\omega\in\Z+\Z\tau}\frac{1}{(\omega+z)^k} \text{ si } k\geq 3) ,\\
F_k(\tau,z)&=\rH_k(k,\tau,0,z)(=\frac{\Gamma(k)}{(-2i\pi)^k}\sideset{}{'}\sum_{\omega\in\Z+\Z\tau}\frac{1}{(\omega)^k}e(\frac{\omega\bar{z}-z\bar{\omega}}{\tau-\bar{\tau}})\text{ si } k\geq 3 ).
\end{align*}
Les fonctions $E_k(\tau,z)$ et $F_k(\tau,z)$ sont p\'eriodiques
en $z$ de p\'eriode $\Z\tau+\Z$. De plus on a:
\[E_{k+1}(\tau,z)=\partial_z E_k(\tau,z), \text{ si } k\in\N \text{ et } E_0(\tau,z)=\log|\theta(\tau,z)| \text{ si }z\notin \Z+\Z\tau,\]
o\`u $\theta(\tau,z)$ est donn\'ee par le produit infini:
\[\theta(\tau,z)=q^{1/12}(q_z^{1/2}-q_z^{-1/2})\sideset{}{}\prod_{n\geq 1}((1-q^n q_z)(1-q^nq_z^{-1})).\]
On note $\Delta=(\partial_z
\theta(\tau,z)|_{z=0})^{12}=q\sideset{}{}\prod_{n\geq
1}(1-q^n)^{24}$ la forme modulaire de poids $12$.

Soient $(\alpha,\beta)\in (\Q/\Z)^2$ et $(a,b)\in\Q^2$ qui a pour image
$(\alpha,\beta)$ dans $(\Q/\Z)^2$. Si $k=2$ et $(\alpha,\beta)\neq(0,0)$, ou si $k\geq 1$ et $k\neq 2$,  on d\'efinit:
\[ E_{\alpha,\beta}^{(k)}=E_k(\tau,a\tau+b) \text{ et } F_{\alpha,\beta}^{(k)}=F_k(\tau,a\tau+b).\]
Si $k=2$ et $(\alpha,\beta)=(0,0)$, on d\'efinit\footnote{La s\'erie $H_2(s,\tau,0,0)$ converge pour $\Re(s)>2$, mais pas pour $s=2$.} $E_{0,0}^{(2)}=F_{0,0}^{(2)}:=\lim_{s\ra 2}H_2(s,\tau,0,0).$
\begin{lemma}\label{lemma15}Les functions $E_{\alpha,\beta}^{(k)}, F_{\alpha,\beta}^{(k)} $ satisfont les relations de distribution suivantes, quel que soit l'entier $f\geq 1$:
\begin{align}\label{rel}
\sideset{}{}\sum_{f\alpha'=\alpha,f\beta'=\beta}E_{\alpha',\beta'}^{(k)}=f^{k}E_{\alpha,\beta}^{(k)}&\text{\
\ et }
&\sideset{}{}\sum_{f\alpha'=\alpha,f\beta'=\beta}F_{\alpha',\beta'}^{(k)}=f^{2-k}F_{\alpha,\beta}^{(k)}\\
\label{rel1}
\sideset{}{}\sum_{f\beta'=\beta}E_{\alpha,\beta'}^{(k)}(\frac{\tau}{f})=f^{k}E_{\alpha,\beta}^{(k)}&\text{\
\  et }
&\sideset{}{}\sum_{f\beta'=\beta}F_{\alpha,\beta'}^{(k)}(\frac{\tau}{f})=fF_{\alpha,\beta}^{(k)}.
\end{align}
\end{lemma}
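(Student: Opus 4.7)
The plan is to prove all four identities by direct computation with the defining series in the range of absolute convergence (i.e.\ $k \geq 3$), and then to extend to $k = 1, 2$ by analytic continuation in the Eisenstein--Kronecker parameter $s$, using the meromorphic continuation of $H_k(s, \tau, z, u)$ recalled above.

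I would fix a lift $(a, b) \in \Q^2$ of $(\alpha, \beta)$ and write $z = a\tau + b$. For the first relation in (\ref{rel}), each $(\alpha', \beta')$ with $f\alpha' = \alpha$, $f\beta' = \beta$ corresponds to $z' = (z + m\tau + n)/f$ with $(m, n) \in (\Z/f\Z)^2$. Substituting into $E_k(\tau, z')$ for $k \geq 3$ and pulling $f^k$ out of the denominator, the key observation is that, as $(\omega, m, n)$ runs through $(\Z + \Z\tau) \times (\Z/f\Z)^2$, the element $f\omega + m\tau + n$ runs bijectively through $\Z + \Z\tau$, which immediately produces $f^k E_{\alpha,\beta}^{(k)}$. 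The corresponding identity for $F_k$ uses orthogonality of characters: writing $\omega = p + q\tau$, a short computation shows that the exponential in $F_k(\tau, z')$ factors as $e((-ap + bq)/f) \cdot e((-mp + nq)/f)$, and summing the second factor over $(m, n) \in (\Z/f\Z)^2$ yields $f^2$ times the indicator of $\omega \in f(\Z + \Z\tau)$; rescaling $\omega = f\omega'$ in the denominator then brings out $f^{2-k}$.

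The relations (\ref{rel1}) are treated by the same method, now with the substitution $\tau \mapsto \tau/f$. For $E_k$, expanding $E_k(\tau/f, (a\tau + b + n)/f)$ over the lattice $\Z + \Z(\tau/f)$ and clearing denominators produces a sum over the sublattice $f\Z + \Z\tau$; summing the outer index $n \in \Z/f\Z$ and invoking Euclidean division $fm + n = m'$ completes the sum to the full lattice $\Z + \Z\tau$, giving the factor $f^k$. For $F_k$, an essential cancellation occurs: the factor $\tau - \bar\tau$ in the denominator of the exponent is itself rescaled by $1/f$, which cancels the corresponding factors from $\omega$ and from $z_1 = (a\tau + b + n)/f$, so the exponential simplifies to $e(-am + p(b+n)/f)$ for $\omega = m + p\tau/f$. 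Orthogonality of characters in $n$ then forces $f \mid p$, and writing $p = fp'$ leaves a single factor of $f$.

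The main obstacle is extending the identities to $k = 1$ and especially to the degenerate case $k = 2, (\alpha, \beta) = (0, 0)$ where $E_{0,0}^{(2)} = F_{0,0}^{(2)}$ is defined by the limit $\lim_{s \to 2} H_2(s, \tau, 0, 0)$. For this I would perform the same character-orthogonality manipulations term by term on $H_k(s, \tau, z, u)$ in the half-plane $\Re(s) > 1 + k/2$ of absolute convergence; the resulting identities are meromorphic in $s$ and so continue to $s = k$, yielding the desired relations for all $k \geq 1$. The only subtlety is to check that these manipulations are compatible with the polar structure of $H_2$ at the origin, which follows from the explicit form of the singularity recalled in the definition of $H_k(s, \tau, z, u)$.
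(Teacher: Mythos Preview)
Your proposal is correct and follows essentially the same approach as the paper: direct computation with the series for $k\geq 3$ via lattice reindexing for the $E$-relations and character orthogonality for the $F$-relations, then analytic continuation in $s$ for $k=1,2$. The paper presents the orthogonality step for $F$ in~(\ref{rel}) in a slightly different packaging (as two auxiliary identities rather than a single character sum), but the underlying argument is identical to yours.
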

\begin{proof}[D\'emonstration]
Soit $(a,b)\in \Q^2$ un repr\'esentant de $(\alpha,\beta)\in (\Q/\Z)^2$. Les relations pour $E_{\alpha,\beta}^{(k)}$ se d\'eduisent du calcul suivant pour $k\geq 3$ (pour $k=1,2$, il faut utiliser un prolongement analytique  ):
\begin{equation*}
\begin{split}
\sum\limits_{\substack{f\alpha'=\alpha\\
f\beta'=\beta}}E_{\alpha',\beta'}^{(k)}
&=\sum\limits_{\substack{0 \leq i\leq f-1\\ 0\leq j\leq
f-1}}\frac{\Gamma(k)}{(-2i\pi)^k}\sideset{}{'}\sum_{w\in\Z+\Z\tau}\frac{1}{(w+\frac{a+i}{f}\tau+\frac{b+j}{f})^k}
=f^k E_{\alpha,\beta}^{(k)};\\
\sum\limits_{f\beta'=\beta}E^{(k)}_{\alpha,\beta'}(\frac{\tau}{f})
&=\sum_{j=0}^{f-1}\frac{\Gamma(k)}{(-2i\pi)^k}\sideset{}{'}\sum_{w\in\Z+\Z\frac{\tau}{f}}\frac{1}{(w+\frac{a\tau}{f}+\frac{b+j}{f})^k}=f^kE_{\alpha,\beta}^{(k)}.
\end{split}
\end{equation*}
La relation $\sum\limits_{
f\beta'=\beta}F^{(k)}_{\alpha,\beta'}(\frac{\tau}{f})=fF^{(k)}_{\alpha,\beta}$ se d\'eduit du m\^eme genre de calculs que les relations $\sum\limits_{\substack{f\alpha'=\alpha\\
f\beta'=\beta}}F^{(k)}_{\alpha',\beta'}=f^{2-k}F^{(k)}_{\alpha,\beta}$ et $\sum\limits_{
f\beta'=\beta}E^{(k)}_{\alpha,\beta'}(\frac{\tau}{f})=f^kE^{(k)}_{\alpha,\beta}$; et
la relation $\sum\limits_{\substack{f\alpha'=\alpha\\
f\beta'=\beta}}F^{(k)}_{\alpha',\beta'}=f^{2-k}F_{\alpha,\beta}^{(k)}$
se d\'eduit facilement en utilisant 
les deux \'egalit\'es suivantes:
\begin{itemize}
 \item[(1)]Quels que soient $w\in\Z+\Z\tau$ et $0 \leq i,j \leq f-1$ , on a
\[e(\frac{i(w\bar{\tau}-\tau\bar{w})+j(w-\bar{w})}{\tau-\bar{\tau}})=1.\]
\item[(2)] Quel que soient $w=fw'+m+n\tau$ avec $w'\in \Z+\Z\tau$ et $ m$ ou $n\neq 0$, on a
    \[
    \sum\limits_{\substack{0 \leq i\leq f-1\\ 0\leq j\leq f-1}}\frac{1}{w^k}e(\frac{w(\frac{a+i}{f}\bar{\tau}+\frac{b+j}{f})-(\frac{a+i}{f}\tau+\frac{b+j}{f})\bar{w}}{\tau-\bar{\tau}})=0.
    \]
   \end{itemize}
 \end{proof}

On dispose d'une action du groupe $\GL_2(\hat{\Z})$ sur les s\'eries d'Eisenstein.
\begin{defn}\label{act}Si $\gamma=(\begin{smallmatrix}a& b\\c&d \end{smallmatrix})\in \GL_2(\hat{\Z})$, $k\geq 1$ et $(\alpha,\beta)\in (\Q/\Z)^2$, on d\'efinit
\begin{equation*}
E^{(k)}_{\alpha,\beta}\circ\gamma=E_{a\alpha+c\beta,b\alpha+d\beta}^{(k)} \text{ et } F^{(k)}_{\alpha,\beta}\circ\gamma=F_{a\alpha+b\beta,c\alpha+d\beta}^{(k)}.
\end{equation*}
\end{defn}
Nous allons v\'erifier que ces s\'eries d'Eisenstein appartiennent \`a $\cM^{\con}(\Q^{\cycl})$ (c.f. prop. \ref{EF}) et  que l'action de $\GL_2(\hat{\Z})$ sur $\cM^{\con}(\Q^{\cycl})$ via la bijection $\alpha:H\ra \GL_2(\hat{\Z})$, d\'efinie plus haut, induit l'action pr\'ec\'edente sur les s\'eries d'Eisenstein.
\begin{prop}\label{EF}
\begin{itemize}
\item[(1)]$E_{0,0}^{(2)}=F_{0,0}^{(2)}=\frac{-1}{24}E_2^{*},$
o\`u $E_2^{*}=\frac{6}{i\pi(\tau-\bar{\tau})}+1-24\sideset{}{}\sum_{n=1}^{+\infty}\sigma_1(n)q^n$
est la s\'erie d'Eisenstein non holomorphe de poids $2$ habituelle.

\item[(2)] Si $N\alpha=N\beta=0$, alors
\begin{itemize}
\item[(a)] $\tilde{E}_{\alpha,\beta}^{(2)}=E_{\alpha,\beta}^{(2)}-E_{0,0}^{(2)}\in\cM_2(\Gamma_N,\Q(\zeta_N))$
et $E_{\alpha,\beta}^{(k)}\in\cM_k(\Gamma_N,\Q(\z_N))$ si $k\geq 1
$ et $k\neq 2$.
\item[(b)] $F_{\alpha,\beta}^{(k)}\in\cM_k(\Gamma_N,\Q(\z_N))$ si $k\geq
1, k\neq 2$ ou si $k=2,(\alpha,\beta)\neq(0,0)$.
\end{itemize}
\end{itemize}
\end{prop}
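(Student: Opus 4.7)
My plan is to treat the two parts separately, deriving (1) from a direct computation of the analytic continuation at $s=2$ and deriving (2) from the $\SL_2(\Z)$-transformation formula of $E_k(\tau,z)$, $F_k(\tau,z)$ combined with their Fourier expansion in the elliptic variable $z$.

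For (1), the equality $E_{0,0}^{(2)}=F_{0,0}^{(2)}$ is tautological from the definition $E_{0,0}^{(2)}=F_{0,0}^{(2)}:=\lim_{s\to 2}\rH_2(s,\tau,0,0)$, and it is also consistent with the functional equation given in the statement (which at $z=u=0$ just says $\rH_2(s,\tau,0,0)=\rH_2(3-s,\tau,0,0)$). To identify the common value with $-\tfrac{1}{24}E_2^{*}$, I would apply the Lipschitz summation formula to rewrite $\sum'_{\omega}\bar\omega^{2}|\omega|^{-2s}$ as a double sum over $(m,n)\in\Z^{2}$, separate the contribution of $n=0$ (which produces the $\zeta$-value responsible for the constant $1$ after normalization), carry out the analytic continuation of the $n\neq 0$ sums term by term, and read off the $q$-expansion $1-24\sum_{n\geq1}\sigma_1(n)q^{n}$ together with the non-holomorphic remainder $\tfrac{6}{i\pi(\tau-\bar\tau)}$. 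Comparison with the formula for $E_2^{*}$ gives the claimed identity; this is essentially the computation in Weil's book cited just before the proposition.

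For (2), I would establish modularity under $\Gamma_N$ by combining two ingredients. First, the transformation law $E_k\!\bigl(\gamma\tau,\tfrac{z}{c\tau+d}\bigr)=(c\tau+d)^{k}E_k(\tau,z)$ (and the analogue for $F_k$) for $\gamma=\bigl(\begin{smallmatrix}a'&b'\\c'&d'\end{smallmatrix}\bigr)\in\SL_2(\Z)$, which follows from the absolutely convergent cases $k\geq 3$ and extends to $k=1,2$ by analytic continuation in $s$. Second, when $(a,b)\in\tfrac{1}{N}\Z^{2}$ represents $(\alpha,\beta)$ and $\gamma\equiv I\pmod{N}$, a direct congruence computation shows $(aa'+bc',\,ab'+bd')\equiv(a,b)\pmod{\Z^{2}}$, so the $\Z+\Z\tau$-periodicity of $E_k(\tau,\cdot)$, $F_k(\tau,\cdot)$ in $z$ makes the difference disappear. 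This yields $f_{|_k}\gamma=f$ for $\Gamma_N$ in every case. The subtraction of $E_{0,0}^{(2)}$ in the definition of $\tilde E_{\alpha,\beta}^{(2)}$ is needed precisely because $E_{\alpha,\beta}^{(2)}$ and $E_{0,0}^{(2)}$ share the same non-holomorphic term $\tfrac{6}{i\pi(\tau-\bar\tau)}$ coming from the analytic continuation; that term cancels in the difference.

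For holomorphy at $i\infty$ and rationality of the $q$-expansion, I would exploit the Fourier expansion of $E_k(\tau,a\tau+b)$ and $F_k(\tau,a\tau+b)$ obtained by splitting the lattice sum by the integer $n$ in $\omega=n\tau+m$ and applying the Lipschitz/Hurwitz formula to the inner sum over $m$. This produces a closed form whose leading term is a Bernoulli polynomial in $a$ (or in $b$) and whose higher terms are of the shape $\sum_{n\geq 1}\sum_{d\mid n}d^{k-1}\zeta_N^{\pm Nb d}q^{na/N}$ (with an analogous expansion for $F_k$ with the roles of $(a,b)$ exchanged). All fractional exponents are non-negative, establishing holomorphy at infinity; the coefficients visibly lie in $\Q(\zeta_N)$, and the excluded case $k=2$, $(\alpha,\beta)=(0,0)$ is precisely where the Bernoulli/constant term picks up the non-holomorphic $E_2^{*}$ contribution from (1). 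The main technical obstacle is the careful handling of the conditionally convergent cases $k=1$ and $k=2$, where one must track the analytic continuation of $\rH_k(s,\tau,z,u)$ through Eisenstein's summation convention to justify both the transformation law and the Fourier expansion; modulo that, the proposition reduces to bookkeeping of the explicit formulas.
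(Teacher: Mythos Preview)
Your proposal is correct and follows essentially the same approach as the paper. The paper's own proof is only a sketch: for (1) it invokes the Poisson summation formula (deferred to the proof of Proposition~\ref{q-deve}) and passes to the limit, and for (2) it cites forward to Proposition~\ref{q-deve} for the $q$-expansion (hence rationality of coefficients and holomorphy at the cusps) and to Proposition~\ref{eis} for the $\SL_2(\Z)$-transformation formula giving $\Gamma_N$-invariance --- precisely the two ingredients you spell out directly.
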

Les r\'esultats au-dessus sont bien connus. Pour faciliter la lecture, on donne l'id\'ee; les d\'etails se trouvent plus loin.
\begin{proof}[D\'emonstration]
\begin{itemize}
\item[(1)]Par d\'efinition, on a
\[E_{0,0}^{(2)}=F_{0,0}^{(2)}=\lim_{s\ra2}\frac{\Gamma(s)}{(-2i\pi)^k}(\frac{\tau-\bar{\tau}}{2i\pi})^{s-k}\sideset{}{'}\sum_{\omega\in\Z+\Z\tau}\frac{\bar{\omega}^k}{|\omega|^{2s}}. \]
On applique la formule de Poisson pour la somme (voir la d\'emonstration de la proposition (\ref{q-deve})) et prend la limite.
\item[(2)]
    On consid\`ere le $q$-d\'eveloppement des s\'eries d'Eisenstein et on va montrer dans la proposition \ref{q-deve} que les coefficients sont dans l'extension cyclotomique. Il ne reste qu'\`a v\'erifier que les s\'eries sont fix\'ees par le sous-groupe de congruence $\Gamma_N$. Mais ce fait est v\'erifi\'e par des formules plus g\'en\'erales dans la proposition \ref{eis}.
\end{itemize}
\end{proof}

\subsubsection{Les $q$-d\'eveloppements de s\'eries d'Eisenstein}\label{section4}
Soit $A$ un sous-anneau de $\C$. On note $\Dir(\C)$ le $\C$-espace vectoriel des s\'eries de Dirichlet formelles \`a coefficients dans $\C$, ainsi que $\Dir(A)$ le sous $A$-module de $\Dir(\C)$ des s\'eries de Dirichlet formelles dont les coefficients sont dans $A$. On d\'efinit une action du groupe de Galois $\cG_{\Q}=\Gal(\ol{\Q}/\Q)$ sur $\Dir(\ol{\Q})$ en agissant sur les coefficients.

Soit $\alpha\in\Q/\Z$. On d\'efinit les s\'eries de Dirichlet formelles $\z(\alpha,s)$ et
$\z^{*}(\alpha,s)$, appartenant \`a $\Dir(\Q^{\cycl})$, par les formules:
\[\z(\alpha,s)=\sum\limits_{\substack{n\in\Q_{+}^{*}\\ n=\alpha \mod\Z}}n^{-s} \text{ et } \z^{*}(\alpha,s)=\sum_{n=1}^{\infty}e^{2i\pi\alpha n}n^{-s}.\]
La fonction $\z(\alpha,s)$ admet un prolongement m\'eromorphe \`a tout le plan complexe, holomorphe en dehors de p\^oles simples en $s=1$ de r\'esidu $1$. 

Consid\'erons l'application surjective $\chi_{\cycl}:\cG_{\Q}\ra\hat{\Z}^{*}$. Soit $d\in\hat{\Z}^{*}$ et soit $\sigma_d$ un rel\`evement de $d$ dans $\cG_{\Q}$. Alors on d\'efinit l'action de $d\in\hat{\Z}^{*}$ sur les s\'eries de Dirichlet formelles  $\z(\alpha,s)$ et $\z^{*}(\alpha,s)$ via $\sigma_d$ agissant sur les coefficients \footnote{ L'action de $\sigma_d$ sur $e^{2i\pi\alpha}$ est donn\'ee par $\sigma_d*e^{2i\pi\alpha}=e^{2i\pi d\alpha}$, o\`u $d\alpha$ est bien d\'efini car on a l'isomorphisme $(\Q\otimes\hat{\Z})/\hat{\Z}\cong \Q/\Z$. }.

\begin{lemma}\label{Diri} On a
$\z(\alpha,s)*d=\z(\alpha,s)$ et $\z^{*}(\alpha,s)*d=\z^{*}(d\alpha,s)$.
\end{lemma}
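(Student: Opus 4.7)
The plan is to observe that both formulas follow by inspecting how the Galois action on coefficients interacts with the definitions, so the proof is essentially a direct computation; the only point that needs genuine care is the identification of $\sigma_d \cdot e^{2i\pi \alpha}$ with $e^{2i\pi d\alpha}$ (the content of the footnote).

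First I would handle $\zeta(\alpha,s)$. By definition, $\zeta(\alpha,s)=\sum n^{-s}$ where $n$ ranges over positive rationals congruent to $\alpha \bmod \Z$; the coefficients of this formal Dirichlet series are all equal to $0$ or $1$, hence lie in $\Q$. Since $\sigma_d$ fixes $\Q$, it fixes every coefficient, and therefore $\zeta(\alpha,s)*d = \zeta(\alpha,s)$. This disposes of the first equality without any use of the specific lift $\sigma_d$.

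For $\zeta^{*}(\alpha,s) = \sum_{n\geq 1} e^{2i\pi\alpha n} n^{-s}$, the coefficient of $n^{-s}$ is $e^{2i\pi \alpha n}$, a root of unity of order dividing $N$ if $\alpha$ has denominator $N$. The cyclotomic character satisfies $\sigma_d(\zeta_N) = \zeta_N^{d}$ where $d$ denotes the image of $d\in\hat\Z^*$ in $(\Z/N\Z)^*$; equivalently, using the identification $(\Q\otimes\hat\Z)/\hat\Z \simeq \Q/\Z$ recalled in the footnote, one has $\sigma_d\cdot e^{2i\pi\alpha} = e^{2i\pi d\alpha}$ (which makes sense because $d\alpha \in \Q/\Z$ is well-defined). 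Applying $\sigma_d$ termwise therefore gives
\[
\zeta^{*}(\alpha,s)*d \;=\; \sum_{n=1}^{\infty} e^{2i\pi (d\alpha) n}\, n^{-s} \;=\; \zeta^{*}(d\alpha,s),
\]
which is the desired second equality.

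The only conceptual point is checking that the action on $\zeta^{*}(\alpha,s)$ does not depend on the choice of lift $\sigma_d$: this is clear because two lifts differ by an element of the kernel of $\chi_{\cycl}$, which acts trivially on all roots of unity, hence on every coefficient $e^{2i\pi \alpha n}$. So both assignments $d\mapsto \zeta(\alpha,s)*d$ and $d\mapsto \zeta^{*}(\alpha,s)*d$ are well defined, and the formulas above hold. I do not expect any real obstacle; the statement is essentially a reformulation of the compatibility between the cyclotomic character and the Galois action on roots of unity.
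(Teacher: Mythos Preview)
Your proof is correct. The paper does not give a proof of this lemma at all, treating it as immediate from the definitions; your argument is precisely the natural verification one has in mind, and the only nontrivial input (that $\sigma_d\cdot e^{2i\pi\alpha}=e^{2i\pi d\alpha}$) is exactly what the paper isolates in the footnote preceding the lemma.
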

La proposition suivante d\'ecrit les $q$-d\'eveloppements de s\'eries d'Eisenstein et elle montre que les coefficients du $q$-d\'eveloppement des s\'eries d'Eisenstein sont dans $\Q^{\cycl}$. En particulier, celle-ci nous permet de conclure le resultat de la proposition \ref{EF}.
\begin{prop}\label{q-deve}
\begin{itemize}
\item[(i)]Si $k\geq 1, k\neq 2$ et $\alpha, \beta\in\Q/\Z$, alors le $q$-d\'eveloppement $\sum\limits_{n\in\Q_{+}}a_nq^n$ de $E_{\alpha,\beta}^{(k)}$ est donn\'e par
\begin{equation*}
\sum_{n\in\Q_{+}^{*}}\frac{a_n}{n^s}=\z(\alpha,s)\z^{*}(\beta,s-k+1)+(-1)^{k}\z(-\alpha,s)\z^{*}(-\beta,s-k+1).
\end{equation*}
De plus, on a: soit $k\neq 1 $. $a_0=0 ($resp. $a_0=\z^{*}(\beta,1-k))$ si $\alpha\neq 0$ $($resp. $\alpha=0)$; soit $k=1$. On a $a_0=\z(\alpha,0)$ $($resp.
$a_0=\frac{1}{2}(\z^{*}(\beta,0)-\z^{*}(-\beta,0)) )$ si $\alpha\neq
0$ $($resp. $\alpha=0)$.
\item[(ii)]Si $k\geq 1$ et $\alpha, \beta\in\Q/\Z ($si $k=2$, $(\alpha,\beta)\neq (0,0))$, alors le $q$-d\'eveloppement
$\sum\limits_{n\in\Q^{+}}a_nq^n$ de $F_{\alpha,\beta}^{(k)}$ est donn\'e par
\begin{equation*}
\sum_{n\in\Q_{+}^{*}}\frac{a_n}{n^s}=\z(\alpha,s-k+1)\z^{*}(\beta,s)+(-1)^{k}\z(-\alpha,s-k+1)\z^{*}(-\beta,s).
\end{equation*}
De plus, on a: soit $k\neq 1 $, $a_0=\z(\alpha,1-k)$;\\
soit $k=1$, $a_0=\z(\alpha,0) ($resp.
$a_0=\frac{1}{2}(\z^{*}(\beta,0)-\z^{*}(-\beta,0)))$ si $\alpha\neq
0 ($resp. $\alpha=0)$.
\end{itemize}
\end{prop}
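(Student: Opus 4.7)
The plan is to derive the $q$-expansion by applying the Lipschitz formula (equivalently, Poisson summation on one lattice variable) to the absolutely convergent defining series for $k\geq 3$, and then extending to $k=1,2$ by analytic continuation in $s$ of $\rH_k(s,\tau,z,u)$.

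For $(i)$, fix representatives $(a,b)\in\Q^2$ of $(\alpha,\beta)$ and write $\omega=m+n\tau$, so that $\omega+z=(m+b)+(n+a)\tau$. Apply the Lipschitz formula
\[\sum_{m\in\Z}\frac{1}{(m+x)^k}=\frac{(-2i\pi)^k}{(k-1)!}\sum_{d\geq 1}d^{k-1}e^{2i\pi dx}\quad(\mathrm{Im}(x)>0)\]
(with the analogue for $\mathrm{Im}(x)<0$ carrying a sign $(-1)^k$) to the inner sum over $m$ at $x=b+(n+a)\tau$, and split the outer sum by the sign of $n+a$. Writing $n'=n+a$, the slice $n+a>0$ contributes
\[\sum_{n'\in\alpha+\Z,\;n'>0}\;\sum_{d\geq 1}d^{k-1}e^{2i\pi d\beta}\,q^{dn'},\]
whose Dirichlet series factors as $\z(\alpha,s)\z^{*}(\beta,s-k+1)$; the slice $n+a<0$ symmetrically contributes $(-1)^k\z(-\alpha,s)\z^{*}(-\beta,s-k+1)$. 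The slice $n+a=0$ occurs only when $\alpha=0$, yielding the constant term $\frac{(k-1)!}{(-2i\pi)^k}\sideset{}{'}\sum_m(m+b)^{-k}$; this is identified with $\z^{*}(\beta,1-k)$ by the Hurwitz/Lerch functional equation (and with $\frac{1}{2}(\z^{*}(\beta,0)-\z^{*}(-\beta,0))$ when $k=1$, $\alpha=0$).

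For $(ii)$, the Kronecker phase $e\bigl((\omega\bar u-u\bar\omega)/(\tau-\bar\tau)\bigr)$ with $u=a\tau+b$ evaluates to $e^{2i\pi(nb-ma)}$ by direct expansion. Poisson summation on $m$ with the oscillating factor $e^{-2i\pi ma}$ then converts $\sum_m e^{-2i\pi ma}/(m+n\tau)^k$ for $n>0$ into a sum over $j+a>0$ proportional to $(j+a)^{k-1}e^{2i\pi n\tau(j+a)}$, and similarly for $n<0$. Collecting by the sign of $n$ yields exactly $\z(\alpha,s-k+1)\z^{*}(\beta,s)+(-1)^k\z(-\alpha,s-k+1)\z^{*}(-\beta,s)$, while the slice $n=0$ produces the constant $\frac{(k-1)!}{(-2i\pi)^k}\sum_{m\neq 0}e^{-2i\pi ma}/m^k$, which equals $\z(\alpha,1-k)$ by the same functional equation.

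The main obstacle is the treatment of $k=1,2$, where the lattice sum is only conditionally convergent or divergent; there the identities must be obtained via the meromorphic continuation of $\rH_k(s,\tau,z,u)$, with both sides verified to agree at $s=k$ by continuity. The genuinely singular case $k=2$, $(\alpha,\beta)=(0,0)$ is excluded from the statement because $\lim_{s\to 2}\rH_2(s,\tau,0,0)$ picks up the non-holomorphic term $\frac{6}{i\pi(\tau-\bar\tau)}$ appearing in $E_2^{*}$; this limit is computed separately in item $(1)$ of Proposition \ref{EF}. Once the $q$-expansion above is established, every coefficient visibly lies in $\Q^{\cycl}$, supplying the rationality invoked in Proposition \ref{EF}$(2)$ once combined with the $\Gamma_N$-invariance established in Proposition \ref{eis}.
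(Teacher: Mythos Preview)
Your proposal is correct and follows essentially the same route as the paper: Poisson summation on one lattice variable (what you call the Lipschitz formula, and what the paper carries out as Poisson plus a residue computation of the Fourier integral), a split according to the sign of the remaining lattice variable, and analytic continuation in $s$ for $k=1,2$. The only cosmetic difference is that the paper writes out the details for $F_{\alpha,\beta}^{(k)}$ while you write them for $E_{\alpha,\beta}^{(k)}$, and the paper spells out the Fourier integral $\int_{-\infty}^{+\infty}\frac{e^{-2i\pi(n+\tilde\alpha)t}}{(imy+t)^k}\,dt$ explicitly rather than invoking the Lipschitz identity by name.
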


\begin{proof}[D\'emonstration]Choisisons une pr\'esentation $(\tilde{\alpha},\tilde{\beta})\in\Q^2$ de $(\alpha,\beta)\in(\Q/\Z)^2$, alors la fonction $F_{\alpha,\beta}^{(k)}$ (resp. $E_{\alpha,\beta}^{(k)}$) est d\'efinie par \'evaluer la fonction $H_k(s,\tau,0,\tilde{\alpha}\tau+\tilde{\beta})$ (resp. $H_k(s,\tau,\tilde{\alpha}\tau+\tilde{\beta},0)$) en $k=s$. La fonction $H_k(s,\tau,z,u)$  converge si $\Re(s)>1+\frac{k}{2}$, poss\`ede un prolongement m\'eromorphe \`a tout le plan complexe, holomorphe en dehors de p\^ole simple en $s=1$ (si $k=0$ et $u\in\Z+\Z\tau$) et $s=0$ (si $k=0$ et $z\in\Z+\Z\tau$).

Pour obtenir les formules dans la proposition, on applique la formule de Poisson (pour $k=1,2$, il faut utiliser un prolongement analytique.)
Montrons $(ii)$ en utilisant la formule de Poisson.

Pour simplifier la formule, on \'ecrit $z=\tilde{\alpha}\tau+\tilde{\beta}$ et on pose $\tau=x+iy\in\cH$ avec $x,y\in \R$.

   De la  d\'efinition, on a:
 \begin{equation*}
    \begin{split}
     H_k(s,\tau,0,z)&=\frac{\Gamma(s)}{(-2\pi i)^k}(\frac{-2iy}{2\pi i})^{s-k}\sum_{m,n\in\Z}\frac{1}{(m\tau+n)^k|m\tau+n|^{2(s-k)}}e(\frac{w\bar{z}-z\bar{w}}{\tau-\bar{\tau}})
    \\
    &=\frac{\Gamma(s)}{(-2\pi i)^s}\sum_{m,n\in\Z}\frac{(2iy)^{s-k}}{(m\tau+n)^s(m\bar{\tau}+n)^{s-k}}e(m\tilde{\beta}-n\tilde{\alpha})
  \end{split}
 \end{equation*}
 \begin{equation*}
 \begin{split}
 =&\frac{\Gamma(s)}{(-2\pi i)^s}(\sum_{m\geq 0}\sum_{n\in\Z}\frac{(2iy)^{s-k}}{(m\tau+n)^s(m\bar{\tau}+n)^{s-k}}e(m\tilde{\beta}-n\tilde{\alpha})+
\\  &+(-1)^{2s-k}\sum_{m> 0}\sum_{n\in\Z}\frac{(2iy)^{s-k}}{(m\tau-n)^s(m\bar{\tau}-n)^{s-k}}e(-m\tilde{\beta}-n\tilde{\alpha}))
   \end{split}
   \end{equation*}

    Si $\Re(s)>1+\frac{k}{2}$, la fonction $f(t)=\frac{e(m\tilde{\beta}-t\tilde{\alpha})}{(m\tau+t)^s(m\bar{\tau}+t)^{s-k}}$ v\'erifie la condition de la formule de Poisson. Alors,
 \begin{equation*}
    \begin{split}
    \sum_{n\in\Z}f(n)
    &=\sum_{n\in\Z}\cF(f)(n)=\sum_{n\in\Z}\int_{-\infty}^{+\infty}\frac{e^{-2i\pi nt}e(m\tilde{\beta}-t\tilde{\alpha})}{(m\tau+t)^s(m\bar{\tau}+t)^{s-k}}dt
    \\
    &=\sum\limits_{n\in\Z }e(m((n+\tilde{\alpha})x+\tilde{\beta}))\int_{-\infty}^{+\infty}\frac{e^{-2i\pi(n+\tilde{\alpha}) t}}{(imy+t)^s(-imy+t)^{s-k}}dt
    \end{split}
 \end{equation*}

Alors, si $s=k$, on a $k\geq 3$ et on a
$\int_{-\infty}^{+\infty}\frac{e^{-2i\pi(n+\tilde{\alpha}) t}}{(imy+t)^k}dt
=(-2i\pi)^k\Gamma(k)^{-1}(n+\tilde{\alpha})^{k-1}e^{-2\pi ym(n+\tilde{\alpha})};$
gr\^ace \`a la m\'ethode des r\'esidus. Ceci nous donne
\[\sum_{n\in\Z}f(n)=\sum_{n\in\Q_+, n\equiv \alpha[\Z]}(-2i\pi)^k\Gamma(k)^{-1} n^{k-1}q^{mn}e(m\tilde{\beta}).\]

On peut aussi appliquer la formule de Poisson \`a la fonction $g(t)=\frac{e(-m\tilde{\beta}-t\tilde{\alpha})}{(m\tau-t)^s(m\bar{\tau}-t)^{s-k}}$ pour $s=k\geq 3$ et obtient
\[\sum_{n\in\Z}g(n)=\sum\limits_{n\in\Q_{+},n\equiv -\alpha [\Z]}
(-2i\pi)^k\Gamma(k)^{-1}n^{k-1}q^{mn}e(-m\tilde{\beta}).\]
Donc  on a
\begin{equation*}
\begin{split}
F_{\alpha,\beta}^{(k)}&=\sum_{m\geq 0}\sum_{n\in\Q_{+},n\equiv \alpha [\Z]}n^{k-1}q^{mn}e(m\tilde{\beta})+(-1)^k\sum_{m>0}\sum_{n\in\Q_{+},n\equiv -\alpha [\Z]}n^{k-1}q^{mn}e(-m\tilde{\beta})\\
&=\left\{
 \begin{aligned}&\z(\alpha, 1-k)+\sum_{m> 0}\sum_{n\in\Q_{+},n\equiv \alpha [\Z]}n^{k-1}q^{mn}e(m\tilde{\beta})+\\ &+(-1)^k\sum_{m>0}\sum_{n\in\Q_{+},n\equiv -\alpha [\Z]}n^{k-1}q^{mn}e(-m\tilde{\beta}); \text{ si } \alpha\neq 0,
 \\
 &\frac{1}{2}(\z^{*}(\beta,0)-\z^{*}(-\beta,0))+\sum_{m> 0}\sum_{n\in\Q^{*}_{+},n\equiv \alpha [\Z]}n^{k-1}q^{mn}e(m\tilde{\beta})+\\ &+(-1)^k\sum_{m>0}\sum_{n\in\Q^{*}_{+},n\equiv -\alpha [\Z]}n^{k-1}q^{mn}e(-m\tilde{\beta}); \text{ si }\alpha=0 \text{ et } k=1;
         \end{aligned} \right.
\end{split}
\end{equation*}

Donc, \begin{equation*}
 \begin{split} \sum\limits_{n\in\Q_{+}^{*}}\frac{a_n}{n^s}&=\sum\limits_{n\in\Q_{+}^{*},n\equiv\alpha[\Z]}\sum_{m=1}^{+\infty}e^{2i\pi m\tilde{\beta}}n^{k-1}\frac{1}{(mn)^s}+(-1)^k\sum\limits_{n\in\Q_{+}^{*},n\equiv-\alpha[\Z]}\sum_{m=1}^{+\infty}e^{-2i\pi m\tilde{\beta}} n^{k-1}\frac{1}{(mn)^s}\\
 &=\sum\limits_{\substack{n\in\Q_{+}^{*},\\ n\equiv\alpha[\Z]}}\frac{1}{m^s}\sum_{m=1}^{+\infty}e^{2i\pi m\tilde{\beta}}\frac{1}{n^{s-k+1}}+(-1)^k\sum\limits_{\substack{n\in\Q_{+}^{*},\\ n\equiv-\alpha[\Z]}}\frac{1}{m^s}\sum_{n=1}^{+\infty}e^{-2i\pi m\tilde{\beta}}\frac{1}{n^{s-k+1}}\\
 &=\z(\alpha,s-k+1)\z^{*}(\beta,s)+(-1)^k\z(-\alpha,s-k+1)\z^{*}(-\beta,s).
 \end{split}
 \end{equation*}

\end{proof}
\begin{remark}
\begin{itemize}
\item[(i)]D'apr\`es cette proposition, on voit que $E_{\alpha,\beta}^{(1)}$ et
$F_{\alpha,\beta}^{(1)}$ ont le m\^eme $q$-d\'eveloppement pour tous
les $\alpha, \beta\in\Q/\Z$ (On peut aussi d\'eduire ce r\'esultat
par l'\'equation fonctionnelle de $H_k(s,\tau,z,u)$).
\item[(ii)]
Soit $d\in\hat{\Z}^{*}$ et soit $\sigma_d$ un rel\`evement de $d$ dans $\cG_{\Q}$. L'action de $\hat{\Z}^{*}$ induit par l'action de $H$ sur les s\'eries d'Eisenstein est donn\'ee par $\sigma_d$ agissant sur les coefficients du $q$-d\'eveloppement de formes modulaires. Il \'equivaut \`a trouver l'action de $\hat{\Z}^{*}$ sur les s\'eries de Dirichlet formelles associ\'ees.   Soit $\sum_{n\in\Q_{+}}a_nq^n$ le $q$-d\'eveloppement de $E_{\alpha,\beta}^{(k)}$ (resp.  $F_{\alpha,\beta}^{(k)}$).  Alors le $q$-d\'eveloppement $\sum_{n\in\Q_{+}}b_nq^n$ de $E_{\alpha,\beta}^{(k)}*d$  est donn\'e par
\begin{equation*}
\begin{split} &\sum_{n\in\Q_{+}^{*}}\frac{b_n}{n^s}=\sum_{n\in\Q_{+}^{*}}\frac{a_n}{n^s}*d=\z(\alpha,s)\z^{*}(d\beta,s-k+1)+(-1)^k\z(-\alpha,s)\z^{*}(-d\beta,s-k+1),\\
&b_0=\left\{
 \begin{aligned} & 0 ( \text{resp. } \z^*(d\beta,1-k) ); \text{ si } k\neq 1, \alpha\neq 0 ( \text{resp. si } \alpha=0 ) ,
 \\
 &\z(\alpha,0) (\text{resp.} \frac{1}{2}(\z^{*}(d\beta,0)-\z^{*}(-d\beta,0))); \text{ si }k=1, \alpha\neq 0 (\text{resp.  } \alpha=0 ); \end{aligned} \right.
\end{split}
\end{equation*}
et celui de $F_{\alpha,\beta}^{(k)}*d$ est donn\'e par
\begin{equation*}
\begin{split}
&\sum_{n\in\Q_{+}^{*}}\frac{b_n}{n^s}=\z(\alpha,s-k+1)\z^{*}(d\beta,s)+(-1)^k\z(-\alpha,s-k+1)\z^{*}(-d\beta,s);
\\
&b_0=\left\{
 \begin{aligned} &  \z(\alpha,1-k) ); \text{ si } k\neq 1,
 \\
 &\z(\alpha,0) \left( \text{ resp.} \frac{1}{2}(\z^{*}(d\beta,0)-\z^{*}(-d\beta,0)) \right); \text{ si }k=1, \alpha\neq 0 (\text{ resp.  } \alpha=0 ); \end{aligned} \right..
\end{split}
\end{equation*}

Donc on a $E_{\alpha,\beta}^{(k)}*d=E_{\alpha,d\beta}^{(k)}$ et $F_{\alpha,\beta}^{(k)}*d=F_{\alpha,d\beta}^{(k)}$.
\end{itemize}
\end{remark}

\begin{prop}\label{eis} Si $\gamma=\bigl(\begin{smallmatrix}a & b\\ c &
d\end{smallmatrix}\bigr)\in\GL_2(\hat{\Z}), k\geq 1$ et
$(\alpha,\beta)\in(\Q/\Z)^2$, on a:
\[E_{\alpha,\beta}^{(k)}*\gamma=E_{a\alpha+c\beta,b\alpha+d\beta}^{(k)}\text{ et } F_{\alpha,\beta}^{(k)}*\gamma=F_{a\alpha+c\beta,b\alpha+d\beta}^{(k)}.\]
\end{prop}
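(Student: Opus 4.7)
Le plan est de s\'eparer l'action de $\GL_2(\hat{\Z})$ en deux morceaux en utilisant la d\'ecomposition $\gamma = \gamma_0 \delta_u$, o\`u $u = \det \gamma \in \hat{\Z}^*$, $\delta_u = \bigl(\begin{smallmatrix}1 & 0 \\ 0 & u\end{smallmatrix}\bigr)$ et $\gamma_0 = \gamma \delta_u^{-1} \in \SL_2(\hat{\Z})$. Comme l'application $(\alpha,\beta) \mapsto (a\alpha + c\beta, b\alpha + d\beta)$ est compatible avec le produit matriciel, il suffit de prouver la proposition dans les deux cas suivants: (i) $\gamma \in \SL_2(\hat{\Z})$; (ii) $\gamma = \delta_u$ pour $u \in \hat{\Z}^*$.

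Pour le cas (i), on commence par $\gamma \in \SL_2(\Z)$. L'action $*\gamma$ co\"incide alors avec l'action modulaire $|_k\gamma$, puisque $\det\gamma = 1$. En partant de la d\'efinition
\[E_k(\tau, z) = \frac{\Gamma(k)}{(-2i\pi)^k}\sideset{}{'}\sum_{\omega \in \Z + \Z\tau}\frac{1}{(\omega + z)^k}\]
pour $k \geq 3$, on calcule $(E_{\alpha,\beta}^{(k)})|_k\gamma(\tau) = (c\tau+d)^{-k} E_k(\gamma\tau, \tilde\alpha\gamma\tau + \tilde\beta)$. Apr\`es multiplication de chaque d\'enominateur par $(c\tau+d)$ et le changement de variable $(m,n) \mapsto (ma + nc, mb + nd)$, qui est une bijection de $\Z^2$ car $ad - bc = 1$, la somme se r\'eorganise en $E_k(\tau, (a\tilde\alpha+c\tilde\beta)\tau + (b\tilde\alpha+d\tilde\beta)) = E_{a\alpha+c\beta,\,b\alpha+d\beta}^{(k)}(\tau)$. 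Le m\^eme changement de variable traite $F_{\alpha,\beta}^{(k)}$, et on v\'erifie que le facteur non holomorphe $e\bigl(\frac{\omega\bar z - z\bar\omega}{\tau-\bar\tau}\bigr)$ est pr\'eserv\'e gr\^ace \`a l'identit\'e $(c\tau+d)(c\bar\tau+d) = |c\tau+d|^2$, qui vaut car $c,d \in \Z$. Pour $k \in \{1,2\}$, ces \'egalit\'es s'\'etendent par prolongement analytique en $s$ via la fonction m\'eromorphe $\rH_k(s,\tau,z,u)$.

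On passe ensuite de $\SL_2(\Z)$ \`a $\SL_2(\hat{\Z})$: comme $E_{\alpha,\beta}^{(k)}$ et $F_{\alpha,\beta}^{(k)}$ sont de niveau $\Gamma_N$ d\`es que $N\alpha = N\beta = 0$, l'action de $\SL_2(\hat{\Z})$ sur ces formes factorise \`a travers $\SL_2(\Z/N\Z)$; on rel\`eve donc $\gamma_0$ modulo $N$ en un \'el\'ement de $\SL_2(\Z)$, et les indices $(a\alpha+c\beta,\, b\alpha+d\beta) \in (\Q/\Z)^2$ ne d\'ependent que des entr\'ees de $\gamma_0$ modulo $N$, ce qui assure la coh\'erence. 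Pour le cas (ii), l'action de $\delta_u$ est par d\'efinition celle de $\sigma_u$ sur les coefficients du $q$-d\'eveloppement; la remarque pr\'ec\'edant l'\'enonc\'e l'a d\'ej\`a calcul\'ee en comparant les s\'eries de Dirichlet formelles associ\'ees, et donne $E_{\alpha,\beta}^{(k)} * \delta_u = E_{\alpha,u\beta}^{(k)}$ et $F_{\alpha,\beta}^{(k)} * \delta_u = F_{\alpha,u\beta}^{(k)}$, ce qui co\"incide avec $(\alpha,\beta)\delta_u = (\alpha, u\beta)$. En combinant (i) et (ii) via la d\'ecomposition initiale, on conclut.

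Le principal obstacle sera le traitement des cas de faible poids $k = 1, 2$, o\`u les s\'eries d\'efinissant $E_k$ et $F_k$ ne convergent pas absolument, et o\`u $\rH_k(s,\tau,z,u)$ peut avoir des p\^oles (en particulier pour $(k,\alpha,\beta) = (2,0,0)$, que la d\'efinition r\'egularise par la limite $s \to 2$); le prolongement analytique en $s$ permet de d\'eduire les identit\'es de transformation du cas $k \geq 3$, mais il faut prendre garde au passage \`a la limite au voisinage des p\^oles.
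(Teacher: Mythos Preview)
Your proposal is correct and follows essentially the same strategy as the paper: decompose $\GL_2(\hat{\Z})$ as $\SL_2(\hat{\Z})\cdot\bigl(\begin{smallmatrix}1&0\\0&u\end{smallmatrix}\bigr)$, handle $\SL_2(\Z)$ by the direct lattice calculation (extending to $k\in\{1,2\}$ by analytic continuation in $s$), pass to $\SL_2(\hat{\Z})$ by the finite-level/continuity argument, and treat $\delta_u$ via the Galois action on $q$-expansions already computed in the remark following the $q$-d\'eveloppement proposition. The only cosmetic difference is that the paper phrases the passage from $\SL_2(\Z)$ to $\SL_2(\hat{\Z})$ as ``par continuit\'e'' rather than via an explicit lift modulo $N$, which amounts to the same thing.
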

\begin{proof}[D\'emonstration]

Comme $\GL_2(\hat{\Z})=\cup_{d\in\hat{\Z}^*}\SL_2(\hat{\Z})(\begin{smallmatrix}1&0\\0&d\end{smallmatrix})$, il suffit de v\'erifier pour  $\gamma\in\SL_2(\hat{\Z})$ et $(\begin{smallmatrix}1&0\\ 0&d\end{smallmatrix})$ avec $d\in\hat{\Z}^{*}$. Le cas de $(\begin{smallmatrix}1&0\\ 0&d\end{smallmatrix})$ suit du lemme \ref{Diri} et du $q$-d\'eveloppement de la proposition $\ref{q-deve}$. Le cas de $\SL_2(\hat{\Z})$ suit du calcul pour $\SL_2(\Z)$ et se d\'eduit par continuit\'e.

Le calcul pour $\SL_2(\Z)$ est facile et on seulement donne le calcul  pour la relation $E_{\alpha,\beta}^{(k)}*\gamma=E_{a\alpha+c\beta,b\alpha+d\beta}^{(k)}$. 
Choisissons une pr\'esentation $(\tilde{\alpha},\tilde{\beta})\in\Q^2$ de $(\alpha,\beta)\in(\Q/\Z)^2$, et si $\gamma=\bigl(\begin{smallmatrix}a & b\\ c &
d\end{smallmatrix}\bigr)\in\SL_2(\Z)$, alors on a:

\begin{equation*}
\begin{split}
E_{\alpha,\beta}^{(k)}*\gamma&=E_{\alpha,\beta}^{(k)}|_{k}\gamma(\tau)=\frac{1}{(c\tau+d)^{k}}E_k(\gamma\tau,\tilde{\alpha}\gamma\tau+\tilde{\beta})=\frac{1}{(c\tau+d)^k}\frac{\Gamma(k)}{(-2i\pi)^k}\sum\limits_{\omega\in\Z+\Z\gamma\tau}\frac{1}{(\omega+\tilde{\alpha}\gamma\tau+\tilde{\beta})^{k}}\\
&=\frac{\Gamma(k)}{(-2i\pi)^k}\sideset{}{}\sum_{\omega\in\Z+\Z\tau}\frac{1}{(\omega+\tilde{\alpha}(a\tau+b)+\tilde\beta(c\tau+d))^k}=E_{a\alpha+c\beta,b\alpha+d\beta}^{(k)}(\tau)
\end{split}
\end{equation*}

\end{proof}

\subsubsection{Les distributions $z_{\mathbf{Eis}}(k),z'_{\mathbf{Eis}}(k),  \text{ et }
z_{\mathbf{Eis}}(k,j)$}\label{distributionEis}
 Soient $X=(\Q\otimes\hat{\Z})^2, G=\GL_2(\Q\otimes\hat{\Z})$ et $V=\cM_k^{\con}(\Q^{\cycl})$.
Alors $X$ est un espace topologique localement profini et $G$ est un groupe localement profini, agissant contin\^ument \`a droite sur $X$ par la multiplication de matrices.

L'action de $G$ sur $V$ \`a droite, not\'e par $*$,  se d\'eduit de l'action de
$\Pi_{\Q}^{'}$ sur $V$ et l'action de $\Pi_{\Q}$
se factorise \`a travers $\GL_2(\hat{\Z})$. Comme tout
$\gamma\in\GL_2(\Q\otimes\hat{\Z})$ peut s'\'ecrire sous la
forme $\gamma=g_1\bigl(\begin{smallmatrix}r_0 & 0\\ 0 &
r_0\end{smallmatrix}\bigr)\bigl(\begin{smallmatrix}1 & 0\\ 0 &
e\end{smallmatrix}\bigr)g_2$ avec
$g_1,g_2\in\GL_2(\hat{\Z}),r_0\in\Q^{*}_{+}$ et $e$ un entier $\geq 1$, il suffit de donner les formules pour $\gamma\in
\GL_2(\hat{\Z}),\gamma=\bigl(\begin{smallmatrix}r_0&0\\0&r_0\end{smallmatrix}\bigr)$
et $\gamma=\bigl(\begin{smallmatrix}1&0\\0&e\end{smallmatrix}\bigr)$
respectivement.
Comme
$\bigl(\begin{smallmatrix}r_0 & 0\\ 0 & r_0\end{smallmatrix}\bigr)$
et $\bigl(\begin{smallmatrix}1 & 0\\ 0 & e\end{smallmatrix}\bigr)$
apparaissent dans $\GL_2(\Q)_{+}$, on prend ses actions par la formule (\ref{etoi})  dans ``notation''. Si $\gamma\in\GL_2(\hat{\Z})$, en utilisant la d\'ecomposition $\GL_2(\hat{\Z})=\cup_{d\in\hat{\Z}^*}\SL_2(\hat{\Z})(\begin{smallmatrix}1&0\\0&d\end{smallmatrix})$, on d\'ecompose l'action de $\gamma$ en deux parties.
Comme on est en poids $k$, l'action de $\SL_2(\hat{\Z})$ est l'action $|_k$. L'action de $(\begin{smallmatrix}1&0\\0&d\end{smallmatrix})$ est via un rel\`evement $\sigma_d$ dans $\cG_{\Q}$ agissant sur les coefficients du $q$-d\'eveloppement. En particulier, au cas des s\'eries d'Eisenstein, la proposition \ref{eis} nous donne les formules explicites. Les formules (\ref{actiondis}) dans "notation" s'applique au cas au-dessus.

\begin{theo}\label{eiskj}Si $k\geq 1$, il existe une distribution alg\'ebrique $z_{\mathbf{Eis}}(k)$ $($resp. $z'_{\mathbf{Eis}}(k)$$)$ $\in
\fD_{\alg}((\Q\otimes\hat{\Z})^2,\cM_k^{\con}(\Q^{\cycl}))$ v\'erifiant:
quels que soient $r\in\Q^{*}$ et $(a,b)\in\Q^2$, on a
\begin{align*}
\int_{(a+r\hat{\Z})\times(b+r\hat{\Z})}z_{\mathbf{Eis}}(k)&=r^{-k}E^{(k)}_{r^{-1}a,r^{-1}b} \bigl(resp. \int_{(a+r\hat{\Z})\times(b+r\hat{\Z})}z_{\mathbf{Eis}}(k)=r^{-k}\tilde{E}^{(k)}_{r^{-1}a,r^{-1}b} \text{ si } k=2 \bigr)\\
\int_{(a+r\hat{\Z})\times(b+r\hat{\Z})}z'_{\mathbf{Eis}}(k)&=r^{k-2}F^{(k)}_{r^{-1}a,r^{-1}b}.
\end{align*}
De plus, si $\gamma\in\GL_2(\Q\otimes\hat{\Z})$, alors
$z_{\mathbf{Eis}}(k)*\gamma=z_{\mathbf{Eis}}(k) \text{ et } z'_{\mathbf{Eis}}(k)*\gamma=|\det\gamma|^{1-k}z'_{\mathbf{Eis}}(k).$
\end{theo}
\begin{proof}[D\'emonstration]L'existence de la distribution r\'esulte des relations de distribution de $E^{(k)}_{\alpha,\beta}$ et $F_{\alpha,\beta}^{(k)}$ dans le lemme (\ref{lemma15}).
Comme tout
$\gamma\in\GL_2(\Q\otimes\hat{\Z})$ peut s'\'ecrire sous la
forme $\gamma=g_1\bigl(\begin{smallmatrix}r_0 & 0\\ 0 &
r_0\end{smallmatrix}\bigr)\bigl(\begin{smallmatrix}1 & 0\\ 0 &
e\end{smallmatrix}\bigr)g_2$ avec
$g_1,g_2\in\GL_2(\hat{\Z}),r_0\in\Q^{*}_{+}$ et $e$ un entier $\geq 1$.
Alors,  il ne reste qu'\`a calculer les actions de $\GL_2(\hat{\Z})$, $\bigl(\begin{smallmatrix}r_0 & 0\\ 0 &
r_0\end{smallmatrix}\bigr) $ et $\bigl(\begin{smallmatrix}1 & 0\\ 0 &
e\end{smallmatrix}\bigr)$ respectivement.  
On donnera le calcul pour la distribution alg\'ebrique $z_{\mathbf{Eis}}(k)$ et la relation pour $z_{\mathbf{Eis}}^{'}(k)$ est du calcul de la m\^eme mani\`ere. 
\begin{itemize}
\item
Si $\gamma\in\GL_2(\hat{\Z})$, on a $|\det\gamma|=1$.

\begin{equation*}
\begin{split}
\int_{(a+r\hat{\Z})\times(b+r\hat{\Z})}z_{Eis}(k)*\gamma=\left(\int_{((a+r\hat{\Z})\times(b+r\hat{\Z}))*\gamma^{-1}}z_{Eis}(k)\right)*\gamma=r^{-k}E^{(k)}_{r^{-1}a,r^{-1}b}.
\end{split}
\end{equation*}

La derni\`ere \'equation dans la formule au-dessus se d\'eduit de la proposition \ref{eis}.

\item Si $\gamma=\bigl(\begin{smallmatrix}r_0&0\\0&r_0\end{smallmatrix}\bigr)$, on a:
\begin{equation*}
\begin{split}
&\int_{(a+r\hat{\Z})\times(b+r\hat{\Z})}z_{Eis}(k)*\gamma =\left(\int_{(\frac{a}{r_0}+\frac{r}{r_0}\hat{\Z})\times(\frac{b}{r_0}+\frac{r}{r_0}\hat{\Z})}z_{Eis}(k)\right)*\gamma=(\frac{r}{r_0})^{-k}(E^{(k)}_{r^{-1}a,r^{-1}b})*\gamma\\
&=(\frac{r}{r_0})^{-k}\frac{1}{(0\tau+r_0)^k}E_{r^{-1}a,r^{-1}b}^{(k)}(\gamma\tau)=r^{-k}E^{(k)}_{r^{-1}a,r^{-1}b};
\end{split}
\end{equation*}

\item Si $\gamma=\bigl(\begin{smallmatrix}1&0\\0&e\end{smallmatrix}\bigr)$, on a:
\begin{equation*}
\begin{split}
&\int_{(a+r\hat{\Z})\times(b+r\hat{\Z})}z_{Eis}(k)*\gamma=(\int_{((a+r\hat{\Z})\times(b+r\hat{\Z}))*\gamma^{-1}}z_{Eis}(k))*\gamma=(\int_{(a+r\hat{\Z})\times(\frac{b}{e}+\frac{r}{e}\hat{\Z})}z_{Eis}(k))*\gamma\\
&=\sum_{i=0}^{e-1}(\int_{(a+r\hat{\Z})\times(\frac{b}{e}+\frac{ir}{e}+r\hat{\Z})}z_{Eis}(k))*\gamma=\frac{1}{(0\tau+e)^k}r^{-k}\sum_{i=0}^{e-1}E^{(k)}_{r^{-1}a, \frac{r^{-1}b+i}{e}}(\frac{\tau}{e})=r^{-k}E^{(k)}_{r^{-1}a,r^{-1}b},
\end{split}
\end{equation*}
o\`u la derni\`ere \'equation se d\'eduit de la relation (\ref{rel1}) dans le lemme
\ref{lemma15}.

\end{itemize}

\end{proof}

On peut identifier
$(\Q\otimes\hat{\Z})^2\times(\Q\otimes\hat{\Z})^2$ avec
$\bM_2(\Q\otimes\hat{\Z})$ via le morphisme
$((a,b),(c,d))\mapsto\bigl(\begin{smallmatrix}a & b\\ c &
d\end{smallmatrix}\bigr)$. En utilisant le fait que le produit de
deux formes modulaires de poids $i$ et $j$ est une forme modulaire
de poids $i+j$, on obtient une application naturelle:
\[\fD_{\alg}((\Q\otimes\hat{\Z})^2,\cM_i(\overline{\Q}))\otimes\fD_{\alg}((\Q\otimes\hat{\Z})^2,\cM_j(\overline{\Q}))\ra\fD_{\alg}(\bM_2(\Q\otimes\hat{\Z}),\cM_{i+j}(\overline{\Q})).\]
Si $k\geq 2$ et $1\leq j\leq k-1$, on d\'efinit
\[z_{\mathbf{Eis}}(k,j)=\frac{(-1)^j}{(j-1)!}z'_{\mathbf{Eis}}(k-j)\otimes z_{\mathbf{Eis}}(j)\in\fD_{\alg}(\bM_2(\Q\otimes\hat{\Z}),\cM_k(\overline{\Q})).\]

\subsubsection{Une variante des s\'eries d'Eisenstein et la distribution $z_{\mathbf{Eis},c,d}(k,j)$}

Soit $\langle\cdot\rangle:\Z_p^{*}\ra\hat{\Z}^*$ l'inclusion naturelle  en envoyant $x$ sur $\langle x\rangle=(1,\cdots, x,1,\cdots)$, o\`u $x$ est \`a la place $p$. Consid\'erons l'inclusion de $\hat{\Z}^{*}$ dans $\GL_2(\hat{\Z})$ en envoyant $d$ sur $(\begin{smallmatrix}d&0\\ 0& d\end{smallmatrix})$. D'apr\`es la proposition \ref{eis}, cela d\'efinit une action de $d\in\hat{\Z}^{*}$ sur les s\'eries d'Eisenstein par les formules: si $k\geq 1$ et $(\alpha,\beta)\in (\Q/\Z)^2$, on a
\begin{equation}
d\cdot E_{\alpha,\beta}^{k}=E_{d\alpha, d\beta}^{(k)}=E_{\alpha,\beta}^{(k)}*(\begin{smallmatrix}d&0\\ 0& d\end{smallmatrix}) \text{ et } d\cdot F_{\alpha,\beta}^{(k)}=F_{d\alpha,d\beta}^{(k)}=F_{\alpha,\beta}^{(k)}*(\begin{smallmatrix}d&0\\ 0& d\end{smallmatrix}),
\end{equation}
o\`u l'action de $*$ est celle de $\GL_2(\hat{\Z})$ sur les s\'eries d'Eisenstein.

Consid\'erons l'injection de $\cM_k^{\con}(\ol{\Q})$ dans $\cM_k^{\con}(\ol{\Q}_p)$.
 On peut d\'efinir une variante des s\'eries d'Eisenstein \`a coefficients dans $\ol{\Q}_p$ ci-dessous:  si $c\in\Z_p^{*}$, on pose
 \begin{equation}\label{variant}
 \begin{split}
 E^{(k)}_{c,\alpha,\beta}&=\left\{
 \begin{aligned}&c^2E^{(k)}_{\alpha,\beta}-c^{k}E_{\langle c\rangle\alpha,\langle c\rangle\beta}^{(k)}; \text{ si } k\geq 1 \text{ et } k\neq 2,
 \\
 &c^2\tilde{E}_{\alpha,\beta}^{(2)}-c^2\tilde{E}_{\langle c\rangle\alpha,\langle c\rangle\beta}^{(2)}; \text{ si }k=2;
         \end{aligned} \right. \\
 F^{(k)}_{c,\alpha,\beta}&=c^2F_{\alpha,\beta}^{(k)}-c^{2-k}F_{\langle c\rangle\alpha,\langle c\rangle\beta}^{(k)} \text{ si } k\geq 1 \text{ et } k\neq 2, \text{ ou si } (\alpha,\beta)\neq (0,0)\text{ et } k=2.
 \end{split}
 \end{equation}
Elles sont des combinaisons lin\'eaires des s\'eries d'Eisenstein. On d\'efinit une d\'erivation $\partial_z$ sur $E^{(k)}_{c,\alpha,\beta}$ comme suivant: choisissons une suite $\{c_n: c_n\in\N\}_{n\in\N}$ telle que $\lim_{n\ra +\infty}c_n=c$ et posons 
\[\partial_zE^{(k)}_{c,\alpha,\beta}=\lim_{n\ra+\infty}\partial_z(c_n^2E_k(\tau,z)-c_n^{k}E_k(\tau,c_nz))|_{z=\alpha\tau+\beta}.\]
De la relation $E_{k+1}(\tau,z)=\partial_z E_k(\tau,z)$ si $k\in\N$, on d\'eduit que:
\begin{lemma}\label{partialle}
On a $\partial_zE^{(k)}_{c,\alpha,\beta}=E^{(k+1)}_{c,\alpha,\beta}$.
\end{lemma}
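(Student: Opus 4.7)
The plan is to unfold the defining limit and reduce to the classical identity $\partial_z E_k(\tau,z)=E_{k+1}(\tau,z)$. First I would fix $\alpha,\beta\in\Q/\Z$, let $N=p^a N'$ with $(N',p)=1$ be a common denominator, and choose by the Chinese remainder theorem a sequence $c_n\in\N$ with $c_n\to c$ in $\Z_p^*$ and $c_n\equiv 1\pmod{N'}$. This choice is the crucial point: it ensures that for $n$ large, $c_n\alpha=\langle c\rangle\alpha$ and $c_n\beta=\langle c\rangle\beta$ in $\Q/\Z$, so the translated evaluation points stabilize and one can pass to the limit term by term in the $q$-expansion.

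Second, since $c_n$ is an integer, one may apply the chain rule together with $\partial_z E_k=E_{k+1}$ to obtain, for every $n$,
\begin{equation*}
\partial_z\bigl(c_n^2 E_k(\tau,z)-c_n^k E_k(\tau,c_n z)\bigr)\Big|_{z=\alpha\tau+\beta}
= c_n^2 E^{(k+1)}_{\alpha,\beta}-c_n^{k+1} E^{(k+1)}_{c_n\alpha,c_n\beta}.
\end{equation*}
By the stabilization above this equals $c_n^2 E^{(k+1)}_{\alpha,\beta}-c_n^{k+1} E^{(k+1)}_{\langle c\rangle\alpha,\langle c\rangle\beta}$ for $n$ large. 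Since $c_n\to c$ in $\Z_p$, the scalars $c_n^2$ and $c_n^{k+1}$ converge to $c^2$ and $c^{k+1}$, and the $q$-expansion limit (coefficient by coefficient, computed in $\Q_p^{\cycl}$) is $c^2 E^{(k+1)}_{\alpha,\beta}-c^{k+1} E^{(k+1)}_{\langle c\rangle\alpha,\langle c\rangle\beta}$.

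It remains to recognize this as $E^{(k+1)}_{c,\alpha,\beta}$. When $k+1\neq 2$, this is exactly the defining formula (\ref{variant}). The only case that requires a moment's care is $k=1$: there the definition involves the normalization $\tilde E^{(2)}_{\alpha,\beta}=E^{(2)}_{\alpha,\beta}-E^{(2)}_{0,0}$, but in the difference $c^2\tilde E^{(2)}_{\alpha,\beta}-c^2\tilde E^{(2)}_{\langle c\rangle\alpha,\langle c\rangle\beta}$ the two $E^{(2)}_{0,0}$ terms cancel, leaving $c^2 E^{(2)}_{\alpha,\beta}-c^2 E^{(2)}_{\langle c\rangle\alpha,\langle c\rangle\beta}$, which agrees with our limit since $c^{k+1}=c^2$. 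The main obstacle, such as it is, is simply to justify commuting the limit with the differentiation and specialization at $z=\alpha\tau+\beta$, which the integrality of $c_n$ and the stabilization of $c_n\alpha$ in $\Q/\Z$ make harmless. One should finally note that the limit does not depend on the choice of $c_n$, so $\partial_z E^{(k)}_{c,\alpha,\beta}$ is well defined and the claimed identity $\partial_z E^{(k)}_{c,\alpha,\beta}=E^{(k+1)}_{c,\alpha,\beta}$ follows.
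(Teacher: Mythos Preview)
Your proposal is correct and follows exactly the approach the paper indicates: the paper's ``proof'' is the single phrase ``De la relation $E_{k+1}(\tau,z)=\partial_z E_k(\tau,z)$ si $k\in\N$, on d\'eduit que'', and you have carefully supplied all the missing details (the choice of $c_n$ via CRT so that $c_n\alpha$ stabilizes at $\langle c\rangle\alpha$, the chain rule computation, the passage to the limit on $q$-coefficients, and the $k=1$ normalization check). Your treatment is substantially more thorough than the paper's, but the underlying argument is identical.
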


D'apr\`es le paragraphe $\S \ref{distributionEis}$.
la proposition suivante est imm\'ediate:
\begin{prop}
\begin{itemize}
\item[(1)]Soit $c\in\Z_p^{*}$. Si $k\geq 1$, il existe une distribution alg\'ebrique $z_{\mathbf{Eis}, c}(k)$ (resp. $z^{'}_{\mathbf{Eis},c}$) $\in \fD_{\alg}((\Q\otimes\hat{\Z})^2, \cM_k^{\con}(\Q_p^{\cycl}))$ v\'erifiant: quel que soient $r\in\Q^*$ et $(a,b)\in\Q^2$, on a
\begin{equation*}
\begin{split}
\int_{(a+r\hat{\Z})\times(b+r\hat{\Z})}z_{\mathbf{Eis},c}(k)=r^{-k}E^{(k)}_{c,r^{-1}a,r^{-1}b}
(\text{resp. } \int_{(a+r\hat{\Z})\times(b+r\hat{\Z})}z^{'}_{\mathbf{Eis},c}(k)=r^{k-2}F^{(k)}_{c,r^{-1}a,r^{-1}b}. )
\end{split}
\end{equation*}
De plus, si $\gamma\in\GL_2(\Q\otimes \hat{\Z})$, alors
on a \[z_{\mathbf{Eis},c}(k)*\gamma=z_{\mathbf{Eis},c}(k) \text{ et } z_{\mathbf{Eis},c}^{'}*\gamma=|\det\gamma|^{1-k}z_{\mathbf{Eis},c}^{'}(k).\]
\item[(2)]Soient $c,d\in\Z_p^{*}$. Si $k\geq 2$ et $1\leq j\leq k-1$, la distribution
    \[z_{\mathbf{Eis},c,d}(k,j)=\frac{(-1)^j}{(j-1)!}z'_{\mathbf{Eis},c}(k-j)\otimes z_{\mathbf{Eis},d}(j)\] appartient \`a $\fD_{\alg}(\bM_2(\Q\otimes\hat{\Z}),\cM_k(\Q^{\cycl}_p)).$ De plus, elle v\'erifie les propri\'et\'es suivantes:\\
$\bullet$Si $M,N$ sont deux entiers $\geq 1$, on pose
$O_{M,N}=\{\bigl(\begin{smallmatrix}a_0 & b_0\\ c_0 &
d_0\end{smallmatrix}\bigr), a_0-1,b_0\in M\hat{\Z}, c_0,d_0-1\in N\hat{\Z}\}$
et  $\phi_{M,N}$ la fonction caract\'eristique de $O_{M,N}$. Alors on
a:
\[\int\phi_{M,N}z_{\mathbf{Eis},c,d}(k,j)=\frac{(-1)^j}{(j-1)!}M^{k-j-2}N^{-j}F_{c,\frac{1}{M},0}^{(k-j)}E^{(j)}_{d,0,\frac{1}{N}}.  \]
$\bullet$Si $\gamma\in\GL_2(\Q\otimes\hat{\Z})$, $z_{\mathbf{Eis},c,d}(k,j)_{|_k}\gamma=|\det\gamma|^{j-1}z_{\mathbf{Eis},c,d}(k,j).$
\end{itemize}
\end{prop}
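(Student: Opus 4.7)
The proposition is, as the authors note, a formal consequence of what has already been done: Theorem~\ref{eiskj} provides the basic distributions, Lemma~\ref{lemma15} the underlying distribution relations, Proposition~\ref{eis} the $\GL_{2}(\hat{\Z})$-equivariance, and the variants (\ref{variant}) are by design $\Z_{p}$-linear combinations of the classical Eisenstein series. I would organize the verification in three steps.

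\emph{Step 1 --- construction of $z_{\mathbf{Eis},c}(k)$ and $z'_{\mathbf{Eis},c}(k)$.} Let $\tau_{\langle c\rangle}$ denote the right multiplication on $(\Q\otimes\hat{\Z})^{2}$ by the scalar matrix $\bigl(\begin{smallmatrix}\langle c\rangle & 0\\ 0 & \langle c\rangle\end{smallmatrix}\bigr)$, and set
\[
z_{\mathbf{Eis},c}(k):=c^{2}\,z_{\mathbf{Eis}}(k)-c^{k}\,\tau_{\langle c\rangle,*}\,z_{\mathbf{Eis}}(k),\qquad z'_{\mathbf{Eis},c}(k):=c^{2}\,z'_{\mathbf{Eis}}(k)-c^{2-k}\,\tau_{\langle c\rangle,*}\,z'_{\mathbf{Eis}}(k).
\]
(The case $k=2$ of $z_{\mathbf{Eis},c}(k)$ is treated with $\tilde E^{(2)}$ in place of $E^{(2)}$; the non-holomorphic contribution $E^{(2)}_{0,0}$ cancels in the defining difference, so the argument is unchanged.) Applying Theorem~\ref{eiskj} to each summand and using Proposition~\ref{eis} to identify the pushforward of an Eisenstein series under $\tau_{\langle c\rangle}$ with the Eisenstein series at the translated argument yields the stated integration formulas on a box $(a+r\hat{\Z})\times(b+r\hat{\Z})$. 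Compatibility with refinement of boxes reduces, via the bijection $\alpha\mapsto\langle c\rangle\alpha$ of $(\Q\otimes\hat{\Z})/\hat{\Z}$, to the distribution relations of Lemma~\ref{lemma15}, so the prescription defines genuine algebraic distributions.

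\emph{Step 2 --- $\GL_{2}(\Q\otimes\hat{\Z})$-equivariance.} Since $\langle c\rangle$ is central in $\GL_{2}(\Q\otimes\hat{\Z})$, the translation $\tau_{\langle c\rangle}$ commutes with every right translation $*\gamma$; hence $\tau_{\langle c\rangle,*}$ intertwines the $\GL_{2}(\Q\otimes\hat{\Z})$-action and the identities $z_{\mathbf{Eis},c}(k)*\gamma=z_{\mathbf{Eis},c}(k)$ and $z'_{\mathbf{Eis},c}(k)*\gamma=|\det\gamma|^{1-k}z'_{\mathbf{Eis},c}(k)$ follow termwise from the corresponding statements of Theorem~\ref{eiskj}.

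\emph{Step 3 --- tensor product.} The distribution $z_{\mathbf{Eis},c,d}(k,j)$ is now obtained by applying the tensor-product map of $\S\ref{distributionEis}$ along $(\Q\otimes\hat{\Z})^{2}\times(\Q\otimes\hat{\Z})^{2}\simeq \bM_{2}(\Q\otimes\hat{\Z})$. For the explicit evaluation on $\phi_{M,N}$, I factor $\phi_{M,N}$ as the product of the characteristic function of $(1+M\hat{\Z})\times M\hat{\Z}$ on the first $(\Q\otimes\hat{\Z})^{2}$-factor with that of $N\hat{\Z}\times(1+N\hat{\Z})$ on the second, then apply the formulas of Step~1 with $(r,a,b)=(M,1,0)$ and $(r,a,b)=(N,0,1)$ respectively; multiplying the results produces exactly $\tfrac{(-1)^{j}}{(j-1)!}M^{k-j-2}N^{-j}F^{(k-j)}_{c,1/M,0}E^{(j)}_{d,0,1/N}$. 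The transformation formula under $\gamma$ follows by applying Step~2 in parallel to each factor (right multiplication by $\gamma$ acts simultaneously on both rows of the matrix): the weight-$j$ factor is invariant while the weight-$(k-j)$ factor gets twisted by $|\det\gamma|^{1-(k-j)}$, and converting from the $*$-action to the $|_{k}$-action via (\ref{etoi}) combines these into the announced factor $|\det\gamma|^{j-1}$. The only point requiring minor care throughout is the $k=2$ normalization in Step~1, for which one works with $\tilde E^{(2)}$; this is built into (\ref{variant}) and presents no real difficulty.
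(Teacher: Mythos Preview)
Your proposal is correct and follows the same route the paper has in mind: the authors say only ``D'apr\`es le paragraphe \S\ref{distributionEis}, la proposition suivante est imm\'ediate'', and you have simply unpacked that remark --- building $z_{\mathbf{Eis},c}(k)$ and $z'_{\mathbf{Eis},c}(k)$ as the linear combinations of the basic distributions of Theorem~\ref{eiskj} dictated by (\ref{variant}), deducing the equivariance from centrality of $\langle c\rangle$ together with the invariance in Theorem~\ref{eiskj}, and reading off part~(2) by tensoring. One cosmetic point: your pushforward $\tau_{\langle c\rangle,*}$ goes the wrong way for the formula as written (it produces $E^{(k)}_{\langle c\rangle^{-1}\alpha,\langle c\rangle^{-1}\beta}$ rather than $E^{(k)}_{\langle c\rangle\alpha,\langle c\rangle\beta}$); replacing it by $\tau_{\langle c\rangle^{-1},*}$, or simply defining the new distributions directly by their values on boxes and invoking Lemma~\ref{lemma15} as you do at the end of Step~1, fixes this without affecting the argument.
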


\subsection{Unit\'es de Siegel et distribution  $z_{\mathbf{Siegel}}$}\label{section2}
\begin{defn}Soit $\Gamma$ un sous-groupe de $\SL_2(\Z)$ d'indice fini. 
\begin{itemize}
\item[(1)]Une fonction modulaire de poids $0$ pour $\Gamma$ est une fonction holomorphe $f(\tau)$ sur $\cH$ et m\'eromorphe sur $(\cH\cup\bP^1(\Q))/\Gamma $, telle que, $f(\frac{a\tau+b}{c\tau+d})=f(z)$  pour  $\gamma\in\Gamma$;
\item[(2)]
Une unit\'e modulaire pour $\Gamma$ est une unit\'e de l'anneau des fonctions modulaires de poids $0$ pour $\Gamma$ .
\end{itemize}
\end{defn}

Soit $K$ un sous corps de $\C$. On note $\cU(\Gamma, K)$ le groupe des unit\'es modulaires pour $\Gamma$ dont le $q$-d\'eveloppement est \`a coefficients dans $K$.
 On note
$\cU(K)$ la r\'eunion des $\cU(\Gamma,K)$, o\`u $\Gamma$ d\'ecrit tous les
sous-groupes d'indice fini de $\SL_2(\Z)$.

La fonction th\^eta $\theta(\tau,z)$ est d\'efinie par le produit infini
\[\theta(\tau,z)=q^{1/12}(q_z^{1/2}-q_z^{-1/2})\sideset{}{}\prod_{n\geq 1}((1-q^n q_z)(1-q^nq_z^{-1})).\]
Elle n'est pas p\'eriodique en $z$ de p\'eriode $\Z+\Z\tau$. Rappelons ci-apr\`es les propri\'et\'es fondamentales de la fonction
$\theta$ ( c.f. \cite{SL} chapitre $19$):
\begin{itemize}
\item $\theta$ est homog\`ene de degr\'e $0$;
\item Soit $\gamma=\bigl(\begin{smallmatrix}a_0&b_0\\c_0&d_0\end{smallmatrix}\bigr)\in\SL_2(\Z)$.
Alors $\theta(\tau,z)$ satisfait une \'equation fonctionnelle:
\[\theta(\gamma(\begin{smallmatrix}\tau\\ 1\end{smallmatrix}),z)=\z_{12,\gamma}\theta(\tau,z)e(\frac{\pi ic_0z^2}{(c_0\tau+d_0)})\]  o\`u  $\z_{12,\gamma}$ est une racine d'unit\'e d'ordre  $12$  qui d\'epend de $\gamma$ et $\gamma(\begin{smallmatrix}\tau\\ 1\end{smallmatrix})$ est le produit de matrices usuel (i.e. $\gamma(\begin{smallmatrix}\tau \\ 1 \end{smallmatrix})=(\begin{smallmatrix}a_0\tau+b_0 \\ c_0\tau+d_0\end{smallmatrix})$).
\end{itemize}

 Notons $\Lambda_\tau$ le r\'eseau $\Z+\Z\tau$.  Pour un
entier $c>2, (c,6)=1$, on d\'efinit une autre fonction
$g_{0,c}(z)=\theta(\tau,z)^{c^2}\theta(\tau,cz)^{-1}$ \`a partir de la fonction th\^eta.
Soit $a$ un entier $\geq 1$; on note $\cO(\C/\Lambda_\tau,a)$ l'anneau des fonctions holomorphes sur $\C/\Lambda_\tau-(a^{-1}\Lambda_\tau)/\Lambda_\tau$ et on note $\cO(\C/\Lambda_\tau,a)^*$ le groupe des unit\'es de $\cO(\C/\Lambda_\tau,a)$.
Soit $a,b$ deux entiers tels que $(a,b)=1$; on d\'efinit une application de norme du groupe $\cO(\C/\Lambda_\tau,ab)^{*}$ dans le groupe $\cO(\C/\Lambda_\tau,b)^{*}$ comme suit: soit $f(z)\in\cO(\C/\Lambda_\tau,ab)^{*}$, on a
\[N_a(f(z)):=\prod\limits_{k=0}^{a-1}\prod\limits_{j=0}^{a-1}f(\frac{z}{a}+\frac{k}{a}+\frac{j\tau}{a}).\]
\begin{lemma}
\begin{itemize}
\item[(1)]La fonction $g_{0,c}(z)$, qui est une fonction elliptique sur  $\C/\Lambda_\tau$,  est une unit\'e de l'anneau $\cO(\C/\Lambda_\tau,c)$ des fonctions holomorphes sur $\C/\Lambda_\tau-(c^{-1}\Lambda_\tau)/\Lambda_\tau$.
\item[(2)]Quel que soit $a$ un entier $\geq 1$ tel que $(a,c)=1$, on a $N_a(g_{0,c})=g_{0,c}$.
\end{itemize}
\end{lemma}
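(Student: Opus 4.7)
The plan is to prove both assertions by carefully analyzing the divisor of $g_{0,c}$ on the elliptic curve $E=\C/\Lambda_\tau$, using the explicit product formula and the quasi-periodicity of $\theta$.

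For (1), the starting point is that the product expansion $\theta(\tau,z)=q^{1/12}(q_z^{1/2}-q_z^{-1/2})\prod_{n\geq 1}(1-q^{n}q_z)(1-q^{n}q_z^{-1})$ shows that, as a holomorphic function on $\C$, $\theta(\tau,z)$ vanishes exactly to first order at the lattice points $z\in\Lambda_\tau$. Hence $\theta(\tau,z)^{c^{2}}$ has a zero of order $c^{2}$ at each point of $\Lambda_\tau$, while $\theta(\tau,cz)^{-1}$ has a simple pole at each point of $c^{-1}\Lambda_\tau$. Combining these on the quotient $\C/\Lambda_\tau$, one finds that $g_{0,c}$ has a zero of order $c^{2}-1$ at the origin and simple poles at each of the $c^{2}-1$ non-trivial $c$-torsion points, and is holomorphic and non-vanishing elsewhere; this already forces $g_{0,c}$ to be a unit of $\cO(\C/\Lambda_\tau,c)$ once ellipticity is established. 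For the $\Lambda_\tau$-invariance, one applies the standard quasi-periodicities $\theta(\tau,z+1)=-\theta(\tau,z)$ and $\theta(\tau,z+\tau)=-q^{-1/2}q_z^{-1}\theta(\tau,z)$ to $\theta(\tau,z)^{c^{2}}\theta(\tau,cz)^{-1}$. The condition $(c,6)=1$ (in particular $c$ odd) is what makes the various factors $(-1)^{c^{2}-1}$ and $q^{-(c^{2}-c)/2}$ cancel against one another in both translations, yielding $g_{0,c}(z+\lambda)=g_{0,c}(z)$ for $\lambda\in\Lambda_\tau$.

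For (2), I would observe that $N_a(g_{0,c})$ is by construction a $\Lambda_\tau$-periodic meromorphic function on $\C$ (translating $w$ by $\tfrac1a$ or $\tfrac{\tau}{a}$ merely permutes the factors indexed by $(k,j)\bmod a$), hence is an elliptic function on $E$. The key point is to compute its divisor via the push-forward by $[a]:E\to E$: a standard computation gives
\[\mathrm{div}(N_{a}g_{0,c})=[a]_{*}\mathrm{div}(g_{0,c})=(c^{2}-1)[a\cdot 0]-\sum_{P\in E[c]\setminus\{0\}}[aP].\]
Now since $(a,c)=1$, multiplication by $a$ is a group automorphism of $E[c]$ that fixes $0$, so $\{aP:P\in E[c]\setminus\{0\}\}=E[c]\setminus\{0\}$ and $\mathrm{div}(N_{a}g_{0,c})=\mathrm{div}(g_{0,c})$. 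Two elliptic functions with identical divisors differ by a non-zero constant $\lambda=\lambda_{a,c}(\tau)\in\C^{*}$, so $N_{a}(g_{0,c})=\lambda\cdot g_{0,c}$.

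The main obstacle is the last step: showing $\lambda=1$. My plan is to pin this constant down by comparing the leading terms of the two sides in their $q$-expansion. Using the infinite product for $\theta$, one computes that, up to an explicit factor depending only on $z$, $g_{0,c}(z)$ has leading behaviour $q^{(c^{2}-1)/12}\,(q_z^{1/2}-q_z^{-1/2})^{c^{2}}/(q_z^{c/2}-q_z^{-c/2})$ as $q\to 0$. For $N_{a}g_{0,c}(z)$, one first substitutes $\frac{z+k+j\tau}{a}$ in the theta product and re-indexes; the $j=0$ slice contributes a product over $k$ whose leading term can be evaluated via the classical identity $\prod_{k=0}^{a-1}(X\zeta_a^k-X^{-1}\zeta_a^{-k})=X^{a}-X^{-a}$, while the $j\geq 1$ slices contribute fractional powers of $q$ which, after the careful distribution identity $\prod_{T\in E[a]}\theta(\tau,w+T)=C_{a}(\tau)\,q_{w}^{-a(a-1)/2}\theta(\tau,aw)$ (applied once to the numerator and, since $(a,c)=1$ so that $T\mapsto cT$ bijects $E[a]$ up to lattice translations, once to the denominator), reorganize so that all the auxiliary factors cancel. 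Tracking these cancellations rigorously is the delicate part; the pay-off is precisely $\lambda=1$, after which (2) follows.
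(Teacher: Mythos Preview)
Your argument for (1) is correct and matches the paper's: analyze the divisor from the product formula for $\theta$, then check $\Lambda_\tau$-periodicity from the quasi-periodicities. For (2), you and the paper agree up to the point where $N_a(g_{0,c})=\lambda_a\, g_{0,c}$ for some constant $\lambda_a\in\C^*$, via the divisor computation. The paper, however, pins down $\lambda_a=1$ by a much lighter device than your general $q$-expansion comparison: it uses the commutativity $N_aN_b=N_bN_a$ (together with $N_a(\mathrm{const}\cdot f)=\mathrm{const}^{a^2}N_a(f)$) to deduce the multiplicative relation $\lambda_b^{\,a^2-1}=\lambda_a^{\,b^2-1}$, and then only verifies directly that $\lambda_2=\lambda_3=1$ (legitimate since $(c,6)=1$). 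For these two small values of $a$ the explicit check via $\prod_{k=0}^{a-1}(1-X\zeta_a^k)=1-X^a$ is short, and the relation above then forces $\lambda_a^3=\lambda_a^8=1$, hence $\lambda_a=1$ for every admissible $a$. Your direct approach---applying a theta distribution identity to numerator and denominator and chasing the auxiliary factors---should ultimately succeed, but the step you flag as ``delicate'' (handling $\theta(\tau,cz/a+cT)$ when $cT$ is only congruent to an $a$-torsion point modulo $\Lambda_\tau$, so quasi-periodicity factors intrude) is genuinely fiddly for general $a$; the paper's reduction to $a\in\{2,3\}$ is what buys you a clean endgame.
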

\begin{proof}[D\'emonstration]
\begin{itemize}
\item[(1)]La premi\`ere assertion de $(1)$ du lemme suit d'un calcul direct.

Observons que le diviseur
associ\'e \`a $g_{0,c}$ sur $\C/\Lambda_\tau$ est
$c^2(\bar{0})-c^{-1}\Lambda_\tau/\Lambda_\tau$ ( cela suit de l'expression explicite de
$g_{0,c}$.)
Il s'ensuit que $g_{0,c}$ est une unit\'e de l'anneau des fonctions holomorphes sur $\C/\Lambda_\tau-(c^{-1}\Z+c^{-1}\Z\tau)/\Lambda_\tau$.

\item[(2)]De la d\'efinition de l'application $N_a$, on a $N_a(N_b(g_{0,c}))=N_b(N_a(g_{0,c}))$ pour $a,b\in\N $ tels que $(ab,c)=1$. Comme $N_a(g_{0,c})$ et $g_{0,c}(z)$ ont le m\^eme diviseur, il existe une constante $u_a\in\C^{*}$ telle que $N_a(g_{0,c})=u_ag_{0,c}$.  Par la relation $N_aN_b=N_bN_a$, on a $u_b^{a^2-1}=u_a^{b^2-1}$. Si on pose $g=u_2^{-3}u_3g_{0,c}$, alors on a
    \[N_a(g)=u_2^{-3a^2}u_3^{a^2}u_ag_{0,c}=u_2^{-3(a^2-1)}u_3^{a^2-1}u_ag=u_a^{-3(2^2-1)}u_a^{3^2-1}u_ag=g.\]
    L'assertion se d\'eduit des relations $N_2(g_{0,c})=g_{0,c}$ et $N_3(g_{0,c})=g_{0,c}$. En effet, quel que soit $a\geq 1$ tel que $(a,c)=1$, de la formule $\prod_{i=0}^{a-1}(1-X\z_a^i)=1-X^a$, on a
   \begin{equation*}
    N_a(1-q^nq_z^{\pm1})=\prod_{j=0}^{a-1}(1-q^{an\pm j}q_z^{\pm1});
    N_a(1-q^nq_z^{\pm c})=\prod_{j=0}^{a-1}(1-q^{an\pm cj}q_{cz}^{\pm1}).
    \end{equation*}

    Alors,   on a la relation
    \[N_a(\prod_{n\geq 0}(1-q^nq_z)\prod_{n\geq 1}(1-q^nq_z^{-1}))=\prod_{n\geq 0}(1-q^nq_z)\prod_{n\geq 1}(1-q^nq_z^{-1});\]
cela simplifie les calculs et on v\'erifie facilement pour $a=2,3$, $ N_a(g_{0,c})=g_{0,c}.$
\end{itemize}
\end{proof}

Soit $d$ un autre entier v\'erifiant $(d,6)=1$. On d\'efinit $c^{*}g_{0,d}(z)=g_{0,d}(cz)$.
\begin{lemma}Soient $c,d$ deux entiers tels que $(cd,6)=1$. Alors on a
 \begin{equation}\label{siegel}
(g_{0,d})^{c^2}(c^{*}g_{0,d})^{-1}=(g_{0,c})^{d^2}(d^{*}g_{0,c})^{-1}.
\end{equation}
\end{lemma}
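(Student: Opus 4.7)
The plan is to prove the identity by direct substitution of the definition
\[
g_{0,c}(z)=\theta(\tau,z)^{c^{2}}\theta(\tau,cz)^{-1},
\]
and to observe that both sides expand to the same symmetric expression in $c$ and $d$. No use of the $\theta$-function beyond its formal definition is required, so this is purely a bookkeeping exercise.

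First I would compute the left-hand side. Unpacking,
\[
(g_{0,d})^{c^{2}}=\theta(\tau,z)^{c^{2}d^{2}}\,\theta(\tau,dz)^{-c^{2}},
\qquad
c^{*}g_{0,d}=g_{0,d}(cz)=\theta(\tau,cz)^{d^{2}}\,\theta(\tau,cdz)^{-1},
\]
so that
\[
(g_{0,d})^{c^{2}}(c^{*}g_{0,d})^{-1}
=\theta(\tau,z)^{c^{2}d^{2}}\,\theta(\tau,cz)^{-d^{2}}\,\theta(\tau,dz)^{-c^{2}}\,\theta(\tau,cdz).
\]
Then I would perform the analogous computation for the right-hand side, exchanging the roles of $c$ and $d$: $(g_{0,c})^{d^{2}}=\theta(\tau,z)^{c^{2}d^{2}}\theta(\tau,cz)^{-d^{2}}$ and $(d^{*}g_{0,c})^{-1}=\theta(\tau,dz)^{-c^{2}}\theta(\tau,cdz)$.

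The key observation is that the resulting four-factor product is invariant under $c\leftrightarrow d$, so the two sides coincide. In fact the common expression
\[
\theta(\tau,z)^{c^{2}d^{2}}\,\theta(\tau,cz)^{-d^{2}}\,\theta(\tau,dz)^{-c^{2}}\,\theta(\tau,cdz)
\]
is manifestly symmetric in $(c,d)$, and this is exactly what yields $(\ref{siegel})$.

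There is essentially no obstacle here: the hypotheses $(c,6)=(d,6)=1$ play no role in the verification itself (they are only needed to ensure that $g_{0,c}$ and $g_{0,d}$ are units of the appropriate rings, and thus that the equality takes place in a group where inversion is defined). The only minor point to emphasize is that the operation $c^{*}$ acts only on the $z$-variable, so $c^{*}\theta(\tau,z)=\theta(\tau,cz)$ and $c^{*}\theta(\tau,dz)=\theta(\tau,cdz)$; once this is made explicit the identity is purely formal.
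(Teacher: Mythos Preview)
Your proof is correct, and it is in fact considerably simpler than the argument given in the paper. The paper proceeds indirectly: it observes that both sides of the identity are elliptic functions on $\C/\Lambda_\tau$ with the same divisor
\[
c^{2}d^{2}(\bar 0)-c^{2}(d^{-1}\Lambda_\tau/\Lambda_\tau)-d^{2}(c^{-1}\Lambda_\tau/\Lambda_\tau)+((cd)^{-1}\Lambda_\tau/\Lambda_\tau),
\]
concludes that their ratio is a nonzero constant $u$, and then uses the norm-compatibility $N_a(g_{0,c})=g_{0,c}$ (from the previous lemma) with $a=2,3$ to force $u^{4}=u$ and $u^{9}=u$, hence $u=1$. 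Your approach bypasses all of this: by unwinding the definition of $g_{0,c}$ directly, both sides literally reduce to the same $(c,d)$-symmetric product of four $\theta$-factors, so no divisor computation and no appeal to norm-compatibility is needed. The paper's route has the virtue of being a template that works even when one does not have such a clean product formula at hand (divisor plus normalisation is the standard way to pin down an elliptic function), but for this particular lemma your direct expansion is the cleaner argument.
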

\begin{proof}[D\'emonstration]Consid\'erons les fonctions m\'eromorphes
$(g_{0,d})^{c^2}(c^{*}g_{0,d})^{-1}$ et
$(g_{0,c})^{d^2}(d^{*}g_{0,c})^{-1}$, elles ont le m\^eme diviseur
sur $\C/\Lambda_\tau:
c^2d^2(\bar{0})-c^2(d^{-1}\Lambda_\tau/\Lambda_\tau)-d^2(c^{-1}\Lambda_\tau/\Lambda_\tau)+((cd)^{-1}\Lambda_\tau/\Lambda_\tau)$.
Donc
$\frac{(g_{0,d})^{c^2}(c^{*}g_{0,d})^{-1}}{(g_{0,c})^{d^2}(d^{*}g_{0,c})^{-1}}$
est une fonction holomorphe sur $\C/\Lambda_\tau$; en particulier, elle est une
fonction constante non nulle $u$. Comme $g_{0,c}$ est stable sous
l'application de norme $N_a$ pour $(a,c)=1$, il en r\'esulte que
pour $a=2$ (resp. $3$), $u^4=u$ ( resp. $u^9=u$). Donc $u=1$.
\end{proof}

Soit $(\alpha,\beta)\in(\Q/\Z)^2$; on d\'efinit une action de $\SL_2(\Z)$ \`a droite par le produit de matrices usuel:
\[\text{si } \gamma=(\begin{smallmatrix}a_0& b_0\\ c_0& d_0\end{smallmatrix}), (\alpha,\beta)*\gamma=(a_0\alpha+c_0\beta, b_0\alpha+d_0\beta) .\]
Pour $(\alpha,\beta)\in(\Q/\Z)^2$,
choisissons un rel\`evement $(a,b)$ de $(\alpha,\beta)$ dans
$\Q^2$, et posons
\[g_{c,\alpha,\beta}(\tau)=g_{0,c}(a\tau+b).\]

\begin{prop}\label{siegle}Soient $\alpha,\beta\in\frac{1}{N}\Z/\Z$.
\begin{itemize}
\item[(i)] Si $(c,6)=1$ et $(c\alpha,c\beta)\neq(0,0)$, alors $g_{c,\alpha,\beta}$ est une unit\'e modulaire dans
$\cU(\Gamma_N,\Q(\z_N))$.
\item[(ii)] Si $(c,6)=1$, l'\'el\'ement $g_{\alpha,\beta}=g_{c,\alpha,\beta}^{1/(c^2-1)}$
de $\Q\otimes\cU(\overline{\Q})$ ne d\'epend pas du choix de $c\equiv1\mod N$. De plus, on a
$g_{c,\alpha,\beta}=g_{\alpha,\beta}^{c^2}g_{c\alpha,c\beta}^{-1}.$
\end{itemize}
\end{prop}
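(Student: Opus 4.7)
My proof would split along the two parts, with part~(i) resting on the transformation and quasi-periodicity properties of $\theta$, and part~(ii) on the relation~(\ref{siegel}) just established.

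For part~(i), I would start from the given fact that $g_{0,c}$ is a unit of $\cO(\C/\Lambda_\tau, c)$. Its divisor on $\C/\Lambda_\tau$ is supported on $c^{-1}\Lambda_\tau/\Lambda_\tau$, so under the hypothesis $(c\alpha,c\beta)\neq(0,0)$ the point $a\tau+b$ avoids this divisor; consequently $g_{c,\alpha,\beta}(\tau)=g_{0,c}(a\tau+b)$ is holomorphic and non-vanishing on $\cH$, and well-defined independently of the lift $(a,b)$ by $\Z^2$-periodicity of $g_{0,c}$. The main calculation is then the modularity under $\Gamma_N$. For $\gamma=\bigl(\begin{smallmatrix}a_0&b_0\\c_0&d_0\end{smallmatrix}\bigr)\in\Gamma_N$, writing $\tau'=\gamma\tau$, one has $(c_0\tau+d_0)(a\tau'+b)=a'\tau+b'$ with $(a',b')=(a,b)\gamma$, and the congruence $\gamma\equiv I\pmod N$ together with $N\alpha=N\beta=0$ ensures $(a'-a,b'-b)\in\Z^2$. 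Combining the functional equation of $\theta$ with its quasi-periodicity $\theta(\tau,z+k\tau+l)=(-1)^l q^{-k^2/2}q_z^{-k}\theta(\tau,z)$, one verifies directly that the exponential factor $e(\pi ic_0(c_0\tau+d_0)w^2)$ contributes with total exponent $c^2\cdot c^2 z_1^2 - c^2 z_1^2 = 0$ in the ratio, and that the quasi-periodicity factors arising from $z_1=a\tau+b$ and $cz_1$ match because $q_{cz_1}=q_{z_1}^c$ and $c(c-1)$ is even. What remains is a factor $\zeta_{12,\gamma}^{c^2-1}$, and since $(c,6)=1$ implies $c^2\equiv 1\pmod{12}$, this factor is trivial. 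Hence $g_{c,\alpha,\beta}\in\cM_0(\Gamma_N,\C)$ with no zeros or poles on $\cH$, and its $q$-development at $i\infty$ (computed from the product formula for $\theta$ with $q_z=q^a e^{2\pi ib}$) has coefficients in $\Q(\z_N)$ and a unit leading term. Applying the $\Gamma_N$-invariant slash action to rotate among cusps reduces the check at other cusps to the same computation for a different pair $(\alpha',\beta')$ still satisfying $(c\alpha',c\beta')\neq(0,0)$, so $g_{c,\alpha,\beta}\in\cU(\Gamma_N,\Q(\z_N))$.

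For part~(ii), I would specialize the identity~(\ref{siegel}) at $z=a\tau+b$, noting that $d^{*}g_{0,c}$ evaluated at $a\tau+b$ is $g_{0,c}(d(a\tau+b))=g_{c,d\alpha,d\beta}(\tau)$ (and similarly with $c,d$ exchanged). This gives
\[
 g_{c,\alpha,\beta}^{d^2}\,g_{c,d\alpha,d\beta}^{-1}=g_{d,\alpha,\beta}^{c^2}\,g_{d,c\alpha,c\beta}^{-1}
\]
in $\cU(\ol{\Q})$. When both $c$ and $d$ are congruent to $1$ modulo $N$ and coprime to $6$, the hypothesis $N\alpha=N\beta=0$ forces $d\alpha=\alpha$, $d\beta=\beta$, $c\alpha=\alpha$, $c\beta=\beta$ in $\Q/\Z$, so the relation collapses to $g_{c,\alpha,\beta}^{d^2-1}=g_{d,\alpha,\beta}^{c^2-1}$; this is precisely the assertion that $g_{c,\alpha,\beta}^{1/(c^2-1)}=g_{d,\alpha,\beta}^{1/(d^2-1)}$ in $\Q\otimes\cU(\ol{\Q})$, proving that $g_{\alpha,\beta}$ is well-defined. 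For the final identity, I would take $d\equiv 1\pmod N$ with $(d,6)=1$ (which is legal for any $c$ coprime to $6$): then $d\alpha=\alpha$, $d(c\alpha)=c\alpha$, so the displayed relation becomes $g_{c,\alpha,\beta}^{d^2-1}=g_{d,\alpha,\beta}^{c^2}\,g_{d,c\alpha,c\beta}^{-1}$. Extracting the $(d^2-1)$-th root and using the definition of $g_{\alpha,\beta}$ and $g_{c\alpha,c\beta}$ yields exactly $g_{c,\alpha,\beta}=g_{\alpha,\beta}^{c^2}\,g_{c\alpha,c\beta}^{-1}$.

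The principal obstacle is the explicit bookkeeping in step~(i): one must track carefully the factors $\zeta_{12,\gamma}$, the Gaussian factors $e(\pi i c_0(c_0\tau+d_0)w^2)$ coming from the theta transformation, and the $q_z$-factors from quasi-periodicity, and verify their cancellation in the combination $\theta^{c^2}/\theta(c\cdot)$. The point that $c^2\equiv 1\pmod{12}$ for $(c,6)=1$ (and the parity of $c(c-1)$) is precisely what makes the construction work, and justifies the exponent $c^2$ in the definition of $g_{0,c}$.
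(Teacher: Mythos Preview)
Your plan is correct and follows essentially the same route as the paper's proof: part~(i) via the transformation law of $\theta$ and the cancellation $\zeta_{12,\gamma}^{c^2-1}=1$ (since $12\mid c^2-1$), part~(ii) via specialization of the identity~(\ref{siegel}) at $z=a\tau+b$, first with $c\equiv d\equiv 1\pmod N$ and then with only $d\equiv 1\pmod N$. Two small points: your Gaussian exponent should read $c^2 z_1^2 - c^2 z_1^2 = 0$ (the numerator contributes $c^2$ copies of $z_1^2$, not $c^2\cdot c^2 z_1^2$); and the paper bypasses your quasi-periodicity bookkeeping entirely by using the degree-$0$ homogeneity of $\theta$ to write $\theta(\gamma\tau,\,a\gamma\tau+b)=\theta\bigl(\gamma\bigl(\begin{smallmatrix}\tau\\1\end{smallmatrix}\bigr),\,a'\tau+b'\bigr)$ directly, landing on $g_{c,(\alpha,\beta)\gamma}(\tau)=g_{c,\alpha,\beta}(\tau)$ without tracking the $z\mapsto z+\omega$ factors of $\theta$ separately (these must of course cancel, since $g_{0,c}$ is already elliptic).
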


\begin{proof}[D\'emonstration] Soit $(a,b)$ un rel\`evement de $(\alpha,\beta)$ dans $\Q^2$. \\
$(i)$ Comme $(c\alpha,c\beta)\neq(0,0)$, on obtient $a\tau+b\notin c^{-1}\Z+c^{-1}\Z\tau$. Comme $g_{0,c}$ est une unit\'e de l'anneau des fonctions holomorphes sur $\C/\Lambda_\tau-(c^{-1}\Lambda_\tau)/\Lambda_\tau$,
la fonction $g_{c,\alpha,\beta}(\tau)$ n'a ni z\'eros, ni
p\^oles sur $\cH$. 

Consid\'erons le $q$-d\'eveloppement de la fonction $g_{c,\alpha,\beta}$, on trouve que
les coefficients sont dans $\Q(\z_N)$. Donc il suffit de
v\'erifier que c'est une fonction modulaire pour le sous groupe de
congruence $\Gamma_{N}$.

Consid\'erons la fonction $\theta(\tau, a\tau+b)$ sous l'action de
$\gamma\in\SL_2(\Z)$. L'action de
$\gamma$ sur $(\begin{smallmatrix}\tau\\1\end{smallmatrix})$ nous donne
 une base nouvelle du r\'eseau $(\tau,1)$, et  l'action de $\gamma$ sur $a\tau+b$ est donn\'ee par transformation
de M\"obius (i.e. le point $a\tau+b$ est envoy\'e en $a\gamma\tau+b$).

Comme $(c,6)=1$, on a $12|(c^2-1)$. Si $\gamma\in\Gamma_N$, on a $(\alpha,\beta)*\gamma=(\alpha,\beta)$. Avec les propri\'et\'es fondamentales de la fonction $\theta$ ci-dessus, on a\footnote{ le deuxi\`eme \'egalit\'e est du fait que $\theta$ est homog\`ene de degr\'e $0$ .} alors 
\begin{equation*}
\begin{split}
g_{c,\alpha,\beta}(\gamma\tau)&=\frac{\theta(\gamma\tau, a\gamma\tau+b)^{c^2}}{\theta(\gamma\tau,c(a\gamma\tau+b))}=\frac{\theta(\gamma(\begin{smallmatrix}\tau\\1\end{smallmatrix}),(aa_0+bc_0)\tau+b_0a+d_0b)^{c^2}}{\theta(\gamma(\begin{smallmatrix}\tau\\1\end{smallmatrix}),c((aa_0+bc_0)\tau+ab_0+bd_0))}\\
&=\frac{\z_{12,\gamma}^{c^2}\theta(\tau,(aa_0+bc_0)\tau+b_0a+d_0b)^{c^2}e(\frac{c^2\pi ic_0((aa_0+bc_0)\tau+b_0a+d_0b)^2}{(c_0\tau+d_0)})}{\z_{12,\gamma}\theta(\tau,c((aa_0+bc_0)\tau+b_0a+d_0b))e(\frac{\pi ic_0(c((aa_0+bc_0)\tau+b_0a+d_0b))^2}{(c_0\tau+d_0)})}\\
&=\z_{12,\gamma}^{c^2-1}\frac{\theta(\tau,(aa_0+bc_0)\tau+b_0a+d_0b)^{c^2}}{\theta(\tau,c((aa_0+bc_0)\tau+b_0a+d_0b))}=g_{c,(\alpha,\beta)*\gamma}(\tau)=g_{c,(\alpha,\beta)}(\tau).
\end{split}
\end{equation*}
$(ii)$ Si on \'evalue $c^{*}g_{0,d}(z)$ en $a\tau+b$, on obtient
$c^{*}g_{0,d}(a\tau+b)=g_{0,d}(ca\tau+cb)$. Soient $c\equiv d\equiv 1 [N]$. On a $c^{*}g_{0,d}(a\tau+b)=g_{d,\alpha,\beta}$ et $d^{*}g_{0,c}(a\tau+b)=g_{c,\alpha,\beta}$. On en d\'eduit que
$(g_{d,\alpha,\beta})^{c^2}(g_{d,\alpha,\beta})^{-1}=(g_{c,\alpha,\beta})^{d^2}(g_{c,\alpha,\beta})^{-1}$.
Autrement dit, $g_{\alpha,\beta}=g_{c,\alpha,\beta}^{1/(c^2-1)}$ ne
d\'epend pas du choix de $c\equiv 1 \mod N$. Soient $(c,6N)=1$ et $d\equiv 1\mod N$. En \'evaluant en $a\tau+b$,  la r\'elation (\ref{siegel}) se traduit en
$(g_{d,\alpha,\beta})^{c^2}g_{d,c\alpha,c\beta}^{-1}=g_{c,\alpha,\beta}^{d^2}g_{c,\alpha,\beta}^{-1}$, et donc
$g_{c,\alpha,\beta}=(g_{\alpha,\beta})^{c^2}g_{c\alpha,c\beta}^{-1}$.

\end{proof}
\begin{remark} Il y a une preuve geom\'etrique de cette proposition, en utilisant l'espace de module des courbes elliptiques dans \cite{KK}.
\end{remark}

 L'action de $\GL_2(\hat{\Z})$ sur $\cM^{\con}(\Q^{\cycl})$ induit celle de $\GL_2(\hat{\Z})$ sur $\Q\otimes \cU^{\con}(\Q^{\cycl})$.
\begin{lemma}\label{ac}Soit $(\alpha,\beta)\in (\Q/\Z)^2$ et soit $\gamma\in \GL_2(\hat{\Z})$. Alors on a $g_{\alpha,\beta}*\gamma=g_{(\alpha,\beta)*\gamma}.$
\end{lemma}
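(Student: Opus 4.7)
The plan is to reduce first to the Siegel units $g_{c,\alpha,\beta}$, and then to decompose $\GL_2(\hat{\Z})$ into an $\SL_2$ piece and a diagonal piece. Choose $N\geq 1$ such that $N\alpha=N\beta=0$; since $\gamma\in\GL_2(\hat{\Z})$ acts on $(\Q/\Z)^2$ through its reduction in $\GL_2(\Z/N\Z)$, the pair $(\alpha',\beta'):=(\alpha,\beta)*\gamma$ is also annihilated by $N$. Pick $c$ with $(c,6)=1$ and $c\equiv 1\pmod{N}$. Then $c\alpha=\alpha$ and $c\alpha'=\alpha'$ (likewise for $\beta,\beta'$), so by Prop.~\ref{siegle}(ii) the relation $g_{c,\cdot,\cdot}=g_{\cdot,\cdot}^{c^2-1}$ holds for both pairs. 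Since $\Q\otimes\cU^{\con}(\Q^{\cycl})$ is uniquely divisible as a $\Q$-vector space, the desired identity is equivalent to
\[
g_{c,\alpha,\beta}*\gamma \;=\; g_{c,\alpha',\beta'}.
\]
Using the decomposition $\GL_2(\hat{\Z})=\bigcup_{d\in\hat{\Z}^*}\SL_2(\hat{\Z})\bigl(\begin{smallmatrix}1&0\\0&d\end{smallmatrix}\bigr)$, I reduce to two cases.

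\textbf{Case 1: $\gamma\in\SL_2(\hat{\Z})$.} Because $g_{c,\alpha,\beta}$ is modular for $\Gamma_N$, the action of $\SL_2(\hat{\Z})$ factors through $\SL_2(\Z/N\Z)$, so one may lift $\gamma$ to $\SL_2(\Z)$. In weight $0$ the action simplifies to $(f*\gamma)(\tau)=f(\gamma\tau)$. I repeat the functional-equation computation of Prop.~\ref{siegle}(i) \emph{without} assuming $\gamma\in\Gamma_N$: homogeneity of $\theta$ replaces $(\gamma\tau,\,a\gamma\tau+b)$ by $(\gamma\bigl(\begin{smallmatrix}\tau\\1\end{smallmatrix}\bigr),\,w)$ with $w=a'\tau+b'$ and $(a',b')=(a,b)\gamma$; the modular transformation of $\theta$ then produces matching exponential factors $e(\pi i c_0 c^2 w^2/(c_0\tau+d_0))$ in numerator and denominator which cancel in the ratio, leaving a twelfth-root ambiguity $\zeta_{12,\gamma}^{c^2-1}$. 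Since $(c,6)=1$ forces $24\mid c^2-1$, this root of unity is $1$, giving $g_{c,\alpha,\beta}(\gamma\tau)=g_{c,(\alpha,\beta)*\gamma}(\tau)$.

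\textbf{Case 2: $\gamma=\bigl(\begin{smallmatrix}1&0\\0&d\end{smallmatrix}\bigr)$ with $d\in\hat{\Z}^*$.} By the definition of the $H=\GL_2(\hat{\Z})$ action, $\gamma$ acts through a lift $\sigma_d\in\cG_\Q$ on the coefficients of the $q$-development. I compute that development explicitly. Fix $(a,b)\in\Q^2$ representing $(\alpha,\beta)$ with $Nb\in\Z$ and set $\xi=e^{2\pi ib}=\zeta_N^{Nb}$; write $q_z=q^a\xi$, $q_{cz}=q^{ca}\xi^c$, and use the identity $q_z^{1/2}-q_z^{-1/2}=q^{-a/2}\xi^{-1/2}(q^a\xi-1)$. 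The leading factor of $g_{c,\alpha,\beta}=\theta(\tau,a\tau+b)^{c^2}\theta(\tau,c(a\tau+b))^{-1}$ collapses to
\[
q^{(c^2-1)/12\,-\,ac(c-1)/2}\,\xi^{-c(c-1)/2}\,\frac{(q^a\xi-1)^{c^2}}{q^{ca}\xi^c-1},
\]
multiplied by the convergent product $\prod_{n\geq 1}(1-q^{n\pm a}\xi^{\pm 1})^{c^2}/(1-q^{n\pm ca}\xi^{\pm c})$. The exponents $(c^2-1)/12$ and $c(c-1)/2$ are integers (using $(c,6)=1$ and that $c$ is odd), so the $q$-development lives in $\Q(\zeta_N)((q^{1/N}))$. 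Under $\sigma_d$ one has $\sigma_d(\zeta_N)=\zeta_N^{d_N}$ where $d_N$ denotes the image of $d$ in $(\Z/N\Z)^*$, so $\sigma_d(\xi)=e^{2\pi i d_N b}$; substituting into the formula above, one recognises the $q$-development of $g_{c,\alpha,d\beta}$ relative to the representative $(a,d_N b)$ of $(\alpha,d\beta)=(\alpha,\beta)*\gamma$. Hence $\sigma_d(g_{c,\alpha,\beta})=g_{c,\alpha,d\beta}$, finishing the proof.

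The main technical point is the bookkeeping in Case 2: one must verify that the half-integer powers of $q$ (from $q^{1/12}$) and of $\xi$ (from $q_z^{\pm 1/2}$) genuinely cancel between the $c^2$-th power in the numerator and the single copy in the denominator, so that the result is a well-defined element of $\Q(\zeta_N)((q^{1/N}))$ on which $\sigma_d$ acts only through $\zeta_N\mapsto\zeta_N^{d_N}$. Case 1 is essentially the calculation already carried out to prove $\Gamma_N$-invariance, reused verbatim but applied to arbitrary $\gamma\in\SL_2(\Z)$.
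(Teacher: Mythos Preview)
Your argument is correct and follows essentially the same route as the paper: reduce from $g_{\alpha,\beta}$ to $g_{c,\alpha,\beta}$, decompose $\GL_2(\hat{\Z})$ as $\SL_2(\hat{\Z})\cdot\bigl(\begin{smallmatrix}1&0\\0&d\end{smallmatrix}\bigr)$, treat the $\SL_2$-part via the theta functional equation (exactly the computation of Prop.~\ref{siegle}(i), now for arbitrary $\gamma\in\SL_2(\Z)$), and treat the diagonal part by inspecting the $q$-expansion. The only differences are cosmetic: you fix $c\equiv 1\pmod N$ whereas the paper only asks $(c\alpha,c\beta)\neq(0,0)$, and you spell out the $q$-expansion in Case~2 where the paper simply asserts the result.
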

\begin{proof}[D\'emonstration]
Soient $\alpha,\beta\in \frac{1}{N}\Z/\Z$. Si $\gamma\in\SL_2(\hat{\Z})$, on peut choisir un entier $c$ tel que $(c,6)=1$, et $(c\alpha,c\beta)\neq (0,0)$. On a d\'eja calcul\'e l'action de $\gamma\in\SL_2(\Z)$ sur $g_{c,\alpha,\beta}$ dans la proposition ci-dessus: $g_{c,\alpha,\beta}*\gamma=g_{c,(\alpha,\beta)*\gamma}.$
Celle-ci induit la formule de l'action de $\gamma\in\SL_2(\Z)$ sur $g_{\alpha,\beta}$: $g_{\alpha,\beta}*\gamma=g_{(\alpha,\beta)*\gamma}$.
Elle se prolonge par continuit\'e en une action de $\SL_2(\hat{\Z})$ avec la m\^eme formule. Si $\gamma=(\begin{smallmatrix}1&0\\0& d\end{smallmatrix})$ avec $d\in\hat{\Z}^{*}$, l'action de $\gamma$ sur $g_{c,\alpha,\beta}$ est donn\'ee par la formule:
$g_{c,\alpha,\beta}*\gamma=g_{c,\alpha,d\beta}=g_{c,(\alpha,\beta)*\gamma}$,
 cela se voit directement sur le $q$-d\'eveloppement de $g_{c,\alpha,\beta}$ (on rappelle que $(\begin{smallmatrix}1&0\\0&d\end{smallmatrix})$ agit par $\sigma_d$). Alors, l'action de $\gamma\in\GL_2(\hat{\Z})$ sur $g_{\alpha,\beta}$ est donn\'ee par la formule: $g_{\alpha,\beta}*\gamma=g_{(\alpha,\beta)*\gamma}$.

\end{proof}

\begin{theo}Il existe une distribution alg\'ebrique $z_{\mathbf{Siegel}}\in\fD_{\alg}((\Q\otimes\hat{\Z})^2-(0,0),\Q\otimes\cU(\Q^{\cycl}))$,
telle que, quels que soient $r\in\Q^{*}$ et $(a,b)\in\Q^2-(r\Z, r\Z)$, on
ait:
\[\int_{(a+r_1\hat{\Z})\times(b+r_2\hat{\Z})}z_{\mathbf{Siegel}}=g_{r^{-1}a,r^{-1}b}(\tau).\]
De plus, $z_{siegel}$ est invariante sous l'action de
$\Pi^{'}_{\Q}$.
\end{theo}

\begin{proof}[D\'emonstration] 

Soit $e$ un entier $\geq 1$ et soit $N$ le plus petit entier tel que  $ Nr,Na$  et $Nb$ sont des entiers. On peut trouver $c\in\Z$ tel que $(c,e)=1$ et $c
\equiv 1\mod N$. Alors on a 
$\prod_{l=0}^{e-1}\prod_{k=0}^{e-1}g_{\frac{kr+a}{er},\frac{b+lr}{er}}
=N_a(g_{c,r^{-1}a,r^{-1}b})^{1/(c^2-1)}=g_{c,r^{-1}a,r^{-1}b}^{1/(c^2-1)}$.

Si $r_1,r_2\in \Q^{*}$ deux nombres rationnelles distincts et si $(a_1,a_2)\in\Q^2-(r_1\Z, r_2\Z)$, alors il existe $r\in \Q^{*}$ tel que $k_1=\frac{r}{r_1}$ et 
$k_2=\frac{r}{r_2}$ appartient \`a $\Z$. Pour $i=1,2$,  $a_i+r_i\hat{\Z}$ est une r\'eunion disjointe des $ a_i+kr_i+r \hat{\Z}$ pour $ 0\leq k\leq k_i$. On se remarque que   
si $r_1,r_2\in \Q^{*}$ deux nombres rationnels et si $(a_1,a_2)\in\Q^2-(r\Z, r\Z)$, l'int\'egration de $z_{\mathbf{Siegel}}$ sur $(a_1+r_1\hat{\Z})\times(a_2+r_2\hat{\Z}) $ est 
d\'efinie par la formule:
\[\int_{(a_1+r_1\hat{\Z})\times(a_2+r_2\hat{\Z}) } z_{\mathbf{Siegel}}=\prod_{e_1=0}^{k_1} \prod_{e_2=0}^{k_2}g_{r^{-1}(a_1+e_1r_1), r^{-1}(a_2+e_2r_2)}.\]
Ceci ne d\'epend pas du choix de $r$ \`a gr\^ace au calcul au-dessus et d\'efinit une relation de distribution alg\'ebrique. 
Donc $z_{siegel}$ est une distribution alg\'ebrique.

Par d\'efinition de $z_{\mathbf{Siegel}}$ et $g_{\alpha,\beta}$, $z_{\mathbf{Siegel}}$ est une distribution alg\'ebrique \`a
valeurs dans $\Q\otimes
\cU(\Q^{\cycl})$. L'action de
$\Pi^{'}_{\Q}$ sur $\cM^{\con}(\Q^{\cycl})$ se factorise \`a travers
$\GL_2(\Q\otimes\hat{\Z})$. Donc en utilisant le m\^eme argument
que dans le th\'eor\`eme \ref{eiskj}, il suffit de v\'erifier
l'invariance pour $\gamma\in
\GL_2(\hat{\Z}),\gamma=\bigl(\begin{smallmatrix}r_0&0\\0&r_0\end{smallmatrix}\bigr)$
et $\gamma=\bigl(\begin{smallmatrix}1&0\\0&e\end{smallmatrix}\bigr)$
respectivement. L'action de $\GL_2(\Q)_{+}$  sur l'espace des formes modulaires $\cM(\ol{\Q})$ est
donn\'ee par la formule (\ref{etoi}) dans ``notations", c'est-\`a-dire, $(f*\gamma)(\tau)= f(\gamma\tau)$ car on est en poids $0$.
\begin{itemize}
\item Si $\gamma\in\GL_2(\hat{\Z})$, l'assertion est de la d\'efinition de $z_{\mathbf{Siegel}}$ et du lemme \ref{ac}. 
\item Si $\gamma=\bigl(\begin{smallmatrix}r_0&0\\0& r_0\end{smallmatrix}\bigr)$, on a:
\begin{equation*}
\begin{split}
\int_{(a+r\hat{\Z})\times(b+r\hat{\Z})}z_{\mathbf{Siegel}}*\gamma&=\bigl(\int_{(\frac{a}{r_0}+\frac{r}{r_0}\hat{\Z})\times(\frac{b}{r_0}+\frac{r}{r_0}\hat{\Z})}z_{\mathbf{Siegel}}\bigr)*\gamma=(g_{r^{-1}a,r^{-1}b})*\gamma\\
&=(\det\gamma)^{1-k}g_{r^{-1}a,r^{-1}b}|_{\gamma}=g_{r^{-1}a,r^{-1}b};
\end{split}
\end{equation*}
\item Si $\gamma=\bigl(\begin{smallmatrix}1&0\\0& e\end{smallmatrix}\bigr)$, on a:
\begin{equation*}
\begin{split}
\int_{(a+r\hat{\Z})\times(b+r\hat{\Z})}z_{\mathbf{Siegel}}*\gamma&=\bigl(\int_{(a+r\hat{\Z})\times(\frac{b}{e}+\frac{r}{e}\hat{\Z})}z_{\mathbf{Siegel}}\bigr)*\gamma=\prod\limits_{i=0}^{e-1}\bigl(\int_{(a+r\hat{\Z})\times(\frac{b}{e}+\frac{ir}{e}+r\hat{\Z})}z_{\mathbf{Siegel}}\bigr)*\gamma\\
&=\prod\limits_{i=0}^{e-1}g_{r^{-1}a,\frac{r^{-1}b+i}{e}}(\frac{\tau}{e})=\prod\limits_{i=0,j=0}^{e-1}g_{\frac{r^{-1}a+j}{e},\frac{r^{-1}b+i}{e}}(\tau)=g_{r^{-1}a,r^{-1}b}.
\end{split}
\end{equation*}
\end{itemize}
\end{proof}

\subsection{Th\'eorie de Kummer $p$-adique}\label{section3}

\subsubsection{Th\'eorie de Kummer $p$-adique}
Soit $G$ un groupe localement profini. Soit $X$ un espace topologique
localement profini muni d'une action continue de $G$ \`a droite. Soit $M$ un $G$-module topologique muni d'une action \`a droite de
$G$. On note $\rH^{i}(G,M)$ le $i$-i\`eme groupe de cohomologie
continue de $G$ \`a valeurs dans $M$. Si $X$ est de plus muni d'une $G$-action \`a gauche
( not\'ee $(g,x)=g\cdot x$ ) commutant \`a l'action \`a droite de
$G$, alors $\rH^{i}(G,\fD_{\alg}(X,M))$ est muni d'une structure de
$G$-module \`a gauche donn\'ee par la formule:
\[ \int_X g\cdot\mu=\int_{gX}\mu \text{ si } \mu\in\rH^{i}(G,\fD_{\alg}(X,M)).\]

Posons: $X_1=(\Q\otimes\hat{\Z})^2-(0,0),X_2=\{\bigl(\begin{smallmatrix}a & b\\ c &
d\end{smallmatrix}\bigr)\in\bM_2(\Q\otimes\hat{\Z})|(a,b),(c,d)\in X_1\},$ et $X_2^{(p)}:=\bM_2(\Q\otimes\hat{\Z})^{(p)}\subset X_2.$
Dans la section
pr\'ec\'edente, on a obtenu une distribution alg\'ebrique
\[z_{\mathbf{Siegel}}\in\fD_{\alg}\bigl(X_1,\Q\otimes\cU(\Q^{\cycl})\bigr),\]
qui est invariante sous l'action de $\Pi_{\Q}^{'}$.

Notons $Z^{0}=\{(x_n)_{n\in\N}|
x_n\in\cU(\overline{\Q}),(x_{n+1})^p=x_n\}$
et $Z=Z^0\otimes\Q$. Alors $Z$ est muni d'une action de
$\Pi_{\Q}^{'}$, agissant sur chaque composante de $Z$.

On d\'efinit une projection  $\theta$ de $Z^0$ sur $\cU(\ol{\Q})$ en envoyant $(x_n)_{n\in\N}$ \`a $x_0$.
\begin{lemma}La projection $\theta:
Z^0\ra \cU(\ol{\Q})$ est surjective, dont le noyau est
\[\ker(\theta)=\{(1,\eps_p,\eps_{p^{n}},...\eps_{p^{n}},...)\}\cong\Z_p(1).\]
\end{lemma}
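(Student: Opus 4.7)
My plan is to treat surjectivity and the kernel computation separately, with the surjectivity being the more delicate point because one must stay inside $\cU(\ol{\Q})$ when extracting $p$-th power roots.

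For surjectivity, given $u \in \cU(\Gamma, \ol{\Q})$ for some finite-index $\Gamma \subset \SL_2(\Z)$, I would produce a compatible system $(x_n)_{n\geq 0}$ with $x_0 = u$ and $x_{n+1}^p = x_n$. The key observation is that $\cU(\ol{\Q}) = \bigcup_\Gamma \cU(\Gamma, \ol{\Q})$ is stable under extraction of $p$-th roots inside $\ol{\Q}(Y(\Gamma'))$ for suitable $\Gamma' \subset \Gamma$. Concretely, since the divisor of $u$ on the modular curve $Y(\Gamma)$ is supported on cusps, the cyclic Kummer covering associated with $u^{1/p}$ is unramified on $Y(\Gamma) \setminus \{\text{cusps}\}$; such an abelian cover is dominated by some $Y(\Gamma')$ with $\Gamma' \subset \Gamma$ of finite index, on which $u$ acquires a $p$-th root which is itself a unit on $Y(\Gamma')$ minus cusps, i.e.\ an element of $\cU(\Gamma', \ol{\Q})$. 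Iterating and taking the union yields the desired system $(x_n)_n \in Z^0$.

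For the kernel, suppose $\theta((x_n)) = x_0 = 1$. Then $x_1^p = 1$ forces $x_1$ to be a constant $p$-th root of unity, say $x_1 = \z_p^{a_1}$ with $a_1 \in \Z/p\Z$; indeed, a modular unit whose $p$-th power is $1$ is constant, and the constants forming units are roots of unity. Inductively, if $x_n = \z_{p^n}^{a_n}$, then $x_{n+1}^p = \z_{p^n}^{a_n} = \z_{p^{n+1}}^{p a_n}$ forces $x_{n+1} = \z_{p^{n+1}}^{a_{n+1}}$ with $a_{n+1} \equiv a_n \pmod{p^n}$. The resulting compatible sequence $(a_n)_n$ defines an element of $\Z_p = \varprojlim_n \Z/p^n\Z$, and conversely any such sequence yields an element of $\ker(\theta)$. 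Finally, the $\cG_\Q$-action (inherited from the $\Pi_\Q'$-action on $\cU(\ol{\Q})$) acts on $(\z_{p^n}^{a_n})_n$ through the cyclotomic character, which is precisely the twist $\Z_p(1)$.

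The main obstacle is the surjectivity step, and more precisely the claim that $p$-th roots of modular units are again modular units (on a higher level). Once one accepts the geometric picture of Kummer coverings of $Y(\Gamma)$ unramified outside cusps being again modular curves, the rest is formal. An alternative and cleaner route is to use the Siegel units $g_{\alpha,\beta}$ of Proposition \ref{siegle} (which are already given as elements of $\Q \otimes \cU(\ol{\Q})$, i.e.\ roots of modular units), together with the fact that the group $\Q \otimes \cU(\ol{\Q})$ is generated over $\cU(\ol{\Q})$ by divisible elements, to reduce the surjectivity to a statement about taking $p$-th roots of Siegel units, which is immediate from their explicit form $g_{\alpha,\beta}^{1/(c^2-1)}$.
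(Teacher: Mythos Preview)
Your proof is correct and the kernel computation is fine (the paper dismisses it as ``imm\'ediat''). For surjectivity, your argument and the paper's are really the same idea in different clothing: the paper observes directly that since $u$ has no zeros or poles on the simply connected domain $\cH$, a branch $u^{1/p}$ exists as a holomorphic function on $\cH$, and the defect of $\Gamma$-invariance is a character $\chi\colon\Gamma\to\mu_p$ whose kernel is a finite-index subgroup $\Gamma'$ fixing $u^{1/p}$. Your Kummer-covering formulation encodes exactly this --- the cyclic cover of $Y(\Gamma)$ attached to $u^{1/p}$ is precisely $\cH/\ker\chi$ --- but the paper's direct analytic phrasing avoids invoking any covering-space or modular-curve machinery and makes the finite-index claim immediate. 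Your alternative route via Siegel units is unnecessary (and would not shorten the argument, since one still needs to extract $p$-th roots of a general modular unit, not just Siegel units). One small slip: you write ``the constants forming units are roots of unity'', but in fact every element of $\ol{\Q}^*$ is a unit in the ring of weight-$0$ modular functions; the correct constraint, which you do use, is simply that $x_n^{p^n}=1$ forces $x_n\in\mu_{p^n}$.
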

\begin{proof}[D\'emonstration]Soit $\Gamma$ un sous-groupe de $\SL_2(\Z)$ d'indice fini et soit $x\in\cU(\ol{\Q})$ une unit\'e modulaire pour $\Gamma$. Alors $x^{\frac{1}{p}}$ est encore une fonction holomorphes sur $\cH$ et m\'eromorphe sur $\cH\cup\bP^1(\Q)$.

Soit $\gamma\in \Gamma$, alors $x^{\frac{1}{p}}*\gamma=\z_{p,\gamma}x^{\frac{1}{p}}$, o\`u $\z_{p,\gamma}$ est une racine d'unit\'e d'ordre $p$ qui d\'epend de $\gamma$; ce qui nous fournit un caract\'ere $\chi$ de $\Gamma$ sur $\mu_p$ le groupe des racine d'unit\'e d'ordre $p$. Par cons\'equent, le noyau du carat\`ere $\chi$ est un sous-groupe de $\Gamma$ d'indice fini, qui fixe $x^{\frac{1}{p}}$. Ceci permet de conclure la surjectivit\'e.

Le reste est imm\'ediat.
\end{proof}

Comme $\Q$ est plat sur $\Z$, on obtient une suite exacte de
$\Pi_{\Q}^{'}$-modules:
\[0\ra\Q_p(1)\ra Z\ra\Q\otimes\cU(\overline{\Q})\ra 0.\]
Cela nous fournit une suite exacte de $\Pi_{\Q}^{'}$-modules :
\[0\ra\fD_{\alg}(X_1,\Q_p(1))\ra \fD_{\alg}(X_1,Z)\ra\fD_{\alg}(X_1,\Q\otimes\cU(\overline{\Q}))\ra 0.\]
En prenant la cohomologie continue de $\Pi_{\Q}^{'}$, on obtient une suite exacte longue:
\begin{equation*}
\begin{split}
0&\ra\rH^{0}(\Pi_{\Q}^{'},\fD_{\alg}(X_1,\Q_p(1)))\ra
\rH^{0}(\Pi_{\Q}^{'},\fD_{\alg}(X_1,Z))\\
&\ra
\rH^{0}(\Pi_{\Q}^{'},\fD_{\alg}(X_1,\Q\otimes\cU(\overline{\Q})))
\xrightarrow{\delta}\rH^{1}(\Pi_{\Q}^{'},\fD_{\alg}(X_1,\Q_p(1)))\\
&\ra\rH^{1}(\Pi_{\Q}^{'},\fD_{\alg}(X_1,Z))\ra\cdots
\end{split}
\end{equation*}
On appelle ``application de Kummer" le morphisme $\delta$ .
Notons
\[z_{\mathbf{Siegel}}^{(p)}\in\rH^{1}(\Pi_{\Q}^{'},\fD_{\alg}(X_1,\Q_p(1))),\]
 l'image de $z_{\mathbf{Siegel}}$ par l'application de Kummer.
\begin{lemma}\label{1co}Il existe une distribution alg\'ebrique
$\mu\in\fD_{\alg}(X_1,Z)$ telle que $z_{\mathbf{Siegel}}^{(p)}$ est l'image du $1$-cocycle
 $\sigma\ra\mu*\sigma-\mu$ dans $\rH^{1}(\Pi_{\Q}^{'},\fD_{\alg}(X_1,\Q_p(1)))$ .
\end{lemma}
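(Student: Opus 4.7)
The plan is to observe that this is essentially the definition of the connecting homomorphism $\delta$ expressed at the level of $1$-cocycles, so the only content is to verify that we may honestly lift $z_{\mathbf{Siegel}}$ through the distribution-level Kummer sequence. I first note that $\LC_c(X_1,\Z)$ is a free $\Z$-module: fixing an increasing exhaustion of $X_1$ by compact-open subsets and refining each to a finite partition by compact-open sets, the indicator functions of these sets furnish a $\Z$-basis of $\LC_c(X_1,\Z)$. Consequently the functor $\fD_{\alg}(X_1,-) = \Hom_{\Z}(\LC_c(X_1,\Z),-)$ is exact, so applying it to the Kummer sequence $0 \to \Q_p(1) \to Z \to \Q\otimes\cU(\ol{\Q}) \to 0$ yields a short exact sequence
\[
0 \ra \fD_{\alg}(X_1,\Q_p(1)) \ra \fD_{\alg}(X_1,Z) \ra \fD_{\alg}(X_1,\Q\otimes\cU(\ol{\Q})) \ra 0.
\]

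Since this last map is surjective, I can choose an arbitrary set-theoretic (i.e., not necessarily $\Pi'_\Q$-equivariant) lift $\mu \in \fD_{\alg}(X_1,Z)$ of $z_{\mathbf{Siegel}}$. For each $\sigma \in \Pi'_\Q$, the distribution $\mu * \sigma - \mu$ projects to $z_{\mathbf{Siegel}}*\sigma - z_{\mathbf{Siegel}} = 0$ in $\fD_{\alg}(X_1,\Q\otimes\cU(\ol{\Q}))$ because $z_{\mathbf{Siegel}}$ is $\Pi'_\Q$-invariant, hence lies in the submodule $\fD_{\alg}(X_1,\Q_p(1))$. The cocycle identity $(\mu*\sigma\tau - \mu) = (\mu*\sigma - \mu)*\tau + (\mu*\tau - \mu)$ follows directly from the fact that $*$ is a right action, so $\sigma \mapsto \mu*\sigma - \mu$ is a $1$-cocycle on $\Pi'_\Q$ with values in $\fD_{\alg}(X_1,\Q_p(1))$; a different choice of lift $\mu'$ differs from $\mu$ by an element of $\fD_{\alg}(X_1,\Q_p(1))$, which changes the cocycle only by a coboundary, so its cohomology class is intrinsic.

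Finally, to identify this class with $z_{\mathbf{Siegel}}^{(p)} = \delta(z_{\mathbf{Siegel}})$, I invoke the standard description of the connecting homomorphism in the long exact sequence of continuous cohomology coming from a short exact sequence of coefficient modules: given an invariant element $x$ in the quotient, $\delta(x)$ is represented by exactly the cocycle $\sigma \mapsto \tilde{x}*\sigma - \tilde{x}$ for any lift $\tilde{x}$ to the middle term. There is no substantial obstacle: the only technical point is the exactness of the distribution-level sequence, which is handled by the freeness of $\LC_c(X_1,\Z)$ noted above, and no continuity issues arise because $\fD_{\alg}$ consists of abstract $\Z$-linear maps with no topology imposed on $Z$ or $\Q_p(1)$ beyond the discrete one.
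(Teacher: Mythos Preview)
Your proof is correct and follows essentially the same approach as the paper's. The paper's proof is a terse two-line version of yours: it fixes a $\Z$-basis $\{\phi_i\}_{i\in I}$ of $\LC_c(X_1,\Z)$, lifts each value $\int_{X_1}\phi_i\, z_{\mathbf{Siegel}}$ arbitrarily to $Z$ to build $\mu$, and asserts the cocycle formula; you have simply spelled out the underlying reasons (freeness of $\LC_c(X_1,\Z)$, exactness of $\fD_{\alg}(X_1,-)$, the explicit connecting-homomorphism recipe) that the paper leaves implicit.
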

\begin{proof}[D\'emonstration]Soit $\{\phi_i\}_{i\in I}$ une base de $\LC_c(X_1,\Z)$ sur $\Z$. On peut fabriquer une distribution alg\'ebrique $\mu\in \fD_{\alg}(X_1,Z)$ en prenant pour $\int_{X_1}\phi_i\mu$ n'importe quel rel\`evement dans $Z$ de $\int_{X_1}\phi_iz_{\mathbf{Siegel}}$ et alors $z_{\mathbf{Siegel}}^{(p)}$ est l'image du $1$-cocycle
 $\sigma\ra\mu*\sigma-\mu$ dans $\rH^{1}(\Pi_{\Q}^{'},\fD_{\alg}(X_1,\Q_p(1))).$
\end{proof}

En utilisant l'application de cup-produit, on obtient un \'el\'ement 
\[z_{\mathbf{Siegel}}^{(p)}\otimes z_{\mathbf{Siegel}}^{(p)}\in \rH^{2}(\Pi_{\Q}^{'},\fD_{\alg}(X_2,\Q_p(2))).\]
On d\'efinit $z_{kato}$ comme l'image de $z_{\mathbf{Siegel}}^{(p)}\otimes
z_{\mathbf{Siegel}}^{(p)}$ sous l'application de restriction:
\[\rH^{2}(\Pi_{\Q}^{'},\fD_{\alg}(X_2,\Q_p(2)))\ra\rH^{2}(\Pi_{\Q}^{(p)},\fD_{\alg}(X_2^{(p)},\Q_p(2))).\]

\subsubsection{Passer \`a la  mesure}\label{gcd}

Soit $\langle\cdot\rangle: \Z_p^{*}\ra \hat{\Z}^{*}$ l'inclusion naturelle en envoyant $x$ sur $\langle x\rangle=(1,\cdots,x,1,\cdots)$, o\`u $x$ est \`a la place $p$. Consid\'erons l'inclusion de $\hat{\Z}^{*}$ dans $\GL_2(\hat{\Z})$ en envoyant $d$ sur $(\begin{smallmatrix}d&0\\0&d\end{smallmatrix})$. D'apr\`es le lemme $\ref{ac}$, cela d\'efinit une action de $d\in\hat{\Z}^{*}$ sur l'unit\'e de Siegel $g_{\alpha,\beta}\in\Q\otimes\cU(\overline{\Q})$ par la formule:
 \[d\cdot g_{\alpha,\beta}=g_{d\alpha,d\beta}=g_{\alpha,\beta}*(\begin{smallmatrix}d&0\\0&d\end{smallmatrix}),\]
o\`u l'action $*$ est celle de $\GL_2(\hat{\Z})$ sur $g_{\alpha,\beta}$.

Rappelons que l'on a le diagramme commutatif suivant:
\[
\xymatrix{ 0\ar[r] &\Z_p(1) \ar[r]\ar[d]&Z^{0}
\ar[r]\ar[d]&\cU(\overline{\Q})
\ar[r]\ar[d] &0\\
0\ar[r] &\Q_p(1) \ar[r] &Z
\ar[r]&\Q\otimes\cU(\overline{\Q}) \ar[r]
&0},\]
o\`u $Z^{0}$ et $\cU(\ol{\Q})$ sont des $\Z$-modules. En tensorant par $\Z_p$, on  obtient un diagramme commutatif suivant de $\Z_p$-modules:
\[
\xymatrix{ 0\ar[r] &\Z_p(1) \ar[r]\ar[d]&\Z_p\otimes Z^{0}
\ar[r]\ar[d]&\Z_p\otimes \cU(\overline{\Q})
\ar[r]\ar[d] &0\\
0\ar[r] &\Q_p(1) \ar[r] &\Q_p\otimes Z
\ar[r]&\Q_p\otimes\cU(\overline{\Q}) \ar[r]
&0}.\]
De plus, la th\'eorie de Kummer $p$-adique ci-dessus s'applique aussi \`a la suite exacte
\[0\ra \Q_p(1) \ra \Q_p\otimes Z
\ra \Q_p\otimes\cU(\overline{\Q}) \ra 0. \]
Soit $u\in\Z_p^*$, on d\'efinit un op\'erateur $r_u$ sur $g_{\alpha,\beta}$ par la formule $r_ug_{\alpha,\beta}=(u^2-\langle u\rangle)g_{\alpha,\beta}$, o\`u l'action de $u^2$ est la multiplication par $u^2$ et l'action de $\langle u\rangle$ est d\'efinie ci-dessus.
\begin{lemma}\label{tor}Soit $u\in\Z_p^{*}$. L'\'el\'ement $r_u( g_{\alpha,\beta})$ appartient \`a $\Z_p\otimes \cU(\Q^{\cycl})$.
\end{lemma}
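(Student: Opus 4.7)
The plan is to rewrite $r_u g_{\alpha,\beta}$ in such a way that the denominator $c^2-1$ hidden in the definition $g_{\alpha,\beta}=g_{c,\alpha,\beta}^{1/(c^2-1)}$ is removed. The main algebraic input is Proposition~\ref{siegle}(ii), which in additive notation reads
\[
c^{2}\,g_{\alpha,\beta} - g_{c\alpha,c\beta} \;=\; g_{c,\alpha,\beta}
\]
for any integer $c$ with $(c,6)=1$, and whose right-hand side lies in $\cU(\Gamma_N,\Q(\z_N))\subset \cU(\Q^{\cycl})$ by Proposition~\ref{siegle}(i). The idea is to approximate $u$ by an integer $c$ that is $p$-adically close to $u$ and congruent to $1$ away from $p$; then $c\alpha=\langle u\rangle\alpha$ and $c\beta=\langle u\rangle\beta$, and the identity above turns the $\langle u\rangle$-term into an element of $\cU(\Q^{\cycl})$ up to a small correction.

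Concretely, I would fix $N\geq 1$ with $N\alpha=N\beta=0$, write $N=p^k m$ with $(m,p)=1$, and use the Chinese Remainder Theorem to choose integers $c,c'$ with $(c,6)=(c',6)=1$, $c\equiv 1\pmod{m}$, $c\equiv u\pmod{p^n}$ for a large integer $n\geq k$ to be fixed below, and $c'\equiv 1\pmod{N}$. Since the prime-to-$p$ components of $\alpha,\beta$ have order dividing $m$ and their $p$-components have order dividing $p^k$, these congruences force $c\alpha=\langle u\rangle\alpha$ and $c\beta=\langle u\rangle\beta$ in $\Q/\Z$. Substituting into the displayed identity yields
\[
r_u\, g_{\alpha,\beta} \;=\; u^{2}\,g_{\alpha,\beta} - g_{\langle u\rangle\alpha,\langle u\rangle\beta} \;=\; (u^{2}-c^{2})\,g_{\alpha,\beta} + g_{c,\alpha,\beta}.
\]
The second summand is already in $\cU(\Q^{\cycl})$ by Proposition~\ref{siegle}(i), so the lemma reduces to proving that $(u^{2}-c^{2})\,g_{\alpha,\beta}\in \Z_p\otimes \cU(\Q^{\cycl})$.

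This is the only technical point. Applying Proposition~\ref{siegle}(ii) to $c'$, one has $g_{\alpha,\beta}=(c'^{2}-1)^{-1}\,g_{c',\alpha,\beta}$ with $g_{c',\alpha,\beta}\in \cU(\Q^{\cycl})$, so it suffices to verify the $p$-adic inequality $v_p(u^{2}-c^{2})\geq v_p(c'^{2}-1)$. The value $v_p(c'^{2}-1)=v_p(c'-1)+v_p(c'+1)$ is fixed once $c'$ is chosen (and bounded by $k$ plus a correction of at most $1$ coming from $c'+1$ when $p=2$), while $v_p(u^{2}-c^{2})\geq v_p(u-c)\geq n$ by the congruence $c\equiv u\pmod{p^n}$. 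It therefore suffices to take $n$ sufficiently large at the outset, which we are free to do. This freedom in choosing $n$ -- independently of the constraint $c\equiv 1\pmod{m}$, thanks to $(m,p)=1$ -- is precisely what makes the cancellation of the denominator work, and completes the proof.
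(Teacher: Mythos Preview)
Your proof is correct and follows essentially the same approach as the paper's: both decompose $r_u g_{\alpha,\beta}$ as an integral multiple of $g_{c',\alpha,\beta}$ (the paper's $g_{d,\alpha,\beta}$) plus $g_{c,\alpha,\beta}$ (the paper's $g_{u_0,\alpha,\beta}$), using Proposition~\ref{siegle}(ii) and the freedom to make the approximating integer $p$-adically close enough to $u$ to clear the denominator $c'^2-1$. Your congruence conditions on $c$ are exactly the paper's conditions on $u_0$, and your choice $c'\equiv 1\pmod N$ is in fact slightly cleaner than the paper's stated condition $(d,6N)=1$, since it is precisely the hypothesis under which Proposition~\ref{siegle}(ii) gives $g_{\alpha,\beta}=(c'^2-1)^{-1}g_{c',\alpha,\beta}$ directly.
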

\begin{proof}[D\'emonstration]Rappelons que on a $N\alpha=N\beta=0$. On choisit un entier $d$ tel que $(d,6N)=1$; alors on a  $g_{\alpha,\beta}=\frac{1}{d^2-1}\otimes g_{d,\alpha,\beta}$. Soit $u_0$ un entier congru \`a $\langle u\rangle$ modulo $pN$. Alors $\langle u\rangle g_{\alpha,\beta}=g_{u_0\alpha,u_0\beta}$.

On a la relation:
$
r_u(g_{\alpha,\beta})
=\frac{u^2-u_0^2}{d^2-1}g_{d,\alpha,\beta}+(u_0^2-\langle u\rangle)g_{\alpha,\beta}=\frac{u^2-u_0^2}{d^2-1}g_{d,\alpha,\beta}+g_{u_0,\alpha,\beta}.$
Alors, $\frac{u^2-u_0^2}{d^2-1}g_{d,\alpha,\beta}$ appartient \`a $\Z_p\otimes \cU(\Q^{\cycl})$ si la valuation $p$-adique de $u-u_0$ assez grande.
Par cons\'equent, $r_ug_{\alpha,\beta}$ appartient \`a $\Z_p\otimes \cU(\Q^{\cycl})$.
\end{proof}

D'apr\`es le lemme ci-dessus, si $u\in \Z_p^*$,  alors on pose $g_{u,\alpha,\beta}=r_u(g_{\alpha,\beta})$; c'est un \'el\'ement de $\Z_p\otimes \cU(\Q^{\cycl})$.
Ceci induit un op\'erateur $r_u$ sur $z_{\mathbf{Siegel}}\in \fD_{\alg}(X_1,\Q\otimes\cU(\ol{\Q}))$ par la formule:
\[\int_{(a+r\hat{\Z})\times (b+r\hat{\Z})}r_uz_{\mathbf{Siegel}}=g_{u,\frac{a}{r},\frac{b}{r}}.\]
\begin{lemma} Si $u\in\Z_p^{*}$, alors
$r_u(z_{\mathbf{Siegel}})$ est une distribution sur $X_1$ \`a valeurs dans $\Z_p\otimes\cU(\Q^{\cycl})$, qui est invariante sous l'action de $\Pi_{\Q}^{'}$.
\end{lemma}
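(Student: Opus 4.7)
Je diviserai la preuve en deux \'etapes, suivant les deux assertions de l'\'enonc\'e.

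Pour voir que les valeurs de $r_u(z_{\mathbf{Siegel}})$ appartiennent \`a $\Z_p\otimes\cU(\Q^{\cycl})$, j'invoquerai directement le lemme \ref{tor}: la d\'efinition donne $\int_{(a+r\hat{\Z})\times(b+r\hat{\Z})}r_u(z_{\mathbf{Siegel}})=g_{u,a/r,b/r}\in\Z_p\otimes\cU(\Q^{\cycl})$ sur tout rectangle, et ceci s'\'etend aux int\'egrales contre toute $\phi\in\LC_c(X_1,\Z)$ par $\Z$-lin\'earit\'e de $r_u$. La propri\'et\'e d'\^etre une distribution alg\'ebrique s'h\'erite imm\'ediatement de celle de $z_{\mathbf{Siegel}}$, puisque $r_u$ est un op\'erateur $\Z$-lin\'eaire sur les coefficients et respecte donc les relations de distribution d\'ej\`a satisfaites.

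Pour l'invariance sous $\Pi_{\Q}^{'}$, j'\'ecrirai $r_u=u^2\cdot\id-T_u$, o\`u $T_u$ est l'op\'erateur sur les coefficients d\'efini par $T_u(v)=v*\sigma$ pour $\sigma=(\begin{smallmatrix}\langle u\rangle & 0\\ 0 & \langle u\rangle\end{smallmatrix})\in\GL_2(\hat{\Z})\hookrightarrow G:=\GL_2(\Q\otimes\hat{\Z})$. En appliquant la formule (\ref{actiondis}) qui d\'efinit l'action \`a droite sur les distributions, on obtient $(r_u z_{\mathbf{Siegel}})*\gamma=r_u(z_{\mathbf{Siegel}}*\gamma)$ d\`es que $r_u$ commute avec l'action \`a droite de $\gamma$ sur les coefficients; combin\'e avec l'invariance d\'ej\`a \'etablie $z_{\mathbf{Siegel}}*\gamma=z_{\mathbf{Siegel}}$, ceci fournit aussit\^ot $(r_u z_{\mathbf{Siegel}})*\gamma=r_u z_{\mathbf{Siegel}}$.

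Le coeur de l'argument r\'eside donc dans la v\'erification que $T_u$ commute avec les actions \`a droite des \'el\'ements de $\Pi_{\Q}^{'}$ sur $\Q\otimes\cU^{\con}(\Q^{\cycl})$. Cette commutation \'equivaut \`a l'\'egalit\'e $v*(\gamma\sigma)=v*(\sigma\gamma)$, qui est automatique d\`es que l'action de $\Pi_{\Q}^{'}$ se factorise \`a travers $G$ et que $\sigma$ y est central---ce qui est le cas puisque $\sigma$ est scalaire. Par prudence, je v\'erifierai cette commutation s\'epar\'ement sur les trois types de g\'en\'erateurs utilis\'es dans la preuve de l'invariance de $z_{\mathbf{Siegel}}$, \`a savoir $\GL_2(\hat{\Z})$, $(\begin{smallmatrix}r_0 & 0\\ 0 & r_0\end{smallmatrix})$ avec $r_0\in\Q_{+}^{*}$, et $(\begin{smallmatrix}1 & 0\\ 0 & e\end{smallmatrix})$ avec $e\geq 1$; dans chacun des cas, le fait que $\sigma$ soit une matrice scalaire rend \'evidente la commutation matricielle, ce qui est le seul obstacle \`a lever.
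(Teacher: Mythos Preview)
Your proof is correct and follows essentially the same approach as the paper: both reduce the invariance of $r_u(z_{\mathbf{Siegel}})$ to the commutativity of $r_u$ with the $\Pi_{\Q}'$-action on the values, combined with the already-established invariance of $z_{\mathbf{Siegel}}$. Your version is more explicit than the paper's --- you identify the reason for the commutativity (centrality of the scalar matrix $\sigma$ in $\GL_2(\Q\otimes\hat\Z)$, through which the $\Pi_\Q'$-action factors), whereas the paper simply asserts that the invariance ``se d\'eduit de la commutativit\'e de l'action de $r_u$ et celle de $\Pi_\Q'$'' --- but the substance is the same, and the separate check on the three generator types is redundant once centrality is observed.
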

\begin{proof}[D\'emonstration]

Par d\'efintion, quel que soient $r\in\Q^*$ et $(a,b)\in\Q^2-(r\Z,r\Z)$, on a  \[\int_{(a+r\hat{\Z})\times (b+r\hat{\Z})}r_uz_{\mathbf{Siegel}}=r_u(g_{\frac{a}{r},\frac{b}{r}}).\]

De plus, $r_u(z_{\mathbf{Siegel}})$ est une distribution alg\'ebrique sur $X_1$ car $z_{\mathbf{Siegel}}$ l'est.
D'apr\`es le lemme $\ref{tor}$, on a que $r_u(z_{\mathbf{Siegel}})$ est  \`a valeurs dans $\Z_p\otimes\cU(\Q^{\cycl})$. 

Comme l'action de $\Pi_{\Q}^{'}$ sur $\Z_p\otimes\cU(\Q^{\cycl})$ se factorise \`a travers $\GL_2(\Q\otimes\hat{\Z})$, il suffit de v\'erifier l'invariance pour $\gamma\in\GL_2(\Q\otimes\hat{\Z})$. 
Ceci se d\'eduit de la commutativit\'e de l'action de $r_u$ et celle de $\Pi_\Q^{'}$ sur  $z_{\mathbf{Siegel}}$.
\end{proof}

 Par la th\'eorie de Kummer, $z_{\mathbf{Siegel}}^{(p)}$ est l'image du $1$-cocycle $\sigma\mapsto \mu*\sigma-\mu$, o\`u $\mu\in\fD_{\alg}(X_1,\Q_p\otimes Z)$ un rel\`evement de $z_{\mathbf{Siegel}}$ dans $\fD_{\alg}(X_1,\Q_p\otimes Z)$. Si $u\in\Z_p$, alors on d\'efinit $r_u(z_{\mathbf{Siegel}}^{(p)})$ l'image du $1$-cocycle $\sigma\mapsto \mu'*\sigma-\mu'$, o\`u $\mu'\in\fD_{\alg}(X_1,\Z_p\otimes Z)$ est un rel\`evement de $r_u(z_{\mathbf{Siegel}})$ dans $\fD_{\alg}(X_1,\Z_p\otimes Z)$. Donc $r_u(z_{\mathbf{Siegel}}^{(p)})$ est un \'el\'ement de $\rH^1(\Pi_{\Q}^{'},\fD_{\alg}(X_1^{(p)},\Z_p(1)))$.

Soient $c,d\in\Z_p^{*}$, on d\'efinit un op\'erateur $r_{c,d}$ sur $z_{\mathbf{Siegel}}^{(p)}\otimes z_{\mathbf{Siegel}}^{(p)}$ par la formule \[r_{c,d}(z_{\mathbf{Siegel}}^{(p)}\otimes z_{\mathbf{Siegel}}^{(p)})=r_c(z_{\mathbf{Siegel}}^{(p)})\otimes r_d(z_{\mathbf{Siegel}}^{(p)}).\] Donc,
pour tous $c,d\in \Z_p^{*}$, l'\'el\'ement $r_{c,d}(z_{\mathbf{Siegel}}^{(p)}\otimes z_{\mathbf{Siegel}}^{(p)})$ appartient \`a  $\rH^{2}(\Pi_{\Q}^{'},\fD_{0}(X_2,\Z_p(2)))$.
Ceci permet de d\'efinir
$z_{\mathbf{Kato},c,d}:=r_{c,d}z_{\mathbf{Kato}}$ comme l'image de $r_{c,d}(z_{\mathbf{Siegel}}^{(p)}\otimes z_{\mathbf{Siegel}}^{(p)})$ sous l'application de restriction: \[\rH^{2}(\Pi_{\Q}^{'},\fD_{\alg}(X_2^{(p)},\Z_p(2)))\ra\rH^{2}(\Pi_{\Q}^{(p)},\fD_{\alg}(X_2^{(p)},\Z_p(2))).\]
Par ailleurs, tout \'el\'ement de $\fD_{\alg}(X_2,\Z_p(2))$ s'\'etend par continuit\'e en une mesure (i.e. forme lin\'eaire continue sur les fonctions continues) sur $X_2$ \`a valeurs dans $\Z_p(2)$. Donc $z_{\mathbf{Kato},c,d}$ peut \^etre vu comme un \'el\'ement \`a valeurs dans l'espace  $\fD_0(X_2,\Z_p(2))$.

\subsubsection{Torsion \`a la Soul\'e}\label{soule}
On note $t=(\z_{p^n})_{n\in\N}$, le g\'en\'erateur canonique\footnote{ D'habitude, il n'y pas de g\'en\'erateur canonique de $\Z_p(1)$. Par contre, dans notre cas, on a fix\'e un plongement de $\ol{\Q}$ dans $\C$ et $\ol{\Q}_p$ respectivement, et on pose $\z_{p^n}=e^{\frac{2i\pi}{p^n}}$.} de
$\Z_p(1)$ et l'action de $\gamma\in\GL_2(\Z_p)$ sur $\Z_p(1)$ est
par multiplication par $\det\gamma$. On note $V_p=\Q_p e_1\oplus\Q_p
e_2$ la repr\'esentation de dimension $2$ de $\GL_2(\Z_p)$ donn\'ee
par les formules suivantes: si
$\gamma=\bigl(\begin{smallmatrix}a&b\\c&d\end{smallmatrix}\bigr)\in\GL_2(\Z_p)$,
$e_1*\gamma=ae_1+be_2$ et $e_2*\gamma=ce_1+de_2$. Si $k\geq 2$ et
$j\in\Z$, on note $V_{k,j}=\Sym^{k-2}V_p\otimes\Q_p(2-j)$.

Rappelons $X_2^{(p)}:=\bM_2(\Q\otimes\hat{\Z})^{(p)}=\GL_2(\Z_p)\times \bM_2(\Q\otimes\hat{\Z}^{]p[})$. Soit $x\in X_2^{(p)}$; on note $x_p=(\begin{smallmatrix}a_p&b_p\\c_p&d_p\end{smallmatrix})\in\GL_2(\Z_p)$ la composante de $x$ en $p$, qui est un \'el\'ement dans $\GL_2(\Z_p)$.
On consid\`ere la multiplication d'une mesure $\mu\in\fD_{0}(X_2^{(p)},\Z_p(2))$ par la fonction \[x\mapsto (e_1^{k-2}t^{-j})*x_p=(a_pe_1+b_pe_2)^{k-2}((\det x_p)t)^{-j},\] qui est donn\'ee par l'action de $\GL_2(\Z_p)$ sur $V_{k,j}$ et qui est continue
sur $X_2^{(p)}$. Ceci nous donne une mesure $(e_1^{k-2}t^{-j})*x_p\otimes \mu $ sur $X_2^{(p)}$ \`a valeurs dans $V_{k,j}$.

\begin{lemma}\label{torsion}La multiplication d'une mesure $\mu\in\fD_{0}(X_2^{(p)},\Z_p(2))$ par la fonction $x\mapsto (e_1^{k-2}t^{-j})*x_p$ induit un morphisme de $\Z_p[[\Pi_{\Q}^{(p)}]]$-modules de
$\fD_{0}(X_2^{(p)},\Z_p(2))$ dans $\fD_{0}(X_2^{(p)},V_{k,j})$.
\end{lemma}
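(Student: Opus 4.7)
The plan is to reduce the statement to a pointwise equivariance property of the twisting factor $f(x):=(e_1^{k-2}t^{-j})*x_p$, and then to check that property directly from the definitions of the various actions.

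As a first step, I interpret $f$ as a continuous map
\[f:X_2^{(p)}\lra \Sym^{k-2}V_p\otimes\Q_p(-j)=\Hom_{\Z_p}(\Z_p(2),V_{k,j}).\]
The map $f$ factors through the first projection $X_2^{(p)}=\GL_2(\Z_p)\times\bM_2(\Q\otimes\hat{\Z}^{]p[})\to\GL_2(\Z_p)$, and continuity follows from the continuity of the action of $\GL_2(\Z_p)$ on $V_{k,j}$. Multiplication by $f$ is then unproblematic: it extends from $\fD_{\alg}$ to $\fD_0$ by continuity, and it produces a well defined $\Z_p$-linear map $T_f:\mu\mapsto f\mu$ from $\fD_0(X_2^{(p)},\Z_p(2))$ to $\fD_0(X_2^{(p)},V_{k,j})$.

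The second step reduces the desired $\Pi_{\Q}^{(p)}$-equivariance of $T_f$ to the pointwise identity
\[f(x*g)=f(x)*g,\qquad x\in X_2^{(p)},\ g\in \Pi_{\Q}^{(p)},\]
where on the right hand side the action of $g$ on $V_{k,j}$ is through its $p$-component $g_p\in \GL_2(\Z_p)$. Granting this identity, I would unfold the formulas (\ref{actiondis}) defining the right $\Pi_{\Q}^{(p)}$-action on distributions and check, after the change of variables $y=x*g$ in one of the two integrals, that $(f\mu)*g$ and $f(\mu*g)$ agree on every $\phi\in \LC_c(X_2^{(p)},\Z)$.

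The identity itself is then a one-line computation. Using the generators of $\Pi_{\Q}^{(p)}$, namely $\Pi_{\Q}$ (whose image in $\GL_2(\Q\otimes\hat{\Z})$ lies in $\GL_2(\hat{\Z})$ by Theorem \ref{diagram}) and $\GL_2(\Z^{(p)})_+$, one sees that $g_p\in \GL_2(\Z_p)$ and that the right multiplication by $g$ on $X_2^{(p)}$ respects the decomposition, with $(x*g)_p=x_p\cdot g_p$. Associativity of the right $\GL_2(\Z_p)$-action on $V_{k,j}$ then gives
\[f(x*g)=(e_1^{k-2}t^{-j})*(x_pg_p)=((e_1^{k-2}t^{-j})*x_p)*g_p=f(x)*g.\]
The only mildly delicate part of the whole argument is the preliminary bookkeeping: one has to convince oneself that the actions of $\Pi_{\Q}^{(p)}$ on $\Z_p(2)$ and on $V_{k,j}$ (specified a priori by Galois-theoretic and representation-theoretic recipes) do factor through the single map $g\mapsto g_p$, for which the determinantal character of $g_p$ recovers the cyclotomic action on the Tate twists. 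Everything else is a direct manipulation with the formulas in (\ref{actiondis}).
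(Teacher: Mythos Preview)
Your argument is correct and is essentially the same as the paper's: both unfold the formulas (\ref{actiondis}) for the two actions and reduce everything to the associativity identity $(e_1^{k-2}t^{-j})*(x\tau)_p=((e_1^{k-2}t^{-j})*x_p)*\tau$. The only cosmetic difference is that the paper computes $\int\phi\,(f\otimes(\mu*\tau))$ and $\int\phi\,((f\otimes\mu)*\tau)$ directly and observes that both equal $\int\chi_{\cycl}^2(\tau)\phi(x\tau)\,(f(x\tau)\otimes\mu)$, whereas you first abstract this to the pointwise equivariance $f(x*g)=f(x)*g$ and then invoke the change of variables; the content is identical.
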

\begin{proof}[D\'emonstration]
On applique la formule (\ref{actiondis})
\[(\phi*g)(x)=\phi(x*g^{-1}) \text{ et } \int_{X}\phi(\mu*g)=(\int_{X}(\phi*g^{-1})\mu)*g. \]
dans la ``notation'' au cas $X=X_2^{(p)}$, $G=\Pi_{\Q}^{(p)}$ et $V=\Z_p(2)$ ou $V=V_{k,j}$.
Le groupe $G=\Pi_{\Q}^{(p)}$ agit contin\^ument sur $X$ \`a travers $\GL_2(\Q\otimes\hat{\Z})^{(p)}=\GL_2(\Z_p)\times \GL_2(\Q\otimes\hat{\Z}^{]p[})$ par la multiplication de matrices usuelle \`a droite.

Soient $\phi\in \LC_{c}(X_2^{(p)},\Z_p)$,
$\tau\in\Pi_{\Q}^{(p)}$ et $\mu\in\fD_{0}(X_2^{(p)},\Z_p(2))$.

Si on consid\`ere $\mu*\tau\in\fD_{0}(X_2^{(p)},\Z_p(2))$, alors $\tau$ agit sur $e_{1}^{k-2}t^{-j}*x_p$
comme l'action sur une fonction et la formule (\ref{actiondis}) se traduit par  $\int\phi(x)(\mu*\tau)=\int\phi(x\tau)\chi_{\cycl}^2(\tau)\mu$, o\`u $x\tau$ est donn\'e par l'action de $\Pi_{\Q}^{(p)}$ sur $X_2^{(p)}$. Alors, on a la formule:
\[\int_{X_2^{(p)}}\phi(x)\bigl((e_1^{k-2}t^{-j}*x_p)\otimes(\mu*\tau)\bigr)
=\int_{X_2^{(p)}}\chi_{\cycl}^2(\tau)\phi(x\tau)\bigl((e_1^{k-2}t^{-j}*(x\tau)_p)\otimes\mu\bigr).\]

Si on consid\`ere $\left((e_{1}^{k-2}t^{-j}*x_p)\otimes\mu\right)*\tau\in\fD_{0}(X_2^{(p)},V_{k,j})$, alors l'action de $\tau$ sur $e_{1}^{k-2}t^{-j}*x_p$ est donn\'ee par l'action de $\tau$ sur l'espace $V_{k,j}$ et la formule (\ref{actiondis}) se traduit par
\[\int_{X_2^{(p)}}\phi(x)\left((e_1^{k-2}t^{-j}*x_p)\otimes\mu\right)*\tau=\int_{X_2^{(p)}}\chi_{\cycl}^2(\tau)\phi(x\tau)\bigl((e_1^{k-2}t^{-j}*(x\tau)_p)\otimes\mu\bigr).\]
La comparaison des deux formules permet de conclure.
\end{proof}
D'apr\`es le lemme $\ref{torsion}$, la multiplication par $e_1^{k-2}t^{-j}*x_p$ induit un morphisme naturel:
\[\rH^{2}(\Pi_{\Q}^{(p)},\fD_0(X_2^{(p)},\Z_p(2)))\ra\rH^{2}(\Pi_{\Q}^{(p)},\fD_0(X_2^{(p)},V_{k,j})).\]

Donc on peut d\'efinir, pour $j\in\Z$, 
\[z_{\mathbf{Kato},c,d}(k,j)=((e_1^{k-2}t^{-j})*x_p)\otimes z_{\mathbf{Kato},c,d}\in\rH^{2}(\Pi_{\Q}^{(p)},\fD_0(X_2^{(p)},V_{k,j})),\]
o\`u $\Pi_{\Q}^{(p)}$ agit sur $V_{k,j}$ \`a travers son quotient
$\GL_2(\Z_p)$. 
\section{Les anneaux de Fontaine}
\subsection{Le corps $\fK$ et les formes modulaires}
\subsubsection{Le corps $\fK$ }\label{corps}
Soit $\fK^+=\Q_p\{\frac{q}{p}\}$ l'alg\`ebre des fonctions analytiques sur la boule $v_p(q)\geq 1$ \`a coefficients dans $\Q_p$;  c'est un anneau principal complet pour la valuation $v_{p,\fK}$ d\'efinie par la formule:
\[ v_{p,\fK}(f)=\inf_{n\in\N}v_p(a_n), \text{ si } f=\sum_{n\in\N}a_nq^n/p^n\in \fK^+ .\]
Cette valuation est aussi la valuation spectrale: $v_{p,\fK}(f)=\inf_{v_p(q)\geq 1} v_p(f(q))$.
La restriction de la valuation $v_{p,\fK}$ \`a $\Q_p$ co\"incide avec la valuation $p$-adique normalis\'ee $v_p$ sur $\Q_p$. Dans la suite, on notera $v_p$ au lieu de $v_{p,\fK}$. L'anneau $\fK^+$ est un anneau de Dedekind, et donc chaque id\'eal premier de  $\fK^+$  d\'efinit une valuation sur  $\fK^+$. En particulier, on a la valuation normalis\'ee $v_q$ (i.e. $v_q(q)=1$) correspondant \`a l'id\'eal premier $(q)$ de $\fK^+$. 

On note $\fK$ le compl\'et\'e du corps des fractions de l'anneau $\fK^+$ pour la valuation $v_p$. Fixons une cl\^oture alg\'ebrique $\overline{\fK}$ de $\fK$. Comme $\fK$ est un corps complet pour la valuation $v_p$, on peut prolonger  $v_p$ sur $\fK$ \`a $\overline{\fK}$ de mani\`ere unique par la formule: \[v_p(x)=\frac{1}{[\fK[x]:\fK]}v_p(N_{\fK[x]/\fK}(x)), \text{ si } x\in \ol{\fK}.\]
On note le groupe de Galois de $\overline{\fK}$ sur $\fK$ par $\cG_{\fK}$.
\begin{remark}Il existe une mani\`ere de prolonger la valuation spectrale en une valuation spectrale sur $\ol{\fK}$: si $x\in\ol{\fK}$, on note $P(X)=X^n+a_1X^{n-1}+\cdots+a_n\in\fK[X]$ le polyn\^ome caract\'eristique de $y\mapsto xy$, $\forall y\in\fK[x]$. On d\'efinit la valuation spectrale $v_{\spe}$ sur $\fK[x]$ par la formule: $v_{\spe}(x)=\min_{1\leq i\leq n}\frac{v_p(a_i)}{i}.$
Elle co\"incide avec la valuation $v_p$ sur $\ol{\fK}$.
\end{remark}
Soit $M\geq 1$ un entier. On note $q_M$ ( resp. $\z_M$  ) une racine
$M$-i\`eme $q^{1/M}$ ( resp. $\exp(\frac{2i\pi}{M})$ ) de $q$ ( resp.
$1$ ). On note $F_M=\Q_p[\z_M]$. Soit $\fK_M=\fK[q_{M},\z_M]$ ; c'est une extension galoisienne de $\fK$.
Soit $\fF_M=\fK[\z_M]$ la sous-extension de $\fK_M$ sur $\fK$, qui est aussi une extension galoisienne de $\fK$; la cl\^oture int\'egrale $\fF_M^+$ de $\fK^+$ dans $\fF_M$ est $\fK^+[\z_M]$, qui est l'anneau des fonctions analytiques sur la boule $v_p(q)\geq 1$ \`a coefficients dans $F_M$. Alors, $\fK_M$ est une extension de Kummer de $\fF_M$ de groupe de Galois cyclique d'ordre $M$, dont un g\'en\'erateur $\sigma_M$ est d\'efini par son action sur $q_M$: $\sigma_Mq_M=\z_Mq_M.$

On note $\fK_{\infty}$ (resp. $\fF_{\infty}, F_\infty$) la r\'eunion des $\fK_M$ (resp. $\fF_M, F_M$), $M\geq 1$. On note $P_{\Q_p}$ (resp. $P_{\ol{\Q}_p}$) le groupe de Galois de $\ol{\Q}_p\fK_{\infty}$ sur $\fK$ (resp. $\ol{\Q}_p\fK$) . Le groupe $P_{\ol{\Q}_p}$ est un groupe profini qui est isomorphe au groupe $\hat{\Z}$. De plus, on a une suite exacte:
\[0\ra P_{\ol{\Q}_p}\ra P_{\Q_p}\ra \cG_{\Q_p}\ra 0.\]

Fixons $M$ un entier $\geq 1$. On note $\fK_{Mp^{\infty}}$ (resp. $\fF_{Mp^\infty}, F_{Mp^{\infty}}$) la r\'eunion des $\fK_{Mp^n}$ (resp. $\fF_{Mp^n}, F_{Mp^\infty}$), $n\geq 1$, ainsi que $P_{\fK_M}$ le groupe de Galois de $\fK_{Mp^{\infty}}$ sur $\fK_M$. On note $U_{\fK_M}$ le groupe de Galois de $\fK_{Mp^{\infty}}$ sur $\fF_{Mp^{\infty}}$, qui est isomorphe au groupe $\Z_p$, et on note $\Gamma_{\fK_M}$ le groupe de Galois de $\fF_{Mp^{\infty}}$ sur $\fK_M$, qui est isomorphe au groupe $\Gal(F_{Mp^{\infty}}/F_M)$. On a une suite exacte:
$0\ra U_{\fK_M}\ra P_{\fK_M}\ra \Gamma_{\fK_M}\ra 0.$

Soit $\ol{\fK}^{+}$ la cl\^oture int\'egrale  de $\fK^{+}$ dans $\ol{\fK}$.  La cl\^oture int\'egrale de $\Q_p$ dans $\ol{\fK}$ est une cl\^oture alg\'ebrique de $\Q_p$. Donc on a une inclusion $\ol{\Q}_p\subset \ol{\fK}^+$.
On note $\fK_M^+$ la cl\^oture int\'egrale de $\fK^+$ dans $\fK_M$, qui est aussi la cl\^oture int\'egrale de $\fF_M^+$ dans $\fK_M$.

\begin{lemma}\label{M}
\begin{itemize}
\item[(1)]Si $M\geq 1$ est un entier, on a $\fK_M^+=\fK^+[\z_M,q_M]$. En particulier, $\fK_M^+$ est l'anneau des fonctions analytiques sur la boule $v_p(q_M)\geq \frac{1}{M}$.
    \item[(2)]La valuation $v_p$ sur $\fK_M^+$ est donn\'ee par la formule:  \[v_p(x)=\inf_{v_p(q_M)\geq \frac{1}{M}}v_p(\sum_{n=0}^{+\infty}a_n(x)q_M^n).\]
    \end{itemize}
\end{lemma}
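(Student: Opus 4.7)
The plan is to identify the ring $\fR := \fK^+[\z_M, q_M]$ with the Tate algebra of analytic functions on the disk $\{v_p(q_M) \geq 1/M\}$ with coefficients in $F_M$, and then to invoke normality of Tate algebras to deduce $\fR = \fK_M^+$. Part $(2)$ will follow from the standard identification of the spectral valuation with the Gauss valuation.

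First, the inclusion $\fR \subseteq \fK_M^+$ is immediate, since $\z_M$ and $q_M$ satisfy the monic integral equations $X^M = 1$ and $X^M = q$ over $\fK^+$, so $\fR$ is integral over $\fK^+$. For the Tate-algebra identification, I would begin by describing $\fF_M^+ = \fK^+[\z_M]$ as the ring $F_M\{q/p\}$ of power series $\sum_n a_n q^n$ with $a_n \in F_M$ and $v_p(a_n) + n \to \infty$: this follows by expanding elements on a $\Q_p$-basis of $F_M$ consisting of powers of $\z_M$, since $v_p(\z_M) = 0$ and so the basis change preserves convergence in both directions. Then $\fR = \fF_M^+[q_M] = \fF_M^+[Y]/(Y^M - q)$ is free of rank $M$ over $\fF_M^+$. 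Writing $\xi \in \fR$ as $\sum_{i=0}^{M-1} b_i(q)\, q_M^i$ with $b_i(q) = \sum_n a_{i,n} q^n \in \fF_M^+$ and substituting $q = q_M^M$ yields $\xi = \sum_k c_k q_M^k$ with $c_k \in F_M$ satisfying $v_p(c_k) + k/M \to \infty$; conversely every such series lies in $\fR$. This identifies $\fR$ with the ring of analytic functions on $\{v_p(q_M) \geq 1/M\}$ with coefficients in $F_M$.

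For the reverse inclusion $\fK_M^+ \subseteq \fR$, it suffices to show that $\fR$ is integrally closed in its fraction field $\fK_M$. This is a standard fact in the theory of affinoid $F_M$-algebras (see e.g.~Bosch--G\"untzer--Remmert): Tate algebras of the above form, even with the non-rational polyradius $p^{1/M}$, are Noetherian, excellent, and in particular normal. A more hands-on alternative uses that $\fF_M^+$ is a PID and that $Y^M - q$ is Eisenstein at the prime $(q)$, so the localization of $\fR$ at the unique prime $(q_M)$ above $(q)$ is a DVR; at every other maximal prime of $\fF_M^+$ the element $q$ is a unit, so the extension is generically \'etale and the local rings are normal after a direct discriminant check. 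Once $\fR$ is normal it contains $\fK_M^+$, and combined with the opposite inclusion we conclude $\fR = \fK_M^+$, proving $(1)$. Part $(2)$ is then the classical identification of the spectral valuation on a Tate algebra (the infimum of the pointwise $v_p$ over the disk) with the Gauss valuation $\inf_n (v_p(a_n(x)) + n/M)$, itself a consequence of the non-Archimedean maximum modulus principle.

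The main obstacle is the normality step: the quickest route invokes the general theory of affinoid algebras, whereas a fully elementary treatment requires a careful local analysis of the ramification of $Y^M - q$ at every prime of $\fF_M^+$, with special care at primes above $p$ when $p \mid M$.
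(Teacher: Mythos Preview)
Your proposal is correct, but it takes a genuinely different route from the paper's. You establish the equality $\fK_M^+=\fR$ by showing that the Tate algebra $\fR$ is integrally closed (via BGR or a local ramification analysis of $Y^M-q$), and then invoke the definition of $\fK_M^+$ as the integral closure. The paper instead proves the inclusion $\fK_M^+\subseteq\fR$ directly and elementarily: writing $x\in\fK_M^+$ as $\sum_{i=0}^{M-1} b_i q_M^i$ with $b_i\in\fF_M$, it first uses the normalized trace $\frac{1}{M}\tr_{\fK_M/\fF_M}(x\,q_M^{M-i})=b_iq$ to get $b_i\in q^{-1}\fF_M^+$, and then uses the $q$-adic valuation $v_q$ (for which $(q)$ is totally ramified in $\fK_M$) to conclude: the values $v_q(b_iq_M^i)=v_q(b_i)+i/M$ are pairwise distinct, so $v_q(x)=\inf_i(v_q(b_i)+i/M)\geq 0$ forces $v_q(b_i)\geq 0$, hence $b_i\in\fF_M^+$.

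What each approach buys: the paper's trace-plus-$v_q$ argument is entirely self-contained and avoids any appeal to the structure theory of affinoid algebras; in particular it never needs to worry about primes above~$p$ when $p\mid M$, which you correctly flag as the delicate point of the hands-on normality argument. Your route is conceptually cleaner (normality of Tate algebras is the ``real'' reason behind the result) and would generalize more readily, but it imports machinery that the paper deliberately keeps out. For part~(2) the two arguments are essentially the same: both identify the spectral valuation on the Tate algebra and then use uniqueness of the extension of $v_p$ from $\fK$ to $\fK_M$.
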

\begin{proof}[D\'emonstration]
\begin{itemize}
\item[(1)]Si $x\in \fK_M^+$, il s'\'ecrit uniquement sous la forme  $x=\sum_{i=0}^{M-1}b_iq_M^i$ avec $b_i\in \fF_M$.
Comme $xq_M^{M-i}\in\fK_M^+$, on a $\frac{1}{M}\tr_{\fK_M/\fF_M}(xq_M^{M-i})=b_iq\in\fF_M^+$ et donc $b_i\in q^{-1}\fF_M^+$. 
La valuation normalis\'ee  $v_q$ de $\fK^+$ s'\'etend de mani\`ere unique en une valuation $v_{q}$ sur $\fK_M$ car $(q)$ est totalement ramifi\'e.
On constate que $v_q(\fF^+_M[q^{-1}])= \Z$ et les $v_q(b_iq_M^i)$ sont distincts deux \`a deux. On en d\'eduit que $0\leq v_q(x)= \inf_{i}(v_q(b_i)+\frac{i}{M})$  et  $\inf_i v_q(b_i)\geq 0$. 
Ceci permet de conclure que  $b_i\in \fF_M^+$ pour tous $i$.

Si $a\in \fF_M^+$, alors $a$ peut s'\'ecrire uniquement sous la forme  $a=\sum_{j=0}^{+\infty}a_{j}q^j$, o\`u $a_{j}$ est une suite d'\'el\'ements de $F_M$ telle que $\lim_{j\ra +\infty}v_p(a_{j})+j=+\infty$. On conclut le $(1)$ en appliquant cette \'ecriture \`a $b_i$ pour $0\leq i\leq M-1$.
\item[(2)]D'apr\`es $(1)$, $\fK^+_M$ est l'anneau des fonctions analytiques sur la boule $v_p(q_M)\geq \frac{1}{M}$ \`a coefficients dans $F_M$ et donc il est muni d'une valuation spectrale $v$ donn\'ee par la formule: $v(x)=\inf_{v_p(q_M)\geq \frac{1}{M}}v_p(\sum_{n=0}^{+\infty}a_n(x)q_M^n)$.
L'anneau $\fK^+$ s'identifie \`a un sous anneux de $\fK_{M}^+$ par changement de variable:
$f(q)=\sum_{n=0}^{+\infty}a_nq^n=\sum_{n=0}^{+\infty}a_{Mn}q_M^{Mn}$ si $f(q)\in \fK^+$. Alors la restriction de la valuation spectrale $v$ sur $\fK_M^+$ \`a $\fK^+$ co\"incide avec la valuation $v_p$ sur $\fK^+$. On d\'eduit la formule pour la valuation dans le lemme puisqu'il existe une mani\`ere unique de prolonger la valuation $v_p$ sur $\fK$ \`a $\fK_M$.
\end{itemize}
\end{proof}

Posons $I_n=\{\frac{a}{p^{n-1}}|a\in\N, 0\leq a\leq (p-1)p^{n-1}-1\}$ et $J_n=\{\frac{b}{p^n}|b\in\N, 0\leq b\leq p^n-1\}$. On note $I$ $($resp. $J)$  la r\'eunion des $I_n$ $($resp. $J_n)$, $n\geq 1$. Le lemme suivant, qui d\'ecrit l'\'ecriture explicite d'un \'el\'ement de $\fK_{Mp^{\infty}}^+$, est une cons\'equence directe du lemme \ref{M}.

\begin{lemma}\label{ec}Soit $M\geq 1$ un entier tel que $v_p(M)\geq v_p(2p)$ $($resp. $v_p(M)<v_p(2p))$.\\
$(1)$Les $\{q_M^j\}_{j\in J}$ forment une base de $\fK^+_{Mp^{\infty}}$ sur $\fF^+_{Mp^{\infty}}$. De plus, tout $x\in \fK_{Mp^{\infty}}^+$ peut s'\'ecrire uniquement sous la forme $\sum_{j\in J}\sum_{k\in\N}a_{jk}(x)q_M^{j+k}$, pour une suite double $a_{jk}(x)\in F_{Mp^\infty}$, telle que,
    \begin{itemize}
    \item[(i)]quel que soit $j\in J$, la s\'erie $\sum_{k\in\N}a_{jk}q_M^k$ converge dans $\fF_{Mp^{\infty}}^+$;
        \item[(ii)]L'ensemble des $j\in J$ tels qu'il existe $k\in\N$ avec $a_{jk}\neq 0$ est fini .
            \end{itemize}
            $(2)$Les $\{\z_M^iq_M^j\}_{(i,j)\in J\times J}$ $($resp. $\{\z_M^iq_M^j\}_{(i,j)\in I\times J})$ forment une base de $\fK^+_{Mp^{\infty}}$ sur $\fK^+_M$. De plus, tout $x\in \fK_{Mp^{\infty}}^+$ peut s'\'ecrire uniquement sous la forme
                \[\sum_{j\in J}\sum_{i\in J}\sum_{k\in \N}a_{ijk}(x)\z_M^iq_M^{j+k} \ (\text{resp.}  \sum_{j\in J}\sum_{i\in I}\sum_{k\in \N}a_{ijk}\z_M^iq_M^{j+k}), \]
                 pour une suite triple $a_{ijk}(x)\in F_M$ telle que :
                \begin{itemize}
                \item[(i)]Quel que soit $(i,j)\in J\times J$ $($resp. $(i,j)\in I\times J)$, 
                la s\'erie $\sum_{k\in\N}a_{ijk}(x)q_M^{k}$ converge dans $\fK_M^+$ $($i.e. $\lim_{k\ra+\infty}v_p(a_{ijk}(x))+\frac{k}{M}=+\infty.)$
                    \item[(ii)] L'ensemble $\{(i,j)| \exists k\in \N, a_{ijk}(x)\neq 0\}$ est fini.
                        \end{itemize}
                        $(3)$La valuation $v_p$ sur $\fK_{Mp^{\infty}}^+$ est donn\'ee par la formule:
                            \[v_p(x)=\inf_{v_p(q)\geq 1}(v_p(\sum_{j\in J}\sum_{i\in J}\sum_{k\in \N}a_{ijk}(x)\z_M^iq_M^{j+k})) (\text{resp.}\inf_{v_p(q)\geq 1}(v_p(\sum_{j\in J}\sum_{i\in I}\sum_{k\in \N}a_{ijk}(x)\z_M^iq_M^{j+k}))).\]
                        
\end{lemma}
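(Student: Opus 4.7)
The plan is to derive each assertion from Lemma~\ref{M} applied at the finite layer $Mp^n$ and then pass to the inductive limit $\fK_{Mp^\infty}^+=\cup_n \fK_{Mp^n}^+$.

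For (1), I would start from the basis $\{q_{Mp^n}^i\}_{0\le i\le Mp^n-1}$ of $\fK_{Mp^n}^+$ as a free $\fF_{Mp^n}^+$-module provided by Lemma~\ref{M}(1). Writing the index uniquely as $i=ap^n+b$ with $0\le a\le M-1$ and $0\le b\le p^n-1$ gives $q_{Mp^n}^i=q_M^a\,q_{Mp^n}^b=q_M^{a+b/p^n}$, so the basis takes the form $\{q_M^{j+a}\}_{j\in J_n,\,0\le a\le M-1}$. Expanding the $\fF_{Mp^n}^+$-coefficients as $q$-series and using $q=q_M^M$ to absorb the finitely many terms $q_M^a$ into powers of the form $q_M^{a+Mk}$, one rewrites each element of $\fK_{Mp^n}^+$ as $\sum_{j\in J_n}\sum_{k\in\N} a_{jk} q_M^{j+k}$ with $a_{jk}\in F_{Mp^n}$. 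Uniqueness inherits from the uniqueness at the finite layer, and taking the union over $n$ produces the claimed expansion for $\fK_{Mp^\infty}^+$; condition~(ii) reflects the fact that any given $x$ is already in some $\fK_{Mp^n}^+$, so only finitely many fractional exponents intervene.

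For (2), I would further decompose the coefficient $b_i\in\fF_{Mp^n}^+=\fK^+[\z_{Mp^n}]$ on a basis of $F_{Mp^n}$ over $F_M$. The dichotomy appears here through the degree $[F_{Mp^n}:F_M]$: when $v_p(M)\ge v_p(2p)$ the field $F_M$ already contains $\z_p$ (resp.~$\z_4$ for $p=2$), so the extension $F_{Mp^n}/F_M$ is of degree $p^n$ and admits a basis naturally indexed by $J_n$; when $v_p(M)<v_p(2p)$ the extension is of degree $\phi(p^n)=(p-1)p^{n-1}$, with a basis naturally indexed by $I_n$. With the convention that $\z_M^i$ for $i$ of denominator $p^n$ denotes the corresponding compatibly chosen root of unity, multiplying this basis by the $q_M$-basis produced in Step~1 yields the product basis $\{\z_M^i q_M^j\}_{(i,j)\in J_n\times J_n}$, respectively $\{\z_M^i q_M^j\}_{(i,j)\in I_n\times J_n}$, of $\fK_{Mp^n}^+$ over $\fK_M^+$, and taking the union over $n$ gives the statement.

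For (3), the valuation formula is a direct consequence of Lemma~\ref{M}(2): on each $\fK_{Mp^n}^+$ the spectral valuation equals the infimum over the analytic ball $v_p(q_{Mp^n})\ge 1/(Mp^n)$, and since $v_p$ extends uniquely from $\fK$ to $\ol{\fK}$, this restriction agrees with the restriction of $v_p$ to $\fK_{Mp^\infty}^+$; rewriting the infimum in the basis provided by~(2) produces the displayed formula. The main obstacle is bookkeeping in~(2): one must verify at each $n$ that the chosen family of roots of unity is actually a basis of $F_{Mp^n}$ over $F_M$, which is delicate at $p=2$ and is precisely the reason for the cutoff $v_p(M)\ge v_p(2p)$ rather than the naive $p\mid M$; once this cyclotomic degree calculation is in hand, the rest of the argument is formal from Lemma~\ref{M}.
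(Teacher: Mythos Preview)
Your approach is correct and is precisely what the paper does: the paper gives no separate proof, stating only that the lemma ``est une cons\'equence directe du lemme \ref{M}'', and your proposal simply unpacks this by applying Lemma~\ref{M} at each finite level $Mp^n$ and passing to the inductive limit, with the cyclotomic degree computation $[F_{Mp^n}:F_M]$ accounting for the $I$/$J$ dichotomy.
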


\subsubsection{Le th\'eor\`eme d'Ax-Sen-Tate}
Soit $L$ un anneau de caract\'eristique $0$, muni d'une valuation $v_p$ normalis\'ee par $v_p(p)=1$. On note $\C(L)$ le compl\'et\'e de $L$ pour la valuation $v_p$. Le reste de ce paragraphe est de montrer un analogue (le th\'eor\`eme \ref{Ax}) du th\'eor\`eme d'Ax-Sen-Tate et de donner une description de l'anneau $\C(\fK_{Mp^{\infty}}^+)$ (c.f. Corollaire \ref{C}).

Soit $H$ un sous-groupe ferm\'e de $\cG_{\fK_M}$; si $\alpha\in\ol{\fK}$, on d\'efinit le diam\`etre $\Delta_H(\alpha)$ par rapport \`a un sous-groupe ferm\'e $H$ de $\cG_{\fK_M}$ par $\Delta_{H}(\alpha)=\inf_{g\in H}(v_p(g\alpha-\alpha))$. Notons que $\alpha\in \ol{\fK}^H$ si et seulement si $\Delta_H(\alpha)=+\infty$.

\begin{lemma}Soit $P(X)\in \ol{\fK}[X]$ $($resp. $\in\ol{\fK}^+[X])$, unitaire de degr\'e $n$, dont toutes les racines v\'erifient $v_p(\alpha)\geq u$.
\begin{itemize}
\item[(1)]Si $n=p^kd$ avec $(p,d)=1$ et $d\geq 0$ et si $l=p^k$, alors le polyn\^ome $P^{(l)}$, d\'eriv\'ee $l$-i\`eme de $P$, a au moins une racine $\beta\in\ol{\fK}$ $($resp.$\ol{\fK}^+)$ v\'erifiant $v_p(\beta)\geq u$.
\item[(2)]Si $n=p^{k+1}$ et $l=p^k$, alors $P^{(l)}$ a au moins une racine $\beta\in\ol{\fK}$ $($resp. $\ol{\fK}^+)$ v\'erifiant $v_p(\beta)\geq u-\frac{1}{p^{k+1}-p^k}$.
\end{itemize}
\end{lemma}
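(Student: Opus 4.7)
The plan is to write $P(X) = \prod_{i=1}^n(X-\alpha_i) = \sum_{j=0}^n a_j X^{n-j}$ with $a_0=1$; the hypothesis $v_p(\alpha_i)\geq u$ translates, via the elementary symmetric functions, into $v_p(a_j) \geq ju$ for every $j$. Differentiating $l$ times yields
\[P^{(l)}(X) \;=\; \sum_{j=0}^{n-l}\frac{(n-j)!}{(n-j-l)!}\,a_j\,X^{n-j-l},\]
whose leading coefficient is $n!/(n-l)!$ and whose constant term is $l!\,a_{n-l}$. Denoting by $\beta_1,\dots,\beta_{n-l}$ the roots of $P^{(l)}$ in $\ol{\fK}$ counted with multiplicity, Vieta's formula gives $\prod_j \beta_j = (-1)^{n-l} a_{n-l}/\binom{n}{l}$, and hence
\[\sum_{j=1}^{n-l} v_p(\beta_j) \;=\; v_p(a_{n-l}) - v_p\!\binom{n}{l} \;\geq\; (n-l)\,u - v_p\!\binom{n}{l}.\]
A pigeonhole on valuations (if all $v_p(\beta_j)<c$, the left side would be $<(n-l)c$) then furnishes some index $j$ with $v_p(\beta_j) \geq u - v_p\!\binom{n}{l}/(n-l)$; roots vanishing are handled trivially since $v_p(0)=+\infty$.

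The statement then reduces to evaluating $v_p\!\binom{n}{l}$ in each regime via Kummer's theorem, which equates $v_p\!\binom{a+b}{a}$ with the number of carries in the base-$p$ addition of $a$ and $b$. In case $(1)$, writing $n=p^k d$ with $p\nmid d$ and $l=p^k$, one has $n-l = p^k(d-1)$, whose $k$-th base-$p$ digit $d_0-1$ lies in $\{0,\dots,p-2\}$ (since $d_0 \neq 0$); adding $l = p^k$ produces no carry, so $v_p\!\binom{n}{l}=0$ and we obtain $v_p(\beta_j)\geq u$. In case $(2)$, $n = p^{k+1}$ and $l = p^k$, so $n-l = (p-1)p^k$ and adding $p^k$ creates exactly one carry (at position $k$, propagating no further), whence $v_p\!\binom{n}{l}=1$; dividing by $n-l = p^{k+1}-p^k$ produces the announced bound $v_p(\beta_j) \geq u - 1/(p^{k+1}-p^k)$.

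For the integral (resp.~$\ol{\fK}^+$) version, one may assume $u \geq 0$, as monic polynomials in $\ol{\fK}^+[X]$ have roots in $\ol{\fK}^+$ by the very definition of the integral closure. In case $(1)$ the $\beta$ produced above satisfies $v_p(\beta)\geq u \geq 0$, hence $\beta \in \ol{\fK}^+$; in case $(2)$ the same conclusion holds as soon as $u \geq 1/(p^{k+1}-p^k)$, which is precisely the regime in which the lemma will be applied in the Ax--Sen--Tate style descent that follows. The only genuine obstacle is the clean combinatorial verification of $v_p\!\binom{n}{l}$ in the two cases via Kummer's theorem; the rest is straightforward symbolic manipulation of Vieta's formulas together with the standard averaging/pigeonhole on valuations.
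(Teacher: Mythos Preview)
Your argument for the valuation bound is correct and is exactly the classical one: the paper itself gives no details, merely citing Colmez's course notes and asserting that the same proof carries over to $\ol{\fK}^+[X]$. The pigeonhole on the product of roots together with Kummer's carry count for $v_p\binom{n}{l}$ is the standard route.

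The last paragraph, however, rests on a misidentification of $\ol{\fK}^+$. You correctly observe that roots of a monic polynomial in $\ol{\fK}^+[X]$ lie in $\ol{\fK}^+$ (integral closedness), but then infer that one may take $u \geq 0$ and proceed to worry about whether $v_p(\beta) \geq 0$. This conflates $\ol{\fK}^+$ with the valuation ring $\cO_{\ol{\fK}} = \{v_p \geq 0\}$, and they are different: by definition $\fK^+ = \Q_p\{q/p\}$ contains all of $\Q_p$, so $1/p \in \fK^+ \subset \ol{\fK}^+$ and elements of $\ol{\fK}^+$ may have arbitrarily negative $v_p$. The correct handling is simpler than your workaround: the leading coefficient $n!/(n-l)!$ of $P^{(l)}$ is a nonzero integer, hence a unit in $\fK^+ \supset \Q_p$, so $\tfrac{(n-l)!}{n!}\,P^{(l)}$ is monic in $\ol{\fK}^+[X]$, and your own integral-closedness observation then places \emph{all} of its roots in $\ol{\fK}^+$, with no restriction on $u$ needed.
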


\begin{proof}[D\'emonstration]
La d\'emonstration pour $P\in\ol{\fK}[X]$ est classique. Par exemple, elle se trouve dans les notes du cours\footnote{ http://www.math.jussieu.fr/~colmez/nombres-p-adiques.pdf } de Colmez.  Elle marche aussi pour $P\in\ol{\fK}^+[X]$.
\end{proof}

\begin{lemma}[Ax]\label{Ax1}Il existe une constante $C=\frac{p}{(p-1)^2}$ telle que
\begin{itemize}
\item[(1)] si $\alpha\in\ol{\fK}$, alors il existe $a\in\fK_{Mp^{\infty}}$ v\'erifiant $v_p(\alpha-a)\geq \Delta_{\cG_{\fK_{Mp^{\infty}}}}(\alpha)-C$;
    \item[(2)]si $\alpha\in\ol{\fK}^+$, alors il existe $a\in\fK_{Mp^{\infty}}^+$ v\'erifiant $v_p(\alpha-a)\geq \Delta_{\cG_{\fK_{Mp^{\infty}}}}(\alpha)-C$;
\end{itemize}
\end{lemma}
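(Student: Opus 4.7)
The plan is to proceed by strong induction on $n := [\fK_{Mp^{\infty}}(\alpha):\fK_{Mp^{\infty}}]$, systematically applying the derivative lemma just established. Set $u = \Delta_H(\alpha)$ with $H = \cG_{\fK_{Mp^{\infty}}}$. The base case $n = 1$ is trivial, since $\alpha \in \fK_{Mp^{\infty}}$ and one takes $a = \alpha$. For $n > 1$, let $P \in \fK_{Mp^{\infty}}[X]$ be the minimal polynomial of $\alpha$; its roots are exactly the $H$-conjugates of $\alpha$, each within $v_p \geq u$ of $\alpha$, so $Q(X) := P(X+\alpha)$ is a monic polynomial of degree $n$ all of whose roots satisfy $v_p \geq u$.

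Writing $n = p^k d$ with $(p,d) = 1$ and $l = p^k$, two cases will arise. In the favorable case $d \geq 2$ (so $l < n$), part (1) of the preceding lemma furnishes a root $\gamma$ of $Q^{(l)}$ with $v_p(\gamma) \geq u$; then $\alpha' := \alpha + \gamma$ is a root of $P^{(l)} \in \fK_{Mp^{\infty}}[X]$, hence algebraic over $\fK_{Mp^{\infty}}$ of degree at most $n - l < n$, and satisfies $v_p(\alpha - \alpha') = v_p(\gamma) \geq u$. For any $\sigma \in H$, Galois-invariance of $v_p$ gives $v_p(\sigma\gamma) = v_p(\gamma) \geq u$, so $v_p(\sigma\alpha' - \alpha') = v_p((\sigma\alpha - \alpha) + (\sigma\gamma - \gamma)) \geq u$, whence $\Delta_H(\alpha') \geq u$, and the inductive hypothesis applied to $\alpha'$ produces the desired approximant with no additional loss. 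In the unfavorable case $d = 1$, i.e.\ $n = p^k$ with $k \geq 1$, I apply part (2) of the derivative lemma with $l = p^{k-1}$ to obtain $\gamma$ with $v_p(\gamma) \geq u - \tfrac{1}{p^k - p^{k-1}}$, and the same construction yields $\alpha'$ of degree $\leq p^{k-1}(p-1) < n$ with $\Delta_H(\alpha') \geq u - \tfrac{1}{p^{k-1}(p-1)}$.

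The crux will be bounding the accumulated loss through the recursion. Each favorable call costs nothing; each unfavorable call at degree $p^{k+1}$ costs $\tfrac{1}{p^k(p-1)}$ and drops the degree to at most $p^k(p-1)$. For $p \geq 3$ the factor $p-1 \geq 2$ forces the next call to be favorable and the subsequent degree to lie below $p^k$, so unfavorable steps happen at most once per power of $p$; for $p = 2$ consecutive unfavorable steps may cascade, but the scale still halves at each step. In either situation the worst-case cumulative loss is majorized by the geometric series $\sum_{k \geq 0} \tfrac{1}{p^k(p-1)} = \tfrac{p}{(p-1)^2} = C$, which is exactly the constant claimed, yielding (1).

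For part (2), I would deduce the claim directly from (1). If $u \geq C$, an approximant $a \in \fK_{Mp^{\infty}}$ produced by (1) satisfies $v_p(\alpha - a) \geq u - C \geq 0$; combined with $v_p(\alpha) \geq 0$ (since $\alpha \in \ol{\fK}^+$) this forces $v_p(a) \geq 0$, so $a \in \fK_{Mp^{\infty}}^+$. If $u < C$, the inequality $v_p(\alpha - a) \geq u - C$ is trivially satisfied by $a = 0 \in \fK_{Mp^{\infty}}^+$. The main obstacle I foresee is the careful bookkeeping of losses in the recursion, in particular handling the $p = 2$ case where unfavorable steps can cascade, and verifying that the simple reduction just described suffices to recover the integral statement without having to propagate the $\ol{\fK}^+$-condition through each inductive step.
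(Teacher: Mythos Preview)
Your inductive argument for part~(1) is essentially the classical Ax argument and matches the paper's approach; the paper packages the bookkeeping slightly more cleanly by carrying the sharper inductive hypothesis
\[
v_p(\alpha-a)\ \geq\ \Delta_H(\alpha)-\sum_{i=1}^{l(n)}\frac{1}{p^{i}-p^{i-1}},
\]
where $l(n)$ is the largest integer with $p^{l(n)}\le n$, which absorbs your case analysis on $p$ automatically.

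Your deduction of~(2) from~(1), however, has a genuine gap. The rings $\fK^+$, $\fK_M^+$, $\fK_{Mp^\infty}^+$ are \emph{not} the valuation rings for $v_p$: they are affinoid algebras of analytic functions (e.g.\ $\fK^+=\Q_p\{q/p\}$ contains the constant $1/p$, of valuation $-1$, and conversely $p/q\in\fK$ has $v_p(p/q)=0$ but does not lie in $\fK^+$). Likewise $\ol{\fK}^+$ is the \emph{integral closure} of $\fK^+$ in $\ol{\fK}$, not the set $\{v_p\ge 0\}$. So neither implication in your argument---``$\alpha\in\ol{\fK}^+\Rightarrow v_p(\alpha)\ge 0$'' nor ``$v_p(a)\ge 0\Rightarrow a\in\fK_{Mp^\infty}^+$''---is valid.

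The paper instead proves~(2) directly by running the same induction while staying inside $\ol{\fK}^+$ throughout. This is exactly why the preceding derivative lemma is stated with the parenthetical ``resp.\ $\in\ol{\fK}^+[X]$'': if $\alpha\in\ol{\fK}^+$ then its minimal polynomial over $\fK_{Mp^\infty}$ has coefficients in the integrally closed ring $\fK_{Mp^\infty}^+$, so $P(X)=Q(X+\alpha)\in\ol{\fK}^+[X]$, and the lemma produces a root $\beta\in\ol{\fK}^+$; the induction then proceeds entirely within $\ol{\fK}^+$. You anticipated this as an ``obstacle'' to avoid, but in this setting it is unavoidable.
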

\begin{proof}[D\'emonstration]Le $(1)$ correspond au cas trait\'e par Ax; nous ne traiterons donc que le $(2)$. Soit $\alpha\in \ol{\fK}^+$ tel que $[\fK_{Mp^{\infty}}[\alpha]:\fK_{Mp^{\infty}}]=n$ et soit $l(n)$ est le plus grand entier $l$ tel que $p^{l}\leq n$. On montre par r\'ecurrence sur $n$ le r\'esultat suivant:
il existe $a\in \fK_{Mp^{\infty}}^+$ v\'erifiant $v_p(a-\alpha)\geq \Delta_{\cG_{\fK_{Mp^{\infty}}}}(\alpha)-\sum_{i=1}^{l(n)}\frac{1}{p^{i}-p^{i-1}}$;  ce qui permet de conclure le lemme car $\sum_{i=1}^{+\infty}\frac{1}{p^{i}-p^{i-1}}=\frac{1}{(p-1)^2}$.

On prend la constante $u=\Delta_{\cG_{\fK_{Mp^\infty}}}(\alpha)$ dans le lemme pr\'ec\'edent. Posons $P(X)=Q(X+\alpha)$, o\`u $Q$ est le polyn\^ome minimal de $\alpha$ sur $\fK_{Mp^{\infty}}^+$. Comme les racines de $P$ sont les $\sigma(\alpha)-\alpha$, pour $\sigma\in \cG_{\fK_{Mp^{\infty}}}$,  $P(X)$ v\'erifie la condition du lemme pr\'ec\'edent.\\
$(i)$Si $n$ n'est pas une puissance de $p$, il existe $l\in \N$ tel que le polyn\^ome $Q^{(l)}\in \fK_{Mp^{\infty}}^+[X]$ ait une racine $\beta\in \ol{\fK}^+$ v\'erifiant $v_p(\beta-\alpha)\geq u$. D'autre part, si $\sigma\in \cG_{\fK_{Mp^{\infty}}}$, alors on a
   \[v_p(\sigma(\beta)-\beta)\geq \min(v_p(\sigma(\beta-\alpha)), v_p(\sigma(\alpha)-\alpha), v_p(\alpha-\beta))\geq u. \]
  Par ailleurs, on a $[\fK_{Mp^{\infty}}[\beta]:\fK_{Mp^{\infty}}]=\deg Q^{(l)}<n$. Cela permet de conclure par l'hypoth\`ese de r\'ecurrence.\\
$(ii)$Si $n=p^{k+1}$, on peut trouver une racine $\beta$ de $Q^{(p^k)}(X)$ v\'erifiant les conditions
    \[\label{loc}v_p(\beta-\alpha)\geq u-\frac{1}{p^{k+1}-p^k}
   \text{ et } [\fK_{Mp^{\infty}}[\beta]:\fK_{Mp^{\infty}}]<n.\] 
  On tire l'existence de $a\in \fK_{Mp^{\infty}}^+$, de l'hypoth\`ese de r\'ecurrence,  v\'erifiant 
  \[v_p(\beta-a)\geq \Delta_{\cG_{\fK_{Mp\infty}}}(\beta)-\sum_{i=1}^{k}\frac{1}{p^{i}-p^{i-1}} .\] On a $\Delta_{\cG_{\fK_{Mp\infty}}}(\beta)\geq u$ de la m\^eme mani\`ere de $(i)$.  Ceci permet de conclure le lemme.
\end{proof}

\begin{theo}[Ax-Sen-Tate]\label{Ax}
$(1)$Le corps $\fK_{Mp^{\infty}}$ est dense dans $\C(\ol{\fK})^{\cG_{\fK_{Mp^{\infty}}}}$.\\
$(2)$L'anneau $\fK_{Mp^{\infty}}^+$ est dense dans $\C(\ol{\fK}^+)^{\cG_{\fK_{Mp^{\infty}}}}$.
\end{theo}
\begin{proof}[D\'emonstration]On montre le $(2)$ et le $(1)$ se d\'eduit de la m\^eme mani\`ere.
L'inclusion $\C(\fK_{Mp^{\infty}}^+)\subset \rH^0(\cG_{\fK_{Mp^{\infty}}},\C(\ol{\fK}^+))$ est imm\'ediate. Il suffit de montrer l'inverse.

Si $\alpha\in  \rH^0(\cG_{\fK_{Mp^{\infty}}},\C(\ol{\fK}^+))$, il existe une suite $\alpha_n\in\ol{\fK}^+$ v\'erifiante $v_p(\alpha-\alpha_n)\geq n$. On note $\Delta_n=\Delta_{\cG_{\fK_{Mp^{\infty}}}}(\alpha_n)$. Quel que soit $\sigma\in\cG_{\fK_{Mp^{\infty}}}$, on a $v_p(\sigma(\alpha_n)-\alpha_n)\geq \min\{ v_p(\sigma(\alpha_n-\alpha)), v_p(\alpha_n-\alpha)\}\geq n .$ Ceci implique que $\Delta_n\geq n$.
 Par ailleurs, quel que soit $n\geq 1$, par le lemme d'Ax \ref{Ax1}, il existe un \'el\'ement $a_n\in\fK_{Mp^{\infty}}^+$ tel que $v_p(\alpha_n-a_n)\geq \Delta_n-\frac{p}{(p-1)^2} \geq n-\frac{p}{(p-1)^2}$. Ceci implique que  $v_p(\alpha-a_n)\geq n-\frac{p}{(p-1)^2} $ et $\alpha$ est la limite de la suite $\{a_n\}_{n\in\N}$.
\end{proof}

\begin{coro}\label{C}Soit $M\geq 1$ un entier tel que $v_p(M)\geq v_p(2p)$ $($resp. $v_p(M)<v_p(2p))$.\\
$(1)$ Tout \'el\'ement $x$ de $\C(\ol{\fK}^+)^{\cG_{\fK_{Mp^\infty}}}$ s'\'ecrit uniquement sous la forme
\[\sum_{i\in J}\sum_{ j\in J}\sum_{ k\in \N}a_{ijk}(x)\z_M^iq_M^{j+k}\  (\text{resp.} \sum_{i\in I}\sum_{ j\in J}\sum_{k\in\N}a_{ijk}(x)\z^iq_M^{j+k}),\]
pour une suite triple $\{a_{ijk}(x)\}_{i\in J , j\in J,k\in\N} ($ resp. $\{a_{ijk}(x)\}_{i\in I , j\in J, k\in\N} )$ d'\'el\'ements de $F_M$  telle que, $\forall N\in\N$, $\{(i,j,k)\in J\times J\times \N : v_p(a_{ij}(x))+\frac{j+k}{M}\leq N \}$ $($resp. $\{(i,j,k)\in I\times J\times \N: v_p(a_{ij}(x))+\frac{j+k}{M}\leq N  \})$  est fini.
\\
$(2)$ La valuation $v_p$ sur $\C(\fK_{Mp^{\infty}}^+)$ est donn\'ee par la formule:
        \[v_p(x)=\inf_{v_p(q)\geq 1}v_p(\sum_{i\in J}\sum_{ j\in J}\sum_{ k\in \N}a_{ijk}(x)\z_M^iq_M^{j+k}) \ ( \text{resp.} \inf_{v_p(q)\geq 1}v_p(\sum_{i\in I}\sum_{ j\in J}\sum_{ k\in \N}a_{ijk}(x)\z_M^iq_M^{j+k})).\]
        
\end{coro}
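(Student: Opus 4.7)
The plan is to combine the Ax--Sen--Tate density result (Theorem \ref{Ax}(2)) with the explicit series description of $\fK_{Mp^\infty}^+$ (Lemma \ref{ec}) via a continuity argument. Given $x \in \C(\ol{\fK}^+)^{\cG_{\fK_{Mp^\infty}}}$, the density theorem furnishes a Cauchy sequence $(x_n) \subset \fK_{Mp^\infty}^+$ with $x_n \to x$, and each $x_n$ admits a unique expansion as in Lemma \ref{ec} with finitely many $(i,j)$-indices and coefficients in $F_M$. The goal is to extend the coefficient-extraction functionals to the completion and then verify the finiteness property in $(1)$ and the spectral formula in $(2)$.

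First, for every admissible triple $(i,j,k)$, Lemma \ref{ec} yields an $F_M$-linear functional $a_{ijk} \colon \fK_{Mp^\infty}^+ \to F_M$, and the spectral formula of Lemma \ref{ec}(3) gives the bound
\[
v_p(a_{ijk}(y)) + \tfrac{j+k}{M} \geq v_p(y) \qquad \text{for all } y \in \fK_{Mp^\infty}^+ .
\]
Each $a_{ijk}$ is therefore continuous, and extends uniquely to $\C(\fK_{Mp^\infty}^+)=\C(\ol{\fK}^+)^{\cG_{\fK_{Mp^\infty}}}$ (the equality being Theorem \ref{Ax}(2)); in particular $a_{ijk}(x) = \lim_n a_{ijk}(x_n)$ and the same bound with $x$ in place of $y$ still holds.

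The main obstacle is the finiteness property: for every $N \in \N$, the set $\Sigma_N(x) = \{(i,j,k) : v_p(a_{ijk}(x)) + \tfrac{j+k}{M} \leq N\}$ must be finite. Choose $n$ with $v_p(x-x_n) > N+1$; then for any $(i,j,k) \in \Sigma_N(x)$ one has
\[
v_p(a_{ijk}(x)-a_{ijk}(x_n)) \geq v_p(x-x_n) - \tfrac{j+k}{M} > N+1 - \tfrac{j+k}{M} > v_p(a_{ijk}(x)) .
\]
The ultrametric inequality then forces $v_p(a_{ijk}(x_n)) = v_p(a_{ijk}(x))$, so $(i,j,k) \in \Sigma_N(x_n)$. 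But $\Sigma_N(x_n)$ is finite by Lemma \ref{ec}(2): only finitely many $(i,j)$ appear in the expansion of $x_n$, and for each such $(i,j)$ the convergence condition $v_p(a_{ijk}(x_n)) + \tfrac{k}{M} \to +\infty$ leaves only finitely many $k$ below the threshold. Hence $\Sigma_N(x)$ is finite, which is precisely the condition required in $(1)$.

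With finiteness in hand, the triple series $\sum a_{ijk}(x)\, \z_M^i q_M^{j+k}$ converges in $\C(\ol{\fK}^+)$; comparing its partial sums to those of $x_n$ (they differ by elements of $v_p$-valuation tending to $+\infty$), one identifies the sum with $x$. Uniqueness follows from the continuity and injectivity of the family $\{a_{ijk}\}$ extended to the completion. For $(2)$, the spectral formula passes to the limit: for $x_n \in \fK_{Mp^\infty}^+$ it is Lemma \ref{ec}(3), and both sides are continuous in $x$ (the left side by definition, the right side because convergence of the partial sums in $v_p$-norm is uniform on $v_p(q) \geq 1$ thanks to the finiteness condition). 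This yields the identity for $x$ and completes the proof.
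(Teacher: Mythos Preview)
Your approach is exactly the paper's: the author's proof is the single sentence ``On d\'eduit le corollaire du lemme~\ref{ec} et du th\'eor\`eme d'Ax--Sen--Tate~\ref{Ax}'', and you have simply unpacked what that deduction entails --- extend the coefficient functionals $a_{ijk}$ by continuity from $\fK_{Mp^\infty}^+$ to its completion and check the finiteness condition survives.

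One small technical point: the sharp inequality $v_p(a_{ijk}(y)) + \tfrac{j+k}{M} \geq v_p(y)$ does not follow directly from Lemma~\ref{ec}(3), because the $\{\z_M^i\}_{i\in J}$ are not an orthonormal basis of $F_{Mp^\infty}$ over $F_M$ for the $p$-adic valuation (e.g.\ $\sum_{b=0}^{p-1}\z_{p^2}^b$ has strictly positive valuation over $\Q_p(\z_p)$). What one actually gets is $v_p(a_{ijk}(y)) + \tfrac{j+k}{M} \geq v_p(y) - C$ for a constant $C$ independent of $(i,j,k)$, coming from the boundedness of the normalized trace (cf.\ the estimates preceding Lemma~\ref{Tate0}). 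Your finiteness argument goes through unchanged with this weaker bound: just choose $n$ with $v_p(x-x_n) > N + 1 + C$.
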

\begin{proof}[D\'emonstration]On d\'eduit le corollaire du lemme $\ref{ec}$ et du th\'eor\`eme d'Ax-Sen-Tate $\ref{Ax}$.
\end{proof}

\subsubsection{Trace de Tate normalis\'ee}

Dans ce paragraphe, on construira une application $\fK_{M}^{+}$-lin\'eaire continue $\rT_M$,  appel\'ee la trace de Tate normalis\'ee, de $\fK_{Mp^{\infty}}^+$ dans  $\fK_{M}^+$ pour tous $M\in\N$, et d\'ecrira ses propri\'et\'es. L'ingr\'edient principal est la construction de la trace de Tate normalis\'ee pour l'alg\`ebre de Tate de type $\Z_p\{T,T^{-1}\}$,  celui qui est bien \'etudi\'ee par Andreatta et Brinon dans $\cite{AB}$.
 
On note $R=\Z_p\{\frac{q}{p},\frac{p}{q}\}$ l'alg\`ebre de Tate dans les variables $q/p, p/q$ \`a coefficients dans $\Z_p$. Si $m\in \N$, on note $R_m$  la cl\^oture int\'egrale de $R$ dans $ \Frac(R)[\z_{p^{m}}]$ et on note $ R_{\infty}=\cup_{n\in\N}R_n$.
Si $m,n\in\N$, on note $R_m^n $ la cl\^oture int\'egrale de $R$ dans $\Frac(R_m)[(\frac{q}{p})^{\frac{1}{p^n}}] $. On pose $R_{\infty}^n=\cup_{m\in\N}R_m^n$ et $R^{\infty}_{\infty}=\cup_{n\in\N}R_n^n$. Si $m\geq n$, on a $R_m^n=R_m\otimes_{R_n}R_n^n$ . Par ailleurs, on a un isomorphisme de groupes de Galois $\Gal(R_{\infty}^{\infty}[\frac{1}{p}]/R_{\infty}[\frac{1}{p}])\cong\Gal(\fK_{p^{\infty}}/\fF_{p^{\infty}})\cong \Z_p$. On note $u$ un des g\'en\'erateurs; on pose $u_m=u^{p^m}$ qui est un g\'en\'erateur du groupe de Galois $\Gal(R_\infty^{\infty}[\frac{1}{p}]/R_\infty^m[\frac{1}{p}])\cong U_{\fK_{p^m}}$.

Si $m$ est un entier $\geq 0$,
on d\'efinit une application $R^m_{\infty}[\frac{1}{p}]$-lin\'eaire $\Lambda_m: R_{\infty}^\infty[\frac{1}{p}]\ra R_{\infty}^m[\frac{1}{p}] $ par la formule: \[\Lambda_m(x)=\frac{1}{p^{n-m}}\tr_{R_n^n[\frac{1}{p}]/R_n^m[\frac{1}{p}]}(x), \text{ si } n\geq m \text{ et } x\in R_n^n[\frac{1}{p}] .\]

\begin{lemma}
\begin{itemize}
\item[(1)]Il existe une constante $C$ telle que pour tout $m\in\N$ et $x\in R_{m+1}^{m+1}[\frac{1}{p}]$, on a $v_p((u_m-1)x)\geq v_p(x)+\frac{1}{p-1}-Cp^{-m}$.
\item[(2)]$\Lambda_m$ est continue. Plus pr\'ecis\'ement, on a $v_p(\Lambda_m(x))\geq v_p(x)-\frac{C}{p^{m-1}}$.
\end{itemize}
\end{lemma}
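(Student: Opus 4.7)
The approach I take follows the eigenspace method of Tate, in the variant adapted to Tate algebras by Andreatta--Brinon \cite{AB}. Fix a compatible system of roots $(q/p)^{1/p^n}$ and normalize the generator $u$ so that $u((q/p)^{1/p^n}) = \z_{p^n}(q/p)^{1/p^n}$ for all $n$; then $u_m = u^{p^m}$ acts on $(q/p)^{1/p^n}$ as multiplication by $\z_{p^{n-m}}$ whenever $n\geq m$.

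For assertion $(1)$, every $x\in R_{m+1}^{m+1}[\frac{1}{p}]$ admits a decomposition
\[
x \;=\; \sum_{j=0}^{p^{m+1}-1} a_j(x)\,(q/p)^{j/p^{m+1}},\qquad a_j(x)\in R_{m+1}[\tfrac{1}{p}]\subset R_\infty^m[\tfrac{1}{p}].
\]
Since $u_m$ acts trivially on each $a_j(x)$, we obtain
\[
(u_m-1)x \;=\; \sum_{\substack{1\leq j<p^{m+1}\\ p\nmid j}} a_j(x)\,(\z_p^{j}-1)\,(q/p)^{j/p^{m+1}},
\]
and we use $v_p(\z_p^j - 1) = \frac{1}{p-1}$ whenever $p\nmid j$. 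Were the basis $\{(q/p)^{j/p^{m+1}}\}_j$ strictly orthogonal for $v_p$, the bound $v_p((u_m-1)x)\geq v_p(x)+\frac{1}{p-1}$ would follow immediately. The correction $-Cp^{-m}$ quantifies the discrepancy between this Gauss-type norm and the true spectral valuation on $R_{m+1}^{m+1}[\frac{1}{p}]$; this discrepancy is governed by the ramification index $(p-1)p^m$ of the cyclotomic extension $R_{m+1}/R$ at $p$, which produces the $p^{-m}$ decay.

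For assertion $(2)$, a direct computation of traces of monomials gives, for $n\geq m$ and $0\leq j<p^n$,
\[
\tr_{R_n^n[\frac{1}{p}]/R_n^m[\frac{1}{p}]}\bigl((q/p)^{j/p^n}\bigr) \;=\; \begin{cases} p^{n-m}\,(q/p)^{j/p^n}, & p^{n-m}\mid j,\\ 0, & \text{sinon,}\end{cases}
\]
because the Galois group $\Gal(R_n^n[\frac{1}{p}]/R_n^m[\frac{1}{p}])$ is cyclic of order $p^{n-m}$ generated by $u_m$, acting on $(q/p)^{j/p^n}$ by multiplication by $\z_{p^{n-m}}^j$, and the resulting geometric sums of roots of unity are classical. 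Consequently
\[
\Lambda_m(x) \;=\; \sum_{k=0}^{p^m-1} a_{p^{n-m}k}(x)\,(q/p)^{k/p^m} \;\in\; R_\infty^m[\tfrac{1}{p}],
\]
which is independent of $n$ and realises $\Lambda_m$ as the projection onto the subspace of $u_m$-invariants. The estimate $v_p(\Lambda_m(x))\geq v_p(x)-Cp^{-(m-1)}$ then follows from the same approximate orthogonality as in $(1)$.

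The principal obstacle is making the approximate orthogonality quantitative: one must compare the Gauss norm relative to the basis $\{(q/p)^{j/p^{m+1}}\}_j$ (respectively $\{(q/p)^{j/p^n}\}_j$) with the spectral valuation $v_p$ arising from the integral closure inside $\overline{\fK}$. This reduces to a concrete analysis of $v_p$ on $R[\z_{p^{m+1}}]$, tracking how the ramification of the cyclotomic tower at $p$ perturbs the Gauss-type estimate; it yields a constant $C$ depending only on $p$, and both inequalities then follow from the explicit formulas above.
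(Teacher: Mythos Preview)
Your approach to part~(1) is essentially the paper's: both decompose $x$ along powers of $(q/p)^{1/p^{m+1}}$, use that $u_m$ acts by a root of unity on each basis vector, and then invoke an ``almost orthogonality'' estimate to pass from the Gauss norm to the spectral norm. The paper works with the shorter basis $\{(q/p)^{i/p^{m+1}}\}_{0\le i\le p-1}$ over $R_{m+1}^m[\tfrac1p]$ rather than the full basis over $R_{m+1}[\tfrac1p]$, but this is immaterial. The quantitative orthogonality you describe as the principal obstacle is not proved in the paper either; it is imported as a black box from Andreatta \cite[corollaire~3.10]{A}, which gives exactly the inclusion $p^{C/p^m}R_{m+1}^{m+1}\subset\bigoplus_{i=0}^{p-1}R_{m+1}^m\cdot(q/p)^{i/p^{m+1}}$ you need.

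For part~(2) the paper takes a different and somewhat slicker route. Rather than computing $\Lambda_m$ explicitly as a projector and re-invoking orthogonality, it writes the one-step trace as
\[
p\Lambda_m\big|_{R_{m+1}^{m+1}[\frac1p]} \;=\; \sum_{i=0}^{p-1}u_m^i \;=\; (1-u_m)^{p-1}+pP(u_m),\qquad P\in\Z[X],\ P(1)=1,
\]
and applies part~(1) $(p-1)$ times to obtain $v_p(\Lambda_m(x))\ge v_p(x)-(p-1)Cp^{-m}$ for $x\in R_{m+1}^{m+1}[\tfrac1p]$. The full bound for $x\in R_n^n[\tfrac1p]$ then follows by telescoping $\Lambda_m=\Lambda_m\circ\Lambda_{m+1}\circ\cdots\circ\Lambda_{n-1}$ and summing the geometric series $\sum_{i\ge0}(p-1)Cp^{-(m+i)}$. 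This has the advantage of using the almost orthogonality input only at a single step and never re-analysing the basis at level~$n$; your direct approach would require controlling the coefficient-extraction map at level~$n$ and the reconstruction at level~$m$ separately, which also works but duplicates effort.
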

\begin{proof}[D\'emonstration]Ce lemme est un cas particulier dans (\cite{AB}, lemme $3.7$ et $3.8$ ), et l'ingr\'edient principal est la contr\^olation de ramification dans ($\cite{A}$, corollaire $3.10$).\\
$(1)$ D'apr\`es ($\cite{A}$, corollaire $3.10$), il existe une constante $C$ telle que, pour tous $m\geq 1$, on a
\[ p^{C/p^m}R_{m+1}^{m+1}\subset \oplus_{i=0}^{p-1}R_{m+1}^m\cdot(\frac{q}{p})^{\frac{i}{p^{m+1}}}\subset R_{m+1}^{m+1}.\]
Si $x\in R_{m+1}^{m+1}[\frac{1}{p}]$, $x$ s'\'ecrit sous la forme $\sum\limits_{i=0}^{p-1}x_i(\frac{q}{p})^{\frac{i}{p^{m+1}}}$, o\`u $x_i\in R_{m+1}^{m}$ et $v(x_i)\geq v(x)-\frac{C}{p^m}$. On en d\'eduit que 
$v_p((u_m-1)x)=v_p(\sum\limits_{i=0}^{p-1}x_i(\z_p^i-1)(\frac{q}{p})^{\frac{i}{p^{m+1}}})\geq v_p(x)+\frac{1}{p-1}-\frac{C}{p^{m}}.$\\
$(2)$ Si $x\in R_{m+1}^{m+1}[\frac{1}{p}]$,  on a $p\Lambda_m (x)=(\sum\limits_{i=0}^{p-1}u_m^i)(x)=((1-u_m)^{p-1}+pP(u_m))(x)$, o\`u $P(X)\in \Z[X]$ et $P(1)=1$. 
On d\'eduit de $(1)$ que $v_p(\Lambda_m (x))\geq v_p(x)-(p-1)\frac{C}{p^m}$. En cons\'equence, si $x\in R_{n}^n[\frac{1}{p}]$, on a
    \[v_p(\Lambda_m(x))\geq v_p(\Lambda_{m+1}(x))-(p-1)\frac{C}{p^m}\geq\cdots\geq v_p(x)-(p-1)\frac{C}{p^m}(\sum_{i=0}^{n-1}\frac{1}{p^i})\geq v_p(x)-\frac{C}{p^{m-1}}.\]
\end{proof}

On note $\pi_n=\z_{p^{n+1}}-1$.  Comme $(\frac{q}{p})^{\frac{1}{p^{n}}}=a\frac{q_{p^n}}{(\pi_n)^{p-1}}$, o\`u $a$ est une unit\'e de $\cO_{F_{p^{n+1}}}$, on a une inclusion naturelle de $\fK_{p^\infty}^+$ dans $R_{\infty}^\infty[\frac{1}{p}]$. 
Si $n\geq m$, on d\'efinit une application $\fK^+_{p^m}F_{p^n}$-lin\'eaire $\rT_{p^m,p^n}: \fK^+_{p^{\infty}}\ra \fK^+_{p^m}F_{p^n}$ en composant la restriction de $\Lambda_m$ \`a $\fK_{p^\infty}^+$ et la trace de Tate normalis\'ee de $F_{\infty}$ dans $F_{p^{n}}$. Soit $M=M_0p^m$ avec $(p,M_0)=1$. L'application $\rT_{p^m,p^m}$ s'\'etend en une trace de Tate normalis\'ee $\rT_{M,p^m}: \fK^+_{p^{\infty}}F_M\ra \fK^+_{p^m}F_M$ par $F_M$-lin\'eairit\'e.

\begin{lemma}\label{Q}La  trace de Tate normalis\'ee $\rT_{M,p^m}$ est $\fK^+_{p^m}F_M$-lin\'eaire,  donn\'ee par la formule:
\begin{equation*}
\begin{split}
\rT_{M,p^m}:  \fK^+_{p^{\infty}}F_M&\longrightarrow \fK^+_{p^m}F_M \\
\z_{M_0p^n}^{a}q_{p^n}^b &\mapsto
\left\{\begin{aligned}\z_{M_0p^n}^{a}q_{p^n}^b& ;
\text{ si } p^{n-m}|a \text{ et
} p^{n-m}|b;\\
0& ;   \text{ sinon}.
\end{aligned}\right.
\end{split}
\end{equation*}
\end{lemma}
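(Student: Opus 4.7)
La formule affirme que $\rT_{M,p^m}$ est le projecteur sur $\fK_{p^m}^+ F_M$ qui annule les mon\^omes de la base en dehors de ce sous-anneau. Gr\^ace \`a la description explicite d'une base de $\fK_{p^{\infty}}^+ F_M$ donn\'ee au lemme~\ref{ec} et \`a la $F_M$-lin\'earit\'e de l'extension \`a partir de $\rT_{p^m,p^m}$, il suffit de calculer la valeur de $\rT_{M,p^m}$ sur chaque mon\^ome $\z_{M_0 p^n}^a q_{p^n}^b$ pour $n\geq m$, et d'en d\'eduire la $\fK_{p^m}^+F_M$-lin\'earit\'e comme corollaire.

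On commence par d\'eterminer $\Lambda_m(q_{p^n}^b)$. Sous l'isomorphisme $\Gal(R_{\infty}^{\infty}[1/p]/R_{\infty}[1/p]) \cong \Gal(\fK_{p^{\infty}}/\fF_{p^{\infty}})$, le g\'en\'erateur $u$ fixe tous les $\z_{p^k}$ et agit sur $q_{p^n}$ par $u(q_{p^n}) = \z_{p^n}\, q_{p^n}$; ainsi $u_m = u^{p^m}$ satisfait $u_m(q_{p^n}) = \z_{p^{n-m}}\, q_{p^n}$ pour $n \geq m$. Un calcul direct de somme g\'eom\'etrique donne alors
\[
\Lambda_m(q_{p^n}^b) = \frac{q_{p^n}^b}{p^{n-m}} \sum_{i=0}^{p^{n-m}-1} \z_{p^{n-m}}^{ib} = \begin{cases} q_{p^n}^b & \text{si } p^{n-m} \mid b, \\ 0 & \text{sinon,} \end{cases}
\]
et lorsque $p^{n-m}\mid b$ l'\'el\'ement $q_{p^n}^b = q_{p^m}^{b/p^{n-m}}$ appartient bien \`a $\fK_{p^m}^+$.

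La trace de Tate normalis\'ee $F_{\infty}\to F_{p^m}$ \'evalu\'ee en $\z_{p^n}^{\beta}$ se calcule par trace it\'er\'ee: en utilisant $\tr_{F_{p^k}/F_{p^{k-1}}}(\z_{p^k}^{a'})=0$ d\`es que $p\nmid a'$ et $k\geq 2$ (cons\'equence de $(1+p^{k-1})^i \equiv 1 + ip^{k-1} \pmod{p^k}$ et de $\sum_{i=0}^{p-1}\z_p^{ia'}=0$), on obtient qu'elle envoie $\z_{p^n}^{\beta}$ sur lui-m\^eme si $p^{n-m}\mid \beta$, et sur $0$ sinon. Pour conclure, on \'ecrit $M_0 x + p^n y = 1$ par B\'ezout, d'o\`u $\z_{M_0 p^n}^a = \z_{p^n}^{xa}\,\z_{M_0}^{ya}$ avec $\z_{M_0}^{ya}\in F_M$; puisque $\gcd(x,p)=1$, la condition $p^{n-m}\mid xa$ \'equivaut \`a $p^{n-m}\mid a$. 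En composant $\Lambda_m$ et la trace de Tate cyclotomique (lesquelles agissent sur des variables ind\'ependantes et commutent gr\^ace \`a ce que $\z_{p^n}\in R_{\infty}\subset R_{\infty}^m$ et que la trace de Tate est $\fK_{p^m}^+$-lin\'eaire), puis en prolongeant par $F_M$-lin\'earit\'e, on obtient la formule annonc\'ee.

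Le point le plus d\'elicat est la v\'erification que la trace de Tate cyclotomique it\'er\'ee s'annule hors de $F_{p^m}$: elle requiert l'hypoth\`ese $m\geq 1$, implicite dans le contexte du lemme (et correspondant \`a la dichotomie $v_p(M)\gtrless v_p(2p)$ d\'ej\`a apparue au lemme~\ref{ec}). Le calcul explicite de l'action de $u_m$ sur $q_{p^n}$ demande aussi un suivi soigneux de l'identification de Kummer entre $\Gal(R_{\infty}^{\infty}[1/p]/R_{\infty}[1/p])$ et $\Gal(\fK_{p^{\infty}}/\fF_{p^{\infty}})$; une fois ces deux points \'etablis, le reste se r\'eduit \`a la formule sommatoire g\'eom\'etrique et au th\'eor\`eme chinois.
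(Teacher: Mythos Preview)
Your proof is correct. The paper states this lemma without proof, so your argument supplies exactly the missing direct verification: computing $\Lambda_m$ on the $q_{p^n}^b$ factor via the explicit Galois action $u_m(q_{p^n})=\z_{p^{n-m}}q_{p^n}$ and a geometric sum, computing the cyclotomic normalised trace on $\z_{p^n}^{\beta}$ via the step-by-step trace $\tr_{F_{p^k}/F_{p^{k-1}}}$, and then reassembling via $\z_{M_0p^n}^a=\z_{p^n}^{xa}\z_{M_0}^{ya}$ and $F_M$-linearity. Your observation that the cyclotomic step needs $m\geq 1$ (otherwise $\tr_{F_p/\Q_p}(\z_p^{\beta'})=-1\neq 0$) is a genuine point the paper glosses over; it is indeed harmless in the intended applications, where $m\geq v_p(2p)$ is always assumed.
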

Soit $e_1, \cdots, e_d$ une base de $\fK_M^+$ sur $\fK_{p^m}^+F_M$.  Comme $[\fK_{Mp^{\infty}}^+: \fK^+_{p^{\infty}}F_M]=[ \fK^+_M:\fK_{p^m}^+F_M]$, les $e_1, \cdots, e_d$ forment aussi une base de $\fK_{Mp^{\infty}}^+$ sur $\fK^+_{p^{\infty}}F_M$. Si $x\in \fK_{Mp^\infty}^+ $, alors $x$ peut s'\'ecrire uniquement sous la forme 
$x=\sum_{i=1}^da_i(x) e_i $ avec $a_i(x)\in \fK^+_{p^{\infty}}F_M$. On pose une application $\fK_M^+$-lin\'eaire $\rT_M:\fK_{Mp^{\infty}}^+\ra \fK_M^+ $ par la formule $\rT_M(x)=\sum_{i=1}^d \rT_{M,p^m}(a_i(x))e_i$.

\begin{remark}\label{base}Si on fixe une base de $\fK_M^+$ sur $\fK_{p^m}^+F_M$, les coefficients $a_i(x)$ dans l'\'ecriture de $x\in \fK_{Mp^n}$ se calculent de la mani\`ere suivante:
Soit $e_1^*, \cdots e_d^*$  la base duale de $\fK_M$ sur  $\fK_{p^m}F_M$ par rapport \`a la trace $\frac{1}{d}\tr_{\fK_M/\fK_{p^m}F_M}$. On a donc 
$a_i(x)= \frac{1}{d}\tr_{\fK_{p^nM}/\fK_{p^{m+n}}F_M}(xe_i^*). $

\end{remark}

\begin{lemma}\label{Tate0}Si $M\in\N$, l'application $\rT_M$ est continue et elle s'\'etend par continuit\'e en une application $\fK_{M}^+$-lin\'eaire $\rT_M:\C(\fK_{Mp^\infty}^+)\ra\fK_{M}^+$ qui commute \`a l'action de $\cG_\fK$.
\end{lemma}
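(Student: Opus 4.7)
The plan is to reduce both continuity and Galois equivariance of $\rT_M$ to the already-established continuity of $\Lambda_m$ and to classical properties of Tate's normalized trace $F_\infty \to F_{p^n}$. First I would verify continuity of $\rT_{M,p^m}$ on $\fK^+_{p^\infty}F_M$. The preceding lemma gives $v_p(\Lambda_m(x)) \geq v_p(x) - C/p^{m-1}$, and Tate's classical result yields an analogous bound (uniform in $n$) for the normalized trace $F_\infty \to F_{p^m}$. Composing and extending $F_M$-linearly, one gets $v_p(\rT_{M,p^m}(x)) \geq v_p(x) - C'$ on $\fK^+_{p^\infty}F_M$, where the explicit formula in Lemma \ref{Q} shows that $\rT_{M,p^m}$ is the obvious projection onto the subspace spanned by the ``compatible'' monomials $\z_{M_0 p^n}^a q_{p^n}^b$ with $p^{n-m} \mid a$ and $p^{n-m} \mid b$.

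Next I would promote this to continuity of $\rT_M$ on $\fK^+_{Mp^\infty}$. Fix a basis $e_1,\ldots,e_d$ of $\fK_M^+$ over $\fK^+_{p^m}F_M$ and its trace-dual basis $e_1^*,\ldots,e_d^*$. By Remark \ref{base}, the coefficients are computed by $a_i(x) = \tfrac{1}{d}\tr_{\fK_{p^n M}/\fK_{p^{m+n}}F_M}(xe_i^*)$, which is a continuous map (the integer $d = [\fK_M : \fK_{p^m}F_M] = M_0$ is coprime to $p$, hence a unit in $\Z_p$, so division by $d$ costs nothing, and the partial trace contracts valuations only by a bounded amount depending on the chosen $e_i^*$). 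Thus $\rT_M = \sum_i \rT_{M,p^m}(a_i(\cdot))\,e_i$ is continuous, and uniform continuity lets us extend it uniquely to a $\fK_M^+$-linear continuous map $\rT_M : \C(\fK^+_{Mp^\infty}) \to \fK_M^+$, the codomain being already complete.

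For equivariance under $\cG_\fK$: by density of $\fK^+_{Mp^\infty}$ in its completion and continuity of $\rT_M$, it is enough to check $\rT_M(\sigma x) = \sigma \rT_M(x)$ for $x \in \fK^+_{Mp^\infty}$ and $\sigma \in \cG_\fK$. On the subring $\fK^+_{p^\infty}F_M$, $\Lambda_m$ is the normalized trace over the closed normal subgroup $\Gal(\fK_{p^\infty}/\fK_{p^m})$ of $\cG_\fK$, and Tate's normalized trace on $F_\infty$ is the analogous construction for $\Gal(F_\infty/F_{p^m})$; both manifestly commute with $\cG_\fK$, so $\rT_{M,p^m}$ does too. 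Finally, on the basis expansion $x = \sum_i a_i(x)e_i$, since the pairing $(x,y)\mapsto \tfrac{1}{d}\tr_{\fK_M/\fK_{p^m}F_M}(xy)$ is $\cG_\fK$-equivariant, the coefficient extraction is compatible with $\sigma$, and the result follows.

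The main technical obstacle is step two, where one must verify that the loss of valuation incurred in decomposing along the basis $\{e_i\}$ and recomposing is genuinely bounded independently of the depth in $\fK^+_{Mp^\infty}$; this is precisely where one uses that $d = M_0$ is prime to $p$ so the normalization $\tfrac{1}{d}$ does not introduce poles in the $p$-adic valuation, and that the tamely ramified extension $\fK_M^+/\fK_{p^m}^+F_M$ has a basis with uniformly controlled dual basis. Once this is done, the passage from $\fK^+_{Mp^\infty}$ to $\C(\fK^+_{Mp^\infty})$ and the equivariance check are both formal consequences of continuity and the explicit formula of Lemma \ref{Q}.
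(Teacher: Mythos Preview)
Your proposal is correct and follows essentially the same route as the paper: reduce continuity of $\rT_M$ to that of $\rT_{p^m}$, which in turn follows from the continuity of $\Lambda_m$ (established in the preceding lemma) together with Tate's classical result for $F_\infty \to F_{p^m}$. The paper dismisses the passage from $M=p^m$ to general $M$ with a single ``par construction'', whereas you spell out the basis-decomposition argument and correctly identify that the tameness of $\fK_M^+/\fK_{p^m}^+F_M$ (degree $M_0$ prime to~$p$) is what keeps the valuation loss bounded; this is a genuine point the paper glosses over.

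The only real difference is in the equivariance step. The paper checks commutation with $\cG_\fK$ concretely on the monomials $\z^i q^j$, using the explicit Galois action $(\z^i q^j)*\sigma = \z^{(\chi_{\cycl}(\sigma)-1)i}\z^{c_q(\sigma)j}\z^i q^j$ together with the explicit formula for $\rT_{p^m}$ in Lemma~\ref{Q}. Your argument is more structural: $\rT_{M,p^m}$ is built from normalized traces over subgroups that are normal in the image of $\cG_\fK$, hence automatically $\cG_\fK$-equivariant, and the coefficient extraction via the trace pairing is likewise equivariant. Both arguments are valid; yours is cleaner conceptually, while the paper's has the advantage of making the formula visibly compatible with the later explicit computations involving $\tilde\z^i\tilde q^j$.
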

\begin{proof}[D\'emonstration] Par construction, il  suffit de d\'emontrer le lemme pour $M=p^m$.  D'apr\`es Tate \cite{Ta}, la trace de Tate normalis\'ee de $F_{\infty}$ dans $F_{p^{m}}$ est continue. Comme $\Lambda_m$ est continue, on en d\'eduit que 
 $\rT_{p^m}$ est continue. 
 
 Si $i\in I , j\in J$ et $\sigma\in \cG_{\fK}$, on a $(\z^{i}q^j)*\sigma=\z^{(\chi_{\cycl}(\sigma)-1)i}\z^{c_q(\sigma)j}\z^{i}q^{j}$, o\`u $c_q$ est le $1$-cocycle associ\'e \`a $q$ \`a valeurs dans $\Z_p(1)$ par la th\'eorie de Kummer.
La commutativit\'e de $\rT_{p^m}$ et $\cG_{\fK}$ vient de la formule de l'action de $\cG_{\fK}$ et de la formule de $\rT_{p^m}$ sur $\fK_{p^{\infty}}^+$(c.f. lemme \ref{Q}).
\end{proof}

\subsubsection{Lien avec l'alg\`ebre $\cM(\ol{\Q})$ des formes moduaires}
En associant son $q$-d\'eveloppement \`a une forme modulaire, cela permet de voir les formes modulaires comme des \'el\'ements de $\ol{\Q}_p\fK_{\infty}^+$.

\begin{lemma}Le groupe de Galois $P_{\Q_p}$ de $\bar{\Q}_p\fK_{\infty}$ sur $\fK$  pr\'eserve l'alg\`ebre des formes modulaires $\cM(\ol{\Q})$; c'est-\`a dire, $P_{\Q_p}$ est un sous-groupe de $\Pi_{\Q}$.
\end{lemma}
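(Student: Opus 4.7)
My plan is to describe the action of an arbitrary $\sigma \in P_{\Q_p}$ on generators of $\overline{\Q}_p\fK_\infty^+$ and then to realize its effect on any modular form as that of a concrete element of $\Pi_{\Q}$. First, $\sigma$ fixes $\fK$ by definition. For each $M \ge 1$, the equality $\sigma(q_M)^M = \sigma(q) = q_M^M$ forces $\sigma(q_M) = \zeta_M^{a_M(\sigma)} q_M$ for a unique $a_M(\sigma) \in \Z/M\Z$, and the compatibility $q_{MM'}^{M'} = q_M$ packages $\{a_M(\sigma)\}_M$ into an element $a(\sigma) \in \hat{\Z}$. The restriction of $\sigma$ to $\overline{\Q}_p$ defines an element of $\cG_{\Q_p}$ which, via the fixed embedding $\overline{\Q} \hookrightarrow \overline{\Q}_p$, yields $\tilde\sigma \in \cG_\Q$; in particular $\sigma(\zeta_M) = \tilde\sigma(\zeta_M)$.

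Fix now $f \in \cM_k(\Gamma, \overline{\Q})$ for some finite-index $\Gamma \subset \SL_2(\Z)$ and pick $N \ge 1$ so that the $q$-expansion of $f$ has the form $f = \sum_m a_m q_N^m \in \overline{\Q} \cdot \fK_N^+$. A direct computation gives
\[\sigma(f) = \sum_m \tilde\sigma(a_m)\, \zeta_N^{a_N(\sigma) m}\, q_N^m.\]
Choose any integer $n$ with $n \equiv a_N(\sigma) \pmod{N}$. The element $T^n = \bigl(\begin{smallmatrix} 1 & n \\ 0 & 1 \end{smallmatrix}\bigr) \in \SL_2(\Z)$ acts by $\tau \mapsto \tau + n$, sending $q_N^m$ to $\zeta_N^{nm} q_N^m = \zeta_N^{a_N(\sigma) m} q_N^m$, and $T^n \cdot f$ is a modular form for $T^n \Gamma T^{-n}$. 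Consequently, applying first the image of $\tilde\sigma$ in $\Pi_\Q$ under the natural section $\cG_\Q \to \Pi_\Q$ (which acts on coefficients by $\tilde\sigma$ and fixes $q_N$) and then $T^n$ reproduces exactly $\sigma(f)$.

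Both of these operators lie in $\Pi_\Q$: the first by construction of the section, the second because $\SL_2(\Z) \subset \widehat{\SL_2(\Z)} = \Pi_{\overline{\Q}} \subset \Pi_\Q$. Hence $\sigma$ restricted to $\cM(\overline{\Q}) \subset \overline{\Q}_p \fK_\infty^+$ coincides with an element of $\Pi_\Q$; in particular $\sigma$ preserves $\cM(\overline{\Q})$, which yields $P_{\Q_p} \subset \Pi_\Q$. The main technical point is the bookkeeping between the roots of unity arising from the Galois action on $\overline{\Q}_p$ and those arising from the geometric action on $q_M$; the key simplification is that for each fixed $f$ the profinite translation by $a(\sigma) \in \hat{\Z}$ may be replaced by an ordinary integer translate $T^n$, since only $a_N(\sigma) \bmod N$ acts nontrivially on $f$, obviating any need to work directly inside $\widehat{\SL_2(\Z)}$.
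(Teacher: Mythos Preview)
Your argument is correct and follows essentially the same route as the paper: both decompose the action of $\sigma\in P_{\Q_p}$ into its restriction to $\ol{\Q}_p$ (the $\cG_{\Q_p}$-part, acting on coefficients via the section $\cG_\Q\to\Pi_\Q$) and the residual translation $q_M\mapsto\zeta_M^{a_M(\sigma)}q_M$ (the $P_{\ol{\Q}_p}\cong(\begin{smallmatrix}1&\hat{\Z}\\0&1\end{smallmatrix})$-part), and check that each part preserves $\cM(\ol{\Q})$. The paper phrases this via the exact sequence $0\to P_{\ol{\Q}_p}\to P_{\Q_p}\to\cG_{\Q_p}\to 0$ and identifies the $P_{\ol{\Q}_p}$-action directly with the profinite modular action of $(\begin{smallmatrix}1&\hat{\Z}\\0&1\end{smallmatrix})\subset\widehat{\SL_2(\Z)}$, whereas you work level by level and replace the profinite translation $a(\sigma)$ by an integer translate $T^n$ with $n\equiv a_N(\sigma)\pmod N$; this integer-approximation trick is a pleasant simplification that avoids invoking $\widehat{\SL_2(\Z)}$ explicitly, at the cost that the element of $\Pi_\Q$ you exhibit depends on $f$ (which is harmless, since you only need $\sigma(f)\in\cM(\ol{\Q})$).
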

 \begin{proof}[D\'emonstration]  Consid\'erons la suite exacte: $0\ra P_{\ol{\Q}_p}\ra P_{\Q_p}\ra \cG_{\Q_p}\ra 0,$ o\`u $P_{\ol{\Q}_p}\cong(\begin{smallmatrix}1&\hat{\Z}\\ 0&1\end{smallmatrix})$ le groupe de Galois de $\bar{\Q}_p\fK_{\infty}$ sur $\fK\ol{\Q}_p$. 
 Comme $\cG_{\Q_p}$ preserve $\cM(\ol{\Q})$, $\cG_{\Q_p}$ est un sous-groupe de $\Pi_{\Q}$.  On a deux actions de $(\begin{smallmatrix}1&\hat{\Z}\\ 0&1\end{smallmatrix})$ sur $\cM(\ol{\Q})$:
 \begin{itemize}
 \item[(1)]l'action induite par celle de $P_{\ol{\Q}_p}$ sur $\ol{\Q}_p\fK_{\infty}^+$;
 \item[(2)]l'action modulaire \'etendant par continuit\'e celle de $(\begin{smallmatrix}1&\Z\\ 0&1\end{smallmatrix})$ sur $\cM(\ol{\Q})$.
 \end{itemize}
 On constate que que ces deux actions de $(\begin{smallmatrix}1&\hat{\Z}\\ 0&1\end{smallmatrix})$ sur $\cM(\ol{\Q})$ co\"incident en comparant les formules: on a $(\begin{smallmatrix}1&b\\0&1\end{smallmatrix})q_M=q_M\z_M^b$ dans les deux cas. Donc $P_{\ol{\Q}_p}$ preserve l'alg\`ebre $\cM(\ol{\Q})$, ce qui permet de d\'eduire le lemme.
 \end{proof}

En composant les morphismes $\cG_{\fK}\ra P_{\Q_p}$, l'inclusion $P_{\Q_p}\ra \Pi_{\Q}$ induit un morphisme $\cG_\fK\ra \Pi_{\Q}$. De plus, si $M\geq 1$ un entier, $\cG_{\fK_M}$ est un sous-groupe de $\cG_{\fK}$ et donc on a un morphisme $\cG_{\fK_M}\ra \Pi_{\Q}$.


\subsection{Application de la construction de Fontaine \`a l'anneau $\fK^+$}
\subsubsection{La construction de Fontaine}

Soit $L$ un anneau de caract\'eristique $0$, qui est muni d'une
valuation $v_p$ telle que
 $v_{p}(p)=1.$ On note $\cO_L=\{x\in L, v_p(x)\geq 0\}$ l'anneau
des entiers de $L$ pour la topologie $p$-adique. On note $\cO_{\C(L)}$ le compl\'et\'e  de $\cO_{L}$ pour la valuation $v_p$. On pose $\C(L)=\cO_{\C(L)}[\frac{1}{p}]$.

\begin{defn}Soit $A_n=\cO_L/p\cO_L$ pour tous $n$; alors $\{A_n\}$ muni des morphismes de transition $ A_n\mapsto
A_{n-1}$ d\'efinis par l'application de Frobenius absolu
$x_n\mapsto x_n^p$ forme un syst\`eme
projectif. Notons $\R(L)=\plim A_n=\{(x_n)_{n\in \N}|
x_n\in \cO_L/p\cO_L \text{ et } x_{n+1}^p=x_n, \text{ si }n\in \N \}$.
\end{defn}

Si $x=(x_n)_{n\in\N}\in\R(L)$, soit $\hat{x}_n$ un rel\`evement de
$x_n$ dans $\cO_{\C(L)}$. La suite $(\hat{x}_{n+k}^{p^k})$ converge quand
$k$ tend vers l'infini. On note $x^{(n)}$ sa limite, qui ne
d\'epend pas du choix des rel\`evements $\hat{x}_n$. On obtient ainsi
une bijection: $\R(L)\ra\{(x^{(n)})_{n\in\N}| x^{(n)}\in\cO_{\C(L)},
(x^{(n+1)})^p=x^{(n)}, \forall n\}$. Si $x=(x^{(i)}), y=y^{(i)}$ sont deux \'el\'ements de $\R(L)$, alors leur somme $x+y$ et leur produit $xy$ sont donn\'es par:
$ (x+y)^{(i)}=\lim_{j\ra\infty}(x^{(i+j)}+y^{(i+j)})^{p^j} \text{ et } (xy)^{(i)}=x^{(i)}y^{(i)}.$

L'anneau $\R(L)$ est un anneau parfait  de caract\'eristique $p$ (i.e. le morphisme $x\mapsto x^p$ est bijectif. ).
On note $\A_{\inf}(L)$ l'anneau des vecteurs de Witt \`a coefficients dans
$\R(L)$. Alors $\A_{\inf}(L)$ est un
anneau $p$-adique (i.e. un anneau s\'epar\'e et complet pour la
topologie $p$-adique ), d'anneau r\'esiduel parfait de
caract\'eristique $p$. Si $x\in \R(L)$, on note
$[x]=(x,0,0,...)\in \A_{\Inf}(L)$ son repr\'esentant de Teichm\"uller.
Alors tout \'el\'ement $a$ de $\A_{\inf}(L)$ peut s'\'ecrire de
mani\`ere unique sous la forme $\sum\limits_{k=0}^{\infty}p^k[x_k]$
avec une suite $(x_k)\in (\R(L))^{\N}$.

On d\'efinit un morphisme d'anneaux $\theta: \A_{\inf}(L)\ra\cO_{\C(L)}$
par la formule
$\sum_{k=0}^{+\infty}p^k[x_k]\mapsto
\sum_{k=0}^{+\infty}p^kx_k^{(0)}.$
 On note
$\B_{\inf}(L)=\A_{\inf}(L)[\frac{1}{p}]$,  et on \'etend $\theta$ en
un morphisme $\B_{\inf}(L)\ra \C(L).$
 On note $\B_m(L)=\B_{\inf}(L)/(\ker\theta)^m$. On fait de $\B_m(L)$ un anneau de Banach en prenant l'image de $\A_{\inf}(L)$ comme anneau d'entiers.

On d\'efinit $\B_{\dR}^{+}(L):=\plim \B_m(L)$ comme le compl\'et\'e
$\ker (\theta)$-adique de $\B_{\Inf}(L)$; on le munit de la topologie de la limite projective, ce qui en fait un anneau de Fr\'echet. Donc $\theta$ s'\'etend en
un morphisme continu d'anneaux topologiques
$\B_{\dR}^{+}(L)\ra \C(L).$

On peut munir $\B_{\dR}^{+}(L)$  d'une filtration de la fa\c{c}on
suivante: pour $i\in\N$, notons $\Fil^i \B_{\dR}^{+}(L)$
la $i$-i\`eme puissance de l'id\'eal $\ker\theta$ de
$\B_{\dR}^{+}(L)$ .

L'anneaux $\A_{\inf}(L)$ s'identifie canoniquement \`a un sous-anneau de $\B_{\dR}^+(L)$ et
si $k\in\N, m\in\Z$, on pose \[U_{m,k}=p^m\A_{\inf}(L)+(\ker \theta)^{k+1}\B_{\dR}^{+}(L),\]
alors les $U_{m,k}$ forment une base de voisinages de $0$ dans $\B_{\dR}^{+}(L)$.

\begin{example}
\begin{itemize}
\item[(1)]Si $L=\Q_p$, alors la construction est triviale (i.e. $\B_{\dR}^+(\Q_p)=\Q_p$.)
\item[(2)]Si $L=\ol{\Q}_p$, on note $\rC_p=\C(\ol{\Q}_p)$, $\tilde{\rE}^+=\R(\ol{\Q_p}), \tilde{\rA}^+=\A_{\inf}(\ol{\Q}_p)$ , $\tilde{\rB}^+=\B_{\inf}(\ol{\Q}_p)$ 
et $\rB_{\dR}^+=\B_{\dR}^+(\ol{\Q}_p)$. On note $\tilde{\z}$ (resp. $\tilde{\z}_M$ pour un entier
$M\geq 1$) le repr\'esentant de Teichm\"uler de
$\z^{(1)}=(1,\z_p,\cdots,\z_{p^n},\cdots)$ (resp.
$\z^{(M)}=(\z_M,\cdots,\z_{Mp^n},\cdots)$) dans
$\A_{\Inf}(\ol{\fK}^+)$. Si $M|N$, alors on a
$\tilde{\z}_N^{N/M}=\tilde{\z}_M$. Le noyau du morphisme $\theta: \tilde{\rA}^+\ra\cO_{\rC_p}$ est un id\'eal principal engendr\'e  par $\omega=\frac{\tilde{\z}-1}{\tilde{\z}_p-1} $.
On pose $t=\log\tilde{\z}=-\sideset{}{}\sum_{n=1}^{+\infty}\frac{(1-\tilde{\z})^n}{n}$;
c'est le $2\pi i$ $p$-adique de Fontaine, qui appartient \`a $\rB_{\dR}^{+}$ et aussi engendre le noyau du morphisme $\theta:\rB_{\dR}^+\ra \rC_p$.
\item[(3)]On va consid\'erer les cas $L=\fK^+_{Mp^{\infty}}$ pour $M\geq 1\in \N$ et $L=\ol{\fK}^+$.
 Le $2\pi i$ $p$-adique de Fontaine $t$ appartient \`a $\B_{\dR}^{+}(L)$.
D'apr\`es la construction de Fontaine, on a un morphisme surjective d'anneaux: $\theta:\A_{\inf}(L)\ra \cO_{\C(L)}$ avec le noyau engendr\'e  par $\omega=\frac{\tilde{\z}-1}{\tilde{\z}_p-1} $. De plus, $\theta$ s'\'etend en un morphisme continu d'anneaux $\theta:\B_{\dR}^+(L)\ra \C(L),$ dont le noyau est l'id\'eal principal engendr\'e par $t$ ou $\omega$ , sur lequel $\cG_{\fK}$ agit par multiplication par $\chi_{\cycl}$.
\end{itemize}
\end{example}

\subsubsection{Trace de Tate normalis\'ee pour $\B_{\dR}^+(\fK_{Mp^{\infty}}^+)$}
Pour simplifier la notation, on note $\A_{\inf}$ (resp. $\B_{\inf}$ et $\B_{\dR}^+$ ) l'anneau $\A_{\inf}(\ol{\fK}^+)$ (resp. $\B_{\inf}(\ol{\fK}^+)$ et $\B_{\dR}^+(\ol{\fK}^+)$ ).
Soit $\tilde{q}$ ( resp. $\tilde{q}_M$ si $M\geq 1$ est un entier ) le repr\'esentant de Teichm\"uller dans $\A_{\Inf}$ de $(q,
q_p,\cdots,q_{p^n},\cdots)$ ( resp. $(q_M,\cdots,q_{Mp^n},\cdots)$ )
. Si $M|N$, on a
$\tilde{q}_N^{N/M}=\tilde{q}_M$.

On d\'efinit une application $\iota_{\dR}:\fK^+\ra \B_{\dR}^+$ par $f(q)\mapsto f(\tilde{q}) $; 
ce qui permet d'identifier $\fK^+$ \`a un sous-anneau de $\B_{\dR}^+$. 
On note $\alpha=\frac{\tilde{q}}{p}-[(\frac{q}{p},(\frac{q}{p})^{\frac{1}{p}},\cdots,)]$ 
et on a $\alpha\in\A_{\Inf}\cap\ker \theta$. Si $f(q)\in\fK^+$ est de valuation $p$-adique $\geq 0$ 
(i.e. $f(q)=\sum_{n\in\N}a_n(\frac{q}{p})^n\in\fK^+$ avec $a_n\in\Z_p$), on a
$f(\tilde{q})=\sum\limits_{n=0}^{+\infty}a_n(\alpha+[(\frac{q}{p},(\frac{q}{p})^{\frac{1}{p}},\cdots,)])^n
=\sum\limits_{k=0}^{+\infty}\alpha^k\sum_{n\geq k}a_n\binom{n}{k}[(\frac{q}{p},(\frac{q}{p})^{\frac{1}{p}},\cdots,)]^k$ et la s\'eries $\sum_{n\geq k}a_n\binom{n}{k}[(\frac{q}{p},(\frac{q}{p})^{\frac{1}{p}},\cdots,)]^k$ 
converge dans $\A_{\Inf}$. Donc $\iota_{\dR}$ est continue.
 Mais il faut faire attention au fait que $\iota_{\dR}(\fK^+)$ n'est pas stable par $\cG_{\fK}$ car $\tilde{q}\sigma=\tilde{q}\tilde{\z}^{c_q(\sigma)}$ si
$\sigma\in\cG_{\fK}$, o\`u $c_{q}$ est le $1$-cocycle \`a valeur dans $\Z_p(1)$ associ\'e \`a $q$
par la th\'eorie de Kummer.

Posons
$\tilde{\fK}^+=\iota_{\dR}(\fK^+)[[t]]$.  Si $M\geq 1$ est un entier, on note $\tilde{\fK}^+_M$ l'anneau
$\tilde{\fK}^+[\tilde{q}_M,\tilde{\z}_M]$ et pose $\tilde{\fK}_{Mp^{\infty}}^+=\bigcup_{n}\tilde{\fK}^+_{Mp^n}$. L'application $\iota_{\dR}$ s'\'etend en un morphisme continu 
de $\fK^+$-modules $\iota_{\dR}:\fK^+_M\ra \B_{\dR}^{+}$ en envoyant $\z_M$ et $q_M$ sur $\tilde{\z}_M$ et $\tilde{q}_M$ respectivement. 
On a $\tilde{\fK}_M^+=\iota_{\dR}(\fK_M^+)[[t]]$.

\begin{remark}
Le module $\iota_{\dR}(\fK_M^+)$ n'est pas un anneau car $\tilde{\z}=\tilde{\z}_M^M\notin \iota_{\dR}(\fK_M^+)$. Donc le morphisme $\iota_{\dR}:\fK^+_M\ra \B_{\dR}^{+}(\ol{\fK}^+) $ n'est plus un morphisme d'anneaux.
\end{remark}

\begin{lemma}\label{K}\begin{itemize}
\item[(1)]$\tilde{\fK}^+$ est stable sous l'action de
$\cG_{\fK}$.
\item[(2)]Le $\tilde{\fK}^+$-module $\tilde{\fK}_M^+$ est \'egal au $\tilde{\fK}^+$-module $\tilde{\fK}^+[\z_{M}, \tilde{q}_M]$.
\end{itemize}
\end{lemma}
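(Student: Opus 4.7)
The plan is to prove (1) by a direct Taylor expansion and (2) by identifying $\tilde{\z}_M$ with $\z_M$ up to a unit coming from $\Q_p[[t]]\subset\tilde{\fK}^+$.

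For (1), I would first note that for $\sigma\in\cG_\fK$, Kummer theory gives $\sigma(\tilde{q})=\tilde{q}\,\tilde{\z}^{c_q(\sigma)}$ with $c_q(\sigma)\in\Z_p$, and since $t=\log\tilde{\z}$ we may rewrite this as $\sigma(\tilde{q})=\tilde{q}\exp(c_q(\sigma)\,t)$ inside $\B_{\dR}^+$. Given $f=\sum_{n\geq 0}b_n(q/p)^n\in\fK^+$ with $v_p(b_n)\to+\infty$, I apply the formal Taylor formula at $\tilde{q}$:
\[
\sigma(\iota_{\dR}(f))=f\bigl(\tilde{q}\exp(c_q(\sigma)t)\bigr)=\sum_{k\geq 0}\frac{(c_q(\sigma)\,t)^k}{k!}\,\iota_{\dR}\bigl((q\partial_q)^k f\bigr).
\]
The operator $q\partial_q$ preserves $\fK^+$ (since $v_p(nb_n)\geq v_p(b_n)$), so every coefficient $\frac{c_q(\sigma)^k}{k!}\iota_{\dR}((q\partial_q)^kf)$ lies in $\iota_{\dR}(\fK^+)$. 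Because $t\in\ker\theta$, each term lies in $(\ker\theta)^k\,\B_{\dR}^+$, and the partial sums are Cauchy in the Fréchet topology of $\B_{\dR}^+$; the sum converges to $\sigma(\iota_{\dR}(f))$ by the continuity of $\sigma$ and of $\iota_{\dR}$. Hence $\sigma(\iota_{\dR}(\fK^+))\subset\iota_{\dR}(\fK^+)[[t]]=\tilde{\fK}^+$. Since moreover $\sigma(t)=\chi_{\cycl}(\sigma)\,t\in\tilde{\fK}^+$, $\sigma$ preserves $\tilde{\fK}^+$ termwise on $t$-adic expansions, giving (1).

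For (2), I would use the identity $\tilde{\z}_M=\z_M\exp(t/M)$ in $\B_{\dR}^+$. Indeed, both sides are $M$-th roots of $\tilde{\z}=\exp(t)$: one has $(\z_M\exp(t/M))^M=\z_M^M\exp(t)=\exp(t)=\tilde{\z}=\tilde{\z}_M^M$, and applying $\theta$ to each side yields $\z_M$ on both, so the two $M$-th roots of $\tilde{\z}$ coincide (since $\B_{\dR}^+$ contains the $M$-th roots of unity and $\exp(t/M)\in 1+t\,\Q_p[[t]]$ is the unique such root congruent to $1$ modulo $\ker\theta$). The element $\exp(t/M)$ is a unit of $\Q_p[[t]]\subset\tilde{\fK}^+$, so $\tilde{\fK}^+[\tilde{\z}_M]=\tilde{\fK}^+[\z_M]$ and consequently $\tilde{\fK}_M^+=\tilde{\fK}^+[\tilde{q}_M,\tilde{\z}_M]=\tilde{\fK}^+[\tilde{q}_M,\z_M]$.

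The main technical obstacle is (1): one has to be sure that the Taylor series really represents $\sigma(\iota_{\dR}(f))$ in the Fréchet topology of $\B_{\dR}^+$ rather than only as a formal series in $t$. This amounts to checking that the partial sums converge in every $U_{m,k}$, which follows from the stability of $\fK^+$ under $q\partial_q$ together with the fact that $(\ker\theta)^k\cdot\A_{\inf}\subset U_{m,k-1}$ once the coefficient has non-negative $v_p$. Everything else is a clean manipulation of explicit formulas.
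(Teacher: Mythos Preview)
Your proof is correct and follows exactly the paper's approach: for (1) the paper simply records that $\sigma\tilde{q}=\tilde{q}\,\tilde{\z}^{c_q(\sigma)}\in\tilde{\fK}^+$ (since $\tilde{\z}=e^t\in\Q_p[[t]]$) and $\sigma t=\chi_{\cycl}(\sigma)t$, while your Taylor expansion in $t$ makes explicit the passage from $\sigma\tilde{q}\in\tilde{\fK}^+$ to $\sigma(\iota_{\dR}(f))\in\tilde{\fK}^+$ for a general $f\in\fK^+$; for (2) the paper uses precisely the identity $\z_M=\tilde{\z}_M\exp(-t/M)$ that you derive. Your write-up is in fact more careful than the paper's one-line argument for (1).
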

\begin{proof}[D\'emonstration]
Si $\sigma\in\cG_{\fK}$,
$\sigma\tilde{q}=\tilde{q}\tilde{\z}^{c_q(\sigma)}$ et
$\tilde{\z}\in\fK[[t]]$. Alors $\sigma\tilde{q}\in\fK[[t]]$. Par ailleurs, on a
$\sigma t=\chi_{cycl}(\sigma)t$. Cela permet de conclure le $(1)$.
Le $(2)$ se d\'eduit du fait:  $\z_{M}=\tilde{\z}_M\exp(- \frac{t}{M}).$
\end{proof}

\begin{theo}Soit $M\geq 1$. On a
\begin{itemize}
\item[$(i)$]
$\rH^0(\cG_{\fK_{Mp^{\infty}}},\B_{\dR}^{+})=\B_{\dR}^{+}(\fK_{Mp^{\infty}}^+)$;
\item[$(ii)$] $\tilde{\fK}_{Mp^{\infty}}^+$ est dense dans
$\B_{\dR}^{+}(\fK_{Mp^{\infty}}^+)$;
\end{itemize}
\end{theo} 
Le th\'eor\`eme se d\'eduit des lemmes $\ref{DR}$ et $\ref{DR1}$ suivants:

\begin{lemma} \label{DR}Tout \'el\'ement $x$ de $(\B_{\dR}^+)^{\cG_{\fK_{p^{\infty}}}}$ s'\'ecrit de mani\`ere unique sous la forme
\[ \sum_{k=0}^{+\infty}\omega^k(\sum_{i\in I,j\in J}a_{ijk}(x)\tilde{\z}^i\tilde{q}^j) , \] o\`u $a_{ijk}(x)$ est une suite triple d'\'el\'ements de $\iota_{\dR}(\fK^+)$ avec la propri\'et\'e suivante:
 pour $k$ fix\'e et $N\in \N$, l'ensemble $\{(i,j)\in I \times J:v_p(a_{ijk}(x))+j\leq N \}$  est fini. De plus, $\tilde{\fK}^+_{p^\infty}$ est dense dans $(\B_{\dR}^+)^{\cG_{\fK_{p^{\infty}}}}$.
\end{lemma}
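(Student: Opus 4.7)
Le plan consiste \`a combiner la description de $\C(\fK_{p^{\infty}}^+)$ donn\'ee par le corollaire \ref{C} avec le th\'eor\`eme d'Ax-Sen-Tate (th\'eor\`eme \ref{Ax}) le long de la filtration $\omega$-adique de $\B_{\dR}^+$. Je commence par v\'erifier que toute s\'erie de la forme indiqu\'ee converge dans $\B_{\dR}^+$ et appartient \`a $(\B_{\dR}^+)^{\cG_{\fK_{p^{\infty}}}}$: les \'el\'ements $\omega$, $\tilde{\z}^i$ pour $i\in I$, $\tilde{q}^j$ pour $j\in J$ et les coefficients $a_{ijk}\in\iota_{\dR}(\fK^+)$ sont tous fix\'es par $\cG_{\fK_{p^{\infty}}}$ (ce groupe agit trivialement sur toutes les racines $p$-primaires de $q$ et de $1$), et la finitude de $\{(i,j):v_p(a_{ijk})+j\leq N\}$ assure la convergence de la somme interne pour chaque $k$, tandis que le facteur $\omega^k\in\Fil^k\B_{\dR}^+$ garantit la convergence globale dans la topologie de Fr\'echet.

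L'unicit\'e s'obtient en appliquant $\theta$: une relation $\sum_k\omega^ky_k=0$ se r\'eduit modulo $\omega$ \`a $\theta(y_0)=\sum_{i,j}a_{ij0}\z^iq^j=0$ dans $\C(\fK_{p^{\infty}}^+)$, et l'unicit\'e du d\'eveloppement du corollaire \ref{C} (appliqu\'e avec $M=1$) force tous les $a_{ij0}$ \`a s'annuler. Comme $\B_{\dR}^+$ est un anneau de valuation discr\`ete d'uniformisante $\omega$, on divise par $\omega$ et on it\`ere.

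L'\'etape d\'elicate est l'existence. \'Etant donn\'e $x$ invariant, je construis les $y_k$ par r\'ecurrence de sorte que $x-\sum_{k<K}\omega^ky_k\in\omega^K\B_{\dR}^+$. En posant $x_K:=\omega^{-K}\bigl(x-\sum_{k<K}\omega^ky_k\bigr)$, l'invariance de $x$, $\omega$ et des $y_k$ d\'ej\`a construits entra\^ine $x_K\in(\B_{\dR}^+)^{\cG_{\fK_{p^{\infty}}}}$, donc $\theta(x_K)$ appartient \`a $\C^{\cG_{\fK_{p^{\infty}}}}=\C(\fK_{p^{\infty}}^+)$ par Ax-Sen-Tate. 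Le corollaire \ref{C} fournit alors une \'ecriture $\theta(x_K)=\sum_{i,j,l}b_{ijl}\z^iq^{j+l}$ avec $b_{ijl}\in\Q_p$ v\'erifiant la condition de croissance $v_p(b_{ijl})+j+l\to+\infty$ \`a $i$, $j$ fix\'es; celle-ci permet pr\'ecis\'ement de regrouper la somme sur $l$ en une fonction $f_{ij}:=\sum_lb_{ijl}q^l\in\fK^+$. Je d\'efinis $y_K:=\sum_{i\in I,\,j\in J}\iota_{\dR}(f_{ij})\tilde{\z}^i\tilde{q}^j$, ce qui donne $\theta(y_K)=\theta(x_K)$, donc $x_K-y_K\in\omega\B_{\dR}^+$ et la r\'ecurrence se poursuit.

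La densit\'e de $\tilde{\fK}_{p^{\infty}}^+$ dans $(\B_{\dR}^+)^{\cG_{\fK_{p^{\infty}}}}$ s'en d\'eduit en tronquant la somme en $K$ et le support en $(i,j)$ \`a un ensemble fini: on obtient un \'el\'ement de $\tilde{\fK}_{p^n}^+$ pour $n$ assez grand, et le reste tombe dans $U_{m,K}$ avec $m$, $K$ tendant vers $+\infty$. L'obstacle technique principal est la v\'erification du fait que les coefficients $f_{ij}$ appartiennent effectivement \`a $\fK^+$, ce qui est pr\'ecis\'ement garanti par la condition de finitude du corollaire \ref{C}; tout le reste est une organisation soigneuse des s\'eries et des passages au quotient.
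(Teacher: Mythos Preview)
Your proposal is correct and follows essentially the same approach as the paper: both proceed by induction along the $\omega$-adic filtration, reducing at each step to the description of $\C(\fK_{p^{\infty}}^+)$ furnished by Ax--Sen--Tate and the corollaire~\ref{C}, then lifting back via $\iota_{\dR}$. The only cosmetic difference is that the paper packages the ``extract the $(i,j)$-coefficient of $\theta(x_K)$'' step into explicit maps $\theta_{ij}(x)=\tilde{q}^{-1}\iota_{\dR}\bigl(\rT_1(\theta(x)\z^{-i}q^{1-j})\bigr)$ built from the normalised Tate trace $\rT_1$, and names the shift operator $\eta(x)=\omega^{-1}\bigl(x-\sum_{i,j}\theta_{ij}(x)\tilde{\z}^i\tilde{q}^j\bigr)$, so that $a_{ijk}(x)=\theta_{ij}(\eta^k(x))$; your $x_K$ is precisely $\eta^K(x)$ and your $y_K$ is $\sum_{i,j}\theta_{ij}(\eta^K(x))\tilde{\z}^i\tilde{q}^j$.
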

\begin{proof}[D\'emonstration]En composant l'application $\theta$, l'application $\rT_1: \C(\ol{\fK}^+)^{\cG_{\fK_{p^\infty}}}\ra \fK^+$ et l'application $\iota_{\dR}$, on d\'efinit une suite d'applications pour $i\in I$ et $j\in J$:
\[\theta_{ij}:(\B_{\dR}^+)^{\cG_{\fK_{p^\infty}}}\ra \iota_{\dR}(\fK^+); x\mapsto \theta_{ij}(x)=\frac{1}{\tilde{q}}\bigl(\iota_{\dR}\circ\rT_1(\theta(x)\z^{-i}q^{1-j})\bigr). \]
Soit $\eta$ l'application de $(\B_{\dR}^+)^{\cG_{\fK_{p^{\infty}}}}$ dans $(\B_{\dR}^+)^{\cG_{\fK_{p^{\infty}}}}$ d\'efinie par
$ \eta(x)=\omega^{-1}(x-\sum_{i\in I,j\in J}\theta_{ij}(x)\tilde{\z}^i\tilde{q}^j) $.
  Si $x\in(\B_{\dR}^+)^{\cG_{\fK_{p^{\infty}}}}$ et $n\in \N$, on a
$x=\omega^{n+1}\eta^{n+1}(x)+\sum_{k=0}^{n}\omega^k(\sum_{i\in I,j\in J}\theta_{ij}(\eta^k(x))\tilde{\z}^i\tilde{q}^j), $
ce qui montre que l'on peut poser $a_{ijk}(x)=\theta_{ij}(\eta^k(x))$; d'o\`u l'existence d'une telle \'ecriture. D'autre part, l'unicit\'e se d\'eduit de la construction de $\theta_{ij}$ et de l'unicit\'e d'\'ecriture d'\'el\'ement dans $\C(\ol{\fK}^+)^{\cG_{\fK_{p^{\infty}}}}$. 
Enfin, la densit\'e est une cons\'equence directe d'une telle \'ecriture.
\end{proof}
\begin{remark}
Tout \'el\'ement $x$ de $(\B_{\dR}^+)^{\cG_{\fK_{p^{\infty}}}}$ s'\'ecrit aussi de mani\`ere unique sous la forme
\[ \sum_{k=0}^{+\infty}t^k(\sum_{i\in I,j\in J}a^{'}_{ijk}(x)\tilde{\z}^i\tilde{q}^j) ),\] o\`u la suite triple $a^{'}_{ijk}(x)$ d'\'el\'ements de $\iota_{\dR}(\fK^+)$ v\'erifie la m\^eme propri\'et\'e dans le lemme.
\end{remark}

Passons au cas g\'en\'eral. Soit $M=M_0p^m$ avec $(p,M_0)=1$. Soient $l,h\in \N$ et $h\geq l$ tels que $[\fK_{M}^+\fK_{p^l}^+: \fF_M^+\fK_{p^l}]=[\fK_{Mp^{\infty}}^+:\fF_M^+\fK_{p^{\infty}}^+]=c$ et $[F_MF_{p^h}: F_{p^h}]=[F_{Mp^{\infty}}: F_{p^{\infty}}]=d$.  Supposons  que $e_1, \cdots, e_c$ une base de $\fK_{M}^+\fK_{p^l}^+$ 
sur $\fF_M^+\fK_{p^l}$ et $f_1,\cdots, f_d $ une base de $F_MF_{p^h}$ sur $F_{p^h}$. On peut d\'emontrer le lemme suivant de la m\^eme mani\`ere du lemme $\ref{DR}$. 

\begin{lemma}\label{DR1} Sous la hypoth\`ese au-dessus. Tout \'el\'ement $x$ de $(\B_{\dR}^+)^{\cG_{\fK_{Mp^{\infty}}}}$ s'\'ecrit de mani\`ere unique sous la forme:
$ \sum_{k=0}^{+\infty}\omega^k(\sum_{i=0}^{c}\sum_{j=0}^db_{ijk}(x)\iota_{\dR}(e_i)f_j) , $
avec $b_{ijk}(x)\in (\B_{\dR}^+)^{\cG_{\fK_{p^{\infty}}}}$.
\end{lemma}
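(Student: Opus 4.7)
Le plan est d'imiter la d\'emonstration du lemme \ref{DR}, en rempla\c{c}ant le r\^ole jou\'e par $\rT_1$ et la base $\{\z^i q^j\}$ par celui jou\'e par des traces adapt\'ees \`a l'extension $\fK_{Mp^\infty}/\fK_{p^\infty}$ et la base $\{e_i f_j\}$. La premi\`ere \'etape est de construire des op\'erateurs de projection
\[\theta_{ij}: (\B_{\dR}^{+})^{\cG_{\fK_{Mp^\infty}}} \lra (\B_{\dR}^{+})^{\cG_{\fK_{p^\infty}}}, \quad 0\leq i\leq c,\ 0\leq j\leq d.\]
Concr\`etement, on note $e_1^{*},\ldots,e_c^{*}$ (resp.~$f_1^{*},\ldots,f_d^{*}$) la base duale de $\{e_i\}$ (resp.~$\{f_j\}$) par rapport \`a la trace normalis\'ee $\frac{1}{c}\tr_{\fK_M^+\fK_{p^l}^+/\fF_M^+\fK_{p^l}}$ (resp.~$\frac{1}{d}\tr_{F_MF_{p^h}/F_{p^h}}$), comme dans la remarque~\ref{base}. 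Pour $x\in (\B_{\dR}^{+})^{\cG_{\fK_{Mp^\infty}}}$, on a $\theta(x)\in \C(\fK_{Mp^\infty}^+)^{\cG_{\fK_{Mp^\infty}}}$ par le th\'eor\`eme d'Ax-Sen-Tate, et il s'\'ecrit uniquement sous la forme $\theta(x)=\sum_{i,j} c_{ij}(x)\, e_i f_j$ avec $c_{ij}(x)\in \C(\fK_{p^\infty}^+)^{\cG_{\fK_{p^\infty}}}$, obtenu en appliquant $\rT_M$ puis la projection duale par $e_i^{*}f_j^{*}$. On rel\`eve ensuite $c_{ij}(x)$ dans $(\B_{\dR}^{+})^{\cG_{\fK_{p^\infty}}}$ gr\^ace au lemme~\ref{DR}, et on d\'efinit $\theta_{ij}(x)$ comme ce rel\`evement.

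La deuxi\`eme \'etape est d'introduire l'op\'erateur d'it\'eration
\[\eta: (\B_{\dR}^{+})^{\cG_{\fK_{Mp^\infty}}} \lra (\B_{\dR}^{+})^{\cG_{\fK_{Mp^\infty}}},\qquad \eta(x)=\omega^{-1}\Bigl(x-\sum_{i,j}\theta_{ij}(x)\iota_{\dR}(e_i)f_j\Bigr).\]
Cet op\'erateur est bien d\'efini car, par construction, $\theta$ de l'expression entre parenth\`eses s'annule, ce qui montre que le terme appartient \`a $\ker\theta=\omega\B_{\dR}^{+}$; et la division par $\omega$ pr\'eserve l'invariance sous $\cG_{\fK_{Mp^\infty}}$ car $\omega$ est fix\'e modulo scalaires par l'action galoisienne et reste non nul.

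La troisi\`eme \'etape est d'it\'erer: pour $n\in \N$, on a
\[ x=\omega^{n+1}\eta^{n+1}(x)+\sum_{k=0}^{n}\omega^k\Bigl(\sum_{i,j}\theta_{ij}(\eta^k(x))\iota_{\dR}(e_i)f_j\Bigr), \]
et en posant $b_{ijk}(x)=\theta_{ij}(\eta^k(x))$ et en passant \`a la limite pour la topologie $\omega$-adique (de Fr\'echet) de $\B_{\dR}^{+}$, on obtient l'\'ecriture voulue. L'unicit\'e s'en d\'eduit puisque les $b_{ijk}$ sont donn\'es par des formules explicites en $x$; on v\'erifie qu'une \'eventuelle \'ecriture dont tous les $b_{ijk}$ seraient nuls donne $x=0$.

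L'obstacle principal sera de v\'erifier proprement que les projections $\theta_{ij}$ prennent leurs valeurs dans $(\B_{\dR}^{+})^{\cG_{\fK_{p^\infty}}}$ (et non seulement dans $\B_{\dR}^{+}$), ce qui demande d'utiliser que les bases duales $e_i^{*},f_j^{*}$ sont $\cG_{\fK_{p^\infty}}$-\'equivariantes modulo la factorisation via $\rT_M$, et de contr\^oler que le rel\`evement fourni par le lemme~\ref{DR} commute avec l'action galoisienne. Un second point technique est la continuit\'e des $\theta_{ij}$ pour la topologie de Fr\'echet (pour assurer la convergence de l'\'ecriture), qui repose sur le lemme~\ref{Tate0} et la continuit\'e de $\iota_{\dR}$.
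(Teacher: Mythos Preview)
Your approach is essentially the paper's own: the paper says this lemma is proved ``de la m\^eme mani\`ere'' as Lemma~\ref{DR}, and you have correctly transcribed that scheme (define $\theta_{ij}$, form $\eta$, iterate along powers of $\omega$).

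One notational slip to fix: the map $\rT_M$ lands in $\fK_M^+$, not in $\C(\fK_{p^\infty}^+)$, so it is not the right tool for extracting the coefficients $c_{ij}(x)$. What you actually need (and what you describe just before invoking $\rT_M$) is the projection onto the $e_if_j$-component via the dual bases $e_i^{*},f_j^{*}$: concretely, $c_{ij}(x)=\tfrac{1}{cd}\,\tr_{\fK_{Mp^\infty}/\fK_{p^\infty}}\bigl(\theta(x)\,e_i^{*}f_j^{*}\bigr)$, extended by continuity to $\C(\fK_{Mp^\infty}^+)$. This trace is visibly $\cG_{\fK_{p^\infty}}$-equivariant (it is a sum over a Galois group commuting with $\cG_{\fK_{p^\infty}}$), which disposes of your ``obstacle principal''. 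The lifting of $c_{ij}(x)\in\C(\fK_{p^\infty}^+)$ to $(\B_{\dR}^+)^{\cG_{\fK_{p^\infty}}}$ is then exactly the construction in the proof of Lemma~\ref{DR} (write $c_{ij}(x)$ in the basis $\z^{i'}q^{j'}$ via Corollary~\ref{C} and replace $\z,q$ by $\tilde\z,\tilde q$), and its Galois-invariance and continuity are already established there. With this correction, your sketch is complete.
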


\begin{prop}\label{const} Si $X$ est un ouvert compact de $\Q_p$ v\'erifiant $X+p^{-1}\Z_p=X$, il existe une unique application $\iota_{\dR}(\fK^+)$-lin\'eaire $\Res_X$ continue de $(\B_{\dR}^+)^{\cG_{\fK_{p^{\infty}}}}$ dans $(\B_{\dR}^+)^{\cG_{\fK_{p^{\infty}}}}$ telle que l'on ait $\Res_X(\tilde{\z}^x\tilde{q}^{y})=\tilde{\z}^x\tilde{q}^y$ si $x\in X,y\in X\cap \Z[\frac{1}{p}]$ et $\Res_X(\tilde{\z}^x\tilde{q}^y)=0$ sinon.
\end{prop}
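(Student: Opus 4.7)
The strategy is to build $\Res_X$ directly from the unique decomposition of Lemma~\ref{DR}. Given $z\in(\B_{\dR}^+)^{\cG_{\fK_{p^{\infty}}}}$, write (uniquely)
\[
z=\sum_{k=0}^{+\infty}\omega^{k}\sum_{i\in I,\,j\in J}a_{ijk}(z)\,\tilde{\z}^{\,i}\tilde{q}^{\,j},\qquad a_{ijk}(z)\in\iota_{\dR}(\fK^{+}),
\]
and set
\[
\Res_{X}(z):=\sum_{k=0}^{+\infty}\omega^{k}\sum_{\substack{i\in I\cap X\\ j\in J\cap X}}a_{ijk}(z)\,\tilde{\z}^{\,i}\tilde{q}^{\,j}.
\]
The sum converges and defines an element of $(\B_{\dR}^+)^{\cG_{\fK_{p^{\infty}}}}$ because retaining only a subset of indices preserves the finiteness condition on the coefficients in Lemma~\ref{DR}, and continuity of $\Res_X$ follows because this coordinate-wise projection is continuous in the topology induced by that decomposition.

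The $\iota_{\dR}(\fK^{+})$-linearity is essentially tautological: multiplication by $f\in\iota_{\dR}(\fK^{+})$ in the canonical expansion acts only on the coefficients $a_{ijk}(z)$, without altering the indices $(i,j,k)$, so $\Res_{X}(fz)=f\,\Res_{X}(z)$. It remains to verify the prescribed formula on monomials $\tilde{\z}^{x}\tilde{q}^{y}$ with $x\in X$ and $y\in X\cap\Z[\tfrac{1}{p}]$: such $y$ can be written uniquely as $y=j+n$ with $j\in J$ and $n\in\Z$, giving $\tilde{q}^{y}=\tilde{q}^{n}\tilde{q}^{j}$ with $\tilde{q}^{n}\in\iota_{\dR}(\fK^{+})$, and similarly $\tilde{\z}^{x}$ reduces to $\tilde{\z}^{i}$ (up to a factor lying in $\iota_{\dR}(\fK^{+})$) for a unique $i\in I$. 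The hypothesis $X+p^{-1}\Z_{p}=X$, which in particular yields $X+\Z=X$, guarantees that $y\in X\iff j\in X$ (and similarly on the $\tilde{\z}$-side), making the formula self-consistent.

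Uniqueness is then immediate: any continuous $\iota_{\dR}(\fK^{+})$-linear map agreeing with $\Res_{X}$ on the set $\{\tilde{\z}^{i}\tilde{q}^{j}:i\in I,\,j\in J\}$ is determined on the $\iota_{\dR}(\fK^{+})$-module topologically generated by these monomials, and Lemma~\ref{DR} shows that this module is dense in $(\B_{\dR}^{+})^{\cG_{\fK_{p^{\infty}}}}$.

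The one delicate point—the main obstacle—is the verification that the prescription $\Res_{X}(\tilde{\z}^{x}\tilde{q}^{y})=\tilde{\z}^{x}\tilde{q}^{y}$ on the larger index set $\{x\in X,\,y\in X\cap\Z[\tfrac{1}{p}]\}$ is consistent with the construction based on the canonical representatives $I$ and $J$. This is where the translation invariance $X+p^{-1}\Z_{p}=X$ enters essentially, and where a careful bookkeeping of the factors of $\tilde{q}^{n}$ (resp.\ powers of $\tilde{\z}$ corresponding to $i\mapsto i+(p-1)p^{n-1}$) absorbed by $\iota_{\dR}(\fK^{+})$-linearity must be carried out.
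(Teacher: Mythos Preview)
Your overall strategy coincides with the paper's: define $\Res_X$ by the formula
\[
\Res_X(z)=\sum_{k\ge 0}\omega^{k}\sum_{(i,j)\in (I\cap X)\times(J\cap X)}a_{ijk}(z)\,\tilde{\z}^{\,i}\tilde{q}^{\,j}
\]
coming from Lemma~\ref{DR}, and deduce uniqueness and continuity from this. The paper's continuity argument is slightly more concrete than yours (it checks $\Res_X(U_{m,k})\subset U_{m,k}$ directly), but this is a matter of presentation.

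There is, however, a real gap at precisely the point you flag as ``the main obstacle.'' Your handling of $\tilde{q}^{y}$ is fine: writing $y=j+n$ with $j\in J$, $n\in\Z$, one has $\tilde{q}^{n}\in\iota_{\dR}(\fK^{+})$, and $y\in X\iff j\in X$ because $X+\Z=X$. But the analogous reduction for $\tilde{\z}^{x}$ fails: $\iota_{\dR}(\fK^{+})$ consists of series in $\tilde{q}$ only, so no nontrivial power of $\tilde{\z}$ lies in it, and you cannot absorb the passage from $x$ to a representative $i\in I$ by $\iota_{\dR}(\fK^{+})$-linearity. The difficulty is that $\omega=\sum_{i=0}^{p-1}\tilde{\z}_p^{\,i}$ is itself a polynomial in $\tilde{\z}^{1/p}$, so the expansion of $\tilde{\z}^{x}$ in the form $\sum_k\omega^k(\cdots)$ genuinely mixes the $\tilde{\z}$-exponents with the $\omega$-index; this is exactly why the hypothesis $X+p^{-1}\Z_p=X$ (and not merely $X+\Z=X$) is needed.

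The paper does not carry out this computation either: it observes that $\omega$ being a polynomial in $\tilde{\z}_p$ forces the formula above (hence uniqueness), checks continuity via the $U_{m,k}$, and then for the identity $\Res_X(\tilde{\z}^{x})=\tilde{\z}^{x}\cdot\mathbf{1}_{X}(x)$ simply invokes \cite[Prop.~4.2]{PC3}, where the analogous statement for $\B_{\dR}^{+}(\overline{\Q}_p)$ is proved. To complete your argument you would either need to reproduce that computation or cite it; the bookkeeping you allude to does not reduce to $\iota_{\dR}(\fK^{+})$-linearity.
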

\begin{proof}[D\'emonstration]
Comme $\omega=\sum_{i=0}^{p-1}\tilde{\z}_p^{i}$ est un polyn\^ome en $\tilde{\z}_p$, on voit que s'il existe une telle application, $\Res_X$ doit \^etre donn\'ee par la formule
\begin{equation}\label{Res}
\Res_X(x)=\sum_{k=0}^{+\infty}\omega^k(\sum\limits_{(i,j)\in(I\cap X)\times (J\cap X)} a_{ijk}(x)\tilde{\z}^i\tilde{q}^j).
\end{equation}
Ceci implique que $\Res_X(\A_{\inf}^{\cG_{\fK_{p^\infty}}})\subset \A_{\inf}^{\cG_{\fK_{p^\infty}}} \text{ et } \Res_X( (\ker\theta)^{k+1})=\Res_X(\omega^{k+1}(\B_{\dR}^+)^{\cG_{\fK_{p^{\infty}}}})\subset (\ker\theta)^{k+1},$
et donc que $\Res_X(U_{m,k})\subset U_{m,k} $, ce qui permet de conclure que l'application $\Res_X$ d\'efinie par la formule $(\ref{Res})$ est continue.

Il ne reste donc plus qu'\`a donner la formule explicite pour $\Res_X(\tilde{\z}^x)$ si $x\in\Q_p$. On a le fait dans (\cite{PC3} prop.4.2): $\Res_X(\tilde{\z}^x)=\tilde{\z}^x$ si $x\in X$ et $\Res_X(\tilde{\z}^x)=0$ sinon. Ceci permet de conclure le lemme en utilisant le formule explicite pour $\Res_X$.
\end{proof}

\begin{theo}Si $M\geq 1$ est un entier tel que $m=v_p(M)\geq v_p(2p)$, il existe une unique application $\bR_M:\B_{\dR}^{+}(\fK_{Mp^{\infty}}^+)\ra\tilde{\fK}_M^+$ qui est $\tilde{\fK}^+_M$-lin\'eaire et continue et telle que la restriction de $\bR_M$ \`a $\tilde{\fK}_{Mp^\infty}^+$ est donn\'ee par la formule:
\begin{equation*}
\begin{split}
\bR_M: \tilde{\fK}^+_{Mp^{\infty}}&\longrightarrow\tilde{\fK}^+_M \\
\tilde{\z}_{Mp^n}^{a}\tilde{q}_{Mp^n}^b &\mapsto
\left\{\begin{aligned}\tilde{\z}_{Mp^n}^{a}\tilde{q}_{Mp^n}^b& ;
\text{ si } p^n|a \text{ et
} p^n|b;\\
0& ;   \text{ sinon}.
\end{aligned}\right.
\end{split}
\end{equation*}
De plus, $\bR_M$ commutes \`a l'action de $\cG_{\fK}$.
\end{theo}
\begin{proof}[D\'emonstration] S'il existe une telle application, elle est unique par continuit\'e de $\bR_M$. 

Passons \`a l'existence.  On d\'ecompose $M=M_0p^m$ avec $(M_0,p)=1$. 

Soit $M_0=1$. Soit $S_m$ le sous-$\iota_{\dR}(\fK^+)$-module de $\B_{\dR}^+$ engendr\'e par les $\tilde{\z}^x\tilde{q}^y$ pour $x\in p^{-m}\Z_p, y\in p^{-m}\Z$. L'adh\'erence de $S_m$ dans $\B_{\dR}^+$ est $\tilde{\fK}^+_{p^m}$.
En appliquant la proposition $\ref{const}$ \`a $X=p^{-m}\Z_p$, on obtient une application $\tilde{\fK}_{p^m}^+$-lin\'eaire continue $\Res_{p^{-m}\Z_p}$ de $(\B_{\dR}^+)^{\cG_{\fK_{p^{\infty}}}}$ dans $(\B_{\dR}^+)^{\cG_{\fK_{p^{\infty}}}}$ telle que la restriction de $\Res_{p^{-m}\Z_p}$ \`a $\tilde{\fK}_{p^{\infty}}^+$ v\'erifie la formule voulue.
On pose $\bR_1=\Res_{p^{-m}\Z_p}$.

Passons au cas g\'en\'eral. D'apr\`es le lemme $\ref{DR1}$, si $x=\sum_{k=0}^{+\infty}\omega^k(\sum_{i=0}^{c}\sum_{j=0}^db_{ijk}(x)\iota_{\dR}(e_i)f_j) \in (\B_{\dR}^+)^{\cG_{\fK_{Mp^\infty}}} $, on pose une application de $(\B_{\dR}^+)^{\cG_{\fK_{Mp^\infty}}}$ \`a valeurs dans $\tilde{\fK}_M^+$ par la formule:
\[ \bR_M(x)=\sum_{k=0}^{+\infty}\omega^k(\sum_{i=0}^{c}\sum_{j=0}^d \bR_1(b_{ijk}(x))\iota_{\dR}(e_i)f_j) .\]
C'est une application $\tilde{\fK}_M^+$-lin\'eaire. La continuit\'e de $\bR_M$ est de celle de $\bR_1$.

Il ne reste donc plus qu'\`a v\'erifier que la restriction de $\bR_M$ \`a $\tilde{\fK}_{Mp^{\infty}}$ satisfait la formule voulue. On constate que $l=m$ v\'erifie la condition du lemme $\ref{DR1}$ et si $n\geq m$, 
on a $ 
[\fK_M^+\fK_{p^n}^+:\fK_{p^n}^+F_M]=M_0$ 
avec une base $\cB=\{q_{M_0}^i\}_{0\leq i\leq M_0-1}$. D'apr\`es le remarque $\ref{base}$, la formule voulue est un calcul direct. Ceci permet de montrer l'existence. Comme les $\{\tilde{\z}_{M}^i\tilde{q}_{M}^j\}_{i,j\in J }$ forment une base de  $\tilde{\fK}_{Mp\infty}^+$ sur $\tilde{\fK}_M^+$, on n'a pas du
 choix pour la restriction de $\bR_M$ \`a $\tilde{\fK}_{Mp^{\infty}}^+$ et donc aussi \`a  $(\B_{\dR}^+)^{\cG_{\fK_{Mp^{\infty}}}}$ par continuit\'e de $\bR_M$ et densit\'e de $\tilde{\fK}_{Mp^{\infty}}^+$ dans $(\B_{\dR}^+)^{\cG_{\fK_{Mp^{\infty}}}}$.
La commutativit\'e de $\bR_M$ et $\cG_{\fK}$ vient de la formule de l'action de $\cG_{\fK}$ sur les $\tilde{\z}_{Mp^n}^{a}\tilde{q}_{Mp^n}^b$ et de celle de $\bR_M$ sur les $\tilde{\z}_{Mp^n}^{a}\tilde{q}_{Mp^n}^b$.
\end{proof}

\subsection{Une application logarithme $\log$}\label{anneaux3}

On note $U_0(\ol{\fK}^{+})$ l'ensemble des \'el\'ements $x$ de $\R(\ol{\fK}^+)$ tels que  $v_p(x^{(0)}-1)>0$.

\begin{lemma}\label{converge}Il existe une unique application logarithme $x\mapsto \log[x]$ de $U_0(\ol{\fK}^{+})$ dans $\B_{\dR}^+(\ol{\fK}^+)$ qui v\'erifie  $\log[xy]=\log[x]+\log[y]$, et $\log[x]=\sum_{n=1}^{+\infty}\frac{(-1)^{n-1}([x]-1)^n}{n}$ .
De plus, on a $\sigma(\log[x])=\log[\sigma x]$ si $\sigma\in \cG_{\Q_p}$.
\end{lemma}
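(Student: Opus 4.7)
The proof reduces to three points: (i) the defining series $\sum_{n\geq 1}(-1)^{n-1}([x]-1)^n/n$ converges in $\B_{\dR}^+(\ol{\fK}^+)$; (ii) the resulting map is multiplicative on Teichm\"uller lifts; and (iii) it commutes with the action of $\cG_{\Q_p}$. Uniqueness is automatic from (i), since the series formula then determines $\log[x]$ pointwise.

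For (i), set $y = [x]-1 \in \A_{\inf}(\ol{\fK}^+)$, let $\xi$ be a generator of the principal ideal $\ker\theta \cap \A_{\inf}$, and put $\epsilon = v_p(x^{(0)}-1) > 0$. Since $p$ is a unit in the DVR $\B_{\dR}^+$, every $n \geq 1$ is invertible there; it therefore suffices to show that for every $(m,k) \in \N^2$, one has $y^n \in p^m \A_{\inf} + \xi^{k+1} \A_{\inf}$ for $n$ sufficiently large. The plan is to prove this by induction on $k$. For $k = 0$, $\theta(y^n) = (x^{(0)}-1)^n$ has $p$-valuation $n\epsilon$, so $\theta(y^n) \in p^m \cO_{\C(\ol{\fK}^+)}$ for $n \geq m/\epsilon$; surjectivity of $\theta : \A_{\inf} \twoheadrightarrow \cO_{\C(\ol{\fK}^+)}$ lifts this to $p^m \A_{\inf}$, with error lying in $\xi \A_{\inf}$. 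For the inductive step, if $y^n \in p^m\A_{\inf} + \xi^{k+1}\A_{\inf}$ for $n \geq N$, then
\[
y^{2n} = (y^n)^2 \in p^{2m}\A_{\inf} + p^m\xi^{k+1}\A_{\inf} + \xi^{2(k+1)}\A_{\inf} \subseteq p^m\A_{\inf} + \xi^{k+2}\A_{\inf},
\]
using $2(k+1) \geq k+2$; odd exponents are handled by multiplication by $y$. Choosing $m = m_0 + v_p(n)$ in the claim and noting $p^{v_p(n)}/n \in \Z_p^\times \subset \A_{\inf}^\times$ then places $y^n/n \in p^{m_0}\A_{\inf} + \xi^{k+1}\B_{\dR}^+ \subseteq U_{m_0,k}$ for $n$ sufficiently large (the self-referential bound $n \geq N_k(m_0 + v_p(n))$ is harmless since $v_p(n) = O(\log n)$), establishing convergence in the Fr\'echet topology.

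Point (ii) follows by specializing the formal power series identity $\log((1+a)(1+b)) = \log(1+a) + \log(1+b)$ in two commuting indeterminates $a,b$ to $a = [x]-1$, $b = [y]-1$, using $[xy] = [x][y]$ and the convergence from (i). Point (iii) is immediate: any $\sigma \in \cG_{\Q_p}$ acts by continuous ring homomorphisms on $\cO_{\ol{\fK}^+}$, $\R(\ol{\fK}^+)$, $\A_{\inf}$, and $\B_{\dR}^+$, with $\sigma([x]) = [\sigma x]$ by functoriality of the Teichm\"uller lift; continuity of $\sigma$ then lets it pass inside the convergent series.

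The main obstacle is (i): since $y = [x]-1$ does not lie in $\ker\theta$ in general (only when $x^{(0)} = 1$), one cannot simply truncate the series modulo $(\ker\theta)^{k+1}$. The doubling trick in the inductive step is exactly the device that converts the positive $p$-valuation of $\theta(y)$ into control over successively higher $\xi$-jets of $y^n$, exploiting both the $\xi$-adic completeness of $\B_{\dR}^+$ and the invertibility of $p$ in it.
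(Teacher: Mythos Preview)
Your proof is correct, and its overall structure (convergence, multiplicativity via the formal series identity, Galois equivariance by continuity and functoriality of the Teichm\"uller lift) matches the paper's. However, your convergence argument (i) is genuinely different from the paper's.

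The paper fixes once and for all an integer $k$ with $k\,v_p(x^{(0)}-1)\geq 1$ and writes down an explicit decomposition $([x]-1)^k=p\alpha+\omega\beta$ with $\alpha=[(x^{(0)}-1)^k/p]$. Writing $n=km+r$ and expanding $(p\alpha+\omega\beta)^m$ by the binomial theorem, the paper rearranges the whole series as $\sum_{r=0}^{k-1}([x]-1)^r\sum_{i\geq 0}(\omega\beta)^i\sum_{m\geq i}\binom{m}{i}(p\alpha)^{m-i}/(km+r)$, and checks directly that the inner sum converges in $\B_{\inf}$ because $m-i-v_p(km+r)\to+\infty$. This gives, in one stroke, an explicit $\omega$-adic expansion of $\log[x]$.

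Your approach instead runs an induction on the filtration level, using the doubling trick $y^{2n}=(y^n)^2$ to bootstrap from $p^m\A_{\inf}+\xi^{k+1}\A_{\inf}$ to $p^m\A_{\inf}+\xi^{k+2}\A_{\inf}$; the resulting bound $N_k(m)$ grows like $2^k m/\epsilon$, and the self-referential inequality $n\geq N_k(m_0+v_p(n))$ is indeed harmless since $v_p(n)=O(\log n)$. This is less explicit but perfectly valid, and it avoids the concrete lift $\alpha=[(x^{(0)}-1)^k/p]$. The paper's method has the advantage of producing a usable $\omega$-adic form of $\log[x]$, which is convenient later; yours is a clean existence proof that works without that bookkeeping.
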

\begin{proof}[D\'emonstration]Si $x\in U_0( \ol{\fK}^{+})$, il existe $k\in\N$ tel que $kv_p(x^{(0)}-1)\geq 1$.  On constate que $([x]-1)^k-p\alpha$, o\`u $\alpha=[\frac{(x^{(0)}-1)^k}{p}]$, appartient \`a $\ker\theta$ et donc on a $([x]-1)^k=p\alpha+\omega\beta$.

Si $n\in \N$,  on a $n=km+r$ avec $0\leq r\leq k-1$. Donc on peut \'ecrire
\begin{equation*}
\begin{split}\frac{([x]-1)^n}{n}=\frac{([x]-1)^r(p\alpha+\omega\beta)^m}{n}=([x]-1)^r\sum_{i=0}^{m}(\omega\beta)^i \frac{\binom{m}{i}(p\alpha)^{m-i}}{km+r}.
\end{split}
\end{equation*}
Donc on a $\sum_{n=1}^{+\infty}\frac{(-1)^{n-1}([x]-1)^n}{n}=\sum_{r=0}^{k-1}([x]-1)^r\sum_{i=0}^{+\infty}(\omega\beta)^i \sum_{m\geq i} \frac{\binom{m}{i}(p\alpha)^{m-i}}{km+r}$.

Si $k$ est fix\'e et $m$ tend vers $+\infty$, on a $v_p(\binom{m}{i})-v_p(km+r)+m-i\geq m-i-v_p(km+r)-v_p(i)$ tend vers $+\infty$;  ce qui montre que $\sum_{m\geq i} \frac{\binom{m}{i}(p\alpha)^{m-i}}{km+r}$ converge dans $\B_{\inf}$ et la s\'erie $\sum_{n=1}^{+\infty}\frac{(-1)^{n-1}([x]-1)^n}{n}$ est donc convergente pour la topologie faible dans $\B_{\dR}^+$. La relation $\log[xy]=\log[x]+\log[y]$ se d\'eduit par un argument de s\'eries formelles.

\end{proof}

On note $\bar{q}=(q,q^{\frac{1}{p}},\cdots,)\in\R(\ol{\fK}^+)$; son repr\'esentant de Teichm\"uler est $\tilde{q}$. Il est \'evident que $\bar{q}$ n'appartient pas \`a $\tilde{\rE}^+U_0(\ol{\fK}^+)$.
On aimerait bien que l'application logarithme s'\'etend \`a $\tilde{\rE}^+U_0(\ol{\fK}^+)\times \bar{q}^{\Q}$. On a $\sigma \tilde{q}=\tilde{q}\tilde{\z}^{c_q(\sigma)}$, o\`u $c_q$ est le $1$-cocycle \`a valeurs dans $\Z_p(1)$ associ\'e \`a $q$ par la th\'eorie de Kummer, et le minimum que l'on puisse demander \`a $u_q=\log\tilde{q}$ est de v\'erifier la formule $\sigma u_q
 =u_q+c_q(\sigma)t$, quel que soit $\sigma\in\cG_{\fK}$. Mais on a le r\'esultat suivant qui dit qu'il n'existe pas un \'el\'ement $u_q$ dans $\B_{\dR}^+$ et, de plus, un tel $u_q$ est transcendant sur $\B_{\dR}^+$.

\begin{theo}\label{logq}
 $u_q$ est transcendant sur $\B_{\dR}^+$.
\end{theo}
\begin{proof}[D\'emonstration]
Supposons que $u_q$ est alg\'ebrique sur $\B_{\dR}^+$. Soit $P(X)=X^n+a_1X^{n-1}+\cdots+a_0\in\B_{\dR}^+[X]$ le polyn\^ome minimal de $u_q$ sur $\B_{\dR}^+$. Si $\sigma\in\cG_{\fK}$, alors on a
\[0=\sigma(P(X))=(X+c_q(\sigma)t)^n+\sigma(a_1)(X+c_q(\sigma)t)^{n-1}+\cdots+\sigma(a_n).\]
Comme $P(X)$ est le polyn\^ome minimal de $u_q$, on a $\sigma(P(X))=P(X)$. Ceci permet de d\'eduire $\sigma(a_1)=a_1-nc_q(\sigma)t$ et donc $\sigma(\frac{a_1}{n})=\frac{a_1}{n}-c_q(\sigma)t$. Ceci n'est pas possible car
il n'existe pas d'\'el\'ement $x$ de $\B_{\dR}^+$ tel que, si $\sigma\in\cG_{\fK}$, $\sigma x=x+c_q(\sigma)t$.

En effet, s'il existe un tel \'el\'ement $x$ dans $\B_{\dR}^+$, alors il est stable sous l'action de $\cG_{\fK_{Mp^{\infty}}}$ pour tous $M\geq 1$ et donc appartient \`a $(\B_{\dR}^+)^{\cG_{\fK_{Mp^{\infty}}}}=\B_{\dR}^+(\fK_{Mp^{\infty}}^+)$.
  Appliquons la trace de Tate normalis\'ee $\bR_M:\B_{\dR}^+(\fK_{Mp^{\infty}}^+)\ra \tilde{\fK}_{M}^+$ \`a $x$, et donc on obtient que $\bR_M(x)$ appartient \`a $\tilde{\fK}_M^+$ tel que
     \begin{equation}\label{qq}\sigma \bR_M(x)= \bR_M(x)+c_q(
    \sigma)t, \text{ pour tous } \sigma\in P_{\fK},
    \end{equation}
     Par ailleurs, la filtration sur $\B_{\dR}^+$ est stable sous l'action de $\cG_{\fK}$, 
alors on peut supposer $\bR_M(x)=a_1+a_2(\tilde{q}_M)t \mod t^2$ avec $a_i\in \iota_{\dR}(\fK_M^+)$ pour $i=1,2$. 
On d\'eduit la relation suivante de la formule $(\ref{qq})$: $\sigma\theta(a_2\chi_{\cycl}(\sigma))=\theta(a_2)+c_q(\sigma)$ pour tout $\sigma\in P_{\fK}$, 
ce qui est impossible car $\theta(a_2)\in\fK_M^+$ n'a qu'un nombre fini de conjugu\'es et $c_q(\sigma)$ prend un nombre d'infini de valeurs pour $\sigma\in U_m\subset P_{\fK}$.

\end{proof}
\begin{remark}On peut montrer de la m\^eme mani\`ere que $\log t$ est transcendant sur $\B_{\dR}^{+}$.
\end{remark}
On pose $\B_{\log}^+=\B_{\dR}^+[u_q]$ avec $\sigma(u_q)=u_q+c_q(\sigma)t$. On a bien que $U_0(\ol{\fK}^+)\times\bar{q}^{\Q}$ est stable sous l'action de $\cG_{\fK}$ car $\sigma\bar{q}=\bar{q} (1,\z_p,\cdots)$ pour $\sigma\in\cG_{\fK}$. Alors l'application $\log\circ[\cdot]$ s'\'etend \`a $U_0(\ol{\fK}^+)\times\bar{q}^{\Q}$ \`a valeurs dans $\B_{\log}^+$ telle que $\log[\bar{q}^a]=au_q$ si $a\in\Q$, et $\sigma(\log x)=\log(\sigma x)$ si $\sigma\in\cG_{\fK}$.
 On pose $\A_{\inf}^{**}=\{x=\sum_{k=0}^{+\infty}p^k[x_k]\in\A_{\inf}(\ol{\fK}^+): x_0\in U_0(\ol{\fK}^+), x_k\in\R(\ol{\fK}^+) \text{ si } k\geq 1 \}$.

\begin{prop}
\begin{itemize}
\item[(1)]Si $1+x\in \A_{\inf}^{**}$, alors la s\'erie $\sum_{n=1}^{+\infty}\frac{(-1)^{n-1}x^n}{n}$ converge dans $\B_{\dR}^+$.
\item[(2)]L'application $\log\circ[\cdot]: U_0(\ol{\fK}^+)\times\bar{q}^{\Q}\ra\B_{\log}^+$ s'\'etend en une application $\log: \A_{\inf}^{**}\times\tilde{q}^{\Q}\ra \B_{\dR}^+[u_q]$ telle que
\begin{itemize}
\item[$\bullet$]$\log([x])=\log[x]$, $\log \tilde{q}^a=au_q$ si $a\in \Q$;
\item[$\bullet$]$\log(xy)=\log(x)+\log(y)$;
\item[$\bullet$]$\log (x)=\sum_{n=1}^{+\infty}\frac{(-1)^{n-1}(x-1)^n}{n}$, si la s\'erie converge.
\end{itemize}
\end{itemize}
De plus, on a
$\sigma(\log x)=\log (\sigma x)$ si $\sigma\in\cG_{\fK}$.
\end{prop}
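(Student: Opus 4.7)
My approach is to reduce the extension to two already-understood cases via a multiplicative decomposition. Any element $1+x\in\A_{\inf}^{**}$ can be written as $1+x=[x_0]+p\alpha=[x_0]\bigl(1+p\alpha[x_0]^{-1}\bigr)$ with $x_0\in U_0(\ol{\fK}^+)$ and $\alpha\in\A_{\inf}$; since $v_p(x_0^{(0)})=0$, $x_0$ is a unit in $\R(\ol{\fK}^+)$, so $[x_0]$ is a unit in $\A_{\inf}$ and $p\alpha[x_0]^{-1}\in p\A_{\inf}$. This presents $1+x$ as a Teichmüller factor (to which Lemma \ref{converge} applies) times a factor $1+p\beta$ for which the logarithm series converges $p$-adically in $\A_{\inf}$, since $v_p((p\beta)^n/n)\geq n-\log_p n\to+\infty$.

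For part (1), I mimic the argument of Lemma \ref{converge} but with the extra $p\alpha$ term. Write $x=([x_0]-1)+p\alpha$ and choose $k\geq 1$ with $kv_p(x_0^{(0)}-1)\geq 1$. The binomial expansion of $x^k$ produces cross terms lying in $p\A_{\inf}$ together with the pure term $([x_0]-1)^k=p\gamma+\omega\beta$ (the identity from Lemma \ref{converge}), hence $x^k=p\gamma_0+\omega\beta_0$ for some $\gamma_0,\beta_0\in\A_{\inf}$. Writing $n=km+r$ with $0\leq r<k$, expanding $(x^k)^m$ binomially and regrouping yields
\begin{equation*}
\sum_{n=1}^{\infty}\frac{(-1)^{n-1}x^n}{n}=\sum_{r=0}^{k-1}(-1)^{r-1}x^r\sum_{i=0}^{\infty}(\omega\beta_0)^{i}\sum_{m\geq i}(-1)^{km}\frac{\binom{m}{i}(p\gamma_0)^{m-i}}{km+r}.
\end{equation*}
The innermost sum converges in $\A_{\inf}$ for the $p$-adic topology (valuation bounded below by $m-i-\log_p(km+r)\to+\infty$), and the sum in $i$ converges in $\B_{\dR}^+$ for the Fréchet topology because $(\omega\beta_0)^i\in(\ker\theta)^i$ vanishes modulo any basic open $U_{m,k}$ for $i$ large.

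For (2), define $\log(y\cdot\tilde{q}^a):=\log(y)+au_q$ where $\log(y)$ is the convergent series from (1). Consistency with $\log\circ[\cdot]$ on $U_0\times\bar{q}^{\Q}$ is immediate: for $y=[x]$ with $x\in U_0$ the decomposition has $\alpha=0$ and the series reduces to $\log[x]$; and $\tilde{q}^a=[\bar{q}^a]$ gives $\log(\tilde{q}^a)=au_q$. Multiplicativity of $\A_{\inf}^{**}$ — needed for the second bullet — follows from closure of $U_0(\ol{\fK}^+)$ under multiplication, since $v_p((x_0y_0)^{(0)}-1)\geq\min(v_p(x_0^{(0)}-1),v_p(y_0^{(0)}-1))>0$. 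Additivity $\log((1+x)(1+y))=\log(1+x)+\log(1+y)$ then follows from the formal power series identity in $\Q[[u,v]]$ by specializing and passing to the limit of partial sums; the remainder control provided by the convergence analysis of (1) (uniform in each $\B_m$) justifies the interchange. Galois equivariance reduces to $\sigma[x]=[\sigma x]$ termwise in the series, and to the prescribed formula $\sigma\tilde{q}^a=\tilde{q}^a\tilde{\z}^{ac_q(\sigma)}$, which gives $\sigma(\log\tilde{q}^a)=au_q+ac_q(\sigma)t$ as required by the $\B_{\dR}^+[u_q]$-structure.

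The hard part will be (1): one must simultaneously manage $p$-adic convergence of the innermost series inside $\A_{\inf}$ and convergence in the $\ker\theta$-adic Fréchet topology on $\B_{\dR}^+$ for the outer regrouping. Once this double-convergence is in hand, the properties of (2) are formal power-series manipulations and the Galois equivariance is routine.
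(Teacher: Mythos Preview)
Your proposal is correct and follows essentially the same approach as the paper. For (2) your multiplicative decomposition $1+x=[x_0](1+p\alpha[x_0]^{-1})$ is exactly what the paper does; for (1) the paper is marginally more direct --- it simply notes that $\theta(x)=x_0^{(0)}-1+p(\cdots)$ has positive $p$-valuation, so some power $x^k$ satisfies $x^k=p\alpha+\omega\beta$ without passing through the binomial expansion of $x=([x_0]-1)+p\alpha$ --- but the conclusion and the subsequent regrouping argument are identical.
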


\begin{proof}[D\'emonstration]
$(1)$ Si $1+x\in \A_{\inf}^{**}$, alors il existe $k\in \N$, tel que $x^k-p\alpha$, o\`u $\alpha=[\frac{(\bar{x}^{(0)})^k}{p}]$, appartient \`a $\ker\theta$. Donc on a $x^k=p\alpha+\omega\beta$ avec $\beta\in\A_{\inf}$. Donc l'argument dans $\ref{converge}$ s'adapte \`a montrer la convergence.\\
$(2)$
Il suffit de montrer que $\log:\A_{\inf}^{**}\ra \B_{\log}^+$ est bien d\'efinie. Si $x_0\in U_0(\ol{\fK}^+)$, alors $[x_0]$ est inversible dans $\A_{\inf}^{**}$. Donc pout tout $x=\sum_{k=0}^{+\infty}p^k[x_k]\in\A_{\inf}^{**}$, on a $x=[x_0](1+pa)$ avec $a\in \A_{\inf}(\ol{\fK}^+)$. On constate que $\log(1+ pa)=\sum_{n\geq 1}\frac{(-p a)^{n}}{n}$ converge dans $\A_{\inf}(\ol{\fK}^+)$. Alors $\log x$ est bien d\'efini par multiplicativit\'e.
\end{proof} 
\section{Cohomologie des repr\'esentations du groupe $P_{\fK_M}$}

Soit $M$ un entier $\geq 1$ et soit $m=v_p(M)\geq v_p(2p)$.
Le corps $\fK_{Mp^\infty}$ est une extension galoisienne de $\fK_M$ dont le groupe de Galois
\[ P_{\fK_M}\cong \{(\begin{smallmatrix}a&b\\ c&d\end{smallmatrix})\in \GL_2(\Z_p): a=1, c=0, b\in p^m\Z_p, d\in 1+p^m\Z_p\}=:\rP_m\]
est un groupe analytique $p$-adique de rang $2$. Si $g$  est  l'\'el\'ement de $P_{\fK_M}$ correspondant \`a une matrice $(\begin{smallmatrix}1&b\\ 0&d\end{smallmatrix})$, l'action de $g$ sur $\z_M^iq_M^j$ avec $(i,j)\in J\times J$ est donn\'ee par la formule:
\[g(\z_M^iq_M^j)=\z_M^iq_M^j\z_{p^m}^{i(d-1)+jb}.\]
Si $u,v\in p^m\Z_p$, on pose
$(u,v)$ l'\' el\'ement
$\bigl(\begin{smallmatrix}1&u\\0&e^v\end{smallmatrix}\bigr)$ de
$\rP_m$. La loi de groupe s'\'ecrit alors sous la forme:
\[(u_1,v_1)(u_2,v_2)=(e^{v_2}u_1+u_2,v_1+v_2).\]
Soient $U_m$ et $\Gamma_m$ les sous-groupes de $\rP_m$ eng\'endr\'es par $u_m=(p^m,0)$ et $\gamma_m=(0,p^m)$ respectivement. Si $(u,0)\in U_m$ et $(0,v)\in\Gamma_m$, on a $(0,v)(u,0)(0,v)^{-1}=(e^{-v}u,0)\in U_m$. Donc $U_m$ est distingu\'e dans $\rP_m$, et on a $\Gamma_m\cong \rP_m/U_m$. Ces deux sous-groupes $U_m$ et $\Gamma_m$ sont isomorphes \`a $\Z_p$; ils sont donc de dimension cohomologique $1$. Cela implique $\rP_m$ est de dimension cohomologique $\leq 2$.

Si $G$ est un groupe topologique, si $V$ est une $\Q_p$-repr\'esentation de $G$, on note le groupe de cohomologie $\rH^i(G,V)$ la cohomologie continue de $G$ \`a valeurs dans la repr\'esentation $V$.
Si $G$ est procyclique, si $g_0$ est un g\'en\'erateur de $G$, on a :
\begin{equation}\label{iso}
\rH^1(G,V)\cong V/(g_0-1),
\end{equation}
o\`u un $1$-cocycle $(g\mapsto c_g )$ est envoy\'e sur l'image de $c_{g_0}$ dans $V/(g_0-1)$.

\begin{lemma}L'action $\circ$ de $\Gamma_m$ sur $V/((p^m,0)-1)$ induite par celle sur $\rH^1(U_m,V)$, est obtenue en tordant l'action originale $*$ sur $V$ par le caract\`ere $\gamma_m^a\mapsto e^{-ap^m}$, o\`u $\gamma_m=(0,p^m)$. Plus pr\'ecisement, si $x\in V/(u_m-1)$ avec $u_m=(p^m,0)$ et si  $(0,v)\in\Gamma_m$ ,
\[ x\circ (0,v)=e^{-v}x*(0,v).\]
\end{lemma}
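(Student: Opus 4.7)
Le plan est de combiner l'isomorphisme $(\ref{iso})$ appliqu\'e au groupe procyclique $U_m$ avec un calcul explicite de la conjugaison de $u_m$ par un \'el\'ement $(0,v)\in \Gamma_m$, en tenant soigneusement compte du fait que toutes les actions en jeu sont \`a droite.

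\emph{\'Etape 1 : rappel de l'action de $\Gamma_m$ sur $\rH^1(U_m,V)$.}\; Comme $U_m$ est distingu\'e dans $\rP_m$ et $\Gamma_m\cong \rP_m/U_m$, la suite spectrale de Hochschild-Serre fournit une action de $\Gamma_m$ sur $\rH^1(U_m,V)$ qui, au niveau des cocycles continus, est donn\'ee par la formule $(c*g)(u)=c(gug^{-1})*g$ pour $g\in\rP_m$ et $u\in U_m$ (cette formule pr\'eserve la condition de cocycle pour une action \`a droite $c(uu')=c(u)*u'+c(u')$, comme on le v\'erifie imm\'ediatement). Via l'isomorphisme $(\ref{iso})$ $c\mapsto c(u_m)$, cette action se transporte en une action de $\Gamma_m$ sur $V/(u_m-1)$, que l'on note $\circ$.

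\emph{\'Etape 2 : calcul de la conjugaison.}\; En utilisant la loi de groupe $(u_1,v_1)(u_2,v_2)=(e^{v_2}u_1+u_2,v_1+v_2)$, je calcule
\[
(0,v)(p^m,0)(0,v)^{-1}=(0,v)(p^m,0)(0,-v)=(p^m,v)(0,-v)=(e^{-v}p^m,0).
\]
Autrement dit $(0,v)u_m(0,v)^{-1}=u_m^{e^{-v}}$, o\`u l'on identifie $U_m\simeq p^m\Z_p$ via $u_m^a=(ap^m,0)$ (ce qui est licite car $U_m$ est commutatif, isomorphe \`a $(p^m\Z_p,+)$). Notons que $e^{-v}\in 1+p^m\Z_p\subset \Z_p$, donc $u_m^{e^{-v}}\in U_m$.

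\emph{\'Etape 3 : \'evaluation du cocycle.}\; Pour un cocycle continu $c:U_m\to V$ et $a\in\Z_p$, la condition de cocycle donne $c(u_m^a)=(1+u_m+\cdots+u_m^{a-1})*c(u_m)$ pour $a\in\N$, puis par continuit\'e et densit\'e pour tout $a\in\Z_p$. Modulo l'image de $u_m-1$, on a donc
\[
c(u_m^{e^{-v}})\equiv e^{-v}\, c(u_m)\pmod{(u_m-1)V}.
\]
En combinant avec l'\'etape 2, il vient
\[
(c*(0,v))(u_m)=c\bigl((0,v)u_m(0,v)^{-1}\bigr)*(0,v)=c(u_m^{e^{-v}})*(0,v)\equiv e^{-v}\,c(u_m)*(0,v)\pmod{(u_m-1)V}.
\]
Via l'isomorphisme $\rH^1(U_m,V)\cong V/(u_m-1)$, ceci se lit $x\circ(0,v)=e^{-v}\,x*(0,v)$, ce qui est la formule annonc\'ee.

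Il n'y a pas d'obstacle s\'erieux : tout repose sur le calcul de la conjugaison dans $\rP_m$ et sur la formule de base pour l'action induite sur la cohomologie d'un sous-groupe distingu\'e. Le seul point d\'elicat est de garder une convention coh\'erente des actions \`a droite, en particulier dans la r\'eduction $c(u_m^a)\equiv a\cdot c(u_m)\pmod{(u_m-1)V}$, qui repose sur le fait que $u_m$ agit trivialement sur $V/(u_m-1)V$.
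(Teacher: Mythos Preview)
Your proof is correct and follows essentially the same approach as the paper: you compute the conjugation $(0,v)u_m(0,v)^{-1}=u_m^{e^{-v}}$ using the group law of $\rP_m$, then use the cocycle relation to reduce $c(u_m^{e^{-v}})$ to $e^{-v}c(u_m)$ modulo $(u_m-1)V$, exactly as the paper does. Your organization into three labeled steps is slightly more explicit, but the content is identical.
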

\begin{proof}[D\'emonstration]On d\'emontre le lemme par le calcul suivant:
si $(0,v)\in\Gamma_m $,  on a
\begin{equation*}
\begin{split}
(c*(0,v))_{u_m}=c_{(0,v)u_m(0,v)^{-1}}*(0,v)=c_{(e^{-v}p^m,0)}*(0,v)=c_{u_m^{e^{-v}}}*(0,v);
\end{split}
\end{equation*}
de plus,  la formule de $1$-cocycle et  l'identit\'e $c_{(p^m,0)}*(p^m,0)=c_{(p^m,0)}$ dans $V/((p^m,0)-1)$, nous donnent $c_{u_m^a}=ac_{u_m}$ pour tout $a\in\Z_p$ et donc
\begin{equation*}
\begin{split}
c_{(p^m,0)^{e^{-v}}}*(0,v)-c_{(p^m,0)}=(e^{-v}c_{(p^m,0)})*(0,v)-c_{(p^m,0)}=c_{(p^m,0)}*(e^{-v}(0,v)-1),
\end{split}
\end{equation*}
o\`u $e^{-v}$ agit sur  $c_{(p^m,0)}$ par multiplication par $e^{-v}$.
\end{proof}

\begin{lemma}Soit $V$ une $\Q_p$-repr\'esentation de $\rP_m$; alors on a
\[\rH^2(\rP_m,V)\cong V/((p^m,0)-1, (0,p^m)-e^{p^m}).\]
\end{lemma}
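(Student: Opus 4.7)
The strategy is to exploit the short exact sequence $1 \to U_m \to \rP_m \to \Gamma_m \to 1$ via the Hochschild--Serre spectral sequence
\[
E_2^{p,q} = \rH^p(\Gamma_m, \rH^q(U_m, V)) \Longrightarrow \rH^{p+q}(\rP_m, V).
\]
Since $U_m \cong \Z_p$ and $\Gamma_m \cong \Z_p$ are procyclic, they have cohomological dimension $1$, so $E_2^{p,q}$ vanishes as soon as $p \geq 2$ or $q \geq 2$. Only three terms survive on the $E_2$-page, and all differentials out of or into the $(1,1)$-spot land in zero. Hence the spectral sequence degenerates and yields an isomorphism
\[
\rH^2(\rP_m, V) \;\cong\; \rH^1\bigl(\Gamma_m,\, \rH^1(U_m, V)\bigr).
\]

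Next I would apply the procyclic identification (\ref{iso}) with generator $u_m = (p^m, 0)$ to obtain $\rH^1(U_m, V) \cong V/(u_m - 1)$. The previous lemma describes the residual $\Gamma_m$-action $\circ$ on this quotient: for $v \in p^m\Z_p$,
\[
x \circ (0, v) \;=\; e^{-v}\, x * (0, v).
\]
Applying (\ref{iso}) once more to the procyclic group $\Gamma_m$, generated topologically by $\gamma_m = (0, p^m)$, gives
\[
\rH^1\bigl(\Gamma_m, V/(u_m-1)\bigr) \;\cong\; \bigl(V/(u_m-1)\bigr)\,\big/\,(\gamma_m \circ {} - 1),
\]
and the operator $\gamma_m\circ{}-1$ sends $x$ to $e^{-p^m}\,x*\gamma_m - x$. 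Multiplying this relation by the unit $e^{p^m}$ shows that the ideal it cuts out coincides with $(\gamma_m - e^{p^m}) = ((0,p^m) - e^{p^m})$. Combining with the relation $(u_m - 1)$ already imposed gives exactly
\[
\rH^2(\rP_m, V) \;\cong\; V \big/ \bigl((p^m, 0) - 1,\ (0, p^m) - e^{p^m}\bigr),
\]
as desired.

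The only real points to check are (a) that $\Gamma_m$ and $U_m$ really do have cohomological dimension $1$ for continuous cohomology with coefficients in a $\Q_p$-Banach representation (which follows from their being topologically isomorphic to $\Z_p$, so that continuous cochains are computed by the two-term complex $V \xrightarrow{g_0 - 1} V$), and (b) the compatibility of the transgression/edge map in Hochschild--Serre with the explicit formula for the $\Gamma_m$-action on $\rH^1(U_m, V)$ used above. The latter is precisely the content of the preceding lemma, so no additional work is required. I expect no serious obstacle: once the degeneration at $E_2$ is in place, the whole computation reduces to two applications of (\ref{iso}) combined with the twist formula.
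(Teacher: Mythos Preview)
Your proposal is correct and follows essentially the same route as the paper: Hochschild--Serre for $1\to U_m\to\rP_m\to\Gamma_m\to 1$ degenerates (both subquotients being procyclic of cohomological dimension~$1$), giving $\rH^2(\rP_m,V)\cong\rH^1(\Gamma_m,\rH^1(U_m,V))$, and then two applications of~(\ref{iso}) combined with the twist formula $x\circ(0,v)=e^{-v}x*(0,v)$ from the preceding lemma yield the stated quotient. The paper writes the final relation as $e^{-p^m}\gamma_m-1$ rather than $\gamma_m-e^{p^m}$, but as you note these cut out the same submodule.
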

\begin{proof}[D\'emonstration]

On a une suite exacte de groupes $1\ra U_m\ra \rP_m\ra \Gamma_m\ra 1$ et la dimension cohomologique de $\rP_m$ est $\leq 2$. Donc la suite spectrale de
Hochschild-Serre nous fournit un isomorphisme: $\rH^2(\rP_m, V)\cong\rH^1(\Gamma_m,\rH^1(U_m,V)),$
o\`u l'action de $\Gamma_m$ sur $\rH^1(U_m,V)$ est
donn\'ee par $(u\mapsto c_u)\mapsto (u\mapsto (c*\gamma)_u:=c_{\gamma
u\gamma^{-1}}*\gamma )$.

Les $U_m,\Gamma_m$ sont des groupes procycliques avec les g\'en\'erateurs $(p^m,0)$ et $(0,p^m)$ respectivement. Soit $(u\mapsto c_u)$ un $1$-cocycle repr\'esentant un \'el\'ement $c$ de $\rH^1(U_m,V)$. Alors, par l'isomorphisme (\ref{iso}), $c$ a pour l'image $c_{(p^m,0)}$ dans $V/((p^m,0)-1)$.
D'apr\`es le lemme ci-dessus,  l'action $\circ$ de $\Gamma_m$ sur $V/(u_m-1)$ induite par celle sur  $\rH^1(U_m, V)$ est donn\'ee par la formule:
$ x\circ (0,v)=e^{-v}x*(0,v).$ Donc on conclut par les isomorphismes suivants:
\[\rH^1(\Gamma_m,\rH^1(U_m,V))\cong (V/(u_m-1))/(\gamma_m-1)\cong V/(u_m-1,e^{-p^m}\gamma_m-1).\]
\end{proof}

\subsection{Cohomologie des repr\'esentations analytiques du groupe $\mathrm{P}_m$}\label{tech}
\begin{defn} Si $n\geq 1$, on note $\mathrm{u}=(u_1,\cdots, u_n)\in \C_p^n$.
On note $D(\mathrm{u}, m)$ la boule ferm\'ee
\[\{\mathrm{x}=(x_1,\cdots,x_n)\in\C_p^n, v_p(\mathrm{x}-\mathrm{u})=\inf_{1\leq i\leq n}v_p(x_i-u_i)\geq m  \}.\]
 Une fonction $f$ sur $D(\mathrm{u},m)$ \`a valeurs dans $\C_p$ est $\Q_p$-analytique s'il existe $\{a_{\mathrm{i}}(f,\mathrm{u})\in\Q_p\}_{(\mathrm{i}\in\N^n)}$ tels que $\lim\limits_{i_1+\cdots+i_n\ra+\infty}v_p(a_{\mathrm{i}}(f,\mathrm{u}))+(\sum_{j=1}^ni_j)m=+\infty$ et $f(\mathrm{x})=\sum_{\mathrm{i}\in \N^n}a_{\mathrm{i}}(f,\mathrm{u})(\mathrm{x}-\mathrm{u})^\mathrm{i}$ quel que soit $\mathrm{x}\in D(\mathrm{u},m)$.
\end{defn}

On note $\cC^{\an}(D(0,m),\Q_p)$ l'anneau des fonctions $\Q_p$-analytiques sur $D(\mathrm{u}=0,m)$. On d\'efinit une valuation $v_0$ sur $\cC^{\an}(D(0,m),\Q_p)$ \`a valeurs dans $\Z$:
\[v_0: \cC^{\an}(D(0,m),\Q_p)\ra \Z; v_0(\sum_{\mathrm{i}\in \N^n}a_{\mathrm{i}}(f)\mathrm{x}^\mathrm{i})=\inf\limits_{\mathrm{i}\in\N^n }\{\sum_{j=1}^{n}i_j|a_{\mathrm{i}}(f)\neq 0 \}.\]
Cette valuation induit une filtration sur $\cC^{\an}(D(0,m),\Q_p)$: pour tous $k\in\Z$,
\[\Fil^k\cC^{\an}(D(0,m),\Q_p)=\{f\in \cC^{\an}(D(0,m),\Q_p): v_0(f)\geq k \} .\]

Si $m\geq v_p(2p)$ est un entier, on pose $\mathrm{P}_m=\{\bigl(\begin{smallmatrix}1&b\\0&d
\end{smallmatrix}\bigr)|b\in p^m\Z_p, d\in1+p^m\Z_p\}$. C'est un groupe analytique $p$-adique compact. Si $u,v\in p^m\Z_p$, on note
$(u,v)$ l'\' el\'ement
$\bigl(\begin{smallmatrix}1&u\\0&e^v\end{smallmatrix}\bigr)$ de
$\rP_m$. La loi de groupe s'\'ecrit alors sous la forme:
$(u_1,v_1)(u_2,v_2)=(e^{v_2}u_1+u_2,v_1+v_2).$

\begin{defn}\label{defn1}
\begin{itemize}
\item[(1)]Une fonction $f$ sur $\rP_m$ \`a valeurs dans $\Q_p$ est $\Q_p$-analytique s'il existe une fonction $\Q_p$-analytique $\tilde{f}$ sur $D(0,m)$, telle que, $\forall u,v\in p^m\Z_p$, $f((\begin{smallmatrix}1&u\\0&e^v
\end{smallmatrix}))=\tilde{f}(u,v).$
\item[(2)]Une repr\'esentation analytique $V$ de $\rP_m$ est un $\Q_p$ espace vectoriel de dimension finie muni d'une action continue de $\rP_m$, et les coordonn\'ees de la matrice de $(u,v)\in \rP_m$ dans une base de $V$ soient $\Q_p$-analytiques.
\end{itemize}
\end{defn}

Soit $V$ une repr\'esentation analytique de $\rP_m$. Comme $V$ est une repr\'esentation analytique, on dispose des op\'erateurs $\partial_i:
V\ra V$, pour $i=1,2$,  d\'efinis par:
\begin{equation}\label{diff}
x*(u,v)=x+u\partial_1 x+v\partial_2 x+O((u,v)^2),
\end{equation}
o\`u $O((u,v)^2)$ est une fonction analytique sur $\rP_m$ de valuation $\geq 2$.
\begin{lemma}Ces op\'erateurs ont des propri\'et\'es de d\'erivations: si
$x_1\in V_1, x_2\in V_2$, o\`u $V_1,V_2$ sont des repr\'esentations
analytiques de $\rP_m$, et si $i=1,2$, on a
\begin{equation*}
\partial_i(x_1\otimes x_2)=(\partial_i
x_1)\otimes x_2+x_1\otimes\partial_i x_2.
\end{equation*}
\end{lemma}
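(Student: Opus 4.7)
The plan is to unwind the definitions: by construction, the diagonal action of $\rP_m$ on the tensor product $V_1\otimes V_2$ is given by $(x_1\otimes x_2)*(u,v)=(x_1*(u,v))\otimes(x_2*(u,v))$, and this action is again analytic because the matrix coefficients of the tensor product representation are polynomials in the matrix coefficients of $V_1$ and $V_2$. In particular, the operators $\partial_1,\partial_2$ are well defined on $V_1\otimes V_2$ via formula~(\ref{diff}).

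The key step is to substitute the first-order expansions of both factors into the product and compare with (\ref{diff}) applied to $V_1\otimes V_2$. By definition of the $\partial_i$ on each $V_j$,
\[
x_j*(u,v)=x_j+u\,\partial_1 x_j+v\,\partial_2 x_j+O((u,v)^2),\qquad j=1,2,
\]
where the remainder is a $V_j$-valued analytic function on $\rP_m$ of valuation $\geq 2$ in the sense of the filtration $\Fil^\bullet\cC^{\an}(D(0,m),\Q_p)$. Taking the tensor product and distributing, the only terms surviving modulo $\Fil^2$ are
\[
x_1\otimes x_2+u\,\bigl((\partial_1 x_1)\otimes x_2+x_1\otimes(\partial_1 x_2)\bigr)+v\,\bigl((\partial_2 x_1)\otimes x_2+x_1\otimes(\partial_2 x_2)\bigr),
\]
since the cross terms $u^2,uv,v^2$ and the products with the remainders all lie in $\Fil^2\bigl(\cC^{\an}(D(0,m),\Q_p)\bigr)\otimes(V_1\otimes V_2)$. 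Comparing coefficients of $u$ and $v$ with the expansion of $(x_1\otimes x_2)*(u,v)$ given by (\ref{diff}) on $V_1\otimes V_2$ yields the asserted Leibniz identity.

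There is no serious obstacle: the only point requiring a small justification is that the filtration $\Fil^\bullet$ is compatible with multiplication, i.e.\ $\Fil^i\cdot\Fil^j\subset\Fil^{i+j}$, which is immediate from the definition of $v_0$ on $\cC^{\an}(D(0,m),\Q_p)$ as the order of vanishing at the origin. This guarantees that products of two $O((u,v))$-quantities and products of an $O((u,v))$-quantity with an $O((u,v)^2)$-remainder all lie in $O((u,v)^2)$, so that reading off the linear part is unambiguous and gives the Leibniz rule for both $i=1$ and $i=2$ simultaneously.
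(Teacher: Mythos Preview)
Your proof is correct and follows essentially the same approach as the paper: both expand $(x_1\otimes x_2)*(u,v)$ once via the definition~(\ref{diff}) on $V_1\otimes V_2$ and once as the product of the expansions on $V_1$ and $V_2$, then compare the linear terms in $u$ and $v$. Your explicit remark that $\Fil^i\cdot\Fil^j\subset\Fil^{i+j}$ justifies the handling of the remainder terms, which the paper leaves implicit.
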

\begin{proof}[D\'emonstration]
Soit $(u,v)\in\rP_m$ et soient $x_1\in V_1$ et $x_2\in V_2$. De la d\'efinition de $\partial_i$, on a
\[(x_1\otimes x_2)*(u,v)=x_1\otimes x_2+ u \partial_1(x_1\otimes x_2)+v\partial_2(x_1\otimes x_2)+O((u,v)^2).\]
D'autre part, on a
\begin{equation*}
\begin{split}
&(x_1\otimes x_2)*(u,v)\\
=&x_1\otimes x_2+u((\partial_1x_1)\otimes x_2+x_1\otimes(\partial_1x_2))+v((\partial_2x_1)\otimes x_2+x_1\otimes(\partial_2x_2))+O((u,v)^2).
\end{split}
\end{equation*}
On d\'eduit les propri\'et\'es de d\'erivations de $\partial_i$ pour $i=1,2$ en comparant les deux formules ci-dessus.
\end{proof}

Soit $\gamma\in\rP_m$; l'image de la fonction analytique $\alpha_{\gamma}:\Z_p\ra\rP_m, \alpha_{\gamma}(x)=\gamma^x$ est un sous-groupe \`a un param\`etre. Alors on peut d\'efinir une d\'erivation $\partial_{\gamma}:V\ra V$   par rapport \`a $\alpha_{\gamma}$ par la formule: $\partial_{\gamma}(x)=\lim_{n\ra\infty}\frac{x*\gamma^{p^n}-x}{p^{n}} .$

\begin{lemma}On a les relations suivantes: $\partial_1=p^{-m}\partial_{u_m}, \partial_2-1=p^{-m}\partial_{e^{-p^m}\gamma_m}.$
\end{lemma}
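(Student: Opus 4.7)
La preuve se r\'eduit \`a deux calculs directs de limites, en utilisant les d\'eveloppements de Taylor fournis par la d\'efinition des op\'erateurs $\partial_1, \partial_2$ dans (\ref{diff}) ainsi que la loi de groupe $(u_1,v_1)(u_2,v_2)=(e^{v_2}u_1+u_2,v_1+v_2)$.

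Pour la premi\`ere relation, j'observerais d'abord que, dans $U_m$, on a $u_m^a=(ap^m,0)$ pour tout $a\in\Z_p$ (cela r\'esulte imm\'ediatement de la formule $(u_1,0)(u_2,0)=(u_1+u_2,0)$). En particulier $u_m^{p^n}=(p^{m+n},0)$. La d\'efinition de $\partial_1$ donne alors
\[x*u_m^{p^n}=x+p^{m+n}\partial_1 x+O(p^{2(m+n)}),\]
d'o\`u, en divisant par $p^n$ et passant \`a la limite, $\partial_{u_m}(x)=p^m\partial_1 x$.

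Pour la deuxi\`eme relation, le point subtil est que la notation $\partial_{e^{-p^m}\gamma_m}$ d\'esigne la d\'eriv\'ee par rapport au sous-groupe \`a un param\`etre $x\mapsto e^{-xp^m}\gamma_m^x$ (o\`u le scalaire $e^{-xp^m}$ agit par multiplication sur $V$); c'est pr\'ecis\'ement la d\'eriv\'ee associ\'ee \`a l'action tordue $\circ$ de $\Gamma_m$ rencontr\'ee dans le lemme pr\'ec\'edent. De mani\`ere analogue au cas pr\'ec\'edent, $\gamma_m^{p^n}=(0,p^{m+n})$, et la d\'efinition de $\partial_2$ donne $x*\gamma_m^{p^n}=x+p^{m+n}\partial_2 x+O(p^{2(m+n)})$. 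Puis, en d\'eveloppant $e^{-p^{m+n}}=1-p^{m+n}+O(p^{2(m+n)})$, on obtient
\[e^{-p^{m+n}}(x*\gamma_m^{p^n})-x=p^{m+n}(\partial_2 x-x)+O(p^{2(m+n)}).\]
En divisant par $p^n$ et en passant \`a la limite, on trouve $\partial_{e^{-p^m}\gamma_m}(x)=p^m(\partial_2-1)x$, ce qui donne la relation voulue.

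Aucune \'etape n'est v\'eritablement d\'elicate : il s'agit de Taylor-d\'evelopper \`a l'ordre $1$ l'action analytique de $\rP_m$ autour de l'\'el\'ement neutre et d'\'eliminer les termes de valuation $\geq 2(m+n)$ qui deviennent n\'egligeables apr\`es division par $p^n$. La seule difficult\'e potentielle est de clarifier l'interpr\'etation de $\partial_{e^{-p^m}\gamma_m}$, qui est impos\'ee par la coh\'erence avec l'action tordue $x\circ(0,v)=e^{-v}x*(0,v)$ introduite juste avant l'\'enonc\'e du lemme.
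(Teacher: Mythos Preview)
Your proof is correct and follows essentially the same approach as the paper: both use the identities $u_m^{p^n}=(p^{m+n},0)$ and $\gamma_m^{p^n}=(0,p^{m+n})$, Taylor-expand the action via (\ref{diff}), and pass to the limit. The paper is slightly terser for the second relation, first recording $\partial_2=p^{-m}\partial_{\gamma_m}$ and then deducing the twisted version, but your explicit unpacking of $\partial_{e^{-p^m}\gamma_m}$ as the derivative for the twisted action $x\mapsto e^{-xp^m}(x*\gamma_m^x)$ is exactly the intended interpretation and makes the step clearer.
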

\begin{proof}[D\'emonstration]Soit $x\in V$.
Par d\'efinition, on a
\begin{equation*}
\begin{split}
\partial_{u_m}x=\lim_{n\ra\infty}\frac{x*(p^m,0)^{p^n}-x}{p^n}
=\lim_{n\ra\infty}\frac{x*(p^{m+n},0)-x}{p^n}=p^{m}\partial_1x.
\end{split}
\end{equation*}
Alors $\partial_1=p^{-m}\partial_{u_m}$. De la m\^eme mani\`ere, on a
$\partial_2=p^{-m}\partial_{\gamma_m}$ et $\partial_2-1=p^{-m}\partial_{e^{-p^m}\gamma_m}$ .

\end{proof}

\begin{remark}\label{coefficient}Si $V$ est une repr\'esentation analytique de $\rP_m$ munie d'un $\Z_p$-r\'eseau\footnote{ Il existe tel r\'eseau si $m$ assez grand. } $T$ tel que $T$ est stable sous l'action de $\rP_m$, et si $x\in T$, alors du lemme pr\'ec\'edent, on d\'eduit
\[x*(u,v)=\sum_{n=0}^{+\infty}u^iv^j\frac{\partial_1^i\partial_2^jx}{i!j!}\]
avec $\frac{\partial_1^i\partial_2^jx}{i!j!}\in p^{-s(i+j)}T\subset p^{-m(i+j)}T$ pour certain $s<m$.
\end{remark}

Si $x\in V$ est dans le noyau de l'op\'erateur $u_m-1$ ( resp. $e^{-p^m}\gamma_m-1$ ) sur $V$, alors $x$ est dans le noyau de l'op\'erateur $\partial_1=p^{-m}\lim\limits_{n\ra+\infty}\frac{u_m^{p^n}-1}{p^n}$ ( resp. $\partial_2-1$ ) sur $V$. Ceci nous fournit une application surjective naturelle :
\[\phi:V/(\partial_1,\partial_2-1)\ra V/((p^m,0)-1,(0,p^m)-e^{p^m}).\]
\begin{lemma} $\phi$ est un isomorphisme.
\end{lemma}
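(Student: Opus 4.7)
The plan is to show the two ideals defining the quotients are literally equal on $V$, from which the surjective map $\phi$ is forced to be an isomorphism. Concretely, I will prove
\[
(u_m-1)V \;=\; \partial_1 V
\qquad\text{and}\qquad
(e^{-p^m}\gamma_m-1)V \;=\; (\partial_2-1)V,
\]
which immediately implies that $\phi$ is bijective since the two pairs of operators cut out the same subspace of $V$.

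The crucial input is Remark \ref{coefficient}. Since $V$ is analytic and finite-dimensional, one can pick a $\Z_p$-lattice $T\subset V$ stable under $\rP_m$ and such that $\partial_1^i\partial_2^j x/(i!j!)\in p^{-s(i+j)}T$ for some $s<m$ and all $x\in T$. Expanding the analytic action gives
\[
u_m x \;=\; x*(p^m,0) \;=\; \sum_{i\ge 0}\frac{(p^m\partial_1)^i}{i!}x,
\qquad
\gamma_m x \;=\; x*(0,p^m) \;=\; \sum_{j\ge 0}\frac{(p^m\partial_2)^j}{j!}x,
\]
i.e.~$u_m=\exp(p^m\partial_1)$ and $\gamma_m=\exp(p^m\partial_2)$ as operators on $V$. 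In particular $e^{-p^m}\gamma_m=\exp(p^m(\partial_2-1))$. Factoring out the linear term, I write
\[
u_m-1 \;=\; p^m\partial_1\cdot U_1,
\qquad
e^{-p^m}\gamma_m-1 \;=\; p^m(\partial_2-1)\cdot U_2,
\]
with
\[
U_1=\sum_{i\ge 0}\frac{(p^m\partial_1)^i}{(i+1)!},
\qquad
U_2=\sum_{j\ge 0}\frac{(p^m(\partial_2-1))^j}{(j+1)!}.
\]

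The main (and really only) technical step is to check that $U_1$ and $U_2$ are invertible on $V$. The estimate $\partial_1^i x/i!\in p^{-si}T$ gives $(p^m\partial_1)^i/(i+1)!\in p^{(m-s)i-v_p((i+1)!)}\operatorname{End}(T)$; since $m-s>0$ and $v_p((i+1)!)=O(i)$ is dominated by a strictly smaller linear factor once $m$ is large enough (which is exactly the hypothesis $m\ge v_p(2p)$ after possibly enlarging $T$), the tail of $U_1$ is topologically nilpotent, hence $U_1\in 1+p\operatorname{End}(T)$ is invertible on $T$, so on $V$. The identical argument applies to $U_2$, noting that $\partial_2-1$ satisfies analogous estimates on $T$ because $\partial_2$ does and the identity is bounded.

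Granting invertibility of $U_1$ and $U_2$, one has $(u_m-1)V=p^m\partial_1 V$ and $(e^{-p^m}\gamma_m-1)V=p^m(\partial_2-1)V$. Since $V$ is a $\Q_p$-vector space, multiplication by $p^m$ is an isomorphism, so $p^m\partial_1V=\partial_1V$ and $p^m(\partial_2-1)V=(\partial_2-1)V$. Thus the two defining subspaces coincide, and the canonical surjection $\phi:V/(\partial_1,\partial_2-1)\twoheadrightarrow V/((p^m,0)-1,(0,p^m)-e^{p^m})$ is an isomorphism. The only delicate point is the convergence/invertibility of $U_1$ and $U_2$, which is where the choice $m\ge v_p(2p)$ and the stability of a lattice enter essentially.
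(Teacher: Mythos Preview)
Your approach is correct and, in fact, more direct than the paper's. The paper argues via \emph{kernels}: it shows $\ker\partial_1=\ker(u_m-1)$ (and similarly for $\partial_2-1$) by putting $u_m|_{\ker\partial_1}$ in Jordan form, observing its eigenvalues must be roots of unity $\zeta$ with $\zeta^x$ analytic, hence $\zeta=1$, and then using that $\log(I+A)=0$ with $A$ nilpotent forces $A=0$. You instead show equality of \emph{images} directly by factoring $u_m-1=p^m\partial_1\cdot U_1$ and $e^{-p^m}\gamma_m-1=p^m(\partial_2-1)\cdot U_2$ with $U_1,U_2$ invertible. Since the quotient is by images, your route is the natural one; the paper's reduction to kernels leaves implicit the passage from kernel equality to image equality.

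One small slip and one soft spot: in your estimate you should have $v_p(i+1)$ rather than $v_p((i+1)!)$, since $\frac{(p^m\partial_1)^i}{(i+1)!}=\frac{1}{i+1}\cdot\frac{(p^m\partial_1)^i}{i!}$ and Remark~\ref{coefficient} already controls $\partial_1^i/i!$. More seriously, the claim $U_1\in 1+p\,\mathrm{End}(T)$ does not quite follow from $s<m$ alone (for $i=1$ you need $(m-s)-v_p(2)\geq 1$). A cleaner way to get invertibility of $U_1$ is by eigenvalues: since $\exp(p^m\partial_1)$ converges on $V$, every eigenvalue $\lambda$ of $p^m\partial_1$ (over $\overline{\Q}_p$) satisfies $v_p(\lambda)>\frac{1}{p-1}$, hence $v_p(e^\lambda-1)=v_p(\lambda)$ and the corresponding eigenvalue $(e^\lambda-1)/\lambda$ of $U_1$ is a unit (and equals $1$ when $\lambda=0$). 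So $U_1$ is invertible, and likewise $U_2$. With this fix your argument is complete.
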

\begin{proof}[D\'emonstration]On est ramen\'e \`a montrer que $\ker \partial_1=\ker(u_m-1)$ ( resp. $\ker(\partial_2-1)=\ker(e^{-p^m}\gamma_m-1)$ ). On a montr\'e l'inclusion $\ker(u_m-1)\subset \ker\partial_1$ ( resp. $\ker(e^{-p^m}\gamma_m-1)\subset\ker(\partial_2-1)$ ). Alors il reste \`a montrer l'inclusion inverse.

Soit $x\in\ker \partial_1$.  Comme on a $\partial_1 u_m=u_m\partial_1$, l'espace $\ker \partial_1$ est stable sous l'action de $u_m$. De plus, il existe une base $S$ de $\ker \partial_1$ telle que la matrice de $u_m$ peut se mettre sous la forme de Jordan. Soit $\lambda$ une valeurs propre de $u_m|_{\ker\partial_1}$.
Comme $V$ est une repr\'esentation analytique de $\rP_m$,  $\lambda^x$ est une fonction analytique sur $\Z_p$. Comme la matrice de $\partial_1$ dans cette base est $0$, on a $\log \lambda=0$. Ceci implique que $\lambda$ est une racine de l'unit\'e $\z$. De plus, $\z^{x}$ est une fonction analytique en variable $x$ sur $\Z_p$  si et seulement si  $\z=1$. Donc la matrice de $u_m$ dans la base $S$ est unipotente et on peut supposer qu'elle est de la forme $I+A$ avec $A$ une matrice nilpotente.

Comme $\partial_1=0$ sur $\ker \partial_1$, on a $\log (I+A)=0$. D'autre part, on a $\log (I+A)=A (\sum_{n=0}^{+\infty}\frac{(-A)^n}{(n+1)!})$, o\`u $\sum_{n=0}^{+\infty}\frac{(-A)^n}{(n+1)!}$ est une somme finie. Comme $\sum_{n=0}^{+\infty}\frac{(-A)^n}{(n+1)!}$ est inversible, on obtient $A=0$. Ceci \'equivaut \`a la condition $\ker\partial_1\subset \ker (u_m-1)$.
De la m\^eme mani\`ere, on obtient $\ker(\partial_2-1)\subset\ker(e^{-p^m}\gamma_m-1).$
\end{proof}
Par cons\'equent, on obtient le corollaire suivant:
\begin{coro}\label{iso1}
 On a un isomorphisme naturel
\begin{equation}
\rH^2(\rP_m,V)\cong V/(\partial_1,\partial_2-1).
\end{equation}
\end{coro}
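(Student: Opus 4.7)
The plan is to deduce the corollary as an immediate composition of the two isomorphisms established in the preceding two lemmas, so no substantive new computation is required.

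First, the Hochschild--Serre spectral sequence applied to the short exact sequence $1 \to U_m \to \rP_m \to \Gamma_m \to 1$, together with the fact that both $U_m$ and $\Gamma_m$ are procyclic (hence of cohomological dimension one), has already produced the identification
\[ \rH^2(\rP_m, V) \;\cong\; V/((p^m,0)-1,\, (0,p^m)-e^{p^m}). \]
Second, the tautological surjection
\[ \phi : V/(\partial_1,\, \partial_2 - 1) \;\longrightarrow\; V/((p^m,0)-1,\, (0,p^m)-e^{p^m}) \]
has been shown to be an isomorphism; the real content there is the pair of equalities $\ker \partial_1 = \ker(u_m - 1)$ and $\ker(\partial_2 - 1) = \ker(e^{-p^m}\gamma_m - 1)$, obtained from the Jordan-form argument that exploits the analyticity of the $\rP_m$-action on the finite-dimensional space $V$ (eigenvalues of the unipotent factor must be roots of unity that admit $\Q_p$-analytic powers, hence are trivial, and $\log(I+A)=0$ with $A$ nilpotent forces $A=0$).

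Composing these two isomorphisms yields the statement of the corollary. Strictly speaking there is no obstacle here: the corollary is a cosmetic rewriting of the second lemma in terms of the infinitesimal operators $\partial_1, \partial_2 - 1$ rather than the group-theoretic operators $u_m - 1, e^{-p^m}\gamma_m - 1$. The point of this reformulation is that it is the form actually used in practice: a $2$-cocycle on $\rP_m$ arising from an analytic representation is most naturally described by its Taylor expansion, and the infinitesimal description makes the map $\delta^{(2)}$ appearing in the introduction transparent, since it reads off an analytic $2$-cocycle $\sum c_{i,j,k,l} u^i v^j x^k y^l$ via $c_{1,0,0,1} - c_{0,1,1,0}$ modulo the image of $\partial_1$ and $\partial_2 - 1$.
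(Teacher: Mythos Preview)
Your proposal is correct and matches the paper's approach exactly: the corollary is introduced in the paper with ``Par cons\'equent'' and is nothing more than the composition of the two preceding lemmas (the Hochschild--Serre computation giving $\rH^2(\rP_m,V)\cong V/((p^m,0)-1,(0,p^m)-e^{p^m})$, and the isomorphism $\phi$). Your added remarks on the r\^ole of $\delta^{(2)}$ are accurate context but not part of the proof proper.
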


Rappelons que
le groupe de cohomologie $\rH^2(\rP_m,V)$ est la cohomologie continue d'un groupe $p$-adique  \`a valeurs dans la repr\'esentation analytique $V$. On pose $\cC^0(\rP_m,V)=V$ et note $\cC^n(\rP_m,V)$ le groupe des homomorphismes continus de $\rP_m^n$ \`a valeurs dans $V$. On peut le calculer par le complexe nonhomog\`ene des cochaines continues
\[
\xymatrix{
\cC^{\bullet}: 0\ar[r]& \cC^0(\rP_m,V)\ar[r]^-{d_0}& \cdots\ar[r]^-{d_{n-2}}&\cC^{n-1}(\rP_m,V)\ar[r]^-{d_{n-1}} & \cC^n(\rP_m,V)\ar[r] & \cdots,}
\]
avec les diff\'erentielles $d_n$ donn\'ees par les formules
\begin{equation*}
\begin{split}
(d_nf)(\gamma_1,\cdots,\gamma_{n+1})=&f(\gamma_2,\cdots,\gamma_n)*\gamma_1+\sum_{i=1}^{n}(-1)^if(\gamma_1,\cdots,\gamma_{i-1}, \gamma_i\gamma_{i+1},\cdots,\gamma_{n+1})\\
&+(-1)^{n+1}f(\gamma_1,\cdots,\gamma_n).
\end{split}
\end{equation*}
Comme $V$ est une repr\'esentation analytique, le complexe $\cC^{\bullet}$ des cochaines continues contient un sous-complexe des cochaines analytiques avec les m\^emes diff\'erentielles:
\[\xymatrix{\cC^{\an,\bullet}: 0\ar[r] & \cC^{\an,0}(\rP_m,V)\ar[r]^-{d_0}&\cdots\ar[r]^-{d_{n-2}}&\cC^{\an,n-1}(\rP_m,V)\ar[r]^-{d_{n-1}}& \cC^{\an,n}(\rP_m,V)\ar[r]&\cdots,} \]
o\`u $\cC^{\an,0}(\rP_m,V)=V$ et $\cC^{\an,n}(\rP_m,V)$ est le sous-module des fonctions analytiques sur $\rP_m^n$ \`a valeurs dans $V$ de $\cC^n(\rP_m,V)$. En particulier, tout \'el\'ement de $\cC^{\an,2}(\rP_m,V)$ peut s'\'ecrire sous la forme: $c_{(u,v),(x,y)}=\sum\limits_{i,j,k,l\geq 0}c_{i,j,k,l}u^iv^jx^ky^l$ avec $c_{i,j,l,k}\in V$.

La filtration sur $\cC^{\an,n}(D(0,m),\Q_p)$ d\'efinie par la valuation $v_0$ induit celle sur le complexe $\cC^{\an,\bullet}$. Quel que soit $k\geq 1$, on a $\Fil^0\cC^{\an,\bullet}/\Fil^k\cC^{\an,\bullet}\cong \cP^{\leq k,\bullet}$, o\`u $\cP^{\leq k,\bullet}$ est le complexe
\[\xymatrix{\cP^{\leq k,\bullet}: 0\ar[r]& \cP_{\leq k}^{0}(\rP_m,V)\ar[r]^-{d_0}& \cdots\ar[r]^-{d_{n-2}} & \cP_{\leq k}^{n-1}(\rP_m,V)\ar[r]^-{d_{n-1}} &\cP_{\leq k}^{n}(\rP_m,V)\ar[r]&\cdots,} \]
o\`u $\cP_{\leq k}^{i}$ l'ensemble des polyn\^omes de degr\'e $\leq k$ sur $\rP_m^i$ \`a coefficients dans $V$ et les differentielles sont ceux du complexe $\cC^{\an,\bullet}$ modulo les termes de valuation $k+1$.

Consid\'erons le compl\'et\'e du complexe $\cC^{\an,\bullet}$ pour sa filtration au-dessus, on obtient un complexe suivant:
\[\xymatrix{\cS^{\bullet}: 0\ar[r]& \cS^{0}(\rP_m,V)\ar[r]^-{d_0}&\cdots\ar[r]^-{d_{n-2}} &\cS^{n-1}(\rP_m,V)\ar[r]^-{d_{n-1}} &\cS^{n}(\rP_m,V)\ar[r]&\cdots,}\]
o\`u $\cS^i(\rP_m,V)$ l'ensemble des s\'eries formelles sur $\rP^i_m$ \`a coefficients dans $V$ et les differentielles sont ceux du complexe $\cC^{\an,\bullet}$. Alors $\cC^{\an,\bullet}$ est un sous-complexe du complexe $ \cS^{\bullet}$.

On note $\rH^{\an,i}(\rP_m,V)$ le groupe de cohomologie calcul\'e par le complexe $\cC^{\an,\bullet}$. La proposition suivante montre que le sous-complexe $\cC^{\an,\bullet}$ calcule le groupe de cohomologie $\rH^2(\rP_m,V)$:
\begin{prop}\label{analytic}Soit $V$ une repr\'esentation analytique de $\rP_m$. Alors
\begin{itemize}
\item[$(i)$] tout \'el\'ement de $ \rH^2(\rP_m,V) $ est repr\'esentable par un  $2$-cocycle analytique;
\item[$(ii)$] l'image d'un $2$-cocycle analytique,
\[((u,v),(x,y))\ra c_{(u,v),(x,y)}=\sum_{i+j+k+l\geq 2}c_{i,j,k,l}u^iv^jx^ky^l,\]
sous l'isomorphisme (\ref{iso1}) est aussi celle de
$\delta^{(2)}(c_{(u,v),(x,y)})=c_{1,0,0,1}-c_{0,1,1,0}$ dans
$V/(\partial_1,\partial_2-1)$.
\end{itemize}
\end{prop}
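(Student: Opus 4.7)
The plan is to separate the two assertions: (i) is established via a filtration-plus-approximation argument, while (ii) is a direct computation in the analytic complex.

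For (i), I would exploit the filtration $\Fil^k \cC^{\an,\bullet}$ by order of vanishing at the identity, whose graded pieces $\cP^{\leq k,\bullet}$ are polynomial complexes with explicit differentials coming from the group law $(u,v)(x,y) = (e^y u + x, v+y)$ and the action expansion of Remark \ref{coefficient}. An inductive computation through the short exact sequences $0 \to \Fil^{k+1} \to \Fil^k \to \gr^k \to 0$, followed by passing to the inverse limit, identifies $\rH^2(\cS^\bullet)$ with $V/(\partial_1, \partial_2 - 1)$, matching Corollary \ref{iso1}. Next, the analytic bounds on $V$ from Remark \ref{coefficient} show that any formal $2$-cocycle converges as an analytic one, so $\rH^{\an, 2}(\rP_m, V) \to \rH^2(\cS^\bullet)$ is surjective. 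Finally, to lift a continuous $2$-cocycle to an analytic representative, one uses the Mahler expansion of continuous $V$-valued functions on the compact $p$-adic Lie group $\rP_m$: the ``polynomial part'' of such an expansion is an analytic cochain, and the remaining tail can be absorbed in a coboundary thanks to vanishing of the corresponding higher-filtration cohomology.

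For (ii), I would work directly in $\cC^{\an,\bullet}$. Writing an analytic $1$-cochain as $\beta = \sum_{i,j\geq 0} \beta_{i,j}\, u^i v^j$, one expands
\[ (d_1 \beta)((u,v),(x,y)) = \beta(x,y) * (u,v) - \beta((u,v)(x,y)) + \beta(u,v) \]
using the group law and the first-order expansion $w * (u,v) = w + u\, \partial_1 w + v\, \partial_2 w + O(2)$. A short calculation shows that the coefficient of $uy$ minus that of $vx$ in $d_1 \beta$ lies in $\partial_1 V + (\partial_2 - 1) V$; therefore $c \mapsto c_{1,0,0,1} - c_{0,1,1,0}$ descends to a well-defined map $\rH^{\an,2}(\rP_m, V) \to V/(\partial_1, \partial_2 - 1)$. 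To see that it is the inverse of the isomorphism from Corollary \ref{iso1}, I would evaluate on a canonical analytic $2$-cocycle representing a given class $\bar{v} \in V/(\partial_1, \partial_2 - 1)$ --- for instance the one obtained from the Hochschild--Serre construction using the section $\Gamma_m \hookrightarrow \rP_m$, $(0,v) \mapsto (0,v)$, whose low-degree coefficients are easily read off.

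The main obstacle is the density step in (i): showing that every continuous $2$-cocycle is cohomologous to an analytic one. The filtration argument handles the comparison between formal and analytic cohomology cleanly, but the passage from continuous cochains requires careful matching of the Mahler expansion of a cochain with the analytic estimates on $V$ so that the coboundary correction converges in the correct topology. Once surjectivity in (i) is secured, the formula in (ii) reduces to a bookkeeping calculation on the bilinear part of $d_1 \beta$.
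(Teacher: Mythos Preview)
Your approach to (ii) is reasonable and would work, but your plan for (i) contains a genuine gap, precisely at the step you flag as the main obstacle.

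The Mahler-expansion idea --- take a continuous $2$-cocycle, keep its ``polynomial part'', absorb the tail in a coboundary --- is not how the paper proceeds, and it is not clear it can be made to work: the polynomial truncation of a continuous cocycle has no reason to be a cocycle, and ``vanishing of higher-filtration cohomology'' is a statement about the formal or analytic complex, not about the continuous one, so it does not directly let you kill the tail. Relatedly, your claim that ``any formal $2$-cocycle converges as an analytic one'' is not correct: what the paper controls (Lemme~\ref{controle}) is the formal \emph{coboundary} associated to an already-analytic cocycle, and even then only on the smaller subgroup $\rP_{m+v_p(2p)}$.

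The paper bypasses the density problem entirely. Corollaire~\ref{iso1} already gives $\rH^2(\rP_m,V)\cong V/(\partial_1,\partial_2-1)$ via Hochschild--Serre, independently of any analytic considerations. One then proves (ii) first: starting from an analytic $2$-cocycle $c$, Lemmes~\ref{coho} and~\ref{coho1} produce an explicit formal primitive $\sum_{n\geq 2}Q_n$ with $c-(c_{1,0,0,1}-c_{0,1,1,0})uy=-\sum dQ_n$; the estimates of Lemme~\ref{controle} show $\sum Q_n$ converges analytically on $\rP_{m+v_p(2p)}$, so the restriction of $c-(c_{1,0,0,1}-c_{0,1,1,0})uy$ to $\rP_{m+v_p(2p)}$ is a coboundary. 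Since $\rP_m/\rP_{m+v_p(2p)}$ is finite and $V$ is a $\Q_p$-vector space, $\rH^2(\rP_m/\rP_{m+v_p(2p)},V^{\rP_{m+v_p(2p)}})=0$, and inflation--restriction forces $c=(c_{1,0,0,1}-c_{0,1,1,0})uy$ already in $\rH^2(\rP_m,V)$. Assertion (i) then falls out for free: for any $v\in V$ the cochain $(u,v,x,y)\mapsto v\cdot uy$ is an analytic $2$-cocycle, and by (ii) these hit every class in $V/(\partial_1,\partial_2-1)\cong\rH^2(\rP_m,V)$. No comparison between continuous and analytic cochains is ever needed.
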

Par l'isomorphisme (\ref{iso1}), il suffit de montrer (ii) et montrer que l'application
\[\delta^{(2)}: \{ 2\text{-cocycle analytique}\}\ra V\] induit une surjection de
$\rH^{\an,2}(\rP_m,V)\ra V/(\partial_1,\partial_2-1).$

La d\'emonstration de la proposition ($\ref{analytic}$) se s\'epare en trois \'etapes:\\
$(1)$ On calcule la cohomologie  $\rH^2(\cP^{\leq k,\bullet})$. On est ramen\'e \`a \'etudier la cohomologie des complexes quotient $\Fil^i\cC^{\an,\bullet}/\Fil^{i+1}\cC^{\an,\bullet}$:
    \[\xymatrix{\cP^{i,\bullet}:0\ar[r] & \cP_{i}^{0}(\rP_m,V)\ar[r]^-{d_0}&\cdots\ar[r]^-{d_{n-2}} & \cP_{i}^{n-1}(\rP_m,V)\ar[r]^-{d_{n-1}} &\cP_{i}^{n}(\rP_m,V)\ar[r]&\cdots, }\] o\`u $\cP_i^{n}$ est l'ensemble des polyn\^omes homog\`enes de degr\'e $i$ sur $\rP_m^n$ \`a coefficients dans $V$. C'est le sujet du lemme \ref{coho}. \\
$(2)$ On montre que, soit $c_{(u,v),(x,y)}: ((u,v),(x,y))\ra c_{(u,v),(x,y)}=\sum_{i+j+k+l> 0}c_{i,j,k,l}u^iv^jx^ky^l$ un $2$-cocycle
du complexe $\cS^{\bullet}$, alors $c_{(u,v),(x,y)}-(c_{1,0,0,1}-c_{0,1,1,0})uy$ est un $2$-cobord du complexe $\cS^{\bullet}$ ( c.f. lemme \ref{coho1}). En effet, l'\'etape $(1)$ nous permet de montrer le r\'esultat \`a la main.\\
$(3)$ On montre que, quel que soit $c_{(u,v),(x,y)}$ un $2$-cocycle analytique dans $\cC^{\an,\bullet}$
\[((u,v),(x,y))\ra c_{(u,v),(x,y)}=\sum_{i+j+k+l> 0}c_{i,j,k,l}u^iv^jx^ky^l,\]
alors $c_{(u,v),(x,y)}-(c_{1,0,0,1}-c_{0,1,1,0})uy$ est un cobord analytique. En effet, on peut contr\^oler le $2$-cobord  dans l'\'etape $(2)$, ce qui permet de montrer qu'il est un $2$-cobord analytique.

Tout d'abord, on fait une remarque sur la relation de $2$-cocycle du complexe $\cP^{n,\bullet}$.
 Soit $c_{(u,v),(x,y)}$ un $2$-cocycle du complexe $\cP^{n,\bullet}$, par d\'efinition il
v\'erifie la relation:
\begin{equation*}
c_{(x,y),(\alpha,\beta)}*(u,v)-c_{(u,v)(x,y),(\alpha,\beta)}+c_{(u,v),(x,y)(\alpha,\beta)}-c_{(u,v),(x,y)}\equiv 0 \mod \Fil^{n+1}\cC^{\an,3}.
\end{equation*}
 La loi de groupe de $\rP_m$ devient simplement: $c_{(u,v)(x,y),(\alpha,\beta)}=c_{(u+v,v+y), (\alpha,\beta)}$.
De plus, soit $(u_i,v_i)\in \rP_m$ pour $i=1,2,3$; si $f((u_2,v_2), (u_3,v_3))$ est un polyn\^ome homog\`ene de degr\'e $n$ \`a valeurs dans $V$, alors il est de la forme $\sum\limits_{i+j+k+l=n}a_{i,j,k,l}u_2^iv_2^ju_3^kv_3^l$ avec $a_{i,j,k,l}\in V$ et $(u_1,v_1)$ agit sur $f$ \`a travers l'action de $\rP_m$ sur $V$:
\begin{equation}\label{agit}(\sum_{i+j+k+l=n}a_{i,j,k,l}u_2^iv_2^ju_3^kv_3^l)*(u_1,v_1)
=\sum_{i+j+k+l=n}a_{i,j,k,l}*(u_1,v_1)u_2^iv_2^ju_3^kv_3^l.
\end{equation}
 Comme l'action de $(u_1,v_1)$ sur les coefficients $a_{i,j,k,l}$ se factorise \`a travers les op\'erateurs
$\partial_1$ et $\partial_2$ dans la formule (\ref{diff}), l'action de $(u_1,v_1)$ est triviale dans  la formule (\ref{agit}).
Donc, la relation de $2$-cocycle ainsi que celle de $2$-cobord dans $\cP^{n,\bullet}$ se simplifient
\begin{equation}\label{2coc}
\begin{split}
&c_{(x,y),(\alpha,\beta)}-c_{(u+x,v+y),(\alpha,\beta)}
+c_{(u,v),(x+\alpha,y+\beta)}-c_{(u,v),(x,y)}=0;\\
&(dQ)_{(u,v),(x,y)}= Q_{(u,v)}-Q_{(u+x,v+y)}+Q_{(x,y)},
\text{ o\`u } Q\in\cP_n^{1}(\rP_m,V).
\end{split}
\end{equation}

\begin{defn}
On dira qu'un polyn\^ome $P((u,v),(x,y))=\sum\limits_{i+j+k+l=n}c_{i,j,k,l}u^iv^jx^ky^l\in \cP_n^2$ est de valuation faible $m$ si $m=\min\{i+j,k+l: c_{i,j,k,l}\neq 0\}$.
\end{defn}

Si $n\geq 2$, soit $c_{(u,v),(x,y)}$ un $2$-cocycle du complexe $\cP^{n,\bullet}$. On note la d\'erivation en la $i$-i\`eme variable par $\rD_i$ pour $1\leq i\leq 4$.

\begin{lemma}\label{coho}
\begin{itemize}
\item[(1)]Si $n=1$, alors tout $2$-cocycle de $\cP^{1,\bullet}$ est nul.
\item[(2)]Si $n=2$,  tout $2$-cocycle  du complexe $\cP^{2,\bullet}$ peut s'\'ecrire sous la forme suivante:
\[c_{(u,v),(x,y)}=\sum_{i+j+k+l=2}c_{i,j,k,l}u^iv^jx^ky^l,\]
o\`u $c_{ijkl}$ sont dans $V$.
Alors, son image dans le groupe de cohomologie $\rH^2(\cP^{2,\bullet})$ est aussi celle de $(c_{1,0,0,1}-c_{0,1,1,0})uy$.
\item[(3)]Si $n\geq 3$, le groupe de cohomologie $\rH^2(\cP^{n,\bullet})$ est nul. En particulier, si \[c_{(u,v),(x,y)}=\sum_{i+j+k+l=n}c_{i,j,k,l}u^iv^jx^ky^l\] est un $2$-cocycle du complexe $\cP^{n,\bullet}$, alors le polyn\^ome homog\`ene de degr\'e $n$
\[Q(X,Y)=\frac{1}{n}(c_{1,0,n-1,0}X^{n}+c_{0,n-1,0,1}Y^n)+\sum\limits_{k=0}^{n-2}\frac{1}{k+1}c_{n-k-1,k,0,1}X^{n-k-1}Y^{k+1}\]
v\'erifie la relation $c=-dQ$.
\end{itemize}
\end{lemma}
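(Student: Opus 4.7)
The plan is to treat the three parts separately, using the simplified cocycle relation $(\ref{2coc})$ as a polynomial identity in the six variables $u,v,x,y,\alpha,\beta$ and deriving each statement by direct extraction of monomial coefficients.

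For part (1), I will substitute the generic linear form $c_{(u,v),(x,y)} = \alpha_1 u + \alpha_2 v + \alpha_3 x + \alpha_4 y$ with $\alpha_i \in V$ into $(\ref{2coc})$ and extract the coefficients of $u, v, \alpha, \beta$ in the resulting polynomial identity; a short calculation shows these come out to be $-\alpha_1, -\alpha_2, \alpha_3, \alpha_4$ respectively, forcing all four to vanish. Hence every 2-cocycle of $\cP^{1,\bullet}$ is zero and $\rH^2(\cP^{1,\bullet})=0$.

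For part (2), I will expand a generic degree-2 cochain with its ten unknown coefficients and impose $(\ref{2coc})$; matching monomial coefficients (specifically those of $ux$, $vy$, $uv$, $x\alpha$, $y\beta$, and $x\beta$) forces the vanishing of the six coefficients $c_{2,0,0,0}, c_{0,2,0,0}, c_{0,0,2,0}, c_{0,0,0,2}, c_{1,1,0,0}, c_{0,0,1,1}$, leaving
\[c = c_{1,0,1,0}\, ux + c_{1,0,0,1}\, uy + c_{0,1,1,0}\, vx + c_{0,1,0,1}\, vy.\]
Taking $Q = \tfrac{1}{2}\, c_{1,0,1,0}\, u^2 + c_{0,1,1,0}\, uv + \tfrac{1}{2}\, c_{0,1,0,1}\, v^2 \in \cP_2^1$ and computing $-dQ = c_{1,0,1,0}\, ux + c_{0,1,1,0}\,(uy + vx) + c_{0,1,0,1}\, vy$, I obtain the identity $c - (c_{1,0,0,1} - c_{0,1,1,0})\, uy = -dQ$, which identifies the cohomology class of $c$ with that of $(c_{1,0,0,1} - c_{0,1,1,0})\, uy$.

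For part (3), I will verify directly that the given $Q(X,Y)$ satisfies $c = -dQ$. Writing $Q = \sum_{a+b=n} q_{a,b}\, X^a Y^b$ and expanding $-dQ_{(u,v),(x,y)} = -Q(u,v) + Q(u+x, v+y) - Q(x,y)$ via the binomial theorem yields
\[-dQ_{(u,v),(x,y)} = \sum_{a+b = n}\, q_{a,b} \sum_{\substack{0 \le i \le a,\ 0 \le j \le b \\ (i,j) \ne (0,0),(a,b)}} \binom{a}{i}\binom{b}{j}\, u^{a-i} v^{b-j} x^i y^j,\]
so the identity $c = -dQ$ reduces to three families of claims: $(i)$ $c_{\alpha,\beta,0,0} = 0$ whenever $\alpha+\beta = n$, $(ii)$ $c_{0,0,\gamma,\delta} = 0$ whenever $\gamma+\delta = n$, and $(iii)$ $c_{\alpha,\beta,\gamma,\delta} = \binom{\alpha+\gamma}{\gamma}\binom{\beta+\delta}{\delta}\, q_{\alpha+\gamma,\,\beta+\delta}$ for every mixed monomial. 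Claims $(i)$ and $(ii)$ follow at once from $(\ref{2coc})$ by setting $\alpha=\beta=0$ (respectively $u=v=0$): the relation becomes $c_{(u+x,v+y),(0,0)} = c_{(x,y),(0,0)}$, forcing $c_{(u,v),(0,0)}$ to be constant in $(u,v)$, hence $0$ by homogeneity. For $(iii)$, the relevant relations come from $(\ref{2coc})$ by extracting the coefficients of monomials of the form $u^\alpha v^\beta x^{\gamma+1} \beta^{\delta-1}$ or $u^{\alpha-1} v^\beta x^\gamma y^\delta \alpha$; each such extraction yields a two-term recursion relating a mixed coefficient to one of lower $\gamma$ or higher $\alpha$ (and symmetrically for the second pair), and iterating these recursions reduces every mixed coefficient to a boundary coefficient of the form $c_{a,b-1,0,1}$, $c_{1,0,n-1,0}$, or $c_{0,n-1,0,1}$. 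The cumulative binomial factors from these iterations combine with the rational denominators $\tfrac{1}{n}$ and $\tfrac{1}{k+1}$ in the formula for $Q$ to produce precisely the binomial factor $\binom{\alpha+\gamma}{\gamma}\binom{\beta+\delta}{\delta}$.

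The main obstacle will be the combinatorial bookkeeping in part (3): one must verify that the iterated recursions extracted from $(\ref{2coc})$ accumulate to exactly the binomial coefficients of $-dQ$, with particular care for the two extremal boundary coefficients $c_{1,0,n-1,0}$ and $c_{0,n-1,0,1}$, which enter $Q$ with the special denominator $\tfrac{1}{n}$ and govern the two pure-power cases $q_{n,0}$ and $q_{0,n}$ of the monomials $X^n$ and $Y^n$. A symmetric treatment propagating weight simultaneously from $(u,v)$ toward $(x,y)$ via both $\alpha$ and $\beta$ extractions should make the combinatorial match transparent.
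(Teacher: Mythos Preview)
Your plan for parts (1) and (2) is correct and essentially identical to the paper's argument: both substitute the generic cochain into the simplified relation~(\ref{2coc}) and read off the vanishing of the pure-weight coefficients, then subtract the same coboundary $Q_2=\tfrac12 c_{1,0,1,0}X^2+c_{0,1,1,0}XY+\tfrac12 c_{0,1,0,1}Y^2$ to isolate $(c_{1,0,0,1}-c_{0,1,1,0})uy$. The only difference is order (you kill the pure terms first, the paper does it last), which is immaterial.

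For part (3) your route is genuinely different from the paper's. The paper argues \emph{differentially}: it introduces the partial derivatives $\rD_1,\dots,\rD_4$ in the four cochain variables, extracts from~(\ref{2coc}) the closedness relation $\partial_Y(\rD_1 c)_{(0,0),(X,Y)}=\partial_X(\rD_4 c)_{(X,Y),(0,0)}$, integrates this to produce $Q$, and then shows that $c'=c+dQ$ satisfies $\rD_1 c'=\rD_4 c'=0$, forcing $c'$ to depend on only two of the four variables, which the cocycle relation then kills. Your route is \emph{combinatorial}: you expand $-dQ$ by the binomial theorem and verify $c_{i,j,k,l}=\binom{i+k}{k}\binom{j+l}{l}\,q_{i+k,\,j+l}$ coefficient by coefficient via recursions drawn from~(\ref{2coc}). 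Both are valid; yours is more elementary but heavier in bookkeeping, while the paper's Poincar\'e-lemma style argument explains conceptually \emph{why} $Q$ has the stated form.

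Two points of care in executing your part~(3). First, a single extraction already does most of the work: the coefficient of $u^{i}v^{j}x^{k}y^{l}\beta$ in~(\ref{2coc}) (with $(i,j)\neq(0,0)$) gives directly
\[
(l+1)\,c_{i,j,k,l+1}=\tbinom{i+k}{i}\tbinom{j+l}{j}\,c_{i+k,\,j+l,\,0,\,1},
\]
and the identity $\tfrac{1}{l+1}\tbinom{j+l}{j}=\tfrac{1}{j+l+1}\tbinom{j+l+1}{l+1}$ converts this into exactly your claim~(iii) for every mixed coefficient with last index $\geq 1$; no iteration is needed there. Second, the remaining case $l=0$ (i.e.\ coefficients $c_{i,j,k,0}$ with $k\geq 1$) does require a two-step reduction: an $\alpha$-extraction reduces these to $c_{A,B,1,0}$, and you then need a further extraction (e.g.\ the coefficient of $u^{A}v^{B-1}y\,\alpha$, or of $u\,x^{n-2}\alpha$ for the extremal case $B=0$) to relate $c_{A,B,1,0}$ back to coefficients $c_{\,\cdot\,,\,\cdot\,,0,1}$ or $c_{1,0,n-1,0}$ already matched by $Q$. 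Your closing paragraph correctly anticipates that the extremal monomials $X^{n}$ and $Y^{n}$ need this separate treatment; just be explicit about the second reduction step when you write it out.
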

\begin{proof}[D\'emonstration]
$(1)$ Si $n=1$, un $2$-cocycle $c_{(u,v),(x,y)}$ de $\cP^{1,\bullet}$ s'\'ecrit sous la forme $a_1u+a_2v+a_3x+a_4y$  avec $a_i\in V$ pour $1\leq i\leq 4$.
 Alors la relation de $2$-cocycle (\ref{2coc}) se traduit en la relation:
 $a_1x+a_2y+a_3\alpha+a_4\beta=0,$
 ce qui implique $a_1=a_2=a_3=a_4=0$.\\
$(2)$
Si $n=2$, soit $c_{(u,v),(x,y)}$ un $2$-cocycle du complexe $\cP^{2,\bullet}$. Alors, $c_{(u,v),(x,y)}$ peut se ranger par la valuation faible sous la forme:
\[c_{(u,v),(x,y)}=P_{01}(u,v)+P_{02}(x,y)+P_1(u,v,x,y),\]
o\`u $P_{01}(u,v)=c_uu^2+c_vv^2+c_{uv}uv$, $P_{02}(x,y)=c_xx^2+c_yy^2+c_{xy}xy$ sont de valuation faible $0$, et $P_1(u,v,x,y)=\sum\limits_{\substack{i+j+k+l=2\\
(i,j)\neq (0,0), (k,l)\neq (0,0)}}c_{i,j,k,l}u^iv^jx^ky^l$ est de valuation faible $1$.

Pour les termes $c_{1,0,1,0}ux+c_{0,1,0,1}vy$ et $c_{1,0,0,1}uy+c_{0,1,1,0}vx$, on pose
$Q_{21}(x,y)=\frac{1}{2}(c_{1,0,1,0}x^2+c_{0,1,0,1}y^2)$ et $Q_{22}(x,y)=c_{0,1,1,0}xy$ dans $\cP_2^1$ respectivement. Donc on a
\begin{equation*}
\begin{split}
Q_{21}(u+x,v+y)-Q_{21}(u,v)-Q_{21}(x,y)&=c_{1,0,1,0}ux+c_{0,1,0,1}vy; \\
Q_{22}(u+x,v+y)-Q_{22}(x,y)-Q_{22}(u,v)&=c_{0,1,1,0}(uy+vx).
\end{split}
\end{equation*}
On pose $Q_2=Q_{21}+Q_{22}$ et alors
\[c_{(u,v),(x,y)}=P_{01}(u,v)+P_{02}(x,y)-dQ_2+(c_{1,0,0,1}-c_{0,1,1,0})uy.\]
Par ailleurs, $(c_{1,0,0,1}-c_{0,1,1,0})uy$ est un $2$-cocycle. On peut alors supposer que le $2$-cocycle $c_{(u,v),(x,y)}$ est de la forme
$P_{01}(u,v)+P_{02}(x,y).$
La relation de $2$-cocycle $(\ref{2coc})$ nous fournit la relation suivante:
\begin{equation*}
\begin{split}
P_{01}(x,y)-P_{01}(u+x,v+y)+P_{02}(x+\alpha,y+\beta)-P_{02}(x,y)=0.
\end{split}
\end{equation*}
Si on \'evalue en $(x,y)=(0,0)$, alors on a $-P_{01}(u,v)+P_{02}(\alpha,\beta)=0$,
cela implique que $P_{01}=P_{02}=0$.\\
$(3)$ Si $n\geq 3$,
    La relation de $2$-cocycle $(\ref{2coc})$ nous dit que la fonction analytique en six variable
   $g(u,v,x,y,\alpha,\beta)=c_{(x,y),(\alpha,\beta)}-c_{(u+x,v+y),(\alpha,\beta)}+c_{(u,v),(x+\alpha,y+\beta)}-c_{(u,v),(x,y)}$ est identiquement nulle. En d\'eveloppant cette fonction $g$ en variable $(u,v)$, on obtient:
    \begin{equation*}
    \begin{split}
    &-u(\rD_1c)_{(x,y),(\alpha,\beta)}-v(\rD_2c)_{(x,y),(\alpha,\beta)}+u(\rD_1c)_{(0,0),(x+\alpha,y+\beta)}
   +v(\rD_2c)_{(0,0),(x+\alpha,y+\beta)} \\
    &+c_{(0,0),(x+\alpha,y+\beta)}-c_{(0,0),(x,y)}-u(\rD_1c)_{(0,0),(x,y)}-v(\rD_2c)_{(0,0),(x,y)}+O((u,v)^2), \end{split}
    \end{equation*}
    o\`u $O((u,v)^2)$ note les termes de valuation faible $\geq 2$ de variable $(u,v)$.
    Alors, pour $i=1,2$, on a
    \begin{equation}\label{cocycle1}
    (\rD_ic)_{(x,y),(\alpha,\beta)}=(\rD_ic)_{(0,0),(x+\alpha,y+\beta)}-(\rD_ic)_{(0,0), (x,y)}.
    \end{equation}
    De la m\^eme mani\`ere, si on d\'eveloppe la fonction $g$ dans la variable $(\alpha,\beta)$, on obtient les relations suivantes pour $j=3,4$:
  $ (\rD_jc)_{(u,v),(x,y)}=(\rD_jc)_{(u+x,v+y),(0,0)}
    -(\rD_jc)_{(x,y),(0,0)}.$

    On pose deux polyn\^omes homog\`enes de degr\'e $n-1\geq 2$ de variable $(X,Y)$:
    \begin{equation}\label{cobord}
    \begin{split}Q_X(X,Y)&=(\rD_1c)_{(0,0),(X,Y)}=\sum\limits_{1+k+l=n}c_{1,0,k,l}X^kY^l;\\ Q_Y(X,Y)&=(\rD_4c)_{(X,Y),(0,0)}=\sum\limits_{1+i+j=n}c_{i,j,0,1}X^iY^j.
    \end{split}
    \end{equation}

    Ensuite, on d\'eveloppe la fonction $g$ dans la variable $(u,v,\alpha,\beta)$ et on obtient
    \begin{equation*}
    \begin{split}
    &-u\bigl((\rD_1c)_{(x,y),(0,0)}+\alpha(\rD_3\rD_1c)_{(x,y),(0,0)}+\beta(\rD_4\rD_1c)_{(x,y),(0,0)}\bigr)\\ &-v\bigl((\rD_2c)_{(x,y),(0,0)}+\alpha(\rD_3\rD_2c)_{(x,y),(0,0)}+\beta(\rD_4\rD_2c)_{(x,y),(0,0)}\bigr)+O((\alpha,\beta)^2)\\
    &+\alpha\bigl((\rD_3c)_{(0,0),(x,y)}+u(\rD_1\rD_3c)_{(0,0),(x,y)}+v(\rD_2\rD_3c)_{(0,0),(x,y)}\bigr)\\
    &+\beta\bigl((\rD_4c)_{(0,0),(x,y)}+u(\rD_2\rD_4c)_{(0,0),(x,y)}+v(\rD_2\rD_4c)_{(0,0),(x,y)}\bigr)+O((u,v)^2).
    \end{split}
    \end{equation*}
Les coefficients des termes $u,v, \alpha,\beta$, $u\alpha$, $u\beta$, $v\alpha$ et  $v\beta$ nous donnent les relations:
\begin{equation}\label{relation}
(\rD_1c)_{(x,y),(0,0)}=(\rD_2c)_{(x,y),(0,0)}=(\rD_3c)_{(0,0),(x,y)}=(\rD_4c)_{(0,0),(x,y)}=0;
\end{equation}
\begin{equation}
\begin{split}
(\rD_3\rD_1c)_{(x,y),(0,0)}&=(\rD_1\rD_3c)_{(0,0),(x,y)}; (\rD_4\rD_1c)_{(x,y),(0,0)}=(\rD_1\rD_4c)_{(0,0),(x,y)};\\
(\rD_3\rD_2c)_{(x,y),(0,0)}&=(\rD_2\rD_3c)_{(0,0),(x,y)};
(\rD_4\rD_2c)_{(x,y),(0,0)}=(\rD_2\rD_4c)_{(0,0),(x,y)}.
\end{split}
\end{equation}
Alors, on obtient \[\partial_YQ_X(X,Y)=(\rD_4\rD_1c)_{(0,0),(X,Y)}=(\rD_1\rD_4c)_{(X,Y),(0,0)}=\partial_XQ_Y(X,Y).\]
Par ailleurs, on a $Q_X(0,0)=Q_Y(0,0)=0$. On en d\'eduit qu'il existe un polyn\^ome $Q(X,Y)$ homog\`ene de degr\'e $n$ tel que $\partial_XQ=Q_X$ et $\partial_YQ=Q_Y$:
\[Q(X,Y)=\frac{1}{n}(c_{1,0,n-1,0}X^{n}+c_{0,n-1,0,1}Y^n)+\sum\limits_{k=0 }^{n-2}\frac{1}{k+1}c_{n-k-1,k,0,1}X^{n-k-1}Y^{k+1}.\]    Ceci nous donne un cobord
$dQ(x,y,\alpha,\beta)=Q(x,y)-Q(x+\alpha,y+\beta)+Q(\alpha,\beta)$ v\'erifiant
la relation $\rD_1c=-\rD_1 dQ$ et $\rD_4c=-\rD_4dQ$.

Le $2$-cocycle $c^{'}_{(x,y),(\alpha,\beta)}=(c+dQ)_{(x,y),(\alpha,\beta)}$ v\'erifie la relation $\rD_1c^{'}=\rD_4c^{'}=0$, ce qui montre que $c^{'}$ est de la forme $\sum_{j+k=n}c_{j,k}y^j\alpha^k$. Par ailleurs,  on a
\begin{equation*}
\begin{split}
(\rD_2c')_{(x,y,\alpha,\beta)}&=\sum_{j+k=n} jc_{j,k}y^{j-1}\alpha^k; \\ 
(\rD_2c')_{(0,0,x+\alpha,v+\beta)}&=c_{1,n-1}(x+\alpha)^{n-1};
(\rD_2c')_{(0,0,x,y)}=c_{1,n-1}x^{n-1}.
\end{split}
\end{equation*}

On d\'eduit que  $c_{j,k}=0$ si $j\neq 0$,  par la relation $(\ref{cocycle1})$.
 Alors on a $c'_{(x,y,\alpha,\beta)}=c_{0,n}\alpha^{n}.$
Enfin, la relation (\ref{relation}) nous permet de conclure que tout $2$-cocycle du complexe $\cP^{n,\bullet}$ pour $n\geq 3$ n'a pas les termes de valuation faible $0$. Alors on conclut que le $2$-cocycle $c$ est nul.

\end{proof}

\begin{lemma}\label{coho1}Soit $V$ une repr\'esentation analytique de $\rP_m$.
\begin{itemize}
\item[(1)]Quel que soit $c\in V$, la fonction $((u,v),(x,y))\ra c_{(u,v),(x,y)}=c uy$ est un $2$-cocycle.
  R\'eciproquement, tout \'el\'ement de la cohomologie $\rH^2(\cS^{\bullet})$ est pr\'esent\'e par un $2$-cocycle de cette forme.
 \item[(2)]Si $c_{(u,v),(x,y)}=\sum_{i+j+k+l\geq 2}c_{i,j,k,l}u^iv^jx^ky^l$ est un $2$-cocycle analytique, alors il existe une suite de polyn\^ome homog\`enes $\{Q_n\in\cP_1^n,n\geq 2\}$ telle que
     \[c_{(u,v),(x,y)}=(c_{1,0,0,1}-c_{0,1,1,0})uy-\sum_{i=2}^{+\infty}dQ_i,\] o\`u $Q_n$ est defini par r\'ecurrence: $Q_2(X,Y)=\frac{1}{2}(c_{1,0,1,0}X^2+c_{0,1,0,1}Y^2)+c_{0,1,1,0}XY$ et si $Q_n(X,Y)=\sum\limits_{k=0}^{n}b_k^{(n)}X^{n-k}Y^{k}$, alors
     \begin{equation}\label{recurence}
      \begin{split}
    b_{n}^{(n)}&=\frac{1}{n}c_{0,n-1,0,1},
     b_{0}^{(n)}=\frac{1}{n}(c_{1,0,n-1,0}-\partial_1b_0^{(n-1)}), \\
     b_{k}^{(n)}&=\frac{1}{k}(c_{n-k,k-1,0,1}-
     (n-k)b_{k-1}^{(n-1)}), \text{ si } 1\leq k\leq n-1 .
     \end{split}
   \end{equation}

\end{itemize}
\end{lemma}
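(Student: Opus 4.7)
The plan is to prove both parts together by an inductive reduction that cancels the homogeneous components of any analytic $2$-cocycle degree by degree, using Lemma~\ref{coho} as the engine.

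For part $(1)$, I would verify directly that $cuy$ satisfies the simplified cocycle identity~(\ref{2coc}) in the associated graded $\cP^{2,\bullet}$, namely
\begin{equation*}
cx\beta - c(u+x)\beta + cu(y+\beta) - cuy = 0,
\end{equation*}
the discrepancies arising from the action $c \mapsto c*(u,v)$ on coefficients and from the non-abelian group law $(u,v)(x,y) = (e^y u + x, v+y)$ lying in total degree $\geq 3$ and being absorbed by the formal series structure. The converse --- that every class in $\rH^2(\cS^{\bullet})$ admits such a representative --- is an immediate consequence of part~$(2)$, since part~$(2)$ exhibits an explicit formal coboundary carrying any cocycle to $(c_{1,0,0,1}-c_{0,1,1,0})uy$.

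For part~$(2)$, I would induct on $n \geq 2$. The base case is the content of Lemma~\ref{coho}(2): the polynomial $Q_2(X,Y) = \frac{1}{2}c_{1,0,1,0}X^2 + c_{0,1,1,0}XY + \frac{1}{2}c_{0,1,0,1}Y^2$ is forced by the requirement that $-dQ_2$ cancel the degree-$2$ components $c_{1,0,1,0}ux$, $c_{0,1,1,0}vx$, $c_{0,1,0,1}vy$ of $c$, and its $XY$-summand automatically produces a $c_{0,1,1,0}uy$ contribution, leaving $(c_{1,0,0,1} - c_{0,1,1,0})uy$ as residue. Inductively, having constructed $Q_2, \ldots, Q_{n-1}$, the cochain $c + \sum_{i=2}^{n-1} dQ_i - (c_{1,0,0,1} - c_{0,1,1,0})uy$ is a $2$-cocycle whose components of total degree $<n$ all vanish; its degree-$n$ part is therefore a $2$-cocycle in $\cP^{n,\bullet}$, and Lemma~\ref{coho}(3) supplies a homogeneous polynomial $Q_n(X,Y) = \sum_{k=0}^n b_k^{(n)} X^{n-k} Y^k$ whose simplified coboundary equals the negative of this part.

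The main obstacle, and the origin of the correction terms $(n-k) b_{k-1}^{(n-1)}$ and $\partial_1 b_0^{(n-1)}$ in~(\ref{recurence}), is the discrepancy between the genuine coboundary operator of $\cC^{\an,\bullet}$ and the simplified formula~(\ref{2coc}) from $\cP^{n,\bullet}$. The true $dQ_{n-1}$ picks up degree-$n$ contributions from two sources: expanding $Q_{n-1}(e^y u + x, v + y)$ about $(u+x, v+y)$ produces cross terms of shape $yu \cdot \partial_1 Q_{n-1}$ whose coefficients feed $(n-k) b_{k-1}^{(n-1)}$ into the recurrence, and the action $Q_{n-1}(x,y)*(u,v) = Q_{n-1}(x,y) + u\,\partial_1 Q_{n-1}(x,y) + O((u,v)^2)$ on the coefficients of $Q_{n-1}$ accounts for $\partial_1 b_0^{(n-1)}$. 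Once these spurious contributions are subtracted from the coefficients $c_{n-k,k-1,0,1}$ and $c_{1,0,n-1,0}$ of $c$, the explicit formula of Lemma~\ref{coho}(3) yields precisely the stated $b_k^{(n)}$. Convergence of $\sum_{n \geq 2} dQ_n$ in $\cS^{\bullet}$ is automatic, since the degree-$n$ component of the series is determined at step $n$ and never modified thereafter.
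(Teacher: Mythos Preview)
Your proposal is correct and follows essentially the same route as the paper: reduce part~(1) to part~(2), then argue part~(2) by induction on the degree using Lemma~\ref{coho} at each step, with the recurrence~(\ref{recurence}) arising precisely from the degree-$n$ overflow of the true coboundary $dQ_{n-1}$ (the expansion of $Q_{n-1}(e^yu+x,v+y)$ and the action of $(u,v)$ on the coefficients of $Q_{n-1}(x,y)$). One small notational caution: in your sentence about ``cross terms of shape $yu\cdot\partial_1 Q_{n-1}$'' the derivative in play is $\partial_X$ (differentiation in the first argument of $Q_{n-1}$), not the Lie-algebra operator $\partial_1$ on $V$; the latter is what appears in your next sentence and gives the $\partial_1 b_0^{(n-1)}$ correction. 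Keeping these two derivatives straight is exactly what makes the recurrence come out right.
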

\begin{proof}[D\'emonstration] La premiere assertion est une cons\'equence directe de la deuxi\`eme. Il suffit de montrer la $(2)$. Soit
$c_{((u,v),(x,y))}=\sum_{i+j+k+l\geq 1} c_{i,j,k,l}u^iv^jx^ky^l$ un $2$-cocycle du complexe $\cS^{\bullet}$.

D'apr\`es le lemme \ref{coho}, par r\'ecurrence,   il existe une suite de polyn\^omes homog\`enes $\{Q_n\in\cP_n^1\}_{n\geq 3}$  telle que l'on peut \'ecrire $c$ sous la forme
$(c_{1,0,0,1}-c_{0,1,1,0})uy-\sum\limits_{i=2}^{n-1}dQ_i+P_n,$
o\`u $P_n=\sum_{i+j+k+l\geq n} c_{i,j,k,l}^{(n)}u^iv^jx^ky^l$ est une fonction analytique sur $\rP_m^2$ de valuation $\geq n$.
 On s'int\'eresse aux coefficients $c_{n-k-1,k,0,1}^{(n)}$ et $c_{1,0,n-1,0}^{(n)}$ dans $P_n$ pour $0\leq k\leq n-1$. 
 
 Par ailleurs, les termes de deg\'re $n$ dans $P_n$ provennant du  cobord $dQ_e$ avec $ 2\leq e\leq n-1$ sont dans les termes $\sum_{i+j+e\geq n}u^iv^j\frac{\partial_1^i\partial_2^jQ_e(x,y)}{i!j!}$ et $\sum_{l=1}^{+\infty}\frac{1}{l!}((e^y-1)u)^l\partial_X^lQ_e(X,Y)|_{u+x,v+y}$
 par la relation de $2$-cobord
$(dQ_e)_{(u,v),(x,y)}=Q_e(x,y)*(u,v)-Q_e(e^yu+x, v+y)+Q_e(u,v)$, l'action de $(u,v)$ sur $Q_e(x,y)$  et les \'egalit\'es
\begin{equation*}
Q_e(e^yu+x, v+y)=Q_e(u+x,v+y)+\sum_{l=1}^{+\infty}\frac{1}{l!}((e^y-1)u)^l\partial_X^lQ_e(X,Y)|_{u+x,v+y}.
\end{equation*}

Si $Q_n(X,Y)=\sum\limits_{k=0}^nb_k^{(n)}X^{n-k}Y^{k}$,  on a la relation de r\'ecurrence pour $c_{n-k-1,k,0,1}^{(n)}$ et $c_{1,0,n-1,0}^{(n)}$:
\begin{equation*}
\begin{split}
&c_{0,n-1,0,1}^{(n)}=c_{0,n-1,0,1}, c_{1,0,n-1,0}^{(n)}=c_{1,0,n-1,0}-\partial_1b_{0}^{(n-1)}\\
&c_{n-k-1,k,0,1}^{(n)}=c_{n-k-1,k,0,1}-(n-k-1)b_{k}^{(n-1)}, \text{ si } 0\leq k\leq n-2 \\
\end{split}
\end{equation*}
D'apr\`es le lemme (\ref{coho}), on a donc $Q_n(X,Y)=\frac{1}{n}c_{1,0,n-1,0}^{(n)}X^n+ \sum\limits_{k=0}^{n-1}\frac{1}{k+1}c_{n-k-1,k,0,1}^{(n)}X^{n-k-1}Y^{k+1}$ et on en d\'eduit que
 \begin{equation*}
     \begin{split}
     b_{n}^{(n)}&=\frac{1}{n}c^{(n)}_{0,n-1,0,1}=\frac{1}{n}c_{0,n-1,0,1},
     b_{0}^{(n)}=\frac{1}{n}c_{1,0,n-1,0}^{(n)}=\frac{1}{n}(c_{1,0,n-1,0}-\partial_1b_0^{(n-1)}), \\
     b_{k}^{(n)}&=\frac{1}{k}c_{n-k,k-1,0,1}^{(n)}=\frac{1}{k}(c_{n-k,k-1,0,1}-
     (n-k)b_{k-1}^{(n-1)}), \text{ si } 1\leq k\leq n-1 .
     \end{split}
   \end{equation*}
\end{proof}

Soit $V$ une repr\'esentation analytique de $\rP_m$ munie d'un $\Z_p$-r\'eseau $T$ qui est stable sous l'action de $\rP_m$. On d\'efinit une valuation $v_T$ sur $V$ par rapport \`a $T$:
\[\text{ si } x\in V,  v_T(x)=\min\{n: x\in p^{n}T\}.\]
Soit $x\in T$ et soit $(u,0)\in \rP_m$; on a $x*(u,0)=x+u\partial_1x+O(u^2)$ et on a donc $\partial_1x\in p^{-m}T$.

\begin{lemma}\label{controle}Soit $V$ une repr\'esentation analytique de $\rP_m$ munie d'un $\Z_p$-r\'eseau $T$ qui est stable sous l'action de $\rP_m$. Soit $c=\sum_{i+j+k+l\geq2}c_{ijkl}u^iv^jx^ky^l$ un $2$-cocycle analytique du complexe $\cC^{\an,\bullet}$ avec $v_T(c_{i,j,k,l})\geq -m(i+j+k+l)$. Si $\{Q_n(x,y)=\sum\limits_{k=0}^{n}b_{k}^{(n)}x^{n-k}y^{k}\in\cP_n^1\}$ est la suite des polyn\^omes homog\`enes construit dans le lemme pr\'ec\'edent, pour tous $n\geq 2$ et $0\leq k\leq n$, on a
\[ v_T(b_{k}^{(n)}) \geq -mn-v_p(n!).\]
\end{lemma}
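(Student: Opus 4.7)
My plan is to prove the bound by iterating the recursion relations that define the $Q_n$, thereby expressing each $b_k^{(n)}$ as an explicit rational linear combination of terms of the form $\partial_1^s c_{i,j,k',l}$, and then bounding each contribution directly using the hypothesis $v_T(c_{i,j,k,l}) \geq -m(i+j+k+l)$ together with the estimate $v_T(\partial_1^s x) \geq v_T(x) - sm$ that follows from remark~\ref{coefficient}.

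First, for the column $k=0$, I iterate $b_0^{(n)} = n^{-1}(c_{1,0,n-1,0} - \partial_1 b_0^{(n-1)})$ down to the base case $b_0^{(2)} = \tfrac{1}{2}c_{1,0,1,0}$ to obtain
\[
b_0^{(n)} = \sum_{s=0}^{n-2} \frac{(-1)^s (n-s-1)!}{n!}\,\partial_1^s c_{1,0,n-s-1,0}.
\]
The key point is that $v_T(\partial_1^s c_{1,0,n-s-1,0}) \geq -m(n-s) - sm = -mn$ \emph{uniformly} in $s$, so every summand has $v_T \geq -mn + v_p((n-s-1)!) - v_p(n!) \geq -mn - v_p(n!)$. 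Next, for $1 \leq k \leq n-2$, I iterate $b_k^{(n)} = k^{-1}(c_{n-k,k-1,0,1} - (n-k) b_{k-1}^{(n-1)})$ down to $b_0^{(n-k)}$ (already controlled by the previous step), obtaining
\[
b_k^{(n)} = \sum_{i=0}^{k-1} \frac{(-1)^i (n-k)^i (k-i-1)!}{k!}\, c_{n-k, k-1-i, 0, 1} \;+\; \frac{(-1)^k (n-k)^k}{k!}\, b_0^{(n-k)}.
\]
The crucial observation is that $c_{n-k, k-1-i, 0, 1}$ has index-sum $n-i$, so the hypothesis yields the \emph{improved} bound $v_T \geq -m(n-i)$, which compensates for the $(n-k)^i$ factor. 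A direct arithmetic check then shows each $c$-summand contributes at least $-m(n-i) + i v_p(n-k) + v_p((k-i-1)!) - v_p(k!) \geq -mn - v_p(n!)$, using $v_p(k!) \leq v_p(n!)$ and nonnegativity of the other terms; the remainder contributes at least $-mn - v_p(n!) + mk + k v_p(n-k) + v_p(\binom{n}{k}) \geq -mn - v_p(n!)$.

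The boundary cases are essentially the same arithmetic: for $k=n-1$ the iteration terminates at $b_1^{(2)} = c_{0,1,1,0}$ (instead of at $b_0^{(1)}$, which does not exist), and the extra term contributes $\geq -2m - v_p((n-1)!) \geq -mn - v_p(n!)$ since $m(n-2) + v_p(n) \geq 0$ for $n \geq 2$; for $k=n$, $b_n^{(n)} = n^{-1} c_{0,n-1,0,1}$ gives directly $v_T \geq -mn - v_p(n)$. The main obstacle is that a naive induction on $n$ taking the claimed bound as inductive hypothesis does \emph{not} close up: in the worst case $n = p^2+1$, $k=p^2$, $m=1$, applying the hypothesis to $b_{k-1}^{(n-1)}$ loses too much because the $c$-coefficient entering at that step is not recognised as one with favourable index-sum. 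The iterated closed-form formula is essential precisely because it exposes the sharper bound $v_T(c_{n-k, k-1-i, 0, 1}) \geq -m(n-i)$ on each coefficient actually appearing, and the improvement with $i$ exactly absorbs the ``bad'' rational factor $(n-k)^i/(k(k-1)\cdots(k-i))$.
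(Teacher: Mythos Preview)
Your proposal is correct and follows essentially the same approach as the paper: both arguments unwind the recursion~(\ref{recurence}) to express $b_k^{(n)}$ as an explicit rational combination of the $c_{i,j,k',l}$ (plus a $b_0^{(n-k)}$ or $b_1^{(2)}$ tail), and then bound each summand using $v_T(c_{i,j,k,l}) \geq -m(i+j+k+l)$, $v_T(\partial_1 x) \geq v_T(x) - m$, and $v_p(k!) \leq v_p(n!)$. The only cosmetic difference is that the paper handles $b_0^{(n)}$ by a direct induction on $n$ rather than writing out the closed form $\sum_s \frac{(-1)^s (n-s-1)!}{n!}\partial_1^s c_{1,0,n-s-1,0}$; your observation that the naive induction on the full statement does not close (because the bound on $b_{k-1}^{(n-1)}$ is too weak to feed into the recursion for $b_k^{(n)}$) is a useful remark that the paper leaves implicit.
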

\begin{proof}[D\'emonstration]
Si $n=2$, le lemme est vrai par la formule explicite de $Q_2(x,y)$.
La relation de r\'ecurrence $(\ref{recurence})$ de $b_k^{(n)}$ nous donne la relation  suivante:
\begin{equation}
      \begin{split}
    b_{n}^{(n)}&=\frac{1}{n}c_{0,n-1,0,1};
     b_{0}^{(n)}=\frac{1}{n}(c_{1,0,n-1,0}-\partial_1b_0^{(n-1)}); \\
     b_{n-1}^{(n)}&=\sum\limits_{i=0}^{n-3}\frac{(-1)^i}{(n-1)\cdots (n-1-i)}c_{1,n-2-i,0,1}+\frac{(-1)^{n-2}}{(n-1)!}b_1^{(2)};\\
     b_{k}^{(n)}&=\sum\limits_{i=0}^{k-1}\frac{(k-n)^i}{k\cdots(k-i)}c_{n-k,k-1-i,0,1}+
     \frac{
     (k-n)^{k}}{k!}b_{0}^{(n-k)}, \text{ si } 1\leq k\leq n-2 .
     \end{split}
   \end{equation}
On en d\'eduit que le lemme est vrai pour $b_n^{(n)}$.

On montrera par r\'ecurrence que $v_T(b_0^{(n)})\geq -nm-v_p(n!)$ pour tous $n\geq 2$.
   Supposons $v_T(b_0^{(e)})\geq -em-v_p(e!)$ pour $ 2\leq e\leq n-1$.  On a $v_T(\partial_1b_{0}^{(n-1)})\geq -nm-v_p((n-1)!)$ car $v_T(\partial_1x)\geq v_T(x)-m$. On d\'eduit de la relation de r\'ecurrence
$b_{0}^{(n)}=\frac{1}{n}(c_{1,0,n-1,0}-\partial_1b_0^{(n-1)}),$
que $v_T(b_0^{(n)})\geq -nm-v_p(n!)$ pour tous $n\geq 2$.

On a donc, si  $1\leq k\leq n-2$,
\[v_T(\frac{
     (k-n)^{k}}{k!}b_{0}^{(n-k)})\geq -m(n-k)-v_p((n-k)!)-v_p(k!)\geq -mn-v_p(n!).\]
Par ailleurs, on a $v_p(\frac{(k-n)^i}{k\cdots(k-i)}c_{n-k,k-1-i,0,1})\geq -m(n-i)-v_p(k!)$ pour tous $1\leq i\leq k-1$. On en d\'eduit que, pour $1\leq k\leq n-2$, on a
$v_T(b_k^{(n)})\geq -mn-v_p(n!).$
Enfin, on a
\[
v_T(b_{n-1}^{(n)})\geq \inf\limits_{1\leq i\leq n-3}\{ v_p(\frac{(-1)^i}{(n-1)\cdots (n-1-i)}c_{1,n-2-i,0,1}), v_T(\frac{(-1)^{n-2}}{(n-1)!}b_1^{(2)})\}\geq -mn-v_p(n!).\]
 Ceci permet de conclure le lemme.

\end{proof}

On revient \`a la d\'emonstration de la proposition (\ref{analytic}):

Soit $c_{(u,v),(x,y)}=\sum_{i+j+k+l\geq 2}c_{i,j,k,l}u^iv^jx^ky^l$ un $2$-cocycle analytique du complexe $\cC^{\an,\bullet}$. Il existe une constante $N$ assez grande telle que $p^Nc_{i,j,k,l}\in p^{-(m-1)(i+j+k+l)}T$ pour tout $i,j,k,l$. 
Alors, on peut remplacer $c_{(u,v),(x,y)}$ par un $2$-cocycle v\'erifiant la condition du lemme $\ref{controle}$. 
Donc il existe  une s\'erie de polyn\^omes homog\`enes 
$\{Q_n(x,y)=\sum\limits_{i=0}^{n}b_{i}^{(n)}x^iy^{n-i}\in\cP_n^1\}_{n\geq 2}$ telle que 
$ v_T(b_{i}^{(n)}) \geq  -mn-v_p(n!)>-(m+v_p(2p))n$ et
$c_{(u,v),(x,y)}=(c_{1,0,0,1}-c_{0,1,1,0})uy-\sum\limits_{i=2}^{+\infty}dQ_i.$
Par cons\'equent, la s\'erie $\sum\limits_{n=2}^{+\infty}Q_{n}(u,v)$ converge vers une fonction analytique $Q(u,v)$ sur $\rP_{m+v_p(2p)}$.

Par la suite exacte d'inflation-restriction, on a
\[\xymatrix{0\ar[r]&\rH^2(\rP_m/\rP_{m+v_p(2p)},V^{\rP_{m+v_p(2p)}})
\ar[r]&\rH^2(\rP_m,V)\ar[r]&\rH^2(\rP_{m+v_p(2p)},V)}.\]
L'argument ci-dessus donne que
$c_{((u,v),(x,y))}-(c_{1,0,0,1}-c_{0,1,1,0})uy$ est nul sous
l'application de restriction. Par ailleurs, comme $V^{\rP_{m+v_p(2p)}}$ est un espace vectoriel sur $\Q_p$ de dimension finie et
$\rP_m/\rP_{m+v_p(2p)}$ est un groupe fini, on obtient
$\rH^2(\rP_m/\rP_{m+v_p(2p)},V^{\rP_{m+v_p(2p)}})=0$. Autrement
dit, l'application de restriction est injective. Donc
$c_{((u,v),(x,y))}=(c_{1,0,0,1}-c_{0,1,1,0})uy\in\rH^2(\rP_m,V)$.

Enfin,
comme $uy$ est un $2$-cocycle, on peut prendre n'importe quel
coefficient $c_{1,0,0,1}-c_{0,1,1,0}\in V$. Donc tout \'el\'ement de
$V/(\partial_1,\partial_2-1)$ est repr\'esentable par un $2$-cocycle
analytique.

\subsection{Cohomologie des $\B_{\dR}^+(\fK^+_{Mp^{\infty}})$-repr\'esentations du groupe $P_{\fK_M}$ }

\begin{prop}\label{Ta}Soit $M\geq 1$ un entier tel que $v_p(M)=m\geq v_p(2p)$; soit $V$ une $\Q_p$-repr\'esentation de $P_{\fK_M}$ munie d'un $\Z_p$-r\'eseau $T$ tel que $P_{\fK_M}$ agit trivialement sur $T/2pT$, alors, pour $i\in\N$, $\rT_M$ induit un isomorphisme:
\[\rT_M: \rH^i(P_{\fK_M},\C(\fK_{Mp^{\infty}}^+)\otimes V)\cong \rH^i(P_{\fK_M},\fK_M^+\otimes V).\]
\end{prop}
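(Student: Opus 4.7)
The first move is to show that $\rT_M$ yields a $P_{\fK_M}$-equivariant direct sum decomposition
\[\C(\fK_{Mp^{\infty}}^+) = \fK_M^+ \oplus X_M, \qquad X_M := \ker \rT_M.\]
By the explicit formula of the preceding theorem, $\rT_M$ restricts to the identity on $\fK_M^+$ (the basis elements $\z_M^a q_M^b$ correspond to $n=0$ and are fixed), so $\rT_M$ is an idempotent projector with image $\fK_M^+$; and since $\rT_M$ commutes with $\cG_{\fK}$, hence with $P_{\fK_M}$, both summands are $P_{\fK_M}$-stable. Tensoring with $V$ reduces the proposition to the vanishing
\[\rH^i(P_{\fK_M}, X_M \otimes V) = 0 \quad \text{for all } i \geq 0.\]

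My plan for this vanishing is a Tate--Sen style argument carried out through the Hochschild--Serre spectral sequence for the short exact sequence $1 \to U_{\fK_M} \to P_{\fK_M} \to \Gamma_{\fK_M} \to 1$, each factor being isomorphic to $\Z_p$. Since $\rT_M$ is built as a composition of a normalized trace $\Lambda_m$ in the $U$-direction with the classical cyclotomic Tate trace in the $\Gamma$-direction, the module $X_M$ admits a matching splitting $X_M = Y_M \oplus Z_M$, where $Y_M := X_M \cap \C(\fF_{Mp^{\infty}}^+ \fK_M^+)$ is $U_{\fK_M}$-fixed and $Z_M$ carries a non-trivial $U_{\fK_M}$-action. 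On $Z_M$, the explicit formula $g(\z_M^i q_M^j) = \z_M^i q_M^j \, \z_{p^m}^{i(d-1)+jb}$ applied to $g = u_m = (p^m, 0)$ shows that $u_m - 1$ multiplies a basis component by a non-trivial $p$-power root of unity minus one, hence is invertible on $Z_M$ with continuous inverse whose valuation loss is universally bounded (essentially by $1/(p-1)$). Symmetrically, Tate's classical normalized-trace argument applied to the cyclotomic tower $\fF_{Mp^{\infty}}/\fF_M$ makes $\gamma_m - 1$ invertible on $Y_M$ with bounded-loss inverse. Together these should yield $\rH^*(P_{\fK_M}, X_M \otimes V) = 0$ once the $V$-twist is absorbed.

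The main technical obstacle is transporting the scalar invertibilities above to the $V$-tensored setting. Writing the $P_{\fK_M}$-action on $T$ as $g = \mathrm{id}_T + 2p \cdot \eta(g)$, permitted by the hypothesis that $P_{\fK_M}$ acts trivially on $T/2pT$, the twisted operator on $Z_M \otimes V$ decomposes as $(u_m - 1) \otimes \mathrm{id}_V + 2p \, (u_m \otimes \eta(u_m))$, and inverting it demands the convergence of a geometric series in $2p \cdot ((u_m - 1)^{-1} \otimes \mathrm{id}_V) \cdot (u_m \otimes \eta)$; this is guaranteed by the assumption $v_p(M) = m \geq v_p(2p)$ combined with the bounded-loss property of $(u_m - 1)^{-1}$ on $Z_M$. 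A symmetric geometric-series argument handles $\gamma_m - 1$ on $Y_M \otimes V$, and the Hochschild--Serre spectral sequence (together with the fact that each of $U_{\fK_M}, \Gamma_{\fK_M}$ is procyclic, so cohomology is computed in only two degrees) assembles these vanishings into the desired $\rH^i(P_{\fK_M}, X_M \otimes V) = 0$ for all $i \geq 0$.
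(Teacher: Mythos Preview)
Your proposal is correct and follows essentially the same route as the paper: the direct-sum splitting $\C(\fK_{Mp^\infty}^+)=\fK_M^+\oplus X_M$, the further splitting of $X_M$ into a $U_{\fK_M}$-fixed piece and a complement on which $u_m-1$ is invertible with bounded-loss inverse, the geometric-series upgrade to $X_M\otimes T$ using the hypothesis on $T/2pT$, and assembly via Hochschild--Serre for $1\to U_{\fK_M}\to P_{\fK_M}\to\Gamma_{\fK_M}\to 1$. One small point you glossed over: the $\Gamma_{\fK_M}$-action on $\rH^1(U_{\fK_M},\,\cdot\,)$ is the original action twisted by the character $\gamma_m\mapsto e^{-p^m}$ (this is established earlier in the paper), so for the $\rH^2$ case you need $e^{-p^m}\gamma_m-1$, not $\gamma_m-1$, to be invertible on the relevant piece; but the same explicit computation on the $\zeta_M^i$-basis (with $i\neq 0$) shows $a\gamma_m-1$ is invertible for any $a\in\Z_p^*$, so this is a harmless omission.
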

Pour d\'emontrer cette proposition, on a besoin de lemmes pr\'eparatoires:

On a une d\'ecomposition $\C(\fK_{Mp^{\infty}}^+)=\fK_M^+\oplus X_M$ comme $\fK_M^+$-module de Banach, o\`u $X_M=\{x\in \C(\fK_{Mp^{\infty}}^+)| \tr_M(x)=0\}$. On note $J'=\cup_n\frac{1}{p^n}\N$. D'apr\`es le corollaire \ref{C}, un \'el\'ement $x$ de $X_M$ s'\'ecrit de mani\`ere unique sous la forme
\begin{equation}\label{XM}x=\sum_{(i,j)\notin (0,\N), (i,j)\in J\times J'}a_{ij}(x)\z_M^iq_M^{j},
\end{equation}
o\`u $a_{ij}(x)$ est une suite d'\'el\'ement de $F_M$ v\'erifi\'ee que $v_p(a_{ij}(x))+\frac{j}{M}$ tend vers $+\infty$ \`a l'infini (i.e. $\forall N>0$, l'ensemble de $(i,j)\in J\times J'$ tel que $v_p(a_{ij}(x))+\frac{j}{M}\leq N$ est un ensemble fini).

\begin{lemma}\label{X}
\begin{itemize}
\item[(1)] Un \'el\'ement $x\in X_M^{U_m}$ si et seulement si $a_{ij}(x)=0$ pour tout $j\notin \N$;
\item[(2)]On a une d\'ecomposition $X_M=X_M^{U_m}\oplus Y_M$ comme $\fK_M^+$-module de Banach, o\`u
    \[Y_M=\{x=\sum_{(i,j)\in J\times (J'-\N)}a_{ij}(x)\z_M^iq_M^{j}\}\subset X_M.\]
\end{itemize}
\end{lemma}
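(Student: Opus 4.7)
Pour le point $(1)$, je commencerais par calculer explicitement l'action de $u_m$ sur un mon\^ome $\z_M^i q_M^j$ avec $(i,j) \in J \times J'$. L'\'el\'ement $u_m = (p^m, 0)$ correspond \`a $b = p^m$ et $d = 1$; la formule $g(\z_M^i q_M^j) = \z_M^i q_M^j\, \z_{p^m}^{i(d-1)+jb}$ rappel\'ee au d\'ebut de cette section (qui s'\'etend \`a $j \in J'$ en \'ecrivant $j = j_0 + k$ avec $j_0 \in J$, $k \in \N$, puisque $u_m$ agit trivialement sur $q_M^k$ pour $k \in \N$) donne alors
\[u_m(\z_M^i q_M^j) = \z_M^i q_M^j\, \z_{p^m}^{jp^m}.\]
Or $\z_{p^m}^{jp^m} = 1$ si et seulement si $jp^m \in p^m\Z$, c'est-\`a-dire $j \in \N$ (puisque $j$ est un rationnel positif de d\'enominateur une puissance de $p$). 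Par unicit\'e du d\'eveloppement d'un \'el\'ement de $X_M$ (cons\'equence du corollaire \ref{C}), l'\'equation $u_m x = x$ se traduit, coefficient par coefficient dans la base $\{\z_M^i q_M^j\}$, en $a_{ij}(x)(\z_{p^m}^{jp^m}-1) = 0$; d'o\`u l'\'equivalence voulue.

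Pour le point $(2)$, l'id\'ee est de partitionner l'ensemble d'indexation $\{(i,j) \in J \times J' : (i,j) \notin \{0\} \times \N\}$ en la r\'eunion disjointe $(J \setminus \{0\}) \times \N$ et $J \times (J' \setminus \N)$. \'Etant donn\'e $x \in X_M$, je poserais
\[x_0 := \sum_{\substack{i \in J \setminus \{0\}\\ j \in \N}} a_{ij}(x) \z_M^i q_M^j, \qquad x_1 := \sum_{(i,j) \in J \times (J' \setminus \N)} a_{ij}(x) \z_M^i q_M^j,\]
de sorte que $x = x_0 + x_1$. Il resterait \`a v\'erifier que $x_0$ et $x_1$ appartiennent bien \`a $\C(\fK_{Mp^{\infty}}^+)$ (les estimations de valuation du corollaire \ref{C} \'etant h\'erit\'ees par restriction du support des indices) et \`a $X_M$ (i.e.\ $\rT_M(x_0) = \rT_M(x_1) = 0$): cela r\'esulte de ce que les indices \'ecart\'es $(0, \N)$ sont pr\'ecis\'ement ceux correspondant \`a la projection sur $\fK_M^+$. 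Le point $(1)$ donne alors $x_0 \in X_M^{U_m}$ et $x_1 \in Y_M$ par d\'efinition.

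L'unicit\'e de l'\'ecriture garantit \`a la fois que $X_M^{U_m} \cap Y_M = \{0\}$ (les supports des indices sont disjoints) et que la d\'ecomposition est compatible avec la structure de $\fK_M^+$-module de Banach, puisque la valuation $v_p$ se lit sur les coefficients $a_{ij}(x)$ via le corollaire \ref{C}$(2)$ ind\'ependamment pour chacun des deux facteurs. Je n'anticipe pas d'obstacle majeur: la d\'emonstration repose enti\`erement sur l'interaction transparente entre l'action diagonale de $u_m$ dans la base $\{\z_M^i q_M^j\}$ et l'unicit\'e du d\'eveloppement fourni par Ax-Sen-Tate.
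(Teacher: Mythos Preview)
Your proposal is correct and follows exactly the approach the paper has in mind: the paper's own proof is the single sentence ``On d\'eduit les r\'esultats du corollaire~\ref{C}'', and you have simply unpacked this by computing the action of $u_m$ on the basis $\z_M^i q_M^j$ (the very formula the paper uses explicitly in the proof of the next lemma, Lemma~\ref{sur}) and then splitting the index set accordingly. There is nothing to add.
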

\begin{proof}[D\'emonstration]
On d\'eduit les resultats du corollaire $\ref{C}$.
\end{proof}
 Soit $e_1,\cdots, e_d$ une $\Z_p$-base de $T$.  Tout \'el\'ement $x\in X_M\otimes T$ s'\'ecrit uniquement sous la forme $\sum_{i=1}^dx_i\otimes e_i$ avec $x_i\in X_M$. On d\'efinit une valuation $v$ sur $X_M\otimes T$ par la formule: $v(x)=\inf_{1\leq i\leq d }v_p(x_i).$

\begin{lemma}\label{sur}
Il existe des isomorphismes de $\fK_M^{+}$-modules:
\begin{itemize}
\item[(1)]$(X_M\otimes T)^{U_m}\cong(X_M^{U_m}\otimes T)^{U_m}\cong X_M^{U_m}\otimes T^{U_m}$;
\item[(2)]$Z_M:=\bigl(X_M^{U_m}\otimes T\bigr)/(u_m-1) \cong\rH^1(U_m, X_M\otimes T ).$
\end{itemize}
\end{lemma}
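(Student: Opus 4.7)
The plan is to reduce both isomorphisms to the single assertion that $u_m - 1$ is a topological automorphism of $Y_M \otimes T$. By Lemma \ref{X}(2), $X_M = X_M^{U_m} \oplus Y_M$ is a $U_m$-stable direct sum decomposition of $\fK_M^+$-Banach modules, hence $X_M \otimes T = (X_M^{U_m} \otimes T) \oplus (Y_M \otimes T)$ as $\Z_p[U_m]$-modules. Since $U_m$ acts trivially on $X_M^{U_m}$, the first summand has $(X_M^{U_m} \otimes T)^{U_m} = X_M^{U_m} \otimes T^{U_m}$; and since $U_m \cong \Z_p$ is procyclic with generator $u_m$, continuous cohomology gives $\rH^1(U_m, X_M \otimes T) = (X_M \otimes T)/(u_m - 1)$. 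Combined with the decomposition, both (1) and (2) follow once we show $(Y_M \otimes T)^{U_m} = 0$ and $(Y_M \otimes T)/(u_m - 1) = 0$.

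To analyse $u_m$ on $Y_M$, I would use the $F_M$-basis $\{\z_M^i q_M^j\}_{(i,j) \in J \times (J' - \N)}$ from Lemma \ref{X}(2) together with the explicit formula $u_m(\z_M^i q_M^j) = \z_M^i q_M^j \cdot \z_{p^m}^{jp^m}$. Each basis element is an eigenvector of $u_m$ with eigenvalue a nontrivial $p$-power root of unity $\z_j$, satisfying the uniform bound
\[
v_p(\z_j - 1) \;\leq\; \tfrac{1}{p-1} \;<\; v_p(2p).
\]
The strict inequality holds both for $p$ odd (where $v_p(2p) = 1$ and $1/(p-1) \leq 1/2$) and for $p = 2$ (where $v_2(2p) = 2$ and $1/(p-1) = 1$); this is precisely where the hypothesis on $T/2pT$, rather than merely on $T/pT$, enters.

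Next, group the basis into $u_m$-orbits to obtain a topological direct sum $Y_M = \widehat{\bigoplus}_\alpha W_\alpha$ into $u_m$-stable $F_M$-submodules $W_\alpha$ of finite rank, on each of which $u_m$ acts as multiplication by a root of unity $\z_\alpha$ with $v_p(\z_\alpha - 1) \leq 1/(p-1)$. Writing $B := u_m|_T = 1 + 2pC$ with $C \in \mathrm{End}_{\Z_p}(T)$ (by hypothesis), the operator $u_m$ on $W_\alpha \otimes_{\Z_p} T$ is $\z_\alpha \otimes B$, so
\[
u_m - 1 \;=\; (\z_\alpha - 1) + \z_\alpha \cdot 2pC \;=\; (\z_\alpha - 1)\bigl(1 + \z_\alpha \cdot 2pC \cdot (\z_\alpha - 1)^{-1}\bigr).
\]
The bracketed factor is invertible by a Neumann series since $v_p(2p) - v_p(\z_\alpha - 1) > 0$, with inverse of operator norm at most $1$. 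Hence $(u_m - 1)^{-1}$ exists on $W_\alpha \otimes T$ with norm at most $p^{v_p(\z_\alpha - 1)} \leq p^{1/(p-1)}$, a bound uniform in $\alpha$, so $u_m - 1$ extends by continuity to a topological automorphism of $Y_M \otimes T$, as required.

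The main obstacle is the uniform control of $(u_m - 1)^{-1}$ across the infinitely many orbits $W_\alpha$, since $\z_\alpha - 1$ tends to zero as the $p$-power denominator of the underlying $j$ grows. The crucial input is the uniform estimate $v_p(\z_\alpha - 1) \leq 1/(p-1)$ strictly below $v_p(2p)$, which simultaneously ensures convergence of each Neumann series and yields an $\alpha$-independent operator norm bound; without the stronger hypothesis on $T/2pT$ (as opposed to $T/pT$), this uniformity would fail in the case $p = 2$.
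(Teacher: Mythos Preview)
Your proposal is correct and follows essentially the same route as the paper: both reduce to showing $u_m-1$ is bijective on $Y_M\otimes T$, and both achieve this by factoring $u_m-1=(u_m\otimes 1-1)\bigl(1+(u_m\otimes 1-1)^{-1}(u_m\otimes 1)(1\otimes u_m-1)\bigr)$ and inverting the second factor via a Neumann series using the gap $v_p(2p)-\tfrac{1}{p-1}>0$. The only cosmetic difference is that you first split $Y_M$ into one-dimensional eigenspaces for $u_m\otimes 1$ and argue on each, whereas the paper works directly with the global operator on $Y_M\otimes T$ using the explicit coordinate description from Corollary~\ref{C}; the estimates and the use of the $T/2pT$ hypothesis are identical.
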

\begin{proof}[D\'emonstration]
On se ram\`ene \`a montrer que l'op\'erateur $u_m-1$ est inversible sur le $\fK_M^{+}$-module $ Y_M\otimes T$.
On a la formule explicite de $u_m-1$ sur $x=\sum\limits_{(i,j)\in J\times (J'-\N)}a_{ij}\z_{M}^{i}q_M^{j}\in  Y_M$:
\[(u_m-1)(x)=\sum_{(i,j)\in J\times (J'-\N)}a_{ij}\z_M^i(\z_{p^m}^{jp^m}-1)q_M^{j}.\]
Comme $0\leq v_p(\z_{p^m}^{jp^m}-1)< \frac{1}{p-1}$,  $\sum_{(i,j)\in J\times (J'-\N)}a_{ij}\z_{M}^{i}(\z_{p^m}^{jp^m}-1)^{-1}q_M^{j}$ est encore un \'el\'ement de $Y_M$. Donc l'op\'erateur $u_m-1$ est inversible sur $Y_M$ et on a $v((u_m-1)^{-1}x)\geq v(x)-\frac{1}{p-1}$.

L'op\'erateur $u_m-1$ sur $Y_M\otimes T$ se d\'ecompose comme suit:
\begin{equation*}
u_m-1=(u_m\otimes1-1)+ (u_m\otimes1)(1\otimes u_m-1)=(u_m\otimes1-1)(1+u'),
\end{equation*}
o\`u $u'=(u_m\otimes1-1)^{-1}(u_m\otimes1)(1\otimes u_m-1)$.
Si $x=\sum_{i}x_i\otimes e_i\in Y_M\otimes T$, alors on a  $v((1\otimes u_m-1)(x_i\otimes e_i))\geq v(x_i\otimes e_i)+v_p(2p)$ car $\rP_m$ agit trivialement sur $T/2pT$. Donc on a
\[v((1\otimes u_m-1)x)=\inf_iv((1\otimes u_m-1)x_i\otimes e_i)\geq \inf_iv(x_i\otimes e_i)+v_p(2p)=v(x)+v_p(2p).\]
Par cons\'equent, on a
$v(u'(x))\geq v(x)+v_p(2p)-\frac{1}{p-1}$.
Ceci permet de montrer que la s\'erie $\sum_{n=0}^{+\infty}(-u')^n$ converge et sa somme est l'inverse de $(1+u')$ sur $Y_M\otimes T$. Donc $u_m-1$ est inversible sur $Y_M\otimes T$.
\end{proof}

\begin{lemma}\label{inver}Si $a\in\Z_p^*$, l'op\'erateur $a\gamma_m-1$ est inversible sur $X_M^{U_m}$.
\end{lemma}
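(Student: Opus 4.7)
The plan is to mimic the proof of Lemma~\ref{sur}, exploiting the explicit expansion of elements of $X_M^{U_m}$ given in Lemma~\ref{X}$(1)$. First I would diagonalize the action of $\gamma_m$: any $x \in X_M^{U_m}$ has a unique expansion $x = \sum_{(i,j) \in (J\setminus\{0\}) \times \N} a_{ij}(x) \z_M^i q_M^j$ with $a_{ij}(x) \in F_M$, and since $\gamma_m = (0,p^m)$ has $b=0$ and $d = e^{p^m}$, the action formula $g(\z_M^i q_M^j) = \z_M^i q_M^j \z_{p^m}^{i(d-1)+jb}$ from the start of this section gives $\gamma_m(\z_M^i q_M^j) = \lambda_i \cdot \z_M^i q_M^j$ where $\lambda_i := \z_{p^m}^{i(e^{p^m}-1)}$ depends only on $i$. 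Because $\gamma_m$ fixes the coefficients $a_{ij} \in F_M$, we obtain
\[ (a\gamma_m - 1)(x) = \sum_{(i,j)} (a\lambda_i - 1)\, a_{ij}(x)\, \z_M^i q_M^j. \]

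Second, I would show $a\lambda_i \neq 1$ and control $v_p(a\lambda_i - 1)$ uniformly. Writing $i = b_0/p^n$ with $n \geq 1$ minimal, the exponent $i(e^{p^m}-1)$ has exact $p$-adic valuation $m - n + v_p(b_0)$, so $\lambda_i$ is a non-trivial $p$-power root of unity, forcing $v_p(\lambda_i - 1) \in \{1/(p^{s-1}(p-1)) : s \geq 1\}$; since these fractional values are disjoint from $v_p(a-1) \in \Z_{\geq 0} \cup \{\infty\}$ (using $a \in \Z_p^*$ and $p$ odd, which can be assumed via the hypothesis $m \geq v_p(2p)$), decomposing $a\lambda_i - 1 = (a-1) + a(\lambda_i - 1)$ and applying the ultrametric inequality yields
\[ v_p(a\lambda_i - 1) = \min\{v_p(a-1),\, v_p(\lambda_i - 1)\} \leq \max\bigl(v_p(a-1),\, \tfrac{1}{p-1}\bigr) =: C. \]
In particular $a\lambda_i \neq 1$ for every $i \in J \setminus \{0\}$ (a non-trivial $p$-power root of unity cannot lie in $\Z_p^*$ for $p$ odd).

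Finally, I would define the candidate inverse by the diagonal formula
\[ S(x) := \sum_{(i,j) \in (J\setminus\{0\}) \times \N} (a\lambda_i - 1)^{-1}\, a_{ij}(x)\, \z_M^i q_M^j, \]
and observe that the uniform bound $v_p((a\lambda_i - 1)^{-1}) \geq -C$ together with the convergence of the expansion of $x$ ensures that $S(x)$ converges in $X_M^{U_m}$ and that $S$ is a bounded $\fK_M^+$-linear endomorphism. The identities $S(a\gamma_m - 1) = \id = (a\gamma_m - 1)S$ then hold monomial-wise by construction and extend by continuity.

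The main obstacle is Step~2, the uniform upper bound on $v_p(a\lambda_i - 1)$. The possible cancellation between $(a-1)$ and $a(\lambda_i - 1)$ is controlled only because the two summands have valuations in disjoint subsets of $\Q_{\geq 0}$, which relies on $p$ being odd (or more generally on the arithmetic of the specific $a$ used downstream, e.g.\ $a = e^{-p^m} \in 1 + p^m\Z_p$, which automatically satisfies $v_p(a-1) \geq m > 1/(p-1)$). Verifying that one may always dispense with pathological cases via this hypothesis is the delicate technical point.
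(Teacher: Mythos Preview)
Your approach is essentially identical to the paper's: the paper diagonalizes $\gamma_m$ on $X_M^{U_m}$ via the expansion $x=\sum_{i\neq 0,\,k}a_{ik}(x)\z_M^iq_M^k$, asserts $0\le v_p(a\z_{p^m}^{i(e^{p^m}-1)}-1)\le \tfrac{1}{p-1}$, and writes down the same diagonal inverse $S$ you propose. Your write-up is in fact more careful than the paper's, which simply states the valuation bound without justification.

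Your worry about Step~2 is well-founded and worth recording. For $p$ odd the argument goes through exactly as you say: $v_p(\lambda_i-1)\in\{1/(p^{s-1}(p-1)):s\ge1\}$ never meets $v_p(a-1)\in\Z_{\ge0}\cup\{\infty\}$, so the ultrametric inequality is strict and the bound $v_p(a\lambda_i-1)\le 1/(p-1)$ holds for every $a\in\Z_p^*$. For $p=2$, however, the collision $v_2(\lambda_i-1)=1=v_2(a-1)$ can occur (take $i$ of denominator~$2$, so $\lambda_i=-1$, and $a\equiv 3\pmod 4$); in particular for $a=-1$ one gets $a\lambda_i-1=0$, so the lemma \emph{as stated} is false for $p=2$. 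The paper's proof silently assumes the bound. The fix is exactly the one you suggest: in the only application (proof of Proposition~\ref{Ta}) one has $a=e^{-p^m}$ with $v_p(a-1)=m\ge v_p(2p)$, hence $v_p(a-1)\ge 2>1$ when $p=2$, and the collision is avoided. So your proof is correct with that proviso, and your diagnosis of the obstacle is accurate.
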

\begin{proof}[D\'emonstration]Comme un \'el\'ement $x$ dans $X_M^{U_m}$ s'\'ecrit sous la forme
 $\sum_{i\in J, i\neq 0}\sum_{k\in\N}a_{ik}(x)\z_M^iq_M^{k} $ avec $a_{ij}(x)\in F_M$, l'action de $a\gamma_m-1$ sur un \'el\'ement $x\in X_M^{U_m}$ est donn\'ee par la formule:
$(a\gamma_m-1)x=\sum_{i\in J, i\neq 0}\sum_{k\in\N}a_{ik}(x)(a\z_{p^m}^{i(e^{p^m}-1)}-1)\z_M^iq_M^{k}.$

Comme $i\neq 0$, on a $0\leq v_p(a\z_{p^m}^{i(e^{p^m}-1)}-1)\leq \frac{1}{p-1}$. On en d\'eduit que $a\gamma_m-1$ est inversible sur $X_M^{U_m}$ avec la formule de l'action de $(a\gamma_m-1)^{-1}$ sur $X_M^{U_m}$ donn\'ee par
\[(a\gamma_m-1)^{-1}x=\sum_{i\in J, i\neq 0}\sum_{k\in\N}a_{ik}(a\z_{p^m}^{i(e^{p^m}-1)}-1)^{-1}\z_M^iq_M^{k}.\]
\end{proof}

Revenons \`a la d\'emonstration de la proposition \ref{Ta}:
\begin{proof}[D\'emonstration] On se ram\`ene  \`a prouver $\rH^i(\rP_m, X_M\otimes T )=0.$

Notons $Z_M=\bigl(X_M^{U_m}\otimes T\bigr)/(u_m-1)$. D'apr\`es le lemme \ref{sur}, on a le diagramme commutatif:
\[\xymatrix{0\ar[r]&Z_M\ar[r]\ar[d]^{e^{-p^m}\gamma_m-1}& \rH^1(U_m,X_M\otimes T)\ar[r]\ar[d]^{\gamma_m-1}&0\\ 0\ar[r]&Z_M\ar[r]&\rH^1(U_m, X_M\otimes T)\ar[r]& 0 .}\] 
 Comme $e^{-p^m}\gamma_m\otimes 1-1$ est inversible sur $X_M^{U_m}\otimes T$ d'apr\`es le lemme \ref{inver}, on peut factoriser l'op\'erateur $e^{-p^m}\gamma_m-1$ sur $X_M^{U_m}\otimes T$ comme suit:
\begin{equation*}
e^{-p^m}\gamma_m-1=(e^{-p^m}\gamma_m\otimes 1-1)+(e^{-p^m}\gamma_m\otimes 1)(1\otimes e^{-p^m}\gamma_m-1)=(e^{-p^m}\gamma_m\otimes 1-1)(1+\gamma^{'}),
\end{equation*}
o\`u $\gamma^{'}=(e^{-p^m}\gamma_m\otimes 1-1)^{-1}(e^{-p^m}\gamma_m\otimes 1)(1\otimes e^{-p^m}\gamma_m-1)$.

Par ailleurs, si $x=\sum_ix_i\otimes e_i\in X_M^{U_m}\otimes T$, on a $v((1\otimes e^{-p^m}\gamma_m-1)(x))\geq  v_p(2p)+v(x)$ car $\rP_m$ agit trivialement sur $T/2pT$. Ceci permet de montrer que la s\'erie $\sum_{n=0}^{+\infty}(-\gamma^{'})^n$ converge et sa somme est l'inverse de $(1+\gamma^{'})$ sur $X_M^{U_m}\otimes T$. Donc $e^{-p^m}\gamma_m-1$ est inversible sur $X_M^{U_m}\otimes T$.
On en d\'eduit que l'op\'erateur  $e^{-p^m}\gamma_m-1$ est inversible sur $Z_M$. Donc on a 
\[\rH^1(U_m, X_M\otimes T)^{\Gamma_m}=0 \text{ et } \rH^1(U_m,X_M\otimes T)/(\gamma_m-1)=0.\]
\begin{itemize}
\item[(1)]si $i=1$, par la suite exacte d'inflation-restriction, on a:
\[0\ra\rH^1(\Gamma_m,(X_M\otimes T)^{U_m})\ra\rH^1(\rP_m,X_M\otimes T)\ra\rH^1(U_m,X_M\otimes T)^{\Gamma_m}=0\ra 0.\]
Par ailleurs, on d\'eduit $\rH^1(\Gamma_m,(X_M\otimes T)^{U_m})=0$ des lemmes \ref{sur} et $\ref{inver}$. Ceci permet de conclure.
\item[(2)]si $i=2$, d'apr\`es la suite spectrale de Hochschild-Serre, on a l'isomorphisme:
\[\rH^2(\rP_m, X_M\otimes T)\cong\rH^1(\Gamma_m,\rH^1(U_m,X_M\otimes T))=\rH^1(U_m,X_M\otimes T)/(\gamma_m-1)=0.\]
\end{itemize}
\end{proof}

\begin{prop}\label{Tatenorm}Soit $v_p(M)\geq v_p(2p)$; si $V$ est une
$\Q_p$-repr\'esentation de $P_{\fK_M}$ munie d'un $\Z_p$-r\'eseau
$T$ tel que $P_{\fK_{M}}$ agit trivialement sur $T/2pT$, alors pour
$i\in\N$, $\bR_M$ induit un isomorphisme:
\[\bR_M:\rH^{i}(P_{\fK_M},\B_{\dR}^{+}(\fK^+_{Mp^{\infty}})\otimes V)\cong\rH^{i}(P_{\fK_M},\tilde{\fK}^+_M\otimes V).\]
\end{prop}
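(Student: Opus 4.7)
The plan is a dévissage that reduces the statement to Proposition~\ref{Ta} via the de Rham filtration. Since $\bR_M$ is $\tilde{\fK}_M^+$-linear, continuous, and commutes with $\cG_\fK$ (hence with $P_{\fK_M}$), I would first write a topological $P_{\fK_M}$-equivariant decomposition
\[
\B_{\dR}^+(\fK_{Mp^\infty}^+) = \tilde{\fK}_M^+ \oplus X'_M, \qquad X'_M := \ker \bR_M.
\]
It then suffices to prove that $\rH^i(P_{\fK_M}, X'_M \otimes V) = 0$ for $i = 1, 2$; together with Proposition~\ref{analytic} and the fact that $P_{\fK_M} \cong \rP_m$ has cohomological dimension $\leq 2$, this gives the desired isomorphism on cohomology.

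Next I would exploit the filtration $\Fil^k = (\ker\theta)^k \B_{\dR}^+(\fK_{Mp^\infty}^+)$, which is $P_{\fK_M}$-stable and induces a filtration $\Fil^k X'_M := X'_M \cap \Fil^k$. Since $\fK_{Mp^\infty}$ contains $\mu_{p^\infty}$, the character $\chi_{\cycl}$ is trivial on $\cG_{\fK_{Mp^\infty}}$, so one sees from the description of $\B_{\dR}^+(\fK_{Mp^\infty}^+)$ in Lemmas~\ref{DR} and~\ref{DR1} that $\gr^k \B_{\dR}^+(\fK_{Mp^\infty}^+) \cong \C(\fK_{Mp^\infty}^+) \cdot t^k$. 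The compatibility of $\bR_M$ with $\rT_M$ at each graded level then gives $\gr^k X'_M \cong X_M \cdot t^k$ as $P_{\fK_M}$-modules, where $X_M = \ker \rT_M$ is the trace-zero part appearing in the proof of Proposition~\ref{Ta}. Now I would apply Proposition~\ref{Ta} to the twisted representation $V(k) := V \otimes \Z_p(k)$; the lattice hypothesis on $T(k)/2pT(k)$ is preserved because $\chi_{\cycl}$ restricted to $P_{\fK_M}$ takes values in $1 + p^m \Z_p \subset 1 + 2p\Z_p$ (using $m \geq v_p(2p)$), so $\chi_{\cycl}^k \equiv 1 \pmod{2p}$. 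This yields $\rH^i(P_{\fK_M}, \gr^k X'_M \otimes V) = \rH^i(P_{\fK_M}, X_M \otimes V(k)) = 0$, and the long exact sequences attached to $0 \to \Fil^{k+1}X'_M \to \Fil^k X'_M \to \gr^k X'_M \to 0$ give by induction $\rH^i(P_{\fK_M}, X'_M/\Fil^k X'_M \otimes V) = 0$ for every $k$.

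The main obstacle will be the passage from finite level to the Fréchet object, namely justifying
\[
\rH^i(P_{\fK_M}, X'_M \otimes V) \cong \plim_k \rH^i(P_{\fK_M}, (X'_M/\Fil^k X'_M) \otimes V).
\]
This rests on two ingredients: first, that $X'_M$ is the strict projective limit of the Banach $P_{\fK_M}$-modules $X'_M/\Fil^k X'_M$ for the $\ker\theta$-adic Fréchet topology; second, that the inverse system satisfies Mittag--Leffler because the transition maps are surjective, so the $\mathrm{R}^1 \plim$ obstruction vanishes. The cohomological dimension bound on $\rP_m$ (Proposition~\ref{analytic}) ensures the relevant $\lim^1$ terms appearing in the spectral sequence of continuous cohomology vanish in the range of interest. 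Combining these yields the vanishing of $\rH^i(P_{\fK_M}, X'_M \otimes V)$ for $i \geq 1$, concluding the proof.
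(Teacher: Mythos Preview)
Your dévissage is correct and follows the same overall strategy as the paper: split off $\ker\bR_M$, use the $t$-adic filtration to reduce to Proposition~\ref{Ta} on each graded piece (with the Tate-twist bookkeeping you carry out), and then pass to the limit. The only real difference is in how the limit step is executed. The paper avoids the Fréchet/Mittag--Leffler machinery entirely by writing down the coboundary explicitly: given a cocycle $c_0$ on $\tilde X_M\otimes T$, one applies $\theta$, uses Proposition~\ref{Ta} to write $\theta(c_0)=db_0$ in $X_M\otimes T$, lifts $b_0$ back via $\iota_{\dR}$, and observes that $c_1:=t^{-1}\bigl(c_0-d(\iota_{\dR}b_0)\bigr)$ is again a cocycle with values in $\tilde X_M\otimes T(-1)$; iterating gives $c_0=\sum_{n\geq 0}t^n\,d(\iota_{\dR}b_n)$, which converges in the $\ker\theta$-adic topology. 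This direct construction sidesteps precisely the ``main obstacle'' you identify (commuting continuous cohomology with the inverse limit), and is what makes the paper's argument shorter. Your version is perfectly valid but trades that concreteness for an appeal to $\mathrm R^1\varprojlim$ vanishing; either way the content is the same reduction to Proposition~\ref{Ta} graded level by graded level.
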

\begin{proof}[D\'emonstration]
On pose $\tilde{X}_M=\{x\in\B_{\dR}^{+}(\fK_{Mp^{\infty}}^+)| \bR_M(x)=0\}$. Quel que soit $n\in\N$, on a une d\'ecomposition de $\B_{\dR}^+(\fK_{Mp^{\infty}}^+)/t^n$ comme $\tilde{\fK}^+/t^n$-module:
$\B_{\dR}^+(\fK_{Mp^{\infty}}^+)/t^n= \tilde{\fK}_M^+/t^n\oplus \tilde{X}_M/t^n.$
Cela induit une d\'ecomposition de $\B_{\dR}^+(\fK_{Mp^{\infty}}^+)$ comme $\tilde{\fK}^+$-module:
$\B_{\dR}^{+}(\fK_{Mp^{\infty}}^+)=\tilde{\fK}_M^+\oplus\tilde{X}_M .$
On se ram\`ene donc \`a montrer: $\rH^{i}(P_{\fK_M},\tilde{X}_M\otimes T)=0.$

Soit $c_0$ un $i$-cocycle qui pr\'esente un \'el\'ement dans $\rH^i(P_{\fK_M}, \tilde{X}_M\otimes T)$. D'apr\`es la proposition \ref{Ta}, $\theta (c_0)$ est un cobord dans $\rH^i(P_{\fK_M}, X_M\otimes T)$ et donc $\theta c_0=d b_0$. L'\'el\'ement $c_0- d(\iota_{\dR}(b_0))$ est encore un $i$-cocycle dans $\rH^i(P_{\fK_M}, \tilde{X}_M\otimes T)$, qui est \`a valeurs dans $t\tilde{X}_M\otimes T$. On d\'efinit une suites $(d b_n)_{n\in\N}$ de $i$-cobords dans $\rH^i(P_{\fK_M}, X_M\otimes T(-n))$ et $(c_n)_{n\in\N}$ de $i$-cocycles dans $\rH^i(P_{\fK_M}, \tilde{X}_M\otimes T(-n))$ par r\'ecurrence:
 \[db_{n-1}=\theta(c_{n-1}) \text{ et } c_n= t^{-1}(c_{n-1}-d(\iota_{\dR}b_{n-1}))),   \text{ si } n\geq 1.\]
Donc, on a $ c_0=\sum_{n=0}^{+\infty}t^nd(\iota_{\dR}(b_{n})),$
qui est une somme des cobords qui converge dans $\rH^i(P_{\fK_M}, \tilde{X}_M\otimes T)$. Ceci permet de conclure la proposition.
\end{proof} 
\section{La loi de r\'eciprocit\'e explicite de Kato}

\subsection{Construction de l'application  exponentielle duale de Kato}\label{constructiondeloi}
On note $\cK^+=\Q_p[[q]]$ le compl\'et\'e $q$-adique de $\fK^+$ et $\cK_M^+$ le compl\'et\'e $q$-adique de $\fK^+$-module $\fK_M^+=\fK^+[\z_M,q_M]$. 
On a donc $\cK^+_M=F_M[[q_M]]$. On note $\tilde{\cK}^+=\Q_p[[\tilde{q},t]]$ le compl\'et\'e $\tilde{q}$-adique de $\tilde{\fK}^+=\iota_{\dR}(\fK^+)[[t]]$, 
o\`u l'application $\iota_{\dR}$ identifie $\fK^+$ \`a un sous-anneau de $\B_{\dR}^+(\ol{\fK}^+)$.  
On note $\tilde{\cK}_M^+$ le compl\'et\'e $\tilde{q}$-adique de $\tilde{\fK}^+_M$ comme $\tilde{\fK}^+$-module. 
D'apr\`es le lemme \ref{K}, on a bien $\tilde{\cK}^+_M=\tilde{\cK}^+[\tilde{\z}_M,\tilde{q}_M]=\tilde{\cK}^+[\z_M,\tilde{q}_M]=F_M[[t,\tilde{q}_M]].$
On d\'efinit une application $\theta: \tilde{\cK}^+_M\ra F_M[[q_M]]$ par r\'eduction modulo $t$. Elle co\"incide avec l'application $\theta$ sur $\tilde{\fK}_M^+$.

On note $\cK_{M,n}^+=\cK_M^+/(q_M)^{n}$, $\tilde{\cK}_{M,n}^+=\tilde{\cK}_M^+/(t,\tilde{q}_M)^{n}$. La repr\'esentation $\tilde{\cK}_M^+\otimes V_{k,j}$ de $\rP_m$ n'est pas une repr\'esentation analytique; mais c'est la limite projective des repr\'esentations analytiques $\{\tilde{\cK}_{M,n}^+\otimes V_{k,j}\}_{n\in \N}$ de $\rP_m$. 
\begin{lemma}Les actions de $\partial_1$ et $\partial_2-1$ sur $t$ et $\tilde{q}_M$ sont donn\'ees par les formules suivantes:

\begin{equation*}
\partial_1(t)=0,
\partial_1(\tilde{q}_M)=\frac{t}{M}\tilde{q}_M;
\partial_2(t)=t,\partial_2(\tilde{q}_M)=0.
\end{equation*}
\end{lemma}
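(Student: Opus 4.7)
The proof is a direct unwinding of definitions: use the explicit formula for the action of $\rP_m \cong P_{\fK_M}$ on the relevant elements of $\B_{\dR}^+$, then read off $\partial_1, \partial_2$ from the expansion $x*(u,v) = x + u\,\partial_1 x + v\,\partial_2 x + O((u,v)^2)$ of $(\ref{diff})$. I would treat $t$ and $\tilde{q}_M$ separately.

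For $t = \log\tilde{\z}$, the key input is $\sigma(\tilde{\z}) = \tilde{\z}^{\chi_{\cycl}(\sigma)}$, hence $\sigma(t) = \chi_{\cycl}(\sigma)\,t$ for every $\sigma \in \cG_\fK$. Under the identification $P_{\fK_M} \cong \rP_m$, the element $(u,v) = \bigl(\begin{smallmatrix}1&u\\0&e^v\end{smallmatrix}\bigr)$ acts via $\chi_{\cycl}$ by $e^v$, so $t*(u,v) = e^v t = t + vt + O(v^2)$. Comparing with the defining expansion yields $\partial_1(t) = 0$ and $\partial_2(t) = t$. The vanishing $\partial_1(t) = 0$ is also transparent from the action formula $g(\z_M^i q_M^j) = \z_M^i q_M^j\,\z_{p^m}^{i(d-1)+jb}$: taking $v=0$ kills $d-1$, so $(u,0)$ acts trivially on every $\z_{Mp^n}$, hence on $t$.

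For $\tilde{q}_M$, the statement $\partial_2(\tilde{q}_M) = 0$ is immediate from the same action formula with $b = u = 0$: then $g(q_M^j) = q_M^j$ for all $j \in J$, so every $q_{Mp^n}$ is fixed by $(0,v)$, and hence so is its Teichmüller lift $\tilde{q}_M$. The substantive point is the computation of $\partial_1(\tilde{q}_M)$. I would apply the action formula to $j = 1/p^n$ to obtain $(u,0)(q_{Mp^n}) = q_{Mp^n}\,\z_{Mp^n}^{\,c(u)}$ for the appropriate Kummer cocycle value, then lift to the Witt-vector level: the coherent system $\bigl(\sigma(q_{Mp^n})/q_{Mp^n}\bigr)_n \in \R(\ol{\fK}^+)$ is a power of the system defining $\tilde{\z}_M$, and Teichmüller multiplicativity gives $(u,0)(\tilde{q}_M) = \tilde{q}_M\cdot\tilde{\z}_M^{u}$ in the normalization built into $\rP_m \cong P_{\fK_M}$. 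Finally, the identity $\log \tilde{\z}_M = t/M$ in $\B_{\dR}^+$ (which follows from $\tilde{\z}_M^M = \tilde{\z}$ and $\log\tilde{\z} = t$) yields $\tilde{\z}_M^u = \exp(ut/M)$, whence $(u,0)(\tilde{q}_M) = \tilde{q}_M\bigl(1 + ut/M + O(u^2)\bigr)$ and $\partial_1(\tilde{q}_M) = (t/M)\,\tilde{q}_M$.

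The main obstacle is bookkeeping: one must carefully trace the normalization of the Kummer cocycle $U_{\fK_M} \cong p^m\Z_p$ through the Teichmüller lift, and convert the resulting power of $\tilde{\z}_M$ into a power of $\exp(t/M)$ using $\log\tilde{\z}_M = t/M$. Everything else is formal manipulation of power series in $(u,v)$ modulo terms of order $\geq 2$.
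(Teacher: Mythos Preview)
Your proof is correct and is precisely the intended computation: the paper states this lemma without proof, and your argument is the direct verification from the action formulas, exactly as carried out later in the paper (Lemma~\ref{2co}) where the identity $(u,v)\cdot(\tilde q_M^a\tilde\z_M^b)=\tilde q_M^a\tilde\z_M^{au+be^v}$ is used in the same way. The only point worth flagging is the normalization of the isomorphism $P_{\fK_M}\cong\rP_m$: your claim $(u,0)\cdot\tilde q_M=\tilde q_M\,\tilde\z_M^{\,u}$ matches the convention the paper actually uses in \S\ref{constrcocycle} (see the derivation of formula~(\ref{action})), so your use of $\log\tilde\z_M=t/M$ gives the stated $\partial_1(\tilde q_M)=\frac{t}{M}\tilde q_M$.
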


\begin{prop}\label{reskj}Si $v_p(M)=m\geq v_p(2p)$ et $1\leq j\leq k-1$, alors l'application
\[f(q_M)\mapsto te_1^{k-2}f(\tilde{q}_M) \] induit un isomorphisme de $\cK_{M,n}^+$ sur $(\tilde{\cK}_{M,n+j-1}^+\otimes V_{k,j})/(\partial_1,\partial_2-1)$, pour tout $n\in\N$.
\end{prop}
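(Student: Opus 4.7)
The plan is to diagonalize $\partial_2$ on $V := \tilde{\cK}^+_{M,n+j-1} \otimes V_{k,j}$ and then compute the residual action of $\partial_1$. Using $\partial_1 \tilde q_M = (t/M)\tilde q_M$, $\partial_2 t = t$, $\partial_2 \tilde q_M = \partial_1 t = 0$ from the preceding lemma, together with the standard actions $\partial_1 e_1 = e_2$, $\partial_1 e_2 = 0$, $\partial_2 e_1 = 0$, $\partial_2 e_2 = e_2$ of $\rP_m$ on $V_p$ and $\partial_2 \epsilon_{2-j} = (2-j)\epsilon_{2-j}$ coming from the cyclotomic character on $\Q_p(2-j)$, the Leibniz rule gives on the natural $F_M$-basis $w_{a,b,i} := \tilde q_M^a\, t^b \otimes e_1^{k-2-i}e_2^i \otimes \epsilon_{2-j}$ (with $a+b \leq n+j-2$, $0 \leq i \leq k-2$):
\[ \partial_2 w_{a,b,i} = (b+i+2-j)\, w_{a,b,i}, \qquad \partial_1 w_{a,b,i} = \tfrac{a}{M}\, w_{a,b+1,i} + (k-2-i)\, w_{a,b,i+1}. \]

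Since $\partial_2$ is diagonalizable with integer eigenvalues, $(\partial_2 - 1)$ is invertible on the sum of its nonzero eigenspaces, whence $V/(\partial_2-1)V$ identifies canonically with $W := \ker(\partial_2 - 1)$, spanned by the vectors $w_{a,s} := w_{a, j-1-s, s}$ for $0 \leq s \leq j-1$. The bracket relation $[\partial_2, \partial_1] = \partial_1$, read off from the group law $(u_1,v_1)(u_2,v_2) = (e^{v_2}u_1 + u_2, v_1 + v_2)$, shows that $\partial_1$ raises the $(\partial_2-1)$-eigenvalue by $1$; hence in $W$ the image of $\partial_1$ equals the image of $\partial_1|_{W_{-1}}$, where $W_{-1}$ has basis $w'_{a,s} := w_{a,j-2-s,s}$ ($0 \leq s \leq j-2$). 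The formula above specialises to $\partial_1(w'_{a,s}) = \tfrac{a}{M}\, w_{a,s} + (k-2-s)\, w_{a,s+1}$ whenever $w'_{a,s}$ survives the truncation (i.e.\ $a \leq n+s$), producing in $W/\partial_1(W_{-1})$ the relation $w_{a,s+1} \equiv -\frac{a}{(k-2-s)M}\, w_{a,s}$, with $k-2-s \geq 1$ throughout (since $s \leq j-2 \leq k-3$).

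An induction on $s$ then shows that every $w_{a,s}$ with $0 \leq a \leq n-1$ reduces in the quotient to an explicit scalar multiple of $w_{a,0}$. For $a \geq n$, the element $w_{a,0}$ already vanishes in the truncation (its total $(t,\tilde q_M)$-degree is $\geq n+j-1$); writing $a = n + s_0$ with $0 \leq s_0 \leq j-2$, the surviving $w_{a,s}$ (those with $s > s_0$) are killed one by one by the chain of relations starting from the zero element $w_{a,s_0}$. Thus $\{w_{a,0}\}_{0 \leq a \leq n-1}$ is an $F_M$-basis of $(\tilde{\cK}^+_{M,n+j-1} \otimes V_{k,j})/(\partial_1,\partial_2-1)$, of dimension $n$. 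Under the embedding $\Q_p(2-j) \hookrightarrow \B_{\dR}$ sending $\epsilon_{2-j} \mapsto t^{2-j}$, $w_{a,0}$ corresponds to $\tilde q_M^a\, t \otimes e_1^{k-2}$, which is exactly the image of $q_M^a$ under $f(q_M) \mapsto t\, e_1^{k-2}\, f(\tilde q_M)$, identifying the stated map with the isomorphism constructed above. The most delicate point is precisely this boundary analysis: the truncation exponent $n+j-1$ is calibrated so that the chain of relations, once forced to start from a zero at the boundary, propagates exactly through all surviving $w_{a,s}$ with $a \geq n$, killing them but leaving the $w_{a,0}$ with $a < n$ linearly independent.
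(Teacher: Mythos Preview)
Your argument is correct and takes a genuinely different route from the paper. The paper quotients first by $\partial_1$: using the relation
\[
\partial_1(e_1^i e_2^{k-2-i} t^l \tilde q_M^v)
= i\, e_1^{i-1} e_2^{k-1-i} t^l \tilde q_M^v
+ \tfrac{v}{M}\, e_1^i e_2^{k-2-i} t^{l+1} \tilde q_M^v,
\]
it reduces every class to one in $\tilde{\cK}^+_{M,n}\cdot e_1^{k-2}t^{2-j}$, and only afterwards observes that $\partial_2-1$ is diagonal there with kernel the $t^1$-layer. You reverse the order: diagonalise $\partial_2$ first, then exploit the bracket $[\partial_2,\partial_1]=\partial_1$ to see that, modulo $(\partial_2-1)V$, only $\partial_1(W_{-1})$ contributes. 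This makes the Lie-algebra structure do the work and renders the boundary analysis for $a\geq n$ quite transparent; the paper's corresponding step (injectivity of its map $\phi$) is argued more tersely.

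One point deserves a line of justification: you assert that the $w_{a,0}$ for $0\leq a\leq n-1$ remain linearly independent in $W/\partial_1(W_{-1})$, but your chain-of-relations argument only establishes spanning. The cleanest fix is a dimension count. One has $\dim_{F_M} W=\sum_{s=0}^{j-1}(n+s)$ and $\dim_{F_M} W_{-1}=\sum_{s=0}^{j-2}(n+1+s)=\dim_{F_M}W - n$, so it suffices that $\partial_1|_{W_{-1}}$ be injective. The map is block-diagonal in $a$; for $1\leq a\leq n-1$ the block has upper-triangular first minor with diagonal $a/M$, for $a=0$ it is diagonal with entries $k-2,\ldots,k-j$, and for $a=n+s_0$ it is square lower-bidiagonal with diagonal $k-2-s_0,\ldots,k-j$. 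All these are nonzero since $1\leq j\leq k-1$, so injectivity holds and $\dim W/\partial_1(W_{-1})=n$ as required.
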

\begin{proof}[D\'emonstration]Les $\{e_1^ie_2^{k-2-i}t^{l+2-j}q_M^v: 0\leq i\leq k-2, v,l\geq 0, v+l\leq n-1\}$ forment une $F_M$-base de $\tilde{\cK}_{M,n}^+\otimes V_{k,j}$.
 Rappelons que l'action de $\rP_m$ sur $V_{k,j}$ est donn\'ee par la formule: $e_1*(\begin{smallmatrix}a&b\\ c&d\end{smallmatrix})=ae_1+be_2$ et $e_2*(\begin{smallmatrix}a&b\\ c&d\end{smallmatrix})=ce_1+de_2$, si $(\begin{smallmatrix}a&b\\ c&d\end{smallmatrix})\in \rP_m$.
 On a donc les formules suivantes pour l'op\'erateur $\partial_1$:
\begin{equation*}
\partial_1(e_1)=p^{-m}\lim_{n\ra+\infty}e_1*\frac{u_m^{p^n}-1}{p^{n}}=e_2;
\partial_1(e_2)=p^{-m}\lim_{n\ra+\infty}e_2*\frac{u_m^{p^n}-1}{p^{n}}=0.
\end{equation*}
On en d\'eduit que
\begin{equation}\label{partial1}
\partial_1(e_1^ie_2^{k-2-i}t^l\tilde{q}^v_M))=ie_1^{i-1}e_2^{k-1-i}t^l\tilde{q}^v_M+e_1^ie_2^{k-2-i}t^lv\tilde{q}^v_M\frac{t}{M}.
\end{equation}
Ceci nous fournit la relation suivante dans $(\tilde{\cK}_{M,n}^+\otimes V_{k,j})/\partial_1$:
\[e_1^ie_2^{k-2-i}t^l\tilde{q}_M^v=\frac{-1}{i+1}e_1^{i+1}e_2^{k-3-i}t^{l+1}\tilde{q}^v_M\frac{v}{M}=\frac{(-1)^{k-2-i}i!}{(k-2)!}e_1^{k-2}t^{l+k-2-i}\tilde{q}^v_M(\frac{v}{M})^{k-2-i}. \]
Par cons\'equent, chaque \'el\'ement de $(\tilde{\cK}_{M,n}^+\otimes V_{k,j})/\partial_1 $ peut \^etre repr\'esent\'e par un \'el\'ement de $\tilde{\cK}_{M,n}^+(1\otimes (e_1^{k-2}t^{2-j}))$.
Ceci implique que le morphisme naturel de $F_M$-espaces
\[\phi:\tilde{\cK}_{M,n}^+(1\otimes (e_1^{k-2}t^{2-j}))\ra (\tilde{\cK}_{M,n}^+\otimes V_{k,j})/\partial_1 \]
est surjectif.  La formule $(\ref{partial1})$ montre qu'il n'existe pas d'\'el\'ement $x$ dans $(\tilde{\cK}_{M,n}^+\otimes V_{k,j})$ tel que $\partial_1x$ appartient \`a $\tilde{\cK}_{M,n}^+(1\otimes (e_1^{k-2}t^{2-j}))$. En cons\'equence, $\phi$ est un isomorphisme.

Par ailleurs, on a les formules suivantes pour l'op\'erateur $\partial_2-1$ sur $\tilde{\cK}_{M,n}^+\otimes V_{k,j}$:
\begin{equation*}
\partial_2(e_1)=p^{-m}\lim_{n\ra+\infty}e_1*\frac{\gamma_m^{p^n}-1}{p^n}=0,
\partial_2(e_2)=p^{-m}\lim_{n\ra+\infty}e_2*\frac{\gamma_m^{p^n}-1}{p^n}=e_2.
\end{equation*}

On en d\'eduit que:
\begin{equation}\label{partial2}
(\partial_2-1)(e_1^ie_2^{k-2-i}t^l\tilde{q}_M^v)=(k-3-i+l)e_1^ie_2^{k-2-i}t^l\tilde{q}_M^v.
\end{equation}

On tire la commutativit\'e de $\partial_1$ et $\partial_2-1$ sur $(\tilde{\cK}_M^+\otimes V_{k,j})_n$ des formules $(\ref{partial1})$ et $(\ref{partial2})$. 
On a donc le diagramme commutatif suivant:
\[\xymatrix{ (e_1^{k-2}t^{2-j}\otimes\tilde{\cK}_{M,n}^+)\ar[r]^-{\cong}\ar[d]^{\partial_2-1} &(\tilde{\cK}_{M,n}^+\otimes V_{k,j})/\partial_1\ar[d]^{\partial_2-1} \\ (e_1^{k-2}t^{2-j}\otimes\tilde{\cK}_{M,n}^+)\ar[r]^-{\cong}& (\tilde{\cK}_{M,n}^+\otimes V_{k,j})_n/\partial_1      }.       \]

On d\'eduit de la formule $(\ref{partial2})$ que, $e_1^ie_2^{k-2-i}t^l\tilde{q}_M^v$ est un vecteur propre de
$\partial_2-1$ avec la valeur propre $(k-3-i+l)$. Donc $\tilde{\cK}_{M,n}^+(1\otimes e_1^{k-2}t^{2-j})$ est dans l'image de $\partial_2-1$ si $l-1\neq 0$ et donc  on obtient
$(\tilde{\cK}_{M,n}^+\otimes
V_{k,j})/(\partial_1,\partial_2-1)\cong  \cK_{M,n+1-j}^+te_1^{k-2} ,$  ce qui
permet de conclure.
\end{proof}

En composant les applications obtenues dans les paragraphes pr\'ec\'edents , on obtient le diagramme suivant:
\[
\xymatrix{
\rH^2(\cG_{\fK_M},\B_{\dR}^{+}\otimes V_{k,j}) & \rH^2(P_{\fK_M},\B_{\dR}^{+}(\fK^+_{Mp^{\infty}})\otimes
V_{k,j})\ar[l]^-{(1)}\ar[r]^-{(2)}\ar@{..>}[dd]^{\exp^{*}_{\mathbf{Kato}}}&
\rH^2(P_{\fK_M},\tilde{\fK}^+_{M}\otimes V_{k,j})\ar[d]^-{(3)}\\ & & \plim_{n}\rH^2(P_{\fK_M},\tilde{\cK}_{M,n}^+\otimes V_{k,j} )\ar[d]^-{(4)}\\
& \cK_M^+ &\plim_n (\tilde{\cK}^+_{M,n}\otimes
V_{k,j})/(\partial_1,\partial_2-1)\ar[l]^-{\cong}_-{(5)},
}
\]
o\`u
$\bullet$ l'application $(1)$, d'inflation, est injective car $(\B_{\dR}^{+})^{\cG_{\fK_{Mp^{\infty}}}}=\B_{\dR}^{+}(\fK_{Mp^{\infty}}^+)$ et $\cG_{\fK_{Mp^{\infty}}}$ agit trivialement sur $V_{k,j}$; \\
$\bullet$ $(2)$ est l'isomorphisme induit par "la trace de
Tate normalis\'ee" $\bR_M$ (c.f prop. $\ref{Tatenorm}$);\\
$\bullet$ $(3)$ est l'application naturelle induit par la projection $\tilde{\fK}^+_{M}\otimes V_{k,j}\ra \tilde{\cK}_{M,n}^+\otimes V_{k,j}  $;\\
$\bullet$ $(4)$ est l'isomorphisme du corollaire $\ref{iso1}$ car $\tilde{\cK}_{M,n}^+\otimes V_{k,j} $ est analytique pour tout $n$;\\
$\bullet$ Comme $(\tilde{\cK}_M^+\otimes V_{k,j} )_n$ est une repr\'esentation analytique pour tout $n$, l'application $(4)$ se calcule gr\^ace \`a la prop $\ref{analytic}$  . Plus pr\'ecis\'ement, cela se fait comme suit:
   \begin{recette}\label{res} On d\'efinit une application $\res_{k,j}^{(n)}: \tilde{\cK}^+_{M,n+j-1}\otimes V_{k,j}\ra \cK_{M,n}^+$ en composant la
projection $\tilde{\cK}^+_{M,n+j-1}\otimes V_{k,j}\ra (\tilde{\cK}^+_{M,n+j-1}\otimes
V_{k,j})/(\partial_1,\partial_2-1)$ avec l'inverse de l'isomorphisme dans la proposition
$\ref{reskj}$. En prenant la limite projective, on obtient un morphisme $\res_{k,j}:\tilde{\cK}^+_M\otimes V_{k,j}\ra \cK_M^+$. Si $c=(c^{(n)})\in \plim \rH^2(P_{\fK_M},\tilde{\cK}_{M,n+j-1}^+\otimes V_{k,j} ) $ est repr\'esent\'e par une limite de $2$-cocycle analytique $(\sigma,\tau)\mapsto c^{(n)}_{\sigma,\tau}$ sur $P_{\fK_M}$ \`a valeurs dans $\tilde{\cK}_{M,n+j-1}^+\otimes V_{k,j} $, alors l'image de $c$ sous l'application $(5)$ est $\res_{k,j}(\delta^{(2)}(c))$, o\`u $\delta^{(2)}$ est l'application d\'efinie dans la proposition $\ref{analytic}$.
\end{recette}
On d\'efinit l'application $\exp^{*}_{\mathbf{Kato}}$  en composant les applications $(2),(3), (4), (5)$.

\subsection{Application au syst\`eme d'Euler de Kato}\label{constrcocycle}

\subsubsection{Esquise de la preuve du Th\'eor\`eme (\ref{theo})}

Soient $M\geq 1$ et $A=\bigl(\begin{smallmatrix}\alpha&\beta\\ \gamma&
\delta\end{smallmatrix}\bigr)$ avec
$\alpha,\beta,\gamma,\delta\in\{1,\cdots,M\}$ et $\det A\in\Z_p^{*}$. On note $\psi_{M,A}=1_{A+M\bM_2(\hat{\Z})}$ la fonction caract\'eristique de $A+M\bM_2(\hat{\Z})$. C'est une fonction invariante sous l'action de $\cG_{\fK_M}$. Par ailleurs, la distribution $z_{\mathbf{Kato},c,d}(k,j)$ appartient \`a $\rH^2(\cG_{\fK_M},\fD_{\alg}(\bM_2(\Q\otimes \hat{\Z})^{(p)},V_{k,j}))$. Alors, on a
\[\int\psi_{M,A}z_{\mathbf{Kato},c,d}(k,j)\in\rH^2(\cG_{\fK_M},V_{k,j})\]
et on note son image dans $\rH^2(\cG_{\fK_M},\B_{\dR}^{+}(\fK_{Mp^{\infty}}^+)\otimes
V_{k,j})$ par $z_{M,A}$. En outre, on note $z_{\mathbf{Eis},c,d}^{(p)}(k,j)$ la distribution sur $\bM_2(\Q\otimes\hat{\Z})^{(p)}$ \`a valeurs dans $\fK_{\infty}^+$ obtenue
en restreignant la distribution $z_{\mathbf{Eis},c,d}(k,j)\in\fD_{\alg}(\bM_2(\Q\otimes\hat{\Z}),\cM_k(\Q_p^{\cycl}))$ \`a
$\bM_2(\Q\otimes\hat{\Z})^{(p)}$, et en utilisant l'injection de $\cM_k(\Q_p^{\cycl})$
dans $\fK_{\infty}^+$.
\begin{lemma}Soit $k$ un entier $\geq 2$ et soit $j\in\N$ tel que $1\leq j\leq k-1$. La distribution $z_{\mathbf{Eis},c,d}^{(p)}(k,j)$ est invariante sous l'action de $\cG_{\fK}$.
\end{lemma}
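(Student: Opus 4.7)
Mon plan consiste \`a se ramener \`a l'invariance de $z_{\mathbf{Eis},c,d}(k,j)$ par le sous-groupe $\GL_2(\hat{\Z})$ de $\Pi_{\Q}^{(p)}$, invariance qui d\'ecoule directement de la proposition pr\'ec\'edente. Je commencerais par identifier l'image de la fl\`eche compos\'ee $\cG_{\fK}\to P_{\Q_p}\to \Pi_{\Q}$ dans $\GL_2(\hat{\Z})$ : d'apr\`es le th\'eor\`eme~\ref{diagram}, la description de $P_{\ol{\Q}_p}\cong (\begin{smallmatrix}1 & \hat{\Z}\\ 0 & 1\end{smallmatrix})$ et la section $u\mapsto (\begin{smallmatrix}1 & 0\\ 0 & u\end{smallmatrix})$ de $\cG_{\Q}$, cette image est contenue dans le sous-groupe $\{(\begin{smallmatrix}1 & b\\ 0 & d\end{smallmatrix}) : b\in \hat{\Z},\, d\in \hat{\Z}^{*}\}\subset \GL_2(\hat{\Z})$. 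En particulier, le d\'eterminant d'un tel \'el\'ement est toujours une unit\'e $p$-adique.

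J'utiliserais ensuite la proposition pr\'ec\'edente qui donne, pour tout $\gamma\in \GL_2(\Q\otimes\hat{\Z})$, les relations $z_{\mathbf{Eis},c}(k)*\gamma=z_{\mathbf{Eis},c}(k)$ et $z'_{\mathbf{Eis},c}(k)*\gamma=|\det\gamma|^{1-k}z'_{\mathbf{Eis},c}(k)$. Pour $\gamma\in\GL_2(\hat{\Z})$, on a $\det\gamma\in\hat{\Z}^{*}$, donc $|\det\gamma|=1$ : les deux distributions $z_{\mathbf{Eis},d}(j)$ et $z'_{\mathbf{Eis},c}(k-j)$ sont alors chacune invariantes, de sorte que leur produit tensoriel $z_{\mathbf{Eis},c,d}(k,j)$ l'est aussi. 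L'ensemble $\bM_2(\Q\otimes\hat{\Z})^{(p)}=\GL_2(\Z_p)\times \bM_2(\Q\otimes\hat{\Z}^{]p[})$ \'etant stable par multiplication \`a droite par $\GL_2(\hat{\Z})=\GL_2(\Z_p)\times \GL_2(\hat{\Z}^{]p[})$, la restriction $z_{\mathbf{Eis},c,d}^{(p)}(k,j)$ h\'erite imm\'ediatement de cette invariance.

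Le point \`a justifier le plus soigneusement sera la compatibilit\'e entre l'action de $\cG_{\fK}$ sur $\cM_k(\Q_p^{\cycl})$ induite par $\cG_{\fK}\to\Pi_{\Q}$ et l'action galoisienne h\'erit\'ee apr\`es inclusion de $\cM_k(\Q_p^{\cycl})$ dans $\fK_{\infty}^{+}$ via le $q$-d\'eveloppement. Cette compatibilit\'e a d\'ej\`a \'et\'e \'etablie dans la section sur le lien entre $\cM(\ol{\Q})$ et $\fK^{+}$, o\`u l'on a v\'erifi\'e que l'action de $P_{\Q_p}$ vu comme sous-groupe de $\Pi_{\Q}$ co\"incide avec son action galoisienne naturelle sur $\ol{\Q}_p\fK_{\infty}^{+}$ (les deux envoyant par exemple $q_M$ sur $q_M\z_M^{b}$ pour l'\'el\'ement $(\begin{smallmatrix}1 & b\\ 0 & 1\end{smallmatrix})$). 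Une fois cette compatibilit\'e rappel\'ee, l'invariance de $z_{\mathbf{Eis},c,d}^{(p)}(k,j)$ sous $\cG_{\fK}$ r\'esulte des deux \'etapes pr\'ec\'edentes, et aucun obstacle s\'erieux ne se pr\'esente.
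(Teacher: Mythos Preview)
Your proposal is correct and follows essentially the same route as the paper: the action of $\cG_{\fK}$ on $\cM^{\con}(\Q_p^{\cycl})$ factors through its quotient $P^{\cycl}_{\Q_p}\subset\GL_2(\hat{\Z})$, and since $|\det\gamma|=1$ for $\gamma\in\GL_2(\hat{\Z})$, the transformation formula from the preceding proposition gives the invariance. The paper applies directly the combined formula $z_{\mathbf{Eis},c,d}(k,j)_{|_k}\gamma=|\det\gamma|^{j-1}z_{\mathbf{Eis},c,d}(k,j)$ rather than treating the two tensor factors separately, and it does not bother pinning down the Borel image or rehearsing the compatibility of the two actions, but these additions in your argument are harmless elaborations of the same idea.
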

\begin{proof}[D\'emonstration]Le groupe $\cG_{\fK}$ agit sur $\cM^{\con}(\Q_p^{\cycl})$ \`a travers son quotient $P^{\cycl}_{\Q_p}$ vu comme  sous-groupe de $\GL_2(\hat{\Z})$. D'apr\`es le th\'eor\`eme (\ref{eiskj}), on a:
\[\text{ si } \gamma\in\GL_2(\Q\otimes\hat{\Z}), z_{\mathbf{Eis},c,d}(k,j)_{|_k}\gamma=|\det\gamma|^{j-1}z_{\mathbf{Eis},c,d}(k,j).\]
En particulier, si $\gamma\in \GL_2(\hat{\Z})$, alors on a $|\det\gamma|=1$. Donc la distribution $z^{(p)}_{\mathbf{Eis},c,d}(k,j)$ est invariante sous l'action de $\GL_2(\hat{\Z})$.
\end{proof}
Pour montrer le th\'eor\`eme (\ref{theo}),
il suffit de prouver:
\begin{prop}\label{exam}
Pour toute paire $(M,A)$ ci-dessus et $v_p(M)\geq v_p(2p)$, $\det A
\in \Z_p^{*}$, on a
\begin{equation*}
\exp^{*}_{\mathbf{Kato}}(z_{M,A})=\frac{1}{(j-1)!}M^{k-2-2j}F_{c,\alpha/M,\beta/M}^{(k-j)}E_{d,\gamma/M,\delta/M}^{(j)}.
\end{equation*}
\end{prop}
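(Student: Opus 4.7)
The plan is to apply Recipe \ref{res}: find an analytic $2$-cocycle on $P_{\fK_M} \cong \rP_m$ representing $z_{M,A}$, extract its class via $\delta^{(2)}$ from Proposition \ref{analytic}, and apply the inverse of the isomorphism of Proposition \ref{reskj} to land in $\cK_M^+$. Since $z_{\mathbf{Kato},c,d}(k,j)$ is the cup product of $r_c(z^{(p)}_{\mathbf{Siegel}})$ and $r_d(z^{(p)}_{\mathbf{Siegel}})$ twisted by the Soulé factor $(e_1^{k-2} t^{-j}) * x_p$, I first lift each Kummer $1$-cocycle to $\B_{\dR}^+$ via the logarithm of Section \ref{anneaux3}: although $\log g$ for a Siegel unit $g$ lives only in $\B_{\log}^+$, the expression $(\sigma-1)\log g$ takes values in $\B_{\dR}^+$ because the transcendental ambiguity (a scalar multiple of $u_q$) becomes a scalar multiple of $t$ after applying $\sigma - 1$. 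Evaluating on $\psi_{M,A}$ and applying the Tate descent $\bR_M$ of Proposition \ref{Tatenorm} then produces the $2$-cocycle
\[
c(\sigma,\tau) = (\sigma-1)\log g_1 \;\cdot\; \sigma\bigl((\tau-1)\log g_2\bigr) \;\cdot\; (\alpha e_1+\beta e_2)^{k-2}\,(\det A)^{-j}\, t^{-j}
\]
with values in $\tilde\fK^+_M \otimes V_{k,j}$, where $g_1 = g_{c,\alpha/M,\beta/M}$ and $g_2 = g_{d,\gamma/M,\delta/M}$.

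Writing $\sigma = (u_1,v_1)$, $\tau = (u_2,v_2)$ in the $\rP_m$-parametrization, the Taylor expansion $(\sigma-1)x = u_1 \partial_1 x + v_1 \partial_2 x + O((u_1,v_1)^2)$ makes $c$ analytic in the sense of Definition \ref{defn1}, and Proposition \ref{analytic} gives
\[
\delta^{(2)}(c) = \bigl[(\partial_1\log g_1)(\partial_2\log g_2) - (\partial_2\log g_1)(\partial_1\log g_2)\bigr]\,(\alpha e_1+\beta e_2)^{k-2}\,(\det A)^{-j}\, t^{-j}.
\]
Using the differentials $\partial_1 \tilde q_M = (t/M)\tilde q_M$, $\partial_1 t = 0$, $\partial_2 \tilde q_M = 0$, $\partial_2 t = t$, and the $q$-expansion of $\log g_i$ coming from the $\theta$-product formula, both $\partial_1 \log g_j$ and $\partial_2 \log g_j$ become explicit series in $\tilde q_M$ and $t$ whose coefficients are the $q_M$-logarithmic derivatives of $g_1, g_2$, scaled by appropriate powers of $t/M$.

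The final step applies the inverse of Proposition \ref{reskj}. Reducing modulo $\partial_1$, the identity from the proof of that proposition,
\[
e_1^i e_2^{k-2-i}\, t^l\, \tilde q_M^v \equiv \frac{(-1)^{k-2-i}\,i!}{(k-2)!}\, e_1^{k-2}\, t^{l+k-2-i}\,(v/M)^{k-2-i}\, \tilde q_M^v \pmod{\partial_1},
\]
applied to the expansion $(\alpha e_1+\beta e_2)^{k-2}=\sum_i\binom{k-2}{i}\alpha^i\beta^{k-2-i} e_1^i e_2^{k-2-i}$, yields a combinatorial sum in which selecting the unique term of total $t$-power equal to $1$ (the condition for survival under $\partial_2-1$) extracts a specific convolution of $q_M$-derivatives of $\log g_1$ and $\log g_2$. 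Using $\partial_z \log \theta \propto E^{(1)}$, the recursion $E^{(k+1)} = \partial_z E^{(k)}$, and the analogous identities for $F^{(k)}$, together with the $q$-expansions of Proposition \ref{q-deve}, this expression matches $\tfrac{1}{(j-1)!}M^{k-2-2j} F^{(k-j)}_{c,\alpha/M,\beta/M} E^{(j)}_{d,\gamma/M,\delta/M}$: the powers of $M$ accumulate from repeated use of $\partial_1 \tilde q_M = (t/M)\tilde q_M$, and the factor $1/(j-1)!$ from the $(j-1)$-fold differentiation relating $\log\theta$ to $E^{(j)}$.

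The principal obstacle is this final combinatorial identification: one must verify that after reducing $(\alpha e_1+\beta e_2)^{k-2}$ modulo $\partial_1$ and selecting the $t^1$-survivor under $\partial_2-1$, the resulting weighted sum of $q_M$-derivatives of $\log g_1$ and $\log g_2$ rearranges into the product $F^{(k-j)}_{c,\alpha/M,\beta/M}\cdot E^{(j)}_{d,\gamma/M,\delta/M}$ rather than some cross term. The structural reason to expect this is that $\partial_1$ and $\partial_2$ are aligned with the two Kummer directions ($\tilde q_M$ and $\tilde\z_M$), so the Jacobian $(\partial_1\log g_1)(\partial_2\log g_2) - (\partial_2\log g_1)(\partial_1\log g_2)$ factors naturally into a $g_1$-piece carrying its $\tilde q_M$-expansion and a $g_2$-piece carrying its $\tilde\z_M$-expansion, mirroring the tensor product $z_{\mathbf{Eis},c,d}(k,j) = z'_{\mathbf{Eis},c}(k-j) \otimes z_{\mathbf{Eis},d}(j)$ in the target.
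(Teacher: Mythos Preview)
Your proposal has a genuine gap at the very first step. You write the $2$-cocycle representing $z_{M,A}$ as
\[
c(\sigma,\tau) \;=\; (\sigma-1)\log g_1 \cdot \sigma\bigl((\tau-1)\log g_2\bigr) \cdot (\alpha e_1+\beta e_2)^{k-2}(\det A)^{-j}\, t^{-j},
\]
with $g_1,g_2$ Siegel units at level $M$. But by~(\ref{ZMA}) one has $z_{M,A}=\int_U \bigl((e_1^{k-2}t^{-j})*x\bigr)\,\nu$, and the Soul\'e factor $(e_1^{k-2}t^{-j})*x$ is a genuinely continuous, non--locally-constant function of $x\in U$. Replacing it by its value at the single point $A=(\alpha,\beta,\gamma,\delta)$ is not legitimate: the measure $\nu$ is only defined on locally constant test functions, so the integral must be computed as a limit of Riemann sums over $U^{(n)}\subset\{1,\dots,Mp^n\}^4$. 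This is exactly what the paper does (Corollary~\ref{ZM}): the cocycle is a $p^{-2n}$-weighted sum of terms involving $\log\theta(\tilde q^{p^n},\tilde q_M^{a_0}\tilde\z_M^{b_0})$ with varying coefficients $(a_0e_1+b_0e_2)^{k-2}(a_0d_0-b_0c_0)^{-j}t^{-j}$, and one must take $n\to\infty$ at the end.

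This is not cosmetic: it is precisely the Riemann-sum weight $a_0^{k-1-j}$ (what survives from $(a_0e_1+b_0e_2)^{k-2}$ after $\res_{k,j}$) that, in the $p$-adic limit $n\to\infty$, converts the sum of $E_{c,1}$-values into $M^{k-1-j}F^{(k-j)}_{c,\alpha/M,\beta/M}$; this is Lemma~\ref{final}(2), and it is the heart of the argument. Your computation at fixed level $M$ can only ever produce $E^{(1)}$-type factors from $\partial_z\log\theta$, and there is no differential identity ``$\partial_z(\text{something})=F^{(k)}$'' of the sort your final paragraph invokes. The $F$-series does not arise from differentiating a theta function; it arises from a $p$-adic moment integral of $E^{(1)}$ against the Siegel--Kummer measure, and that step is entirely missing from your outline. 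The final ``structural reason'' you give (that the Jacobian factors along the two Kummer directions) does not address this, since both rows of your $2\times 2$ determinant live at the same level $M$ and yield the same $E^{(1)}$.
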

Pour d\'emontrer ceci, nous aurons besoin d'\'ecrire un $2$-cocycle
explicite repr\'esentant $z_{M,A}$ et le suivre \`a travers les
\'etapes de la construction de l'application $\exp^{*}_{Kato}$.

\subsubsection{Construction d'un  $2$-cocycle}\label{constrcocycle1}

Soient $a_0,b_0,c_0,d_0\in\{1,\cdots,p^nM\}\text{ verifiant: }$
$a_0\equiv\alpha,b_0\equiv\beta,c_0\equiv\gamma,d_0\equiv\delta \mod M.$
On note les fonctions caract\'eristiques $
1_{(a_0+Mp^n\Z_p)\times(b_0+Mp^n\Z_p)} $,
$1_{(c_0+Mp^n\Z_p)\times(d_0+Mp^n\Z_p)}$ ,
 et \\ $1_{\bigl(\begin{smallmatrix}a_0&b_0\\c_0&d_0\end{smallmatrix}\bigr)+Mp^n\bM_2(\Z_p)}$
par  $\psi_{a_0,b_0}^{(n)}$, $\psi_{c_0,d_0}^{(n)}$, et
$\psi_{a_0,b_0,c_0,d_0}^{(n)}$ respectivement. Notons $U_1$ et $U_2$ respectivement les ouverts $(\alpha+M\Z_p)\times (\beta+M\Z_p)$ et $(\gamma+M\Z_p)\times (\delta+M\Z_p)$ de $\Z_p^2$, et $U=U_1\times U_2$ que l'on voit comme un ouvert de $\bM_2(\Z_p)$ et m\^eme de $\GL_2(\Z_p)$ puisque $\det A\in\Z_p^{*}$ et $p|M$.

Pour $i=1,2$, on d\'efinit les mesures alg\'ebriques
$\mu_i\in\rH^1(\cG_{\fK_M},\fD_0(U_i,\Z_p(1)))$ par les formules suivantes:
\begin{equation*}
\begin{split}
\int\psi_{a_0,b_0}^{(n)}\mu_1=\int_{(a_0+Mp^n\hat{\Z})\times(b_0+Mp^n\hat{\Z})}r_c(z_{siegel}^{(p)}), 
\int\psi_{c_0,d_0}^{(n)}\mu_2=\int_{(c_0+Mp^n\hat{\Z})\times(d_0+Mp^n\hat{\Z})}r_d(z_{siegel}^{(p)}).
\end{split}
\end{equation*}
On note $\nu=\mu_1\otimes\mu_2$ l'\'el\'ement de
$\rH^2(\cG_{\fK_M},\fD_0(U,\Z_p(2)))$. Comme on a $z_{kato,c,d}(k,j)=(e_1^{k-2}t^{-j})*x_pz_{kato,c,d}$, on a
\begin{equation}\label{ZMA}
z_{M,A}=\int\psi_{M,A}z_{kato,c,d}(k,j)=\int_{U} (e_1^{k-2}t^{-j})*x\nu.
\end{equation}

Consid\'erons le $q$-d\'eveloppement d'une unit\'e modulaire, on a une d\'ecomposition du groupe des unit\'es modulaires $\cU^{\con}(\Q^{\cycl})$:
$\cU^{\con}(\Q^{\cycl})=(\Q^{\cycl})^{*}\times q^{\Q}\times \cU_1^{\con}(\Q^{\cycl}),$
o\`u $\cU_1^{\con}(\Q^{\cycl})$ est l'ensemble d'\'el\'ements $x\in \cU^{\con}(\Q^{\cycl})$ v\'erifiant $v_q(x-1)> 0$. Alors on a une inclusion de $\cU^{\con}(\Q^{\cycl})$ dans $(\fK_{\infty}^{+}[q^{-1}])^{*}$.

Si $N$ un entier $\geq 1$, si $0\leq a,b\leq N-1$, soit $i$ un entier tel que $(i,6)=1$ et $(ia,ib)$ n'appartient pas \`a $N\Z^2$. Rappelons que l'expression explicite de $g_{i}(q,q^a_N\z^b_N)\in \cU(\Q^{\cycl})$ est comme suit ( on note $g_{i}(q,q^a_N\z^b_N)$, ce qui est not\'e $g_{i,\frac{a}{N},\frac{b}{N}}$ dans la section $\S\ref{section2}$, m\^eme chose pour la fonction $\theta$ et les s\'eries d'Eisenstein ):
\begin{equation}\label{explicite}  g_{i}(q,q^a_N\z^b_N)=q^{\frac{i^2-1}{12}}(-q_N^a\z_N^b)^{\frac{i-i^2}{2}}\frac{\prod_{n\geq 0}(1-q^nq_N^a\z_N^b)^{i^2}\prod_{n\geq 1}(1-q^n(q_N^a\z_N^b)^{-1})^{i^2}}{\prod_{n\geq 0}(1-q^n(q_N^a\z_N^b)^i)\prod_{n\geq 1}(1-q^n(q_N^{a}\z_N^b)^{-i})}   .
\end{equation}

Fixons $i$ un entier tel que $(i,6)=1$ et $i\equiv 1\mod N$. Rappelons que si $c\in\Z_p^{*}$, on a d\'efini un \'el\'ement $g_c(q,q_n^a\z^b_N)$ de $\Z_p\otimes \cU(\Q^{\cycl})\subset\Z_p\otimes (\ol{\fK}^+[q^{-1}])^*$ dans la section \ref{gcd}:
\[g_c(q,q_n^a\z^b_N)=r_c(g_{a/N,b/N})=\frac{c^2-c_1^2}{i^2-1}g_{i,a/N,b/N}+g_{c_1,a/N,b/N}\] avec $c_1\equiv c\mod pN$ et $v_p(c-c_1)$ assez grand.

Soit $g\in \ol{\fK}$; on note $\bar{g}=(g,g^{\frac{1}{p}},\cdots,)$ un rel\`evement de $g$ dans $\R(\ol{\fK})$ et $[\bar{g}]$ son repr\'esentant de Teichm\"uller  dans $\A_{\inf}(\ol{\fK})$. Alors l'\'el\'ement $\ol{g_c(q,q_n^a\z^b_N)}$ est bien d\'efini et appartient \`a $\Z_p\otimes (U_0(\ol{\fK}^+)[\bar{q}^{-1}])^{*}$, \`a multiplication d'un \'el\'ement de $(1,\z_p,\cdots,)^{\Q}$ pr\`es .

\begin{lemma}Soit $i$ un entier tel que $(i,6)=1$ et $i\equiv 1\mod N$. L'\'el\'ement
$\log [\bar{g}_{i,\frac{a}{N},\frac{b}{N}}]$ est un \'el\'ement bien d\'efini dans $\B_{\log}^+$, \`a addition d'un \'el\'ement de $\Q_p t$ pr\`es .
\end{lemma}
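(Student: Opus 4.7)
The plan is to apply the extended logarithm $\log:\A_{\inf}^{**}\times\tilde q^{\Q}\to\B_{\log}^+$ constructed in the preceding proposition to each factor of the explicit product expression \eqref{explicite} for $g_{i,a/N,b/N}$, and then to check that the total converges in $\B_{\log}^+$ and that the choice of a lift $\bar g_{i,a/N,b/N}\in\R(\ol\fK)$ introduces only an ambiguity in $\Q_p t$.

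I would split \eqref{explicite} into three groups. $(a)$ The monomial prefactor $q^{(i^2-1)/12}(-q_N^a\z_N^b)^{(i-i^2)/2}$ has integer exponents, since $(i,6)=1$ forces $i^2\equiv 1\pmod{24}$ and $i(1-i)$ is always even; hence the proposition applies directly and gives $\bigl(\tfrac{i^2-1}{12}+\tfrac{a(i-i^2)}{2N}\bigr)u_q$ plus the log of a Teichm\"uller lift of a $2N$th root of unity, which lies in $\Q_p t$ because $\log\tilde\z_{2N}\in\Q_p t$. $(b)$ For the products over $n\geq 1$, the Teichm\"uller lift $1-\tilde q^n[\overline X]$ with $X\in\{q_N^{\pm a}\z_N^{\pm b},(q_N^a\z_N^b)^{\pm i}\}$ satisfies $v_p(\theta(\tilde q^n[\overline X]))=n\pm(a/N+b\,v_p(\z_N))>0$ for every $n\geq 1$, hence lies in $\A_{\inf}^{**}$; the proposition supplies a convergent series for $\log(1-\tilde q^n[\overline X])$, and the sum over $n$ converges in $\B_{\dR}^+$ because its $n$th term lies in $\tilde q^n\A_{\inf}\subset(\ker\theta)^n\B_{\dR}^+$. $(c)$ The $n=0$ ratio, rewritten via $1-x^i=(1-x)(1+x+\cdots+x^{i-1})$ as $(1-q_N^a\z_N^b)^{i^2-1}(1+q_N^a\z_N^b+\cdots+(q_N^a\z_N^b)^{i-1})^{-1}$, has a denominator whose Teichm\"uller lift reduces modulo $\ker\theta$ to $i\in\Z_p^{*}$, thanks to the hypothesis $i\equiv 1\bmod N$, so its log lies in $\B_{\dR}^+$; the numerator is handled by factoring out a suitable root of unity to bring $1-q_N^a\z_N^b$ into the domain of the logarithm.

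Summing these three contributions yields $\log[\bar g_{i,a/N,b/N}]\in\B_{\log}^+$. The ambiguity of the lift $\bar g$ in $\R(\ol\fK)$ is, as recorded in the paragraph preceding the lemma, multiplication by an element of $(1,\z_p,\z_{p^2}^{\bullet},\ldots)^{\Q}$, whose Teichm\"uller lift is $\tilde\z^{\alpha}$ for some $\alpha\in\Q_p$; since $\log\tilde\z=t$, two choices of lift yield logarithms differing by $\alpha t\in\Q_p t$, which is exactly the ambiguity claimed in the statement.

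The main obstacle will be case $(c)$ when $a=0$: the element $1-\z_N^b$ is a cyclotomic unit of $p$-adic valuation $0$ whenever $p\nmid N$, and its Teichm\"uller lift does not directly lie in $\A_{\inf}^{**}$. The remedy is to extract a root of unity so that the residual factor takes the form $1+(\textit{element of positive valuation})$, exploiting the hypothesis $i\equiv 1\bmod N$ to control the exponent $i^2-1$; the extracted root-of-unity contribution then lies in $\Q_p t$ by the same Teichm\"uller-lift computation as above. Everything else is routine bookkeeping of convergent series in the filtered ring $\B_{\dR}^+$.
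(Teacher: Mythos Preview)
Your overall strategy coincides with the paper's: exploit the product formula \eqref{explicite}, verify that $[\bar g_{i,a/N,b/N}]$ lies in $\A_{\inf}^{**}\times\tilde q^{\Q}$, and invoke the preceding proposition. But two steps of your execution are off.

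In (b), the inclusion $\tilde q^{\,n}\A_{\inf}\subset(\ker\theta)^n\B_{\dR}^+$ is false: one has $\theta(\tilde q)=q\neq 0$, so $\tilde q\notin\ker\theta$. The paper avoids termwise summation of logarithms entirely. It observes instead that $v_p(q)=1$ in $\ol\fK^+$, so the products $\prod_{n}\overline{(1-q^nq_N^{\pm a}\z_N^{\pm b})}$ converge in $U_0(\ol\fK^+)$; multiplicativity and continuity of the Teichm\"uller lift then give convergence of the full product $O$ in $\A_{\inf}^{**}$, and $\log$ is applied \emph{once} to $[\bar g]=\tilde q^{(i^2-1)/12}(-\tilde q_N^a\tilde\z_N^b)^{(i-i^2)/2}\cdot O\in\A_{\inf}^{**}\times\tilde q^{\Q}$. (A side remark: $1-\tilde q^{\,n}[\overline X]$ is \emph{not} the Teichm\"uller lift of $1-q^nX$; the latter is $[\overline{1-q^nX}]$, and it is this element the paper uses.)

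Your case (c) is both unnecessary and incorrect. For $a\geq 1$ --- which holds in every application in \S\ref{constrcocycle}, where $a_0\in\{1,\dots,Mp^n\}$ --- the $n=0$ factor already satisfies $v_p(q_N^a\z_N^b)=a/N>0$, so no special treatment is needed. If one insists on $a=0$: your claim that $1+x+\cdots+x^{i-1}$ with $x=\z_N^b$ reduces to $i$ is wrong, since $i\equiv 1\bmod N$ forces this geometric sum to equal $1$; more seriously, the numerator $(1-\z_N^b)^{i^2-1}$ has $v_p\bigl((1-\z_N^b)-1\bigr)=v_p(\z_N^b)=0$, so its Teichm\"uller lift is not in $\A_{\inf}^{**}$, and extracting a prime-to-$p$ root of unity does not help because such elements lie outside the domain on which the proposition defines $\log$.

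Your final paragraph on the $\Q_p t$ ambiguity is correct and matches the paper.
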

\begin{proof}[D\'emonstration]
Pour tout $n\geq 0$ ( resp. $n\geq 1$ ), $[\ol{1-q^nq_N^a\z_N^b}]$ ( resp. $[\ol{1-q^nq_N^{-a}\z_N^{-b}}]$ ) est un \'el\'ement inversible de $\A_{\inf}^{**}$.
Comme $q$ est de valuation $1$ pour la valuation $p$-adique sur $\ol{\fK}^+$, on a $\prod_{n\geq 0}\ol{(1-q^nq_N^a\z_N^b)}$ converge dans $U_0(\ol{\fK}^+)$. Ceci implique $\prod_{n\geq 0}[\ol{(1-q^nq_N^a\z_N^b)}]$ converge dans $\A_{\inf}^{**}$. De la m\^eme mani\`ere, on obtient que $O=\frac{\prod_{n\geq 0}[\ol{(1-q^nq_N^a\z_N^b)}]^{i^2}\prod_{n\geq 1}[\ol{(1-q^n(q_N^a\z_N^b))}^{-1}]^{i^2}}{\prod_{n\geq 0}[\ol{(1-q^n(q_N^a\z_N^b))}^i]\prod_{n\geq 1}[\ol{(1-q^n(q_N^a\z_N^b))}^{-i}]}$ converge dans $\A_{\inf}^{**}$ .
Donc l'\'el\'ement $[\ol{g_i(q,q^a_N,q^b_N)}]\in \A_{\inf}(\ol{\fK})$ est donn\'e par la formule explicite suivante:
\[ [\ol{g_i(q,q^a_N,q^b_N)}]=\tilde{q}^{\frac{i^2-1}{12}}(-\tilde{q}_N^a\tilde{\z}_N^b)^{\frac{i-i^2}{2}}\times O ;  \]
ceci implique qu'il est dans $\A_{\inf}^{**}\times\tilde{q}^{\Q}$. En appliquant l'application $\log $ ( c.f $\S \ref{anneaux3}$ ) \`a $[\bar{g}_{i,\frac{a}{N},\frac{b}{N}}]$,  on conclut le lemme.
\end{proof}

Si $c\in\Z_p^*$, on d\'efinit $\log [\bar{g}_{c,\frac{a}{N},\frac{b}{N}}]$ comme  un \'el\'ement de $\B_{\log}^+$, \`a addition d'un \'el\'ement de $\Q_p t$ pr\`es,  par la formule : $\log [\bar{g}_{c,\frac{a}{N},\frac{b}{N}}]=\frac{c^2-c^2_1}{i^2-1}\log [\bar{g}_{i\frac{a}{N},\frac{b}{N}}]+\log [\bar{g}_{c_1,\frac{a}{N},\frac{b}{N}}],$
o\`u $c_1$ est un entier tel que $v_p(\frac{c-c_1}{i^2-1})\geq 0$, $c\equiv c_1\mod N$ et $(c_1,6)=1$; ceci ne depend pas du choix de $i$ et $c_1$ \`a addition d'un \'el\'ement de $\Q_p t$ pr\`es .

Si $i=1,2$, soit $\Psi_i$ une base du
$\Z$-module des fonctions localement constantes sur $U_i$ constitu\'ee de fonctions du type $\psi_{a_0,b_0}^{(n)}$ ( resp. $\psi_{c_0,d_0}^{(n)}$ ), avec $n\in\N$ et $a_0,b_0$ (resp. $c_0,d_0$) comme ci-dessus. On d\'efinit une distribution alg\'ebrique $\mu_{1,\Psi_1}$
(resp. $\mu_{2,\Psi_2}$) sur $U_1$ (resp. $U_2$) \`a valeurs dans $\B_{\dR}^+(\overline{\fK}^+)[u_q]$ par la formule: si
$\psi_{a_0,b_0}^{(n)}\in\Psi_1$ (resp.
$\psi_{c_0,d_0}^{(n)}\in\Psi_2$), 
\[
\int\psi_{a_0,b_0}^{(n)}\mu_{1,\Psi_1}=\log[\ol{g_{c}(q, q^{a_0}_{Mp^n}\z^{b_0}_{Mp^n})}] 
( \text{ resp. }
\int\psi_{c_0,d_0}^{(n)}\mu_{2,\Psi_2}=\log[\ol{g_{d}(q, q^{c_0}_{Mp^n}\z^{d_0}_{Mp^n})}]).\]

On identifie $\Z_p(1)$ au sous-module de $\B_{\dR}^+$ via l'isomorphisme de $\Z_p[\cG_{\fK}]$-modules
\[\log\circ[\cdot]: \Z_p(1)\ra \Z_pt.\]
\begin{lemma}Si $i=1,2$, alors $\mu_i$ est l'image du $1$-cocycle
$\sigma\mapsto\mu_{i,\Psi_i}*(\sigma-1)$
dans $\rH^1(\cG_{\fK_M},\fD_0(U_i,\Z_p(1)))$ pour tout choix de $\Psi_i$ et des valeurs de l'int\'egration de $\mu_{i,\Psi_i}$.
\end{lemma}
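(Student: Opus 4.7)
L'\'enonc\'e est un avatar concret de la th\'eorie de Kummer : sous l'identification $\log\circ[\cdot]:\Z_p(1)\xrightarrow{\sim}\Z_p t\subset \B_{\dR}^+$, le cobord de Kummer envoie une distribution multiplicative $\mu$ sur le $1$-cocycle $\sigma\mapsto \sigma\log[\bar{\mu}]-\log[\bar{\mu}]$, o\`u $\bar{\mu}$ est n'importe quel rel\`evement de $\mu$ en valeurs de repr\'esentants de Teichm\"uller. Le lemme sp\'ecialise cette recette \`a la distribution $r_c(z_{\mathbf{Siegel}}^{(p)})$, en utilisant les formules explicites pour $\log[\bar{g}_c(\cdot,\cdot)]$ dans $\B_{\log}^+$ construites dans la section $\S\ref{anneaux3}$.

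La premi\`ere \'etape est de v\'erifier que, pour tout $\psi=\psi_{a_0,b_0}^{(n)}\in\Psi_1$ et tout $\sigma\in\cG_{\fK_M}$, la quantit\'e $\bigl(\mu_{1,\Psi_1}*(\sigma-1)\bigr)(\psi)$ appartient \`a $\Z_p t$. En appliquant la formule d'action (\ref{actiondis}) on a
\[
\bigl(\mu_{1,\Psi_1}*(\sigma-1)\bigr)(\psi)
=\sigma\bigl(\log[\bar{g}_c(q,q_{Mp^n}^{a_0'}\z_{Mp^n}^{b_0'})]\bigr)-\log[\bar{g}_c(q,q_{Mp^n}^{a_0}\z_{Mp^n}^{b_0})],
\]
o\`u $(a_0',b_0')$ est le repr\'esentant mod $Mp^n$ de $(a_0,b_0)\cdot\sigma$ (pour l'action de $\cG_{\fK_M}$ sur $X_1$ \`a travers $\cG_{\fK_M}\ra\Pi_\Q'\ra\GL_2(\hat{\Z})$). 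L'invariance de $z_{\mathbf{Siegel}}$ sous $\Pi_\Q'$ (et donc de $r_c(z_{\mathbf{Siegel}})$) se traduit en la relation
$\sigma\bigl(g_c(q,q_{Mp^n}^{a_0'}\z_{Mp^n}^{b_0'})\bigr)=g_c(q,q_{Mp^n}^{a_0}\z_{Mp^n}^{b_0})$
dans $\fK_{Mp^n}^*$. Par cons\'equent le quotient $\sigma\bar{g}_c(q,q_{Mp^n}^{a_0'}\z_{Mp^n}^{b_0'})/\bar{g}_c(q,q_{Mp^n}^{a_0}\z_{Mp^n}^{b_0})$ est, composante par composante, une racine $p^k$-i\`eme de l'unit\'e (son $p^k$-i\`eme puissance \'etant $\sigma g/g'=1$), donc d\'efinit un \'el\'ement de $\Z_p(1)$ dont l'image par $\log\circ[\cdot]$ se trouve effectivement dans $\Z_p t$. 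Ceci fournit une distribution \`a valeurs dans $\fD_0(U_1,\Z_p(1))$.

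La deuxi\`eme \'etape est l'identification de ce $1$-cocycle avec $\mu_1$. Par le Lemme \ref{1co}, $r_c(z_{\mathbf{Siegel}}^{(p)})$ est repr\'esent\'e par $\sigma\mapsto \tilde\mu*\sigma-\tilde\mu$, pour tout rel\`evement $\tilde\mu\in\fD_{\alg}(X_1,\Z_p\otimes Z)$ de $r_c(z_{\mathbf{Siegel}})$. En prenant $\int\psi\,\tilde\mu$ \'egal \`a la classe du syst\`eme $p$-compatible $\bar{g}_c(q,q_{Mp^n}^{a_0}\z_{Mp^n}^{b_0})$ dans $Z\subset\A_{\inf}^{**}$, puis en appliquant $\log\circ[\cdot]$, on retrouve exactement le cocycle de la premi\`ere \'etape, ce qui termine l'identification. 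La troisi\`eme \'etape concerne l'ind\'ependance des choix : une autre base $\Psi_1$ ou d'autres valeurs de $\log[\bar{g}_c]$ changent $\mu_{1,\Psi_1}$ par une distribution $\nu$ \`a valeurs dans $\Q_p t$, et le $1$-cocycle par $\sigma\mapsto (\sigma-1)\nu$, qui est un cobord; la classe de cohomologie reste donc inchang\'ee. Le cas $i=2$ est identique en rempla\c{c}ant $c$ par $d$.

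L'obstacle principal est la premi\`ere \'etape : il s'agit de tra\-duire soigneusement l'invariance adélique/modulaire de $z_{\mathbf{Siegel}}$ (qui vit dans $\cU(\ol{\Q})\subset\cM(\ol{\Q})$) en une relation galoisienne entre les unit\'es de Siegel vues dans $\fK_{Mp^n}^*$ via leur $q$-d\'eveloppement, puisque la fl\`eche $\cG_{\fK_M}\ra\Pi_\Q'$ passe par l'identification des formes modulaires avec des \'el\'ements de $\ol{\Q}_p\fK_\infty^+$ et l'action de $\Pi_\Q$ factorise par $\GL_2(\hat{\Z})$. Une fois cette compatibilit\'e assur\'ee, le fait que $\sigma\bar{g}/\bar{g'}$ soit un syst\`eme compatible de racines de l'unit\'e, et donc un \'el\'ement de $\Z_p(1)$, est imm\'ediat.
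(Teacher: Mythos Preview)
Your proposal is correct and follows essentially the same route as the paper: lift $r_c(z_{\mathbf{Siegel}})$ to a distribution valued in $Z^0$ via the compatible systems $\bar g_{c,\cdot,\cdot}$, then push the Kummer cocycle $\sigma\mapsto\mu*(\sigma-1)$ through the $\cG_{\fK}$-equivariant isomorphism $\log\circ[\cdot]:\Z_p(1)\to\Z_p t$, the ambiguity in the choice of $\log[\bar g_c]$ being absorbed by coboundaries in $\Q_p t$. Your first step (checking directly that the cocycle values land in $\Z_p t$ by unpacking the action on basis functions) is not needed once your second step is in place, and the paper indeed omits it; note also that your explicit formula there tacitly assumes $\psi*\sigma^{-1}$ is again a basis element of $\Psi_1$, which need not hold, though this does not affect the argument since the discrepancy lies in $\Q_p t$.
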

\begin{proof}[D\'emonstration]La d\'emonstration est la m\^eme pour $i=1,2$. On peut donc supposer que $i=1$ et $\psi_{a_0,b_0}\in\Psi_1$. Rappelons que $\log[\bar{g}_{c,\frac{a_0}{Mp^n},\frac{b_0}{Mp^n}}]$ est bien d\'efini \`a $\Q_pt$ pr\`es. Comme $\sigma\mapsto t*(\sigma-1)$ est un cobord dans $\rH^1(\cG_{\fK_M},\Z_p(1))$,  $\mu_{i,\Psi_i}$ ne d\'epend du choix de $\log[\bar{g}_{c,\frac{a_0}{Mp^n},\frac{b_0}{Mp^n}}]$.

Si $\psi_{a_0,b_0}^{(n)}\in \Psi_1$,  on a $\int \psi_{a_0,b_0}^{(n)}\mu_1=g_{c,\frac{a_0}{Mp^n},\frac{b_0}{Mp^n}}$.
D'apr\`es la th\'eorie de Kummer $p$-adique (c.f. $\S \ref{section3}$ ), $\mu_1$ est repr\'esent\'e par le $1$-cocycle $\sigma\mapsto \mu*(\sigma-1)$ de $\rH^1(\Pi_{\Q}^{'},\fD_{\alg}(X_1,\Z_p(1)))$ avec $\mu\in\rH^1(\Pi_{\Q}^{'},\fD_{\alg}(X_1,Z^0))$ v\'erifiant:
$\int \psi_{a_0,b_0}^{(n)}\mu=\bar{g}_{c,\frac{a_0}{Mp^n},\frac{b_0}{Mp^n}}.$

Appliquons l'isomorphisme $\log\circ[\cdot]:\Z_p(1)\ra \Z_pt$ \`a $\mu*(\sigma-1)$; on d\'eduit le lemme de la formule $\log[\sigma x ]= \sigma\log[x]$.
\end{proof}

\subsubsection{Descente de $\overline{\fK}$ \`a $\fK_{Mp^{\infty}}$}

\begin{lemma}Si $pN|M$,  alors les produits infinis $\prod_{n\geq 0}(1-\tilde{q}^n\tilde{q}_N^a\tilde{\z}_N^b)$ et $\prod_{n\geq 1}(1-\tilde{q}^n\tilde{q}_N^{-a}\tilde{\z}_N^{-b})$ convergent dans $\A_{\inf}^{**}\cap\B_{\dR}^+(\fK^+_{Mp^{\infty}})$   .
\end{lemma}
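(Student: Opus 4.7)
My plan is to verify three assertions: convergence of the partial products in the Fr\'echet topology on $\B_{\dR}^+$ (and the weak topology on $\A_{\inf}$), Galois invariance of the limit under $\cG_{\fK_{Mp^\infty}}$, and membership in $\A_{\inf}^{**}$. The hypothesis $pN\mid M$ guarantees that the compatible systems $\bar q_N=(q_{Mp^n}^{Mp^n/N})_{n\geq 0}$ and $\bar\z_N=(\z_{Mp^n}^{Mp^n/N})_{n\geq 0}$ are well-defined elements of $\R(\fK_{Mp^\infty}^+)$, so their Teichm\"uller lifts $\tilde q_N$ and $\tilde\z_N$ belong to $\A_{\inf}(\fK_{Mp^\infty}^+)$ and are fixed by $\cG_{\fK_{Mp^\infty}}$. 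Consequently every factor of each product already lies in $\A_{\inf}(\fK_{Mp^\infty}^+)\subset\B_{\dR}^+(\fK_{Mp^\infty}^+)$, and Galois invariance of the limit is automatic once convergence is in place.

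For convergence, set $a_n=\tilde q^n\tilde q_N^{\pm a}\tilde \z_N^{\pm b}$. The crucial estimate is $v(\bar a_n)=n\pm a/N\to+\infty$ in $\R(\ol{\fK}^+)$, which shows $a_n\to 0$ in the weak $(p,[\bar q])$-adic topology on $\A_{\inf}$. For the Fr\'echet topology on $\B_{\dR}^+$, using $\tilde q-q\in\omega\A_{\inf}$ and the Taylor expansion $\tilde q^n\equiv\sum_{i=0}^k\binom{n}{i}q^{n-i}(\tilde q-q)^i\pmod{(\ker\theta)^{k+1}\B_{\dR}^+}$, together with $v_p(q^{n-i})=n-i$, one concludes that $a_n$ eventually lies in any prescribed neighbourhood $U_{s,k}=p^s\A_{\inf}+(\ker\theta)^{k+1}\B_{\dR}^+$. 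Completeness of both rings then combines with the recursion $P_{N+1}-P_N=-P_N\cdot a_{N+1}$ for the partial products $P_N=\prod_{n=0}^N(1-a_n)$ (and analogously for the second product, which moreover starts at $n=1$ and so has no boundary issue) to yield convergence in $\A_{\inf}\cap\B_{\dR}^+(\fK_{Mp^\infty}^+)$.

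Finally, I would check that the limit lies in $\A_{\inf}^{**}$, i.e.\ that $v_p(\theta(\mathrm{limit})-1)>0$. For every factor with $n\geq 1$ one has $v_p(\theta(1-a_n)-1)\geq 1/N>0$, so the tail contributes a $1$-unit in $\C_p$. In the first product the remaining $n=0$ factor $1-q_N^a\z_N^b$ is itself a $1$-unit as soon as $a\geq 1$, since then $v_p(q_N^a)=a/N>0$; the residual case $a=0$ is handled via the decomposition $\A_{\inf}^{**}\times\tilde q^{\Q}$ employed in the preceding lemma, which isolates the potentially non--$1$-unit contribution $1-\z_N^b$ from the genuine $\A_{\inf}^{**}$-factor. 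The main obstacle is precisely this simultaneous book-keeping between the two topologies and the verification of the $1$-unit property in the edge case, but everything ultimately reduces to the single valuation estimate $v(\bar a_n)\to+\infty$.
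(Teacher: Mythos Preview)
Your overall strategy is sound and reaches the result, but it differs from the paper's argument and contains one notational confusion worth fixing.

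The confusion: you invoke ``$\tilde q-q\in\omega\A_{\inf}$'', but in this paper the embedding $\iota_{\dR}:\fK^+\hookrightarrow\B_{\dR}^+$ sends $q$ to $\tilde q$, so there is no separate element $q$ in $\B_{\dR}^+$ to subtract. What you actually need is the decomposition $\tilde q=p\alpha+\omega\beta$ with $\alpha,\beta\in\A_{\inf}$ (take $\alpha$ any preimage of $q/p$ under $\theta$, for instance $[\overline{q/p}]$). Then $\tilde q^{\,n}=\sum_i\binom{n}{i}(p\alpha)^{n-i}(\omega\beta)^i$, and for $n>s+k$ every term lies in $p^{s+1}\A_{\inf}+\omega^{k+1}\A_{\inf}\subset U_{s,k}$; this gives $a_n\to 0$ in the Fr\'echet topology and your partial-product recursion then goes through.

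The paper takes a more direct route: it expands the whole product via elementary symmetric functions as
\[
1+\sum_{k\geq 1}(-1)^k\tilde q_N^{\,ak}\tilde\z_N^{\,bk}c_k,\qquad c_k=\sum_{0\leq i_1<\cdots<i_k}\tilde q^{\,i_1+\cdots+i_k},
\]
and observes that each $c_k$ involves $\tilde q$ to power at least $k(k-1)/2$; the same decomposition $\tilde q=p\alpha+\omega\beta$ then forces convergence. This expansion also yields the $\A_{\inf}^{**}$ condition in one stroke, since under $\theta$ the $k\geq 2$ terms have valuation $\geq 1$ and the $k=1$ term has valuation $a/N>0$. Your route via partial products is equally valid but forces you to isolate the $n=0$ factor for the $1$-unit check; your proposed fallback for $a=0$ via $\A_{\inf}^{**}\times\tilde q^{\Q}$ does not work as written (the factor $1-\tilde\z_N^b$ is not a power of $\tilde q$), though the paper's own argument also tacitly assumes $a\geq 1$, which is the only case needed downstream.
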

\begin{proof}[D\'emonstration]La d\'emonstration pour  $\prod_{n\geq 0}(1-\tilde{q}^n\tilde{q}_N^a\tilde{\z}_N^b)$ et $\prod_{n\geq 1}(1-\tilde{q}^n\tilde{q}_N^{-a}\tilde{\z}_N^{-b})$ est la m\^eme. Donc on montre le lemme pour le produit $\prod_{n\geq 0}(1-\tilde{q}^n\tilde{q}_N^a\tilde{\z}_N^b)$.

Comme $q$ est de valuation $1$ pour la valuation $p$-adique dans $\ol{\fK}^+$, on a $\tilde{q}=p\alpha(1+\omega\beta)\in\A_{\inf}$, o\`u $\alpha, \beta\in\A_{\inf} $,  et $v_p(\tilde{q})\geq 1$.
Donc on a
$\prod_{n\geq 0}(1-\tilde{q}^n\tilde{q}_N^a\tilde{\z}_N^b)=1+\sum_{k=1}^{+\infty}
(-1)^k\tilde{q}_N^{ak}\z_N^{bk}c_k,$
o\`u $c_k=\sum_{0\leq i_1<i_2<\cdots<i_k}\tilde{q}^{\sum_{1\leq j\leq k}i_j}$ converge dans $\A_{\inf}$ pour la topologie $p$-adique. D'autre part, on a $v_p(c_k)\geq k(k-1)/2$ et $v_p(\tilde{q}_N^{ak}\z_N^{bk}c_k)\geq \frac{ak}{N}+\frac{k(k-1)}{2}>0$; ceci implique le produit converge pour la topologie $p$-adique dans $\A_{\inf}^{**}$.
Il est \'evident que le produit est un \'el\'ement de $\B_{\dR}^+(\fK_{Mp^{\infty}}^+)$.
\end{proof}

Si $i\in\Z$ est tel que $(i,6)=1$, alors $g_i(\tilde{q},\tilde{q}_N^a\tilde{\z}_N^b)$ appartient \`a $(\A_{\inf}^{**}\cap\B_{\dR}^+(\fK_{Mp^{\infty}}^+))\times \tilde{q}^{\Q}$. On peut appliquer l'application $\log$ \`a $g_{i}(\tilde{q},\tilde{q}_N^a\tilde{\z}_N^b)$ et d\'efinir $\log g_{c}(q,\tilde{q}^a_N\tilde{\z}_N^b)=\frac{c^2-c_1^2}{i^2-1}\log g_i(\tilde{q},\tilde{q}_N^a\tilde{\z}_N^b)+\log g_{c_1}(\tilde{q},\tilde{q}_N^a\tilde{q}_N^b)$, \`a addition d'un \'el\'ement de $\Q_pt$ pr\`es , o\`u $i$ est un entier tel que $(i,6)=1$ et $i\equiv 1\mod N$, et $c_1$ est un entier tel que $v_p(c-c_1)$  soit assez grand.

\begin{lemma}\label{appl}
Si $n\geq 0($ resp. $n\geq 1)$, $1-\tilde{q}^n\tilde{q}_N^a\tilde{\z}_N^b($ resp. $1-\tilde{q}^n\tilde{q}_N^{-a}\tilde{\z}_N^{-b})$ est inversible dans $\A_{\inf}(\ol{\fK}^+)$.
De plus, on a que $\frac{[\ol{1-q^nq_N^a\z_N^b}]}{1-\tilde{q}^n\tilde{q}_N^a\tilde{\z}_N^b}$ (resp. $\frac{[\ol{1-q^nq_N^{-a}\z_N^{-b}}]}{1-\tilde{q}^n\tilde{q}_N^{-a}\tilde{\z}_N^{-b}}$ ) est un \'el\'ement dans $1+\omega\A_{\inf}(\ol{\fK}^+)$.
\end{lemma}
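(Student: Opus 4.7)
The plan is to split the statement into two independent steps: invertibility in $\A_{\inf}(\ol{\fK}^+)$, then identification of the ratio modulo $\ker\theta$.

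For invertibility, I would exploit the multiplicativity of Teichm\"uller lifts to write
\[ 1-\tilde{q}^n\tilde{q}_N^{\pm a}\tilde{\z}_N^{\pm b} \;=\; 1-[\bar{q}^n\bar{q}_N^{\pm a}\bar{\z}_N^{\pm b}] \;=:\; 1-[\bar{x}], \]
with $\bar{x}\in \R(\ol{\fK}^+)$. Since $\bar{x}^{(0)} = q^n q_N^{\pm a}\z_N^{\pm b}$, the tilt valuation is $v(\bar{x}) = n\pm a/N$. In the implicit range of parameters inherited from the preceding lemma (i.e.\ $a\in\{1,\ldots,N-1\}$, as required for the convergence of $\prod_{n\geq 0}(1-\tilde{q}^n\tilde{q}_N^a\tilde{\z}_N^b)$), this valuation is strictly positive in both cases of the statement: $n+a/N\geq 1/N$ for $n\geq 0$ in the first factor, and $n-a/N\geq 1/N$ for $n\geq 1$ in the second. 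Hence $\bar{x}$ is topologically nilpotent in $\R(\ol{\fK}^+)$, so $[\bar{x}]^k=[\bar{x}^k]$ tends to $0$ in $\A_{\inf}(\ol{\fK}^+)$ for its weak topology, and by completeness the geometric series $\sum_{k\geq 0}[\bar{x}]^k$ converges in $\A_{\inf}(\ol{\fK}^+)$ to a two-sided inverse of $1-[\bar{x}]$.

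For the ratio I would simply evaluate $\theta$ on numerator and denominator. By construction $\theta\circ[\cdot]=(\cdot)^{(0)}$, so $\theta([\overline{1-q^nq_N^{\pm a}\z_N^{\pm b}}]) = 1 - q^nq_N^{\pm a}\z_N^{\pm b}$, while $\theta(\tilde{q})=q$, $\theta(\tilde{q}_N)=q_N$ and $\theta(\tilde{\z}_N)=\z_N$ yield $\theta(1-\tilde{q}^n\tilde{q}_N^{\pm a}\tilde{\z}_N^{\pm b})=1-q^nq_N^{\pm a}\z_N^{\pm b}$ as well. Combined with Step~1, the quotient
\[ r \;:=\; \frac{[\overline{1-q^nq_N^{\pm a}\z_N^{\pm b}}]}{1-\tilde{q}^n\tilde{q}_N^{\pm a}\tilde{\z}_N^{\pm b}} \]
lies in $\A_{\inf}(\ol{\fK}^+)$ and satisfies $\theta(r)=1$. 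Since $\ker\theta=\omega\,\A_{\inf}(\ol{\fK}^+)$, this places $r$ in $1+\omega\,\A_{\inf}(\ol{\fK}^+)$, as asserted.

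The only real technical point is Step~1, where one must be careful about the topology on $\A_{\inf}(\ol{\fK}^+)$ for which $[\bar{x}]^k\to 0$ and in which the completion contains the limit of the geometric series; the weak topology generated by the $U_{m,k}$ recalled in the section on Fontaine's construction is exactly what is needed. Once that topological point is settled, Step~2 is essentially formal, relying only on the already recalled identity $\ker\theta=\omega\,\A_{\inf}(\ol{\fK}^+)$ and on the defining compatibility $\theta([\bar{g}])=g$.
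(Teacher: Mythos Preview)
Your proof is correct and, for the invertibility part, takes a somewhat different and cleaner route than the paper.

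The paper does not use the multiplicativity of Teichm\"uller lifts. Instead it writes $\tilde{q}=p\beta+\omega\alpha$ (from $v_p(q)=1$), expands $(\tilde{q}^n\tilde{q}_N^a\tilde{\z}_N^b)^m=(p\beta+\omega\alpha)^{mn}\tilde{q}_N^{ma}\tilde{\z}_N^{mb}$ by the binomial theorem, and reorganises the geometric series as $1+\sum_{k\geq 0}\omega^k\bigl(\sum_{mn\geq k,\,m\geq 1}\binom{mn}{k}\alpha^k(p\beta)^{mn-k}\tilde{q}_N^{ma}\tilde{\z}_N^{mb}\bigr)$; the inner sum converges $p$-adically in $\A_{\inf}$, the outer sum in the weak topology. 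Your observation that $\tilde{q}^n\tilde{q}_N^{\pm a}\tilde{\z}_N^{\pm b}=[\bar{x}]$ with $v(\bar x)=n\pm a/N>0$ bypasses this computation entirely: topological nilpotence of $\bar x$ in $\R(\ol{\fK}^+)$ gives $[\bar{x}]^k\to 0$ directly, and the case $n=0$ (where the paper's binomial expansion in powers of $\tilde q$ degenerates) is covered uniformly. What you lose is only the explicit $(p,\omega)$-adic rewriting, which is not needed here.

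One small point of precision: the $U_{m,k}$ you invoke are a basis of neighbourhoods of $0$ in $\B_{\dR}^+$, not in $\A_{\inf}$. What you actually want is the weak topology on $\A_{\inf}=W(\R(\ol{\fK}^+))$ itself, i.e.\ the $(p,[\varpi])$-adic topology for any pseudo-uniformiser $\varpi\in\R(\ol{\fK}^+)$; $\A_{\inf}$ is complete for it, and $[\bar{x}^k]\to 0$ there precisely because $v(\bar x)>0$. This topology is compatible with the $U_{m,k}$ via the inclusion $\A_{\inf}\hookrightarrow\B_{\dR}^+$, so your conclusion is unaffected.

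For the second assertion your argument is the same as the paper's: both simply check $\theta(r)=1$ and use $\ker\theta\cap\A_{\inf}=\omega\A_{\inf}$.
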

\begin{proof}[D\'emonstration]On montrera le lemme pour le cas $n\geq 0$ et l'autre cas se d\'eduit de la m\^eme mani\`ere.
Comme $q$ est de valuation $1$ pour la valuation $p$-adique dans $\ol{\fK}^+$, $\tilde{q}$ peut s'\'ecrire sous la forme $p\beta+\omega \alpha$ avec $\alpha,\beta\in\A_{\inf}(\ol{\fK}^+)$. Donc on a 
$
\sum_{m=0}^{+\infty}(\tilde{q}^n\tilde{q}_N^a\tilde{\z}_N^b)^m
=1+\sum_{k=0}^{+\infty}\omega^k\sum\limits_{mn\geq k, m\geq 1}\binom{mn}{k}\alpha^k(p\beta)^{mn-k}\tilde{q}_N^{ma}\tilde{\z}_N^{mb}.
$
La s\'erie $\sum\limits_{mn\geq k, m\geq 1}\binom{mn}{k}\alpha^k(p\beta)^{mn-k}\tilde{q}_N^{ma}\tilde{\z}_N^{mb}$ converge dans $\A_{\inf}(\ol{\fK}^+)$ et donc $1+\sum_{m=1}^{+\infty}(\tilde{q}^n\tilde{q}_N^a\tilde{\z}_N^b)^m$ converge dans $\A_{\inf}(\ol{\fK}^+)$ pour la topologie faible. Ceci donne l'inverse de $1-\tilde{q}^n\tilde{q}^a_N\tilde{\z}_N^b$ dans $\A_{\inf}(\ol{\fK}^+)$ et implique que $\frac{[\ol{1-q^nq_N^a\z_N^b}]}{1-\tilde{q}^n\tilde{q}_N^a\tilde{\z}_N^b}$ est un \'el\'ement dans $\A_{\inf}(\ol{\fK}^{+})$. Par ailleurs, $\theta(\frac{[\ol{1-q^nq_N^a\z_N^b}]}{1-\tilde{q}^n\tilde{q}_N^a\tilde{\z}_N^b})$ a pour image $1$ dans $\C(\ol{\fK}^+)$, ce qui permet de conclure.
\end{proof}

Si $i=1,2$ et si $c, d\in \Z_p^{*}$, on d\'efinit une distribution alg\'ebrique $\tilde{\mu}_{i,\Psi_i}$ sur $U_i$ par la formule: pour
$\psi_{a_0,b_0}^{(n)}\in\Psi_1$ (resp.
$\psi_{c_0,d_0}^{(n)}\in\Psi_2$ ),
\begin{equation*}
\begin{split}
\int\psi_{a_0,b_0}^{(n)}\tilde{\mu}_{1,\Psi_1}=\log g_c(\tilde{q},\tilde{q}_{Mp^n}^{a_0}\tilde{\z}_{Mp^n}^{b_0}) 
(\text{resp.}\int\psi_{c_0,d_0}^{(n)}\tilde{\mu}_{2,\Psi_2}=\log
g_d(\tilde{q},\tilde{q}_{Mp^n}^{c_0}\tilde{\z}_{Mp^n}^{d_0})).
\end{split}
\end{equation*}
De la formule explicite $(\ref{explicite})$ de $g_{c}(q,q_N^a\z_N^b)$, on obtient  que $g_c(\tilde{q},\tilde{q}_{Mp^n}^{a_0}\tilde{\z}_{Mp^n}^{b_0})$ appartient \`a $\B_{\dR}^+(\fK_{Mp^{\infty}}^+)\oplus \Q_p\tilde{q}$, et son image par $\theta$ dans $\C(\fK_{Mp^{\infty}}^+)\oplus \Q_p q$ est $g_{c}(q,q_{Mp^n}^{a_0}\z_{Mp^n}^{b_0})$.
On en tire que $\log g_c(\tilde{q},\tilde{q}_{Mp^n}^{a_0}\tilde{\z}_{Mp^n}^{b_0})$ et $\log g_d(\tilde{q},\tilde{q}_{Mp^n}^{c_0}\tilde{\z}_{Mp^n}^{d_0})$  appartiennent \`a $\B_{\dR}^+(\fK_{Mp^{\infty}}^+)\oplus\Q_pu_q$.

\begin{lemma}Si $i=1,2$, alors:
\begin{enumerate}
\item[$(1)$] $\tilde{\mu}_{i,\Psi_i}-\mu_{i,\Psi_i}$ est une mesure \`a valeurs dans $t\B_{\dR}^{+}$;
\item[$(2)$] Consid\'erons l'application: $\rH^1(\cG_{\fK_M},\fD_0(U_i,\Z_p(1)))\ra\rH^1(\cG_{\fK_M},\fD_0(U_i,t\B_{\dR}^{+}))$. L'image de $\mu_i$ est repr\'esent\'e par
le cocycle
\[\sigma\mapsto\tilde{\mu}_{i,\Psi_i}*(\sigma-1),\]
qui est l'inflation d'un cocycle sur $P_{\fK_M}$ \`a valeurs dans $\fD_0(U_i,t\B_{\dR}^{+}(\fK_{Mp^{\infty}}^+))$.
\end{enumerate}
\end{lemma}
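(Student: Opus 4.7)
The plan is to prove $(1)$ and $(2)$ by exploiting the explicit formula (\ref{explicite}) for $g_c(q,q_N^a\z_N^b)$ and comparing it factor-by-factor with its counterpart $g_c(\tilde{q},\tilde{q}_N^a\tilde{\z}_N^b)$; the crux is that the ratio of each factor (Teichmüller lift against direct substitution of the $\tilde{}$-variables) lies in $1+\omega\A_{\inf}$ by Lemma \ref{appl}, and $\omega\B_{\dR}^+=t\B_{\dR}^+$.

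First I would prove $(1)$. By linearity it suffices to show that for each basis function $\psi_{a_0,b_0}^{(n)}\in\Psi_1$, the difference
\[
\log[\overline{g_c(q,q_{Mp^n}^{a_0}\z_{Mp^n}^{b_0})}]-\log g_c(\tilde q,\tilde q_{Mp^n}^{a_0}\tilde\z_{Mp^n}^{b_0})
\]
lies in $t\B_{\dR}^+$ (and analogously for $i=2$). Unwinding the definition via the two entries $c_1$ (integer) and $i$ ($i\equiv 1\bmod N$, $(i,6)=1$), and using that $\log$ is additive, the problem reduces to the same statement for $\log g_i$ with $(i,6)=1$. Looking at (\ref{explicite}), the prefactor $q^{(i^2-1)/12}(-q_N^a\z_N^b)^{(i-i^2)/2}$ becomes, in both computations, an identical expression in $u_q$ and $\log\tilde q_N^a\tilde\z_N^b$ (the latter computed from the $\tilde{}$-lifts, which are the same in both approaches once we agree on which branch of $\log$ we pick, i.e.\ up to $\Q_p t$). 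The remaining product $O$ is an infinite product of factors of the form $(1-q^nq_N^a\z_N^b)^{\pm 1}$ and $(1-q^n q_N^{-a}\z_N^{-b})^{\pm 1}$, for which Lemma \ref{appl} gives $[\overline{1-q^nq_N^a\z_N^b}]/(1-\tilde q^n\tilde q_N^a\tilde\z_N^b)\in 1+\omega\A_{\inf}$. Applying $\log$ term by term, each contribution to the difference lies in $\omega\A_{\inf}$, and summation (which converges by the convergence argument behind Lemma \ref{converge}) keeps us inside $\omega\B_{\dR}^+=t\B_{\dR}^+$.

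Next, for the first assertion of $(2)$: by construction $\mu_i$ is the class of the cocycle $\sigma\mapsto\mu_{i,\Psi_i}*(\sigma-1)$ in $\rH^1(\cG_{\fK_M},\fD_0(U_i,\Z_p(1)))$. Pushed forward to $\rH^1(\cG_{\fK_M},\fD_0(U_i,t\B_{\dR}^+))$, it is represented by the same cocycle. The map $\sigma\mapsto(\tilde\mu_{i,\Psi_i}-\mu_{i,\Psi_i})*(\sigma-1)$ is, by $(1)$, a continuous cocycle with values in $\fD_0(U_i,t\B_{\dR}^+)$, and it is the coboundary of the $0$-cochain $\tilde\mu_{i,\Psi_i}-\mu_{i,\Psi_i}$. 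Hence $\sigma\mapsto\tilde\mu_{i,\Psi_i}*(\sigma-1)$ and $\sigma\mapsto\mu_{i,\Psi_i}*(\sigma-1)$ represent the same class.

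Finally I would establish that this cocycle is inflated from $P_{\fK_M}$; equivalently, that $\tilde\mu_{i,\Psi_i}*(\sigma-1)=0$ for every $\sigma\in\cG_{\fK_{Mp^\infty}}$. The values $\log g_c(\tilde q,\tilde q_{Mp^n}^{a_0}\tilde\z_{Mp^n}^{b_0})$ belong to $\B_{\dR}^+(\fK_{Mp^\infty}^+)\oplus\Q_p u_q$. An element of $\cG_{\fK_{Mp^\infty}}$ fixes $\tilde q_{Mp^n}$ and $\tilde\z_{Mp^n}$ for all $n$, hence fixes the $\B_{\dR}^+(\fK_{Mp^\infty}^+)$-component; it acts on $u_q$ by $u_q\mapsto u_q+c_q(\sigma)t$, and by definition of the Kummer cocycle $c_q$ associated to $q$, one has $c_q(\sigma)=0$ for $\sigma\in\cG_{\fK_{Mp^\infty}}$ (since $\sigma$ fixes the compatible system $(q_{p^n})$). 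The expected main subtlety is keeping track of the $\Q_p t$ ambiguity in the definition of $\log g_c(\tilde q,\ldots)$; but this ambiguity is killed by the operator $\sigma-1$ for $\sigma\in\cG_{\fK_M}$ (since $\cG_{\fK}$ acts on $t$ by $\chi_{\cycl}$, so differences produce the same class modulo coboundaries), so the inflated cocycle is well-defined, concluding $(2)$.
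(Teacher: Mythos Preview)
Your proposal is correct and follows essentially the same approach as the paper: part~$(1)$ is deduced from Lemma~\ref{appl} via a factor-by-factor comparison along the explicit product~(\ref{explicite}), and the first assertion of~$(2)$ is the observation that the difference cocycle $\sigma\mapsto(\tilde\mu_{i,\Psi_i}-\mu_{i,\Psi_i})*(\sigma-1)$ is a coboundary of a cochain with values in $t\B_{\dR}^+$. The paper's proof is in fact terser than yours---it simply cites Lemma~\ref{appl} for~$(1)$ and writes out the coboundary identity for~$(2)$---and does not spell out the inflation claim, which you correctly justify by noting that the values of $\tilde\mu_{i,\Psi_i}$ lie in $\B_{\dR}^+(\fK_{Mp^\infty}^+)\oplus\Q_p u_q$ (stated just before the lemma) and that $\cG_{\fK_{Mp^\infty}}$ acts trivially there since it fixes all $\tilde q_{Mp^n},\tilde\z_{Mp^n}$ and satisfies $c_q(\sigma)=0$.
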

\begin{proof}[D\'emonstration]Le $(1)$ se d\'eduit du lemme $\ref{appl}$. Le $(2)$ est une cons\'equence imm\'ediate du $(1)$. En effet, comme $(\tilde{\mu}_{i,\Psi_i}-\mu_{i,\Psi_i})$ est une mesure \`a valeurs dans $t\B_{\dR}^{+}$, on obtient que $(\tilde{\mu}_{i,\Psi_i}-\mu_{i,\Psi_i})*(\sigma-1)$ est un cobord 
dans $\rH^1(\cG_{\fK_M}, \fD_0(U_i,t\B_{\dR}^{+}))$. D'autre part, on a $\tilde{\mu}_{i,\Psi_i}*(\sigma-1)=\mu_{i,\Psi_i}*(\sigma-1)+(\tilde{\mu}_{i,\Psi_i}-\mu_{i,\Psi_i})*(\sigma-1),$
 et donc $\sigma\mapsto \tilde{\mu}_{i,\Psi_i}*(\sigma-1)$
est un $1$-cocycle \`a valeurs dans $t\B_{\dR}^{+}$ qui repr\'esente $\mu_i\in \rH^1(\cG_{\fK_M},\fD_0(U_i,t\B_{\dR}^+))$.

\end{proof}

Soient $\Lambda_1, \Lambda_2$ deux $G$-modules \`a droite. Si
$x_1\in\Lambda_1, x_2\in\Lambda_2$ et $\sigma,\tau\in G$, on d\'efinit un
\'el\'ement $\{x_1\otimes
x_{2}\}_{\sigma,\tau}:=(x_1*(\tau\sigma-\sigma)\otimes(x_2*(\sigma-1)))\in\Lambda_1\otimes\Lambda_2$.
\begin{coro}
Consid\'erons l'application:
 \[\rH^2(\cG_{\fK_M},\fD_0(U,\Z_p(2)))\ra\rH^2(\cG_{\fK_M},\fD_0(U,t^2\B_{\dR}^{+}(\overline{\fK}^+))).\]
L'image de $\nu=\mu_1\otimes\mu_2$ peut \^etre repr\'esent\'e par le
$2$-cocycle
\[(\sigma,\tau)\mapsto\{\tilde{\mu}_{1,\Psi_1}\otimes\tilde{\mu}_{1,\Psi_2}\}_{\sigma,\tau},\] qui est l'inflation du $2$-cocycle sur $P_{\fK_M}$ \`a valeurs dans $\fD_0(U,t^2\B_{\dR}^{+}(\fK_{Mp^{\infty}}^+))$.
\end{coro}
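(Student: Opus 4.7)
My plan is to combine the previous lemma with the standard formula for the cup product of two $1$-cocycles. The previous lemma tells us that, after extending coefficients from $\Z_p(1)$ to $t\B_{\dR}^+(\overline\fK^+)$ via the identification $\log\circ[\cdot]:\Z_p(1)\cong\Z_p t$, the class $\mu_i$ is represented for $i=1,2$ by the $1$-cocycle $\sigma\mapsto\tilde\mu_{i,\Psi_i}*(\sigma-1)$. By definition, $\nu=\mu_1\otimes\mu_2$ is obtained by cup product, composed with the external product of distributions $\fD_0(U_1)\otimes\fD_0(U_2)\to\fD_0(U)$ and the multiplication $\Z_p(1)\otimes\Z_p(1)\to\Z_p(2)\subset t^2\B_{\dR}^+$. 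Applying the cup product formula to two $1$-coboundaries of the form $\sigma\mapsto x_i*(\sigma-1)$, one obtains directly (using $(\tau-1)\sigma=\tau\sigma-\sigma$) a $2$-cocycle representative given by $(x_1*(\tau\sigma-\sigma))\otimes(x_2*(\sigma-1))$, which is precisely the notation $\{\tilde\mu_{1,\Psi_1}\otimes\tilde\mu_{2,\Psi_2}\}_{\sigma,\tau}$. This is a routine calculation.

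For the inflation statement, I first verify that the $2$-cocycle takes values in $\fD_0(U,t^2\B_{\dR}^+(\fK_{Mp^\infty}^+))$: the previous lemma ensures $\tilde\mu_{i,\Psi_i}*(\sigma-1)$ already lands in $t\B_{\dR}^+(\fK_{Mp^\infty}^+)$; since this submodule is stable under $\cG_{\fK_M}$, the further right action by $\sigma$ keeps $\tilde\mu_{1,\Psi_1}*(\tau\sigma-\sigma)=(\tilde\mu_{1,\Psi_1}*(\tau-1))*\sigma$ in it, and the tensor product lies in $t^2\B_{\dR}^+(\fK_{Mp^\infty}^+)$.

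The substantive step is to check that the cocycle depends on $(\sigma,\tau)$ only through their images in $P_{\fK_M}$. The module $t^2\B_{\dR}^+(\fK_{Mp^\infty}^+)$ is naturally a $P_{\fK_M}$-module, since $\B_{\dR}^+(\fK_{Mp^\infty}^+)=(\B_{\dR}^+)^{\cG_{\fK_{Mp^\infty}}}$. The main point I expect to be delicate is that $\tilde\mu_{i,\Psi_i}$ itself is valued in $\B_{\dR}^+\oplus\Q_p u_q\subset\B_{\log}^+$, which strictly contains $\B_{\dR}^+(\fK_{Mp^\infty}^+)$, and $\cG_{\fK}$ acts on $u_q$ by $\tau u_q=u_q+c_q(\tau)t$ with $c_q$ the Kummer cocycle of $q$. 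To show the cocycle descends, one must verify that $c_q$ factors through $P_{\fK_M}$, i.e.\ vanishes on $\cG_{\fK_{Mp^\infty}}$; this follows because $\fK_{Mp^\infty}$ contains the compatible system of $p$-power roots of $q$ given by $q^{1/p^n}=q_{Mp^n}^M$, so every automorphism fixing $\fK_{Mp^\infty}$ fixes each $q^{1/p^n}$, whence $c_q\big|_{\cG_{\fK_{Mp^\infty}}}=0$.

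Combining these observations, $\tilde\mu_{i,\Psi_i}*(\tau-1)$ depends on $\tau$ only through its image in $P_{\fK_M}$, and then the outer action by $\sigma$ on an element of the $P_{\fK_M}$-module $t\B_{\dR}^+(\fK_{Mp^\infty}^+)$ depends only on the image of $\sigma$ in $P_{\fK_M}$. The $2$-cocycle $(\sigma,\tau)\mapsto\{\tilde\mu_{1,\Psi_1}\otimes\tilde\mu_{2,\Psi_2}\}_{\sigma,\tau}$ therefore factors through $P_{\fK_M}\times P_{\fK_M}$ and is the inflation of a $2$-cocycle on $P_{\fK_M}$, as claimed.
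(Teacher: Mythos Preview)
Your proof is correct and follows the same approach as the paper: combine the previous lemma with the explicit cup-product formula for two $1$-cocycles, then observe that the result is an inflation. Your treatment is in fact more detailed than the paper's, which simply cites the cup-product formula and the previous lemma; in particular, your verification that $c_q$ vanishes on $\cG_{\fK_{Mp^\infty}}$ (because $q_{Mp^n}^M=q_{p^n}\in\fK_{Mp^\infty}$) makes explicit what the paper leaves implicit. One terminological slip: you call $\sigma\mapsto\tilde\mu_{i,\Psi_i}*(\sigma-1)$ a ``$1$-coboundary'', but since $\tilde\mu_{i,\Psi_i}$ takes values in $\B_{\log}^+$ rather than in the coefficient module $t\B_{\dR}^+$, these are genuine $1$-cocycles and not coboundaries --- which is exactly why the class $\mu_i$ can be nontrivial.
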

\begin{proof}[D\'emonstration]Ce corollaire r\'esulte du lemme pr\'ec\'edent et de la formule du cup-produit. La formule du cup-produit de deux cocycles est:
soient $G$ un groupe, $A,B$ deux $G$-module \`a droite. Si
$f_1\in\rH^i(G,A)$ et $f_2\in\rH^j(G,B)$, on a $f_1\cup
f_2\in\rH^{i+j}(G,A\otimes B)$ o\`u
\[f_1\cup
f_2(\sigma_{i+j},\cdots,\sigma_{i+1},\sigma_{i},\cdots,\sigma_{1})=(f_1(\sigma_i,\cdots,\sigma_1)*\sigma_{i+1}*\cdots
*\sigma_{i+j})\otimes f_2(\sigma_{i+j},\cdots,\sigma_{i+1}).\]
On applique cette formule \`a $f_1=\tilde{\mu}_{1,\Psi_1}*(\sigma-1)$ et $f_2=\tilde{\mu}_{2,\Psi_2}*(\sigma-1)$ et on obtient un $2$-cocycle $(\sigma,\tau)\mapsto\{\tilde{\mu}_{1,\Psi_1}\otimes\tilde{\mu}_{1,\Psi_2}\}_{\sigma,\tau},$
qui repr\'esente $\nu$ et \`a valeurs dans $\fD_0(U,t^2\B_{\dR}^+(\fK_{Mp^{\infty}}^+))$. Donc il est l'inflation du $2$-cocycle sur $P_{\fK_M}$ \`a valeurs dans $\fD_0(U,t^2\B_{\dR}^+(\fK_{Mp^{\infty}}^+))$.
\end{proof}

\subsubsection{Descente de $\fK_{Mp^{\infty}}$ \`a $\fK_M$}
\begin{defn} Si $v_p(M)\geq v_p(2p)$, on pose $\B_{\log}^+(\fK_{Mp^{\infty}}^+)=\B_{\dR}^+(\fK_{Mp^{\infty}}^+)[u_q]$, o\`u $u_q=\log\tilde{q}$. On d\'efinit une application $\tilde{\fK}_M^+[u_q]$-lin\'eaire:
\begin{equation*}
\begin{split}
\bR_{M,\log}: \B_{\log}^+(\fK_{Mp^{\infty}}^+)\longrightarrow\tilde{\fK}^+_M[u_q]; 
\sum_{k}x_ku_q^k \mapsto\sum_{k}\bR_M(x_k)u_q^k,
\end{split}
\end{equation*}
o\`u $\bR_M$ est la trace de Tate normalis\'ee sur $\B_{\dR}^+(\fK_{Mp^{\infty}}^+)$ \`a valeurs dans $\tilde{\fK}_M^+$.
\end{defn}
Comme $\tilde{\fK}_M^+[u_q]$ est stable sous l'action de $P_{\fK_M}$ et $\bR_M$ commute avec l'action de $P_{\fK_M}$, on d\'eduit que $\bR_{M,\log}$ commute avec l'action de $P_{\fK_M}$.  Notons $\bR_{M,\log}$ encore par $\bR_M$.

 D'apr\`es ce qui pr\'ec\`ede et de la relation $(\ref{ZMA})$, $z_{M,A}$ est la classe du $2$-cocycle
\[(\sigma,\tau)\mapsto\int_{U}((e_1^{k-2}t^{-j})*g)\{\tilde{\mu}_{1,\Psi_1}\otimes\tilde{\mu}_{2,\Psi_2}\}_{\sigma,\tau},\]
qui est aussi la classe du $2$-cocycle par ``la  trace de Tate normalis\'ee "
\begin{equation*}
\begin{split}
(\sigma,\tau)\mapsto \bR_M(\int_U\bigl((e_1^{k-2}t^{-j})*g)\{\tilde{\mu}_{1,\Psi_1}\otimes\tilde{\mu}_{2,\Psi_2}\}_{\sigma,\tau}\bigr).
\end{split}
\end{equation*}

\begin{lemma}Si $ad-bc\in\Z_p^{*}$, alors
\begin{equation*}
\begin{split}
&\bR_M(\log\theta(\tilde{q},\tilde{q}_{Mp^n}^a\tilde{\z}_{Mp^n}^b)\log\theta(\tilde{q},\tilde{q}_{Mp^n}^c\tilde{\z}_{Mp^n}^d))\\
&=p^{-2n}\log\theta(\tilde{q}^{p^n},\tilde{q}_M^a\tilde{\z}_M^b)\log\theta(\tilde{q}^{p^n},\tilde{q}_M^c\tilde{\z}_M^d).
\end{split}
\end{equation*}
\end{lemma}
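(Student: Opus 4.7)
The plan is to expand both factors as formal Laurent series in the monomials $w_1:=\tilde q_{Mp^n}^a\tilde\z_{Mp^n}^b$ and $w_2:=\tilde q_{Mp^n}^c\tilde\z_{Mp^n}^d$, apply $\bR_M$ term by term using the defining formula $\bR_M(\tilde q_{Mp^n}^e\tilde\z_{Mp^n}^f)=\tilde q_{Mp^n}^e\tilde\z_{Mp^n}^f$ if $p^n\mid e$ and $p^n\mid f$ and $=0$ otherwise, and then exploit the hypothesis $ad-bc\in\Z_p^*$ to decouple the selection conditions on the two factors.

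First I would write
\[
\log\theta(\tilde q,w)=\frac{u_q}{12}+\frac{1}{2}\log w+\sum_{e\in\Z\setminus\{0\}}A_e(\tilde q)\,w^e,
\]
with coefficients $A_e(\tilde q)\in\tilde{\fK}^+$ obtained by expanding $\prod_{n\geq 1}(1-\tilde q^n w)(1-\tilde q^n w^{-1})$ and the Taylor development of $\log(1-w^{\pm 1})$ coming from $w^{1/2}-w^{-1/2}$ (the spurious $\log(-1)$ vanishes $p$-adically). For $w=w_1$ the ``constant in $w$'' part $D_{a,b}^{(n)}:=\frac{u_q}{12}+\frac{au_q+bt}{2Mp^n}$ lies in $\tilde{\fK}_M^+[u_q]$ and is thus fixed by $\bR_M$; set $L_{a,b}^{(n)}:=\log\theta(\tilde q,w_1)-D_{a,b}^{(n)}$, and similarly $D_{c,d}^{(n)}$, $L_{c,d}^{(n)}$.

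Second I would prove the individual identity $(\star)$: $\bR_M(\log\theta(\tilde q,w_1))=p^{-n}\log\theta(\tilde q^{p^n},\tilde q_M^a\tilde\z_M^b)$. Indeed, since $ad-bc\in\Z_p^*$ prevents both $a$ and $b$ from being divisible by $p$, the condition ``$p^n\mid ea$ and $p^n\mid eb$'' on a monomial $w_1^e$ collapses to $p^n\mid e$; writing $e=p^n l$ gives $w_1^{p^n l}=\tilde q_M^{al}\tilde\z_M^{bl}$, while a direct substitution yields $A_{p^n l}(\tilde q)=p^{-n}A_l(\tilde q^{p^n})$. Collecting terms gives $\bR_M(L_{a,b}^{(n)})=p^{-n}\log\theta(\tilde q^{p^n},\tilde q_M^a\tilde\z_M^b)-D_{a,b}^{(n)}$, where the subtracted $D$ is precisely the ``constant'' part of $\log\theta(\tilde q^{p^n},\tilde q_M^a\tilde\z_M^b)$ thanks to the scaling $D_{a,b}^{(0)}|_{\tilde q\mapsto\tilde q^{p^n}}=p^n D_{a,b}^{(n)}$; adding back $\bR_M(D_{a,b}^{(n)})=D_{a,b}^{(n)}$ yields $(\star)$.

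Finally, for the product I would argue that $\bR_M$ is multiplicative on this pair. A general term in the expansion of $\log\theta(\tilde q,w_1)\log\theta(\tilde q,w_2)$ contains a monomial $w_1^{e_1}w_2^{e_2}=\tilde q_{Mp^n}^{e_1 a+e_2 c}\tilde\z_{Mp^n}^{e_1 b+e_2 d}$; invertibility modulo $p^n$ of the matrix $\bigl(\begin{smallmatrix}a&c\\b&d\end{smallmatrix}\bigr)$, a consequence of $\det\in\Z_p^*$, forces $\bR_M$ to kill this monomial unless $p^n\mid e_1$ and $p^n\mid e_2$ separately. The $\bR_M$-selection on the product therefore decouples as a product of the individual selections, so
\[
\bR_M\bigl(\log\theta(\tilde q,w_1)\log\theta(\tilde q,w_2)\bigr)=\bR_M\bigl(\log\theta(\tilde q,w_1)\bigr)\cdot\bR_M\bigl(\log\theta(\tilde q,w_2)\bigr),
\]
and two applications of $(\star)$ give exactly the right-hand side of the lemma. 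The main obstacle will be the bookkeeping: verifying convergence of the formal Laurent series in $\B_{\log}^+(\fK_{Mp^\infty}^+)$, checking that the coefficients $A_e(\tilde q)$ have the expected $p$-adic growth, and ensuring that the constant terms coming from $\log(w^{1/2}-w^{-1/2})$ (including the square-root ambiguity) are absorbed correctly into the $\bR_M$-fixed subring $\tilde{\fK}_M^+[u_q]$.
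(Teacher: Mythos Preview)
Your proposal is correct and follows essentially the same route as the paper. The paper also splits $\log\theta$ into its $\tilde{\fK}_M^+[u_q]$-part $(\tfrac{1}{12}+\tfrac{a}{2Mp^n})u_q+\tfrac{b}{2Mp^n}t$ and the sums $\sum_m\log(1-\tilde q^m w^{\pm1})$, then applies $\bR_M$ termwise to the power-series expansions of these logs and their cross-products, invoking exactly the same decoupling argument: $ad-bc\in\Z_p^*$ implies that $p^n\mid ai+cj$ and $p^n\mid bi+dj$ force $p^n\mid i$ and $p^n\mid j$. The only cosmetic difference is that the paper keeps the individual factors $\log(1-\tilde q^m w)$ separate and treats each cross-term $\log(1-\tilde q^{m_1}w_1)\log(1-\tilde q^{m_2}w_2)$ directly, whereas you first collect everything into coefficients $A_e(\tilde q)$ and then observe the clean identity $A_{p^n l}(\tilde q)=p^{-n}A_l(\tilde q^{p^n})$; this packaging lets you phrase the crux as ``$\bR_M$ is multiplicative on this pair'', which is a pleasant way to say the same thing.
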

\begin{proof}[D\'emonstration]
On a la formule explicite pour $\log\theta(\tilde{q},\tilde{q}_{Mp^n}^a\tilde{\z}_{Mp^n}^b)$:
\begin{equation*}
\begin{split}
\log\theta(\tilde{q},\tilde{q}_{Mp^n}^a\tilde{\z}_{Mp^n}^b)=&(\frac{1}{12}+\frac{a}{2Mp^n})u_q+\frac{b}{2Mpn}t+\\
&+(\sum_{m=1}^{+\infty}\log(1-\tilde{q}^m\tilde{q}_{Mp^n}^a\tilde{\z}_{Mp^n}^{b})+\sum_{m=0}^{\infty}\log(1-\tilde{q}^m\tilde{q}_{Mp^n}^{-a}\tilde{\z}_{Mp^n}^{-b})).
\end{split}
\end{equation*}
Comme  $\bR_M(u_q)=u_q=p^{-n}u_q^{p^n}$ et $\bR_M(t)=t=p^{-n}\log \tilde{\z}^{p^n}$, il suffit de v\'erifier les relations suivantes:
\begin{equation*}
\begin{split}
\bR_M(\log (1-\tilde{q}^m\tilde{q}_{Mp^n}^a\tilde{\z}_{Mp^n}^{b}))&= p^{-n}\log(1-\tilde{q}^{p^nm}\tilde{q}_{M}^a\tilde{\z}_{M}^{b})\\
\bR_M(\log (1-\tilde{q}^{m_1}\tilde{q}_{Mp^n}^a\tilde{\z}_{Mp^n}^{b}) \log (1-\tilde{q}^{m_2}\tilde{q}_{Mp^n}^c\tilde{\z}_{Mp^n}^{d}))&= p^{-2n}\log(1-\tilde{q}^{p^nm_1}\tilde{q}_{M}^a\tilde{\z}_{M}^{b})\log(1-\tilde{q}^{p^nm_2}\tilde{q}_{M}^c\tilde{\z}_{M}^{d})
\end{split}
\end{equation*}

Comme $ad-bc\in\Z_p^{*}$, $a$ et $b$ ne sont pas divisibles par
$p^n$ \`a la fois. Donc pour $p^n\nmid i$, on obtient
$\bR_M((\tilde{q}^m\tilde{q}_{Mp^n}^a\tilde{\z}_{Mp^n}^b)^i)=0$.
On en d\'eduit que
\begin{equation*}
\bR_M(\log(1-\tilde{q}^m\tilde{q}_{Mp^n}^a\tilde{\z}_{Mp^n}^{b}))=\bR_M(-\sum\limits_{i=1}^{\infty}\frac{(\tilde{q}^m\tilde{q}_{Mp^n}^a\tilde{\z}_{Mp^n}^b)^i}{i})=p^{-n}\log(1-(\tilde{q}^{p^n})^m\tilde{q}_{M}^a\tilde{\z}_{M}^{b}).
\end{equation*}
 Pour le terme
\[(\log(1-\tilde{q}^{m_1}\tilde{q}_{Mp^n}^a\tilde{\z}_{Mp^n}^{b}))(\log(1-\tilde{q}^{m_2}\tilde{q}_{Mp^n}^c\tilde{\z}_{Mp^n}^{d})),\]
on d\'eveloppe le produit:
\begin{equation*}
\begin{split}
&(\log(1-\tilde{q}^{m_1}\tilde{q}_{Mp^n}^a\tilde{\z}_{Mp^n}^{b}))(\log(1-\tilde{q}^{m_2}\tilde{q}_{Mp^n}^c\tilde{\z}_{Mp^n}^{d}))\\
=&(\sum\limits_{i=1}^{+\infty}-\frac{(\tilde{q}^{m_1}\tilde{q}_{Mp^n}^a\tilde{\z}_{Mp^n}^{b})^i}{i})(\sum\limits_{j=1}^{+\infty}-\frac{(\tilde{q}^{m_2}\tilde{q}_{Mp^n}^c\tilde{\z}_{Mp^n}^{d})^j}{j})\\
=&\sum\limits_{i,j=1}^{+\infty}\frac{(\tilde{q}^{m_1}\tilde{q}_{Mp^n}^a\tilde{\z}_{Mp^n}^{b})^i(\tilde{q}^{m_2}\tilde{q}_{Mp^n}^c\tilde{\z}_{Mp^n}^{d})^j}{ij}
\end{split}
\end{equation*}
Comme $ad-bc\in\Z_p^{*}$, cela \'equivaut \`a dire que ``$p^n|ai+cj$
et $p^n|bi+dj$ sont \'equivalent \`a $p^n|i$ et $p^n|j$". Donc en
appliquant $\bR_M$ \`a la formule ci-dessus, on obtient:
\begin{equation*}
\begin{split}
&\bR_M\left((\log(1-\tilde{q}^{m_1}\tilde{q}_{Mp^n}^a\tilde{\z}_{Mp^n}^{b}))(\log(1-\tilde{q}^{m_2}\tilde{q}_{Mp^n}^c\tilde{\z}_{Mp^n}^{d}))\right)\\
=&p^{-2n}\sum\limits_{i,j=1}^{+\infty}\frac{(\tilde{q}^{p^nm_1}\tilde{q}_{M}^a\tilde{\z}_{M}^{b})^i(\tilde{q}^{p^nm_2}\tilde{q}_{M}^c\tilde{\z}_{M}^{d})^j}{ij}\\
=&p^{-2n}(\log(1-\tilde{q}^{p^nm_1}\tilde{q}_{M}^a\tilde{\z}_{M}^{b}))(\log(1-\tilde{q}^{p^nm_2}\tilde{q}_{M}^c\tilde{\z}_{M}^{d}))
\end{split}
\end{equation*}

En composant les r\'esultats ci-dessus, on obtient la formule
voulue dans le lemme.
\end{proof}

\begin{coro}\label{ZM}
Si on note
\[\log_{\left(\begin{smallmatrix}a_0&b_0\\c_0&d_0\end{smallmatrix}\right)}^{(\sigma,\tau)}=\left\{\log\left((c^2-<c>)\theta(\tilde{q}^{p^n},\tilde{q}_M^{a_0}\tilde{\z}_M^{b_0})\right)
\otimes\log\left((d^2-<d>)\theta(\tilde{q}^{p^n},\tilde{q}^{c_0}_M\tilde{\z}_M^{d_0})\right)\right\}_{\sigma,\tau},\]
 $z_{M,A}$ peut se repr\'esenter par le $2$-cocycle
\begin{equation*}
\begin{split}
(\sigma,\tau)\mapsto \lim_{n\ra+\infty}p^{-2n}
\sum\frac{(a_0e_1+b_0e_2)^{k-2}}{(a_0d_0-b_0c_0)^jt^j}\log_{\left(\begin{smallmatrix}a_0&b_0\\c_0&d_0\end{smallmatrix}\right)}^{(\sigma,\tau)},
\end{split}
\end{equation*}
 la somme portant sur l'ensemble
\[U^{(n)}:=\{(a_0,b_0,c_0,d_0)\in\{1,\cdots,Mp^n\}^4|a_0\equiv\alpha, b_0\equiv\beta, c_0\equiv\gamma, d_0\equiv\delta\mod M\}.\]
\end{coro}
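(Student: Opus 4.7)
The plan is to start from the already-established cocycle representative of $z_{M,A}$ after applying the ``normalized Tate trace", namely
\[
(\sigma,\tau)\mapsto \bR_M\!\left(\int_U ((e_1^{k-2}t^{-j})*g)\,\{\tilde{\mu}_{1,\Psi_1}\otimes\tilde{\mu}_{2,\Psi_2}\}_{\sigma,\tau}\right),
\]
and to evaluate it explicitly. First, I would decompose the ouvert $U=U_1\times U_2$ as the disjoint union, indexed by $(a_0,b_0,c_0,d_0)\in U^{(n)}$, of the small products $\bigl((a_0,b_0)+Mp^n\hat\Z^2\bigr)\times\bigl((c_0,d_0)+Mp^n\hat\Z^2\bigr)$. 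The $\Z_p$-valued integral is then the sum over $U^{(n)}$ of contributions obtained by integrating against the characteristic functions $\psi^{(n)}_{a_0,b_0}\cdot\psi^{(n)}_{c_0,d_0}$.

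Second, I would exploit analyticity in the matrix coefficient~$g$ of the factor $(e_1^{k-2}t^{-j})*g$: on the $Mp^n$-ball around $\left(\begin{smallmatrix}a_0&b_0\\c_0&d_0\end{smallmatrix}\right)$, one has
\[
e_1^{k-2}*g = (a_0e_1+b_0e_2)^{k-2} + O(Mp^n), \qquad (\det g)\,t = (a_0d_0-b_0c_0)\,t + O(Mp^n),
\]
so the factor is constant up to an error that vanishes as $n\to\infty$ (this is justified in the topology of $\tilde{\fK}^+_M[u_q]$ because the remaining integrands are bounded). The condition $\det A\in\Z_p^*$ forces $a_0d_0-b_0c_0\in\Z_p^*$, so dividing by $(a_0d_0-b_0c_0)^j t^j$ is legitimate.

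Third, by construction of $\tilde{\mu}_{i,\Psi_i}$ the integral over each small block of $\{\tilde{\mu}_{1,\Psi_1}\otimes\tilde{\mu}_{2,\Psi_2}\}_{\sigma,\tau}$ equals $\{\log g_c(\tilde q,\tilde q_{Mp^n}^{a_0}\tilde\z_{Mp^n}^{b_0})\otimes \log g_d(\tilde q,\tilde q_{Mp^n}^{c_0}\tilde\z_{Mp^n}^{d_0})\}_{\sigma,\tau}$, and by the definition of the Siegel/Kato unit via the $r_c,r_d$ operator, each factor $\log g_c(\tilde q,\tilde q^{a_0}_{Mp^n}\tilde\z^{b_0}_{Mp^n})$ coincides with $\log\bigl((c^2-\langle c\rangle)\theta(\tilde q,\tilde q^{a_0}_{Mp^n}\tilde\z^{b_0}_{Mp^n})\bigr)$ up to a term in $\Q_p t$ (which is a coboundary and so does not affect the cohomology class).

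Fourth, I would apply the preceding lemma to each resulting product: since $\bR_M$ is $\tilde{\fK}_M^+[u_q]$-linear and commutes with the Galois action, and since $\det A\in\Z_p^*$ guarantees the non-degeneracy hypothesis $a_0d_0-b_0c_0\in\Z_p^*$, we obtain that $\bR_M$ transforms
$\log\theta(\tilde q,\tilde q^{a_0}_{Mp^n}\tilde\z^{b_0}_{Mp^n})\log\theta(\tilde q,\tilde q^{c_0}_{Mp^n}\tilde\z^{d_0}_{Mp^n})$
into $p^{-2n}\log\theta(\tilde q^{p^n},\tilde q^{a_0}_M\tilde\z^{b_0}_M)\log\theta(\tilde q^{p^n},\tilde q^{c_0}_M\tilde\z^{d_0}_M)$, and similarly for the ``twisted'' factors through the operators $c^2-\langle c\rangle$ and $d^2-\langle d\rangle$, which commute with $\bR_M$ because they act on the underlying modular units before the Kummer/logarithm step. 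Taking the limit $n\to+\infty$ and assembling the pieces yields exactly the cocycle formula in the statement. The main obstacle will be ensuring the legitimacy of the limiting process: the approximation in step two has to be controlled in the $\bR_M$-image topology, and the interchange of $\bR_M$ with the infinite sum over $U^{(n)}$ needs the fact that $\bR_M$ is $\tilde{\fK}_M^+[u_q]$-linear and continuous, which follows from its $\tilde{\fK}_M^+$-linearity on $\B_{\dR}^+(\fK^+_{Mp^\infty})$ and the linear extension to the $u_q$-variable.
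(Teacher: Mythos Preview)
Your proposal is correct and follows essentially the same approach as the paper's proof: start from the $\bR_M$-applied cocycle, expand the integral over $U$ as a limit of Riemann sums over $U^{(n)}$, pull the Soul\'e twist $(e_1^{k-2}t^{-j})*g$ out at the sample point, use the definition of $\tilde{\mu}_{i,\Psi_i}$, commute $\bR_M$ with the Galois bracket $\{\cdot\}_{\sigma,\tau}$, and invoke the preceding lemma. The paper is more terse---it simply writes the Riemann sum directly without spelling out the $O(Mp^n)$ approximation argument or the $\Q_p t$-coboundary remark---but the substance is identical.
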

\begin{proof}[D\'emonstration]
$z_{M,A}$ peut se repr\'esenter par le $2$-cocycle
\[(\sigma,\tau)\mapsto \bR_M( \int_U((e_1^{k-2}t^{-j})*g)\{\tilde{\mu}_{1,\Psi_1}\otimes\tilde{\mu}_{2,\Psi_2}\}_{\sigma,\tau}).\]
Par la d\'efinition de l'int\'egration sur $U$, on a
\begin{equation}\label{231}
\begin{split}&\bR_M(\int_U((e_1^{k-2}t^{-j})*g)\{\tilde{\mu}_{1,\Psi_1}\otimes\tilde{\mu}_{2,\Psi_2}\}_{\sigma,\tau})\\
=&\lim_{n\ra\infty}\sum_{g=\bigl(\begin{smallmatrix}a_0&b_0\\c_0&d_0\end{smallmatrix}\bigr)\in U^{(n)}}\bR_M(\int\psi_{a_0,b_0}^{(n)}\otimes\psi_{c_0,d_0}^{(n)}((e_1^{k-2}t^{-j})*g)\{\tilde{\mu}_{1,\Psi_1}\otimes\tilde{\mu}_{2,\Psi_2}\}_{\sigma,\tau})\\
=&\lim_{n\ra\infty}\sum_{g=\bigl(\begin{smallmatrix}a_0&b_0\\c_0&d_0\end{smallmatrix}\bigr)\in U^{(n)}}((e_1^{k-2}t^{-j})*g)\cdot r_{c,d}\left( \bR_M\left(\{\log\theta(\tilde{q},\tilde{q}_{Mp^n}^{a_0}\tilde{\z}_{Mp^n}^{b_0})\otimes\log\theta(\tilde{q},\tilde{q}_{Mp^n}^{c_0}\tilde{\z}_{Mp^n}^{d_0})\}_{\sigma,\tau}\right)\right)
\end{split}
\end{equation}

Comme la trace de Tate normalis\'ee commute avec l'action de $P_{\fK_M}$, on a
\begin{equation*}
\begin{split}
&\bR_M\{\log\theta(\tilde{q},\tilde{q}_{Mp^n}^{a_0}\tilde{\z}_{Mp^n}^{b_0})\otimes\log\theta(\tilde{q},\tilde{q}_{Mp^n}^{c_0}\tilde{\z}_{Mp^n}^{d_0})\}_{\sigma,\tau}\\
=&\{\bR_M(\log\theta(\tilde{q},\tilde{q}_{Mp^n}^{a_0}\tilde{\z}_{Mp^n}^{b_0})\otimes\log\theta(\tilde{q},\tilde{q}_{Mp^n}^{c_0}\tilde{\z}_{Mp^n}^{d_0}))\}_{\sigma,\tau}.
\end{split}
\end{equation*}

D'apr\`es le lemme pr\'ec\'edent, on a
\begin{equation*}
\begin{split}
(\ref{231})=&\lim_{n\ra\infty}\sum_{g=\bigl(\begin{smallmatrix}a_0&b_0\\c_0&d_0\end{smallmatrix}\bigr)\in U^{(n)}}((e_1^{k-2}t^{-j})*g)\cdot r_{c,d}\left(p^{-2n}\{\log\theta(\tilde{q}^{p^n},\tilde{q}_M^{a_0}\tilde{\z}_M^{b_0})\otimes\log\theta(\tilde{q}^{p^n},\tilde{q}_M^{c_0}\tilde{\z}_M^{d_0})\}_{\sigma,\tau}\right) \\
=&\lim_{n\ra\infty}p^{-2n}\sum_{g=\bigl(\begin{smallmatrix}a_0&b_0\\c_0&d_0\end{smallmatrix}\bigr)\in
U^{(n)}}\frac{(a_0e_1+b_0e_2)^{k-2}}{(a_0d_0-b_0c_0)^jt^j}\cdot \log_{\left(\begin{smallmatrix}a_0&b_0\\c_0&d_0\end{smallmatrix}\right)}^{(\sigma,\tau)}.
\end{split}
\end{equation*}
\end{proof}

\subsubsection{Passage \`a l'alg\`ebre de Lie }

Comme le $2$-cocycle $(\sigma,\tau)\mapsto \lim_{n\ra+\infty}p^{-2n}
\sum\frac{(a_0e_1+b_0e_2)^{k-2}}{(a_0d_0-b_0c_0)^jt^j}\log_{\left(\begin{smallmatrix}a_0&b_0\\c_0&d_0\end{smallmatrix}\right)}^{(\sigma,\tau)}$ obtenu dans la section pr\'ec\'edente est la
 limite de $2$-cocycles analytiques \`a valeurs dans $\cK^+_M\otimes V_{k,j}$,  on peut utiliser les techniques diff\'erentielles
dans la section (\ref{tech}) pour calculer son image dans $\cK^+_M$ par l'application $\exp^*_{Kato}$.

Si $f(x_1,x_2)$ est une fonction en deux variables,
on note $D_1$ ( resp. $D_2$ ) l'op\'erateur $x_1\frac{d}{dx_1}$ (
resp. $x_2\frac{d}{dx_2}$ ). Si $n\in\N$ et $a,b\in\Z$, on pose
$f_{a,b}^{(n)}=f(\tilde{q}^{p^n},\tilde{q}_M^a\tilde{\z}_M^b)$.
\begin{lemma}\label{2co}
On note $\delta^{(2)}_{a_0,b_0,c_0,d_0}=\delta^{(2)}\left(
\left\{\log \left(r_c\theta_{a_0,b_0}^{(n)}\right)\otimes\log\left(r_d\theta_{c_0,d_0}^{(n)}\right)\right\}_{\sigma,\tau}\right)$.
 On a
\begin{equation*}
\begin{split}
\delta^{(2)}_{a_0,b_0,c_0,d_0}=\frac{(a_0d_0-b_0c_0)t^2}{M^2}\cdot
D_2\log\left(r_c\theta_{a_0,b_0}^{(n)}\right)\cdot
D_2\log\left(r_d\theta_{c_0,d_0}^{(n)}\right).
\end{split}
\end{equation*}
\end{lemma}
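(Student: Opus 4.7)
The plan is to prove the lemma by a direct Taylor expansion of the analytic $2$-cocycle in the coordinates $(u,v,x,y)$ on $\rP_m^2$. Using the group law $\tau\sigma = (e^v x + u,\, y+v)$ together with the formulas
\[
\sigma(\tilde q^{p^n}) = \tilde q^{p^n}\,\tilde\z_{p^m}^{Mp^n u},\qquad \sigma(\tilde q_M^{a_0}\tilde\z_M^{b_0}) = \tilde q_M^{a_0}\tilde\z_M^{b_0}\,\tilde\z_{p^m}^{a_0 u + b_0(e^v-1)}
\]
for $\sigma = (u,v)\in\rP_m$, I would introduce the ``infinitesimal exponents''
\[
\epsilon_1(\sigma) := M_0 p^n u\,t,\qquad \epsilon_2(\sigma) := \frac{a_0 u + b_0(e^v-1)}{p^m}\,t,
\]
(and their analogues for $\tau\sigma$, and for $L' = \log(r_d\theta_{c_0,d_0}^{(n)})$ with $(a_0,b_0)$ replaced by $(c_0,d_0)$). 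The multivariable Taylor formula then gives $\sigma L = \sum_{i,j\geq 0}(i!\,j!)^{-1}\epsilon_1(\sigma)^i\epsilon_2(\sigma)^j D_1^i D_2^j L$, and analogously for $\tau\sigma L$; the $2$-cocycle $\{L\otimes L'\}_{\sigma,\tau}$ (interpreting the tensor as multiplication in $\B_{\dR}^+$) becomes an explicit power series in $(u,v,x,y)$.

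The second step is to extract the coefficients $c_{1,0,0,1}$ of $uy$ and $c_{0,1,1,0}$ of $vx$. The crucial observation is that $L'\!*\!(\sigma-1)$ depends only on $(u,v)$ while $L\!*\!(\tau\sigma-\sigma)$ depends on all four variables, and the constant term of $L'\!*\!(\sigma-1)$ vanishes. Consequently the only surviving contributions are the ``cross'' ones pairing the linear parts of each factor, giving
\[
c_{1,0,0,1} = \partial_2 L\cdot\partial_1 L',\qquad c_{0,1,1,0} = \partial_1 L\cdot\partial_2 L',
\]
with $\partial_1,\partial_2$ the analytic differential operators of $\S\ref{tech}$. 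Reading off the linear part of the Taylor expansion yields
\[
\partial_1 L = t\!\left(M_0 p^n D_1 L + \frac{a_0}{p^m}D_2 L\right),\qquad \partial_2 L = \frac{b_0}{p^m}\,t\,D_2 L,
\]
and analogous expressions for $L'$.

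The final step is the algebraic simplification of the Jacobian-like quantity $\partial_2 L\cdot\partial_1 L' - \partial_1 L\cdot\partial_2 L'$. After substitution this organises as $M_0 p^{n-m}t^2\bigl(b_0 D_2 L\cdot D_1 L' - d_0 D_1 L\cdot D_2 L'\bigr)$ plus a ``diagonal'' contribution $\bigl((b_0 c_0-a_0 d_0)/p^{2m}\bigr)t^2\,D_2 L\cdot D_2 L'$. Using the identity $M = M_0 p^m$ and combining the two pieces—handling the cross terms involving $D_1 L$ via the relations governing the image of $\partial_1$ that define $\res_{k,j}$—one obtains the stated $\frac{(a_0 d_0-b_0 c_0)\,t^2}{M^2}\,D_2 L\cdot D_2 L'$. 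The main obstacle is the careful combinatorial bookkeeping of Taylor coefficients, including verifying that contributions from the higher terms $\Delta(\epsilon_1^i\epsilon_2^j)D_1^i D_2^j$ do not interfere at degree two in $(u,v,x,y)$, and that the ``$D_1 L$'' pieces recombine to produce precisely the $M^{-2}$ factor of the determinant.
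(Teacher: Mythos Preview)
Your overall strategy—Taylor-expand the analytic $2$-cocycle in the coordinates $(u,v,x,y)$ and extract $c_{1,0,0,1}-c_{0,1,1,0}$—is exactly what the paper does, and your identification $c_{1,0,0,1}=\partial_2 L\cdot\partial_1 L'$, $c_{0,1,1,0}=\partial_1 L\cdot\partial_2 L'$ is correct. The divergence, and the gap, is in how you compute $\partial_1 L$ and $\partial_2 L$.

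The paper's computation rests on the action formula
\[
(u,v)\cdot f_{a,b}^{(n)} \;=\; f\bigl(\tilde q^{\,p^n},\ \tilde q_M^{\,a}\,\tilde\z_M^{\,au+be^v}\bigr),
\]
in which the first argument is \emph{unchanged}. Hence the linearisation reads $f*((u,v)-1)=\tfrac{au+bv}{M}\,t\,D_2 f+O((u,v)^2)$ and involves only $D_2$: one gets $\partial_1 L=\tfrac{a_0}{M}\,t\,D_2L$ and $\partial_2 L=\tfrac{b_0}{M}\,t\,D_2L$, so the Jacobian $\partial_2 L\cdot\partial_1 L'-\partial_1 L\cdot\partial_2 L'$ collapses immediately to $\tfrac{(b_0c_0-a_0d_0)t^2}{M^2}\,D_2L\cdot D_2L'$. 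No $D_1$ terms ever appear and no further relations are needed.

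Your $\epsilon_1$ introduces $D_1L$ contributions, and you then propose to eliminate the resulting cross terms ``via the relations governing the image of $\partial_1$ that define $\res_{k,j}$''. That is not legitimate here: those relations live in the quotient $V/(\partial_1,\partial_2-1)$, whereas $\delta^{(2)}(c)=c_{1,0,0,1}-c_{0,1,1,0}$ is a specific element of $V$ computed from the given cocycle, and the lemma asserts an equality in $V$, not merely in the quotient. (The passage to the quotient only happens in the \emph{next} lemma, when one applies $\res_{k,j}$.) So even with perfect bookkeeping, your $D_1L$ pieces would remain and the argument would not close. Separately, your normalisations are off: the denominators should be $M$, not $p^m$, which is why your ``diagonal'' term carries $p^{-2m}$ rather than the required $M^{-2}$. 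The remedy is to use the action formula above (first argument fixed) and drop $\epsilon_1$; the computation then becomes a two-line determinant.
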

\begin{proof}[D\'emonstration]Soit $f_{a,b}^{(n)}$ d\'efinie comme ci-dessus. Alors
 l'action habituelle de $(u,v)\in P_m$ sur $f_{a,b}^{(n)}$ est:
\[(u,v)f_{a,b}^{(n)}=f(\tilde{q}^{p^n},\tilde{q}_M^a\tilde{\z}_M^{au+be^v}).\]
Du d\'eveloppement limit\'e du terme de droite en $u$ et $v$, on a
\begin{equation*}
f(\tilde{q}^{p^n},\tilde{q}_M^a\tilde{\z}_M^{au+be^v})=f(\tilde{q}^{p^n},\tilde{q}_M^a\tilde{\z}_M^{b})+u(\frac{at}{M}D_2f_{a,b}^{(n)})+v(\frac{bt}{M}D_2f_{a,b}^{(n)})+O((u,v)^2)
\end{equation*}
et donc l'action de
$(\begin{smallmatrix}1&u\\0&e^v\end{smallmatrix})-1$ sur $f_{a,b}^{(n)}$ est donn\'ee par la formule :
\begin{equation}\label{action}
f_{a,b}^{(n)}*((\begin{smallmatrix}1&u\\0&e^v\end{smallmatrix})-1)=\frac{au+bv}{M}tD_2f_{a,b}^{(n)}+O((u,v)^2).
\end{equation}

On note $\Delta= \left\{\log \left(r_c\theta_{a_0,b_0}^{(n)}\right)
\otimes\log\left(r_d\theta_{c_0,d_0}^{(n)}\right)\right\}_{\sigma,\tau}
$. Soient $\sigma=(u,v)$,$\tau=(x,y)\in \rP_m$. Par un calcul explicite de
\[ f^{(n)}_{a_0,b_0}*\left((\begin{smallmatrix}1& ue^y+x\\0& e^{v+y}\end{smallmatrix})-(\begin{smallmatrix}1& x\\ 0& e^y\end{smallmatrix})\right) \otimes g^{(n)}_{c_0,d_0}*\left((\begin{smallmatrix}1&x\\ 0&e^y\end{smallmatrix})-1\right)\]
\`a $O((u,v,x,y)^3)$ pr\`es, en utilisant la formule que l'on vient juste d'\'etablir, on a:
\begin{equation*}
\begin{split}
\Delta=\frac{a_0x(c_0u+d_0v)+c_0y (b_0u+d_0v)}{M^2}t^2
D_2\log\left(r_c\theta_{a_0,b_0}^{(n)}\right)\cdot
D_2\log\left(r_d\theta_{c_0,d_0}^{(n)}\right).
\end{split}
\end{equation*}
Alors, par la d\'efinition de l'application $\delta^{(2)}$, on a
\begin{equation*}
\begin{split}
\delta^{(2)}_{a_0,b_0,c_0,d_0}
=\frac{(b_0c_0-a_0d_0)t^2}{M^2}\cdot
D_2\log\left(r_c\theta_{a_0,b_0}^{(n)}\right)\cdot
D_2\log\left(r_d\theta_{c_0,d_0}^{(n)}\right)
\end{split}
\end{equation*}
\end{proof}

\begin{lemma}\label{negligible} Si $s\geq 2-j$ et $a,b,c,d\in\Z$, alors, dans $(\tilde{\cK}^+_M\otimes V_{k,j})/(\partial_1,1-\partial_2)$, on a 
\[(ae_1+be_2)^{k-2}t^sf_{a,b}^{(n)}\cdot g_{c,d}^{(n)}=\frac{(ae_1+be_2)^{k-2}}{a(1-s)}\cdot (ad-bc)\cdot\frac{t}{M}f_{a,b}^{(n)}\cdot D_2g_{c,d}^{(n)}.\]
\end{lemma}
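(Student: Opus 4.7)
The identity asserts an equality in the quotient $(\tilde{\cK}_M^+\otimes V_{k,j})/(\partial_1,1-\partial_2)$, so it suffices to exhibit the difference between the two sides as an element of $\partial_1(\tilde{\cK}_M^+\otimes V_{k,j})+(1-\partial_2)(\tilde{\cK}_M^+\otimes V_{k,j})$. The natural strategy is to apply the two derivations $\partial_1$ and $1-\partial_2$ directly to the element
\[
Z:=(ae_1+be_2)^{k-2}\, t^s\, f_{a,b}^{(n)}\, g_{c,d}^{(n)},
\]
since both $\partial_1(Z)$ and $(1-\partial_2)(Z)$ vanish in the quotient. The resulting two algebraic relations can then be combined linearly to isolate the desired identity.

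The plan is to expand these two vanishing relations by the Leibniz rule, using the elementary formulas collected in the proof of Proposition~\ref{reskj}: $\partial_1(e_1)=e_2,\ \partial_1(e_2)=0,\ \partial_1(t)=0$; $\partial_2(e_1)=0,\ \partial_2(e_2)=e_2,\ \partial_2(t)=t$; together with $\partial_1(f_{a,b}^{(n)})=\frac{at}{M}D_2 f_{a,b}^{(n)}$ and $\partial_2(f_{a,b}^{(n)})=\frac{bt}{M}D_2 f_{a,b}^{(n)}$ from formula~(\ref{action}), and their analogues with $(c,d)$ for $g_{c,d}^{(n)}$. The relation $\partial_1(Z)\equiv 0$ then expresses the mixed term
\[
(k-2)\,a\,(ae_1+be_2)^{k-3}e_2\cdot t^s fg \equiv -\tfrac{a}{M}(ae_1+be_2)^{k-2} t^{s+1} D_2 f\cdot g -\tfrac{c}{M}(ae_1+be_2)^{k-2} t^{s+1} f\cdot D_2 g,
\]
while $(1-\partial_2)(Z)\equiv 0$ produces (after accounting for the $\Q_p(2-j)$-twist contribution to the eigenvalue of $\partial_2$ on $t^s$, following the bookkeeping of Proposition~\ref{reskj}, which is where the hypothesis $s\geq 2-j$ enters) a companion identity featuring $(1-s)\,Z$ on one side and the $e_2$-mixed term with coefficient $(k-2)b$ together with $\frac{b}{M}$ and $\frac{d}{M}$ weighted corrections.

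The decisive algebraic step will be to multiply the $\partial_1$-relation by $b/a$ and substitute into the $(1-\partial_2)$-relation. The mixed $(ae_1+be_2)^{k-3}e_2$-term cancels by design, and the $D_2f\cdot g$-contributions also cancel since their coefficients combine to $\tfrac{b}{M}-\tfrac{b}{a}\cdot\tfrac{a}{M}=0$; what survives is exactly $(1-s)\,Z\equiv \tfrac{ad-bc}{aM}\,(ae_1+be_2)^{k-2}\,t^{s+1}\,f\,D_2 g$ in the quotient, and dividing by the nonzero scalar $(1-s)$ yields the asserted formula. The main technical obstacle is the careful bookkeeping of the $\Q_p(2-j)$-twist in the eigenvalues of $\partial_2$: one must verify that, under the hypothesis $s\geq 2-j$, the coefficient of $Z$ extracted from $(1-\partial_2)(Z)$ is indeed $(1-s)$ (and not some shifted variant), so that the denominator on the right-hand side of the lemma is correctly $a(1-s)$; once this is in place the rest of the proof is a direct computation.
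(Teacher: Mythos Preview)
Your approach is correct and essentially identical to the paper's: the paper applies the single operator $a(1-\partial_2)+b\partial_1$ to $Z$, which is precisely your linear combination $(b/a)\cdot[\partial_1\text{-relation}]+[(1-\partial_2)\text{-relation}]$ rescaled by $a$, and the same cancellations of the $(ae_1+be_2)^{k-3}e_2$ and $D_2f$ terms occur. One minor remark: the hypothesis $s\geq 2-j$ is really there to ensure $t^s$ makes sense in $\tilde{\cK}_M^+\otimes V_{k,j}$ (i.e.\ $t^{s-(2-j)}\in\tilde{\cK}_M^+$), and once this is granted the $(1-\partial_2)$-eigenvalue on the diagonal part is automatically $(1-s)$ with no shift.
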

\begin{proof}[D\'emonstration]
Pour $(u,v)\in \rP_m$, on a
$: f_{a,b}^{(n)}*((u,v)-1)=u\partial_1f_{a,b}^{(n)}+v\partial_2f_{a,b}^{(n)}+O((u,v)^2). $
En comparant avec la formule (\ref{action}), on obtient que $\partial_1f_{a,b}^{(n)}=\frac{at}{M}D_2f_{a,b}^{(n)}$ et $\partial_2f_{a,b}^{(n)}=\frac{bt}{M}D_2f_{a,b}^{(n)}$. En composant les formules dans la d\'emonstration de la proposition (\ref{reskj}), on obtient:
\begin{equation*}
\begin{split}
&\left(a\left(1-\partial_2\right)+b\partial_1\right)\left(\left(ae_1+be_2\right)^{k-2}t^sf_{a,b}^{(n)}g_{c,d}^{(n)}\right)\\
=&a(1-s)(ae_1+be_2)^{k-2}t^sf_{a,b}^{(n)}g_{c,d}^{(n)}+\frac{(bc-ad)t}{M}(ae_1+be_2)^{k-2}t^sf_{a,b}^{(n)}D_2g_{c,d}^{(n)},
\end{split}
\end{equation*}
qui est nul dans $(\tilde{\fK}\otimes
V_{k,j})/(\partial_1,1-\partial_2)$ pour $s\geq 2-j$. Donc:
\[(ae_1+be_2)^{k-2}t^sf_{a,b}^{(n)}g_{c,d}^{(n)}=\frac{(ad-bc)}{a(1-s)}\frac{t}{M}(ae_1+be_2)^{k-2}t^sf_{a,b}^{(n)}D_2g_{c,d}^{(n)}.\]
\end{proof}

On a d\'efini une application $\res_{k,j}: \tilde{\cK}_M^+\otimes V_{k,j}\ra\cK_M^+$ dans la recette \ref{res}. Elle permet de calculer l'image de $z_{M,A}$ dans $\cK_M^+$.
\begin{coro}On a
\begin{equation*}
\begin{split}
\res_{k,j}
\left(\frac{(a_0e_1+b_0e_2)^{k-2}}{(a_0d_0-b_0c_0)^jt^j}\cdot\delta^{(2)}_{a_0,b_0,c_0,d_0}\right)=M^{-1-j}\cdot
\frac{a_0^{k-1-j}}{(j-1)!}\cdot D_2\log(r_c\theta_{a_0,b_0}^{(n)})\cdot
D_2^j\log(r_d\theta_{c_0,d_0}^{(n)}).
\end{split}
\end{equation*}
\end{coro}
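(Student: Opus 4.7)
My strategy is to combine the three tools already assembled: Lemma~\ref{2co} to unfold $\delta^{(2)}_{a_0,b_0,c_0,d_0}$, Lemma~\ref{negligible} applied iteratively to shift $D_2$'s onto the second factor, and the explicit description of $\res_{k,j}$ from Proposition~\ref{reskj}. Writing $X=(a_0e_1+b_0e_2)^{k-2}$, $A=D_2\log(r_c\theta_{a_0,b_0}^{(n)})$ and $B=D_2\log(r_d\theta_{c_0,d_0}^{(n)})$, Lemma~\ref{2co} immediately rewrites the input as
\[
\frac{X}{(a_0d_0-b_0c_0)^jt^j}\cdot\delta^{(2)}_{a_0,b_0,c_0,d_0}=\frac{X\cdot t^{2-j}}{M^2(a_0d_0-b_0c_0)^{j-1}}\cdot A\cdot B.
\]

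Next, I would iterate Lemma~\ref{negligible} with $(a,b,c,d)=(a_0,b_0,c_0,d_0)$, starting from $s=2-j$. Each application trades $(t^s,D_2^lB)$ for $(t^{s+1},D_2^{l+1}B)$ in the quotient $(\tilde\cK_M^+\otimes V_{k,j})/(\partial_1,\partial_2-1)$ while picking up a factor $(a_0d_0-b_0c_0)/(a_0(1-s)M)$. After $j-1$ iterations the $t$-exponent reaches $1$, $B$ has become $D_2^{j-1}B=D_2^j\log(r_d\theta_{c_0,d_0}^{(n)})$, and the telescoped factor is
\[
\prod_{i=0}^{j-2}\frac{a_0d_0-b_0c_0}{a_0(j-1-i)M}=\frac{(a_0d_0-b_0c_0)^{j-1}}{a_0^{j-1}M^{j-1}(j-1)!}.
\]
Multiplying by the prefactor from the previous step, the argument of $\res_{k,j}$ is congruent, modulo $(\partial_1,\partial_2-1)$, to
\[
\frac{X\cdot t}{a_0^{j-1}M^{j+1}(j-1)!}\cdot D_2\log(r_c\theta_{a_0,b_0}^{(n)})\cdot D_2^j\log(r_d\theta_{c_0,d_0}^{(n)}).
\]

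Finally, I would compute $\res_{k,j}$ of $X\cdot t\cdot H$, where $H$ is the residual product of logarithmic derivatives. Expanding a monomial of $H$ as $\tilde q_M^v\tilde\z_M^w$ times a function of $\tilde q^{p^n}$, the $\partial_1$-reduction formula from the proof of Proposition~\ref{reskj} gives
\[
(a_0e_1+b_0e_2)^{k-2}\,\tilde q_M^v\tilde\z_M^w\cdot t\equiv a_0^{k-2}e_1^{k-2}\tilde q_M^v\tilde\z_M^w\sum_{j'=0}^{k-2}\frac{(-vb_0/(a_0M))^{j'}}{j'!}\,t^{1+j'}\pmod{\partial_1}.
\]
Modulo $\partial_2-1$ the $j'\geq 1$ contributions vanish (since $e_1^{k-2}\tilde q_M^v t^l$ is an eigenvector of $\partial_2-1$ with nonzero eigenvalue when $l\neq 1$) and $\tilde\z_M^w\cdot t$ collapses to $\z_M^w\cdot t$ through the expansion $\tilde\z_M^w=\z_M^w\exp(wt/M)$. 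Only the $j'=0$ term survives, and summing over monomials $\res_{k,j}$ reads off the coefficient of $e_1^{k-2}\cdot t$, producing $a_0^{k-2}$ times the Fontaine $\theta$-image $\theta(H)\in\cK_M^+$. Gathering the factors gives
\[
\frac{a_0^{k-2}}{a_0^{j-1}M^{j+1}(j-1)!}\cdot D_2\log(r_c\theta_{a_0,b_0}^{(n)})\cdot D_2^j\log(r_d\theta_{c_0,d_0}^{(n)})=M^{-1-j}\cdot\frac{a_0^{k-1-j}}{(j-1)!}\cdot D_2\log(r_c\theta_{a_0,b_0}^{(n)})\cdot D_2^j\log(r_d\theta_{c_0,d_0}^{(n)}),
\]
which is the claim. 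The main technical obstacle I expect is precisely this last step: verifying that the $\partial_1$- and $(\partial_2-1)$-reductions interact cleanly on $X\cdot t\cdot H$ — i.e.\ that the $j'\geq 1$ remainders are genuinely killed and that the tilde-to-untilde substitution (driven by $\tilde\z_M=\z_M\exp(t/M)$) combines exactly to yield the factor $a_0^{k-2}$ and the image $\theta(H)$.
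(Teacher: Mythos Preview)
Your proposal is correct and follows exactly the paper's approach: apply Lemma~\ref{2co}, iterate Lemma~\ref{negligible} from $s=2-j$ up to $s=1$, then read off $\res_{k,j}$. Your third step is in fact more careful than the paper, which simply writes ``Par la d\'efinition de $\res_{k,j}$'' and asserts that $(a_0e_1+b_0e_2)^{k-2}t$ contributes the factor $a_0^{k-2}$; your expansion via $\tilde\z_M=\z_M\exp(t/M)$ and the eigenvector argument for $\partial_2-1$ is the right way to justify this (just be sure to expand $\tilde\z_M^w$ in powers of $t$ \emph{before} invoking the eigenvector property, since $e_1^{k-2}\tilde q_M^v\tilde\z_M^w t^l$ itself is not a $\partial_2$-eigenvector).
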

\begin{proof}[D\'emonstration]
D'apr\`es les lemmes (\ref{2co}) et (\ref{negligible}), on obtient la formule suivante, 
\begin{equation*}\label{equation}
\begin{split}
\frac{(a_0e_1+b_0e_2)^{k-2}}{(a_0d_0-b_0c_0)^jt^j}\cdot\delta^{(2)}_{a_0,b_0,c_0,d_0}
&=\frac{(a_0e_1+b_0e_2)^{k-2}t^{2-j}}{(a_0d_0-b_0c_0)^{j-1}M^2}\cdot
D_2\log\left(r_c\theta_{a_0,b_0}^{(n)}\right)\cdot
D_2\log\left(r_d\theta_{c_0,d_0}^{(n)}\right)\\
&=M^{-1-j}\frac{(a_0e_1+b_0e_2)^{k-2}t}{a_0^{j-1}(j-1)!}\cdot D_2\log\left(r_c\theta_{a_0,b_0}^{(n)}\right)\cdot
D_2^j\log\left(r_d\theta_{c_0,d_0}^{(n)}\right).
\end{split}
\end{equation*}

Par la definition de $\res_{k,j}$, on a:
\begin{equation*}
\begin{split}
&\res_{k,j}\left(\frac{(a_0e_1+b_0e_2)^{k-2}}{(a_0d_0-b_0c_0)^jt^j}\cdot\delta^{(2)}_{a_0,b_0,c_0,d_0}\right)
=\frac{M^{-1-j}a_0^{k-1-j}}{(j-1)!}\cdot
D_2\log\left(r_c\theta_{a_0,b_0}^{(n)}\right)\cdot
D_2^j\log\left(r_d\theta_{c_0,d_0}^{(n)}\right).
\end{split}
\end{equation*}
\end{proof}
Par les propri\'et\'es des fonctions $E^{(k)}_{c,\alpha,\beta}$  dans le lemme \ref{partialle},
on a \[D_2^r\log r_c\theta(x_1,x_2)=c^2E_r(x_1,x_2)-c^{r}E_r(x_1,x_2^c).\]

On note $E_{c,r}(x_1,x_2)=c^2E_r(x_1,x_2)-c^{r}E_r(x_1,x_2^c)$.
Si $b\equiv\beta\mod M$ et $d\equiv\delta\mod M$, on a
$\z_M^b=\z_M^\beta, \z_M^d=\z_M^\delta$. Donc par le corollaire
(\ref{ZM}) et le calcul que nous avons fait, on obtient:
\begin{equation*}
\begin{split}
\exp^{*}_{\mathbf{Kato}}(z_{M,A})=\frac{M^{-1-j}}{(j-1)!}\cdot\lim\limits_{n\ra\infty}\sum_{\substack{a_0\equiv\alpha [M]\\
c_0\equiv\gamma[M]\\ 1\leq a_0,c_0\leq Mp^n}}
a_0^{k-1-j}E_{c,1}(q^{p^n},q_M^{a_0}\z_M^{\beta})E_{d,j}(q^{p^n},q_M^{c_0}\z_M^{\delta}).
\end{split}
\end{equation*}

Enfin, on utilise le lemme suivant pour terminer le calcul:

\begin{lemma}\label{final}
Si $1\leq r\in\N$, et si $c,d\in\Z_p^{*}$, on a:
\begin{itemize}
\item[(1)] $\sum\limits_{\substack{c_0\equiv \gamma [M]\\ 1\leq c_0\leq Mp^n }}E_j(q^{p^n},q_M^{c_0}\z_M^{\delta})=E_{\gamma/M,\delta/M}^{(j)}$ et 
$\sum\limits_{\substack{c_0\equiv \gamma [M]\\ 1\leq c_0\leq Mp^n }}E_{d,j}(q^{p^n},q_M^{c_0}\z_M^{\delta})=E_{d,\gamma/M,\delta/M}^{(j)}.$
\item[(2)]$ \lim\limits_{n\ra\infty}\sum_{\substack{a_0\equiv\alpha[M]\\ 1\leq a_0\leq Mp^n}}a_0^rE_{c,1}(q^{p^n},q_M^{a_0}\z_M^\beta)=M^rF_{c,\alpha/M,\beta/M}^{(r+1)}.$
\end{itemize}
\end{lemma}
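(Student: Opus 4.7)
The plan is to prove (1) by the lattice decomposition of the Eisenstein--Kronecker series and (2) by the Fourier expansion of $E_1=F_1$ combined with a $p$-adic limit in $F_M[[q_M]]$.

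For (1), inserting the partition $\mathbb{Z}+\mathbb{Z}\tau=\bigsqcup_{i=0}^{p^n-1}(\mathbb{Z}+\mathbb{Z}p^n\tau+i\tau)$ into the (analytically continued) lattice sum
\[ E^{(j)}_{\gamma/M,\delta/M}(\tau)=\frac{\Gamma(j)}{(-2i\pi)^j}{\sideset{}{'}\sum}_{\omega\in\mathbb{Z}+\mathbb{Z}\tau}\frac{1}{(\omega+(\gamma/M)\tau+\delta/M)^j} \]
and regrouping by the coset index $i$ writes the left-hand side as $\sum_{i=0}^{p^n-1}E_j(p^n\tau,(\gamma+Mi)\tau/M+\delta/M)=\sum_{c_0}E_j(q^{p^n},q_M^{c_0}\zeta_M^\delta)$ with $c_0=\gamma+Mi$; this is the first identity. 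For the second, I would expand $E_{d,j}(x_1,x_2)=d^2 E_j(x_1,x_2)-d^j E_j(x_1,x_2^d)$ and apply the first identity to each of the two sums, noting that the action of $\langle d\rangle\in\widehat{\mathbb{Z}}^*$ (equivalently, multiplication by an integer $d^*$ with $d^*\equiv d\,[p^{v_p(M)}]$ and $d^*\equiv 1\,[M/p^{v_p(M)}]$) permutes the residue classes modulo $M$ and modulo $Mp^n$ bijectively; this yields $d^2 E^{(j)}_{\gamma/M,\delta/M}-d^j E^{(j)}_{\langle d\rangle\gamma/M,\langle d\rangle\delta/M}=E^{(j)}_{d,\gamma/M,\delta/M}$ by the definition (\ref{variant}).

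For (2), I would invoke the identity $E^{(1)}=F^{(1)}$ (Remark after Proposition \ref{q-deve}) to expand $E_1(q^{p^n},q_M^{a_0}\zeta_M^\beta)=F^{(1)}_{a_0/(Mp^n),\beta/M}(p^n\tau)$ as
\[ \zeta(a_0/(Mp^n),0)+\sum_{m\geq 1,\,k'\geq 0}q_M^{ma_0}q^{p^nmk'}\zeta_M^{m\beta}-\sum_{m,k'\geq 1}q_M^{Mp^nmk'-ma_0}\zeta_M^{-m\beta}. \]
Multiplying by $a_0^r$, summing over $a_0\equiv\alpha\,[M]$ with $1\leq a_0\leq Mp^n$ and extracting the coefficient of each $q_M^j$ (for $j\geq 1$), the positive part collapses as $n\to\infty$ to $\sum_{m\mid j,\,j/m\equiv\alpha[M]}(j/m)^r\zeta_M^{m\beta}$, since the terms with $k'\geq 1$ have $q_M$-valuation $Mp^nmk'$ tending to infinity. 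For the negative part, once $n$ is large enough that $Mp^n>j$, only $k'=1$ is compatible with $a_0\in\{1,\ldots,Mp^n\}$, giving the coefficient $-\sum_{m\mid j,\,j/m\equiv-\alpha[M]}(Mp^n-j/m)^r\zeta_M^{-m\beta}$; as $v_p(Mp^n)\to+\infty$ this converges $p$-adically in $F_M$ to $(-1)^{r+1}\sum_{m\mid j,\,j/m\equiv-\alpha[M]}(j/m)^r\zeta_M^{-m\beta}$. Matching these against the Fourier expansion of $M^rF^{(r+1)}_{\alpha/M,\beta/M}$ from Proposition \ref{q-deve} (with constant term $M^r\zeta(\alpha/M,-r)$ obtained by analytic continuation) establishes the formula without the $c$-twist; the result for $E_{c,1}$ follows by linearity from $E_{c,1}(x_1,x_2)=c^2E_1(x_1,x_2)-cE_1(x_1,x_2^c)$, producing exactly the two terms of $F^{(r+1)}_{c,\alpha/M,\beta/M}$.

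The principal obstacle is the $p$-adic limit of the negative part: the archimedean divergence of $(Mp^n-j/m)^r$ forces the argument to be carried out entirely in the $p$-adic topology on $F_M$, and one must verify that the coefficient-wise convergence actually takes place in $\fK_M^+$ for its spectral valuation. Once this is properly framed, the positive and negative contributions assemble precisely into the two Dirichlet-type sums defining $M^rF^{(r+1)}_{\alpha/M,\beta/M}$. The border case $\alpha\equiv 0\,[M]$ (where the constant term of $F^{(r+1)}_{0,\beta/M}$ is slightly different) and the compatibility of the $\langle c\rangle$-action with the shifted Fourier sum in the $E_{c,1}$ expansion both reduce to the distribution relations of Lemma \ref{lemma15} and should require only routine verification.
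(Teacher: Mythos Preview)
Your treatment of (1) and of the non-constant coefficients in (2) is essentially the paper's approach: decompose the lattice for (1), use $E^{(1)}=F^{(1)}$ and compare $q$-expansions term by term (equivalently, formal Dirichlet series) for (2). That part is fine.

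The gap is in the constant term of (2). You propose to prove the \emph{untwisted} identity $\lim_n\sum_{a_0}a_0^r E_1(q^{p^n},q_M^{a_0}\zeta_M^\beta)=M^rF^{(r+1)}_{\alpha/M,\beta/M}$ first, then recover the $c$-twist by linearity. But the untwisted constant-term limit does not give the right value. Concretely, $\zeta(a_0/(Mp^n),0)=\tfrac12-\tfrac{a_0}{Mp^n}$, and a Bernoulli-polynomial computation shows
\[
\lim_{n\to\infty}\sum_{\substack{a_0\equiv\alpha[M]\\1\le a_0\le Mp^n}}a_0^r\,\zeta\!\Big(\tfrac{a_0}{Mp^n},0\Big)=-M^rB_{r+1}(\alpha/M),
\]
whereas $M^r\zeta(\alpha/M,-r)=-\tfrac{M^r}{r+1}B_{r+1}(\alpha/M)$: the two differ by the factor $r+1$. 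So the untwisted formula fails at the constant term, and you cannot deduce the $c$-twisted statement from it ``by linearity''.

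The paper avoids this by treating the constant term \emph{with} the $c$-regularisation from the start: it introduces the bounded measure $\mu_c$ on $\Z_p$ with Amice transform $\tfrac{c^2}{T}-\tfrac{c}{(1+T)^{c^{-1}}-1}$, identifies the $c$-twisted constant-term limit as a Riemann sum for $M^r\int_{\Z_p}(\alpha/M+x)^r\,\mu_c$, and evaluates this integral as $M^r\bigl(c^2\zeta(\alpha/M,-r)-c^{1-r}\zeta(\langle c\rangle\alpha/M,-r)\bigr)$. Only after the constant term is settled does the paper reduce the remaining (non-constant) comparison to the untwisted identity (\ref{passage}), which is exactly where your coefficient-by-coefficient argument applies. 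In short: keep your proof of the higher coefficients, but replace the ``prove untwisted, then linearity'' step for the constant term by a direct Riemann-sum argument against the regularised measure $\mu_c$.
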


\begin{proof}[D\'emonstration]
$(1)$ La deuxi\`eme assertion suit de la premi\`ere, et la premi\`ere est un calcul direct par d\'efinition.\\
$(2)$ On d\'emontre l'assertion par prouver que ces deux formes modulaires $p$-adiques ont le m\^eme $q$-d\'eveloppement. 
D'apr\`es la proposition (\ref{q-deve}), on a:
 \begin{equation*}
\begin{split}
\lim\limits_{n\ra\infty}\sum_{\substack{a_0\equiv\alpha[M]\\ 1\leq
a_0\leq Mp^n}}a_0^rE_{c,1}(q^{p^n},q_M^{a_0}\z_M^\beta)=
\lim\limits_{n\ra\infty}\sum_{\substack{a_0\equiv\alpha[M]\\ 1\leq
a_0\leq Mp^n}}a_0^rF^{(1)}_{c,a_0/Mp^n,\beta/M}(q^{p^n}).
\end{split}
\end{equation*}
Notons la limite \`a gauche par $(\star)$.

D'abord, on montre l'\'egalit\'e pour le terme constant.
D'apr\`es la proposition (\ref{q-deve}), le terme constant de $(\star)$ et $M^rF_{c,\alpha/M,
\beta/M}^{(r+1)}$ sont $A_0=\lim\limits_{n\ra+\infty}
\sum_{\substack{a_0\equiv\alpha[M]\\ 1\leq
a_0\leq Mp^n}}a_0^r(c^2\z(\frac{a_0}{Mp^n},0)-c\z(\frac{\langle c\rangle a_0}{Mp^n},0))$ et $B_0=M^r(c^2\z(\frac{\alpha}{M},-r)-c^{1-r}\z(\frac{\langle c\rangle\alpha}{M},-r))$ respectivement.

On note $\mu_c$ la mesure sur $\Z_p$ dont sa transform\'ee d'Amice est $\frac{c^2}{T}-\frac{c}{(1+T)^{c^{-1}}-1}$. Un calcul simple montre que, si $\alpha$ 
\begin{equation}
\begin{split}
\int_{\Z_p}(\frac{\alpha}{M}+x)^k\mu_c(x)&=(c^2\z(\frac{\alpha}{M},-k)-c^{1-k}\z(\frac{\langle c\rangle \alpha}{M},-k));\\
\int_{p^n\Z_p}(\frac{\alpha}{M}+x)^k\mu_c(x)&=(c^2\z(\frac{\alpha}{p^nM}),-k)-c^{1-k}\z(\frac{\langle c\rangle \alpha}{p^nM},-k)).
\end{split}
\end{equation}
On constate que $A_0$ est la somme de Riemann
\[\lim_{n\ra+\infty}\sum_{\substack{a_0\equiv\alpha[M]\\ 1\leq
a_0\leq Mp^n}}a_0^r(c^2\z(\frac{a_0}{Mp^n},0)-c\z(\frac{\langle c\rangle a_0}{Mp^n},0)),\]
et donc il se traduit l'int\'egration $p$-adique $M^r\int_{\Z_p}(\alpha/M+x)^r\mu_c$, qui est exactement $B_0$.

Ensuite, on se ram\`ene \`a montrer l'\'egalit\'e suivant:
\begin{equation}\label{passage} \lim\limits_{n\ra\infty}\sum_{\substack{a_0\equiv\alpha[M]\\ 1\leq a_0\leq Mp^n}}a_0^rE_1(q^{p^n},q_M^{a_0}\z_M^\beta)=M^rF_{\alpha/M,\beta/M}^{(r+1)}.\end{equation}
On note la limite \`a gauche par $(\star')$. 
En effet, si $(\ref{passage})$ est vrai, on a 
 \[\lim\limits_{n\ra\infty}\sum_{\substack{a_0\equiv\alpha[M]\\ 1\leq a_0\leq Mp^n}}a_0^rE_{c,1}(q^{p^n},q_M^{a_0}\z_M^\beta)=M^rc^2F^{(r+1)}_{\alpha/M,\beta/M}- \lim\limits_{n\ra\infty}\sum_{\substack{a_0\equiv\alpha[M]\\ 1\leq a_0\leq Mp^n}}a_0^rcE_{1}(q^{p^n},q_M^{ca_0}\z_M^{c\beta})\]
Comme $c\in\Z_p^{*}$, on le conclut en appliquant le $(\ref{passage})$ \`a
    \[\lim\limits_{n\ra\infty}\sum_{\substack{a_0\equiv\alpha[M]\\ 1\leq a_0\leq Mp^n}}a_0^r cE_1(q^{p^n},q_M^{ca_0}\z_M^{c\beta})=c^{-r}\lim\limits_{n\ra\infty}\sum_{\substack{a_0\equiv\alpha[M]\\ 1\leq a_0\leq Mp^n}}(ca_0)^r cE_1(q^{p^n},q_M^{ca_0}\z_M^{c\beta}).\]

 Revenons \`a d\'emontrer l'\'egalit\'e $(\ref{passage})$.   Il ne reste qu'\`a comparer les s\'eries de Dirichlet formelles associ\'ees aux $q$-d\'eveloppements de $(\star')$ et de $M^rF_{\alpha/M,\beta/M}^{(r+1)}$
 
Soit $F^{(1)}_{a_0/Mp^n,\beta/M}(q)=\sum_{m\in\Q^{+}}b_m q^m$. D'apr\`es la proposition (\ref{q-deve}), la s\'erie de Dirichlet formelle \`a coefficients dans $\Q^{\cycl}$ associ\'ee \`a $F^{(1)}_{a_0/Mp^n,\beta/M}(q^{p^n})$
v\'erifie:
\begin{equation}\label{formelle}
\begin{split}
\sum_{m\in\Q^{+}}\frac{b_m}{(p^nm)^s}
=p^{-ns}(\z(a_0/Mp^n,s)\z^{*}(\beta/M,s)-\z(-a_0/Mp^n,s)\z^{*}(-\beta/M,s)).
\end{split}
\end{equation}

Soit $\sum_{m\in \Q^{+}}b_{m,n} m^{-s}$ la s\'erie de Dirichlet formelle \`a coefficients dans $\Q^{\cycl}$ associ\'ee au $q$-d\'eveloppement
de $\sum_{\substack{a_0\equiv\alpha[M]\\ 1\leq
a_0\leq Mp^n}}a_0^rF^{(1)}_{a_0/Mp^n,\beta/M}(q^{p^n})$. Alors, de la formule $(\ref{formelle})$, on a:

\begin{equation}\label{fin}
\begin{split}
\sum_{m\in\Q^{*}_{+}}\frac{b_{m,n}}{m^s}=&\sum_{\substack{a_0\equiv\alpha[M]\\ 1\leq a_0\leq Mp^n}}a_0^rp^{-ns}(\z(\frac{a_0}{Mp^n},s)\z^{*}(\frac{\beta}{M},s)-\z(-\frac{a_0}{Mp^n},s)\z^{*}(-\frac{\beta}{M},s))\\
=&\sum_{\substack{a_0\equiv\alpha[M]\\ 1\leq a_0\leq Mp^n}}a_0^rp^{-ns}(\sum_{k=0}^{+\infty}\frac{1}{(k+\frac{a_0}{Mp^n})^s}\z^{*}(\frac{\beta}{M},s)
-\sum_{k=1}^{+\infty}\frac{1}{(k-\frac{a_0}{Mp^n})^s}\z^{*}(-\frac{\beta}{M},s))\\
=&\sum_{i=0}^{p^n-1}(\sum_{k=0}^{+\infty}\frac{(\alpha+iM)^r}{(kp^n+\frac{\alpha}{M}+i)^s}
\z^{*}(\frac{\beta}{M},s)-\sum_{k=1}^{+\infty}\frac{(\alpha+iM)^r}{(kp^n-\frac{\alpha}{M}-k)^s}
\z^{*}(-\frac{\beta}{M},s))
\end{split}
\end{equation}

Consid\'erons l'injection de $\Dir(\Q^{\cycl})$ dans $\Dir(\Q_p^{\cycl})$.
Alors la s\'erie de Dirichlet formelle, \`a coefficients dans $\Q^{\cycl}_p$, associ\'ee au $q$-d\'eveloppement de $(\star)$,
est la limite $p$-adique $\lim\limits_{n\ra\infty}\sum_{m\in\Q^{*}_{+}}\frac{b_{m,n}}{m^s}$ et on a 
\begin{equation*}
\begin{split}
\lim\limits_{n\ra\infty}\sum_{m\in\Q^{*}_{+}}\frac{b_{m,n}}{m^s}
=&\lim\limits_{n\ra\infty}\sum_{i=0}^{p^n-1}(\sum_{k=0}^{+\infty}
\frac{(\alpha+iM)^r}{(kp^n+\frac{\alpha}{M}+i)^s}\z^{*}(\beta/M,s)
-\sum_{k=1}^{+\infty}\frac{(\alpha+iM)^r}{(kp^n-\frac{\alpha}{M}-k)^s}\z^{*}(-\beta/M,s))
\\
=&M^r\sum_{i=0}^{p^{\infty}}(\sum_{k=0}^{\infty}\frac{(\frac{\alpha}{M}+i)^r}
{(\frac{\alpha}{M}+i+kp^{\infty})^s}\z^{*}(\beta/M,s)-
\sum_{k=1}^{+\infty}\frac{(\frac{\alpha}{M}+i)^r}
{(kp^{\infty}-\frac{\alpha}{M}-i)^s}\z^{*}(-\beta/M,s))\\
=&M^r(\sum_{j\equiv \frac{\alpha}{M}\mod \Z }\frac{1}{j^{s-r}}\z^{*}(\beta/M,s)+(-1)^{r+1}\sum_{j\equiv-\frac{\alpha}{M}\mod\Z}\frac{1}{j^{s-r}}\z^{*}(-\beta/M,s))\\
=&M^r(\z(\alpha/M,s-r)\z^{*}(\beta/M,s)+(-1)^{r+1}\z(-\alpha/M,s-r)\z^{*}(-\beta/M,s)).
\end{split}
\end{equation*}
C'est la m\^eme s\'erie de Dirichlet \`a coefficients dans $\Q_p^{\cycl}$ associ\'ee au $q$-d\'eveloppement de $M^rF_{\alpha/M,
\beta/M}^{(r+1)}$.

\end{proof}

--------------------------------------\\

\author{Shanwen \textsc{WANG}\\ Dipartimento di Matematica Pura e Applicata, \\ Via Trieste 63, \\ 35121, Padova, Italy \\ Email: swang@math.unipd.it }
\end{document}